\documentclass[11pt, twoside, leqno, draft]{article}

\usepackage{amsfonts}

\usepackage{amssymb}
\usepackage{amsmath}
\usepackage{amsthm}
\usepackage{xcolor}
\usepackage{mathrsfs}
\usepackage{txfonts}

\usepackage{indentfirst}

\allowdisplaybreaks

\pagestyle{myheadings}
\markboth{\footnotesize\rm\sc
Ziyi He, Liguang Liu, Dachun Yang and Wen Yuan}
{\footnotesize\rm\sc Calder\'{o}n Reproducing Formulae}

\textwidth=15cm
\textheight=21.0395truecm
\oddsidemargin 0.35cm
\evensidemargin 0.35cm

\parindent=13pt

\def\rr{{\mathbb R}}
\def\rn{{\mathbb{R}^n}}

\def\nn{{\mathbb N}}
\def\zz{{\mathbb Z}}
\def\cc{{\mathbb C}}
\def\CG{{\mathcal G}}
\def\CL{{\mathcal L}}
\def\CS{{\mathcal S}}
\def\CR{{\mathcal R}}
\def\CM{{\mathcal M}}
\def\CA{{\mathcal M}}
\def\CX{{\mathcal X}}
\def\CY{{\mathcal Y}}
\def\CB{{\mathcal B}}

\newcommand{\FT}{\mathfrak T}
\newcommand{\FR}{\mathfrak R}

\newcommand{\RZ}{\mathrm Z}

\newcommand{\CN}{\mathcal{N}}

\def\fz{\infty }
\def\az{\alpha}
\def\bz{\beta}
\def\dz{\delta}
\def\ez{\epsilon}
\def\kz{\kappa}
\def\thz{\theta}
\def\vz{\varphi}

\def\lf{\left}
\def\r{\right}
\def\ls{\lesssim}
\def\noz{\nonumber}
\def\wz{\widetilde}
\def\gfz{\genfrac{}{}{0pt}{}}

\def\loc{{\mathrm{loc}}}
\DeclareMathOperator{\supp}{supp}

\def\XXint#1#2#3{{\setbox0=\hbox{$#1{#2#3}{\int}$ }
\vcenter{\hbox{$#2#3$ }}\kern-.6\wd0}}


\def\lz{{\lambda}}

\def\CG{{\mathcal G}}
\def\CA{{\mathcal A}}
\def\CS{{\mathcal S}}
\def\CY{\mathcal Y}
\def\RY{{\mathrm Y}}
\def\gz{\gamma}

\newcommand{\RJ}{\mathrm J}


\DeclareMathOperator{\diam}{diam}


\newcommand{\wt}{\widetilde}

\def\gfz{\genfrac{}{}{0pt}{}}


\newtheorem{theorem}{Theorem}[section]
\newtheorem{lemma}[theorem]{Lemma}
\newtheorem{corollary}[theorem]{Corollary}
\newtheorem{proposition}[theorem]{Proposition}

\theoremstyle{definition}
\newtheorem{remark}[theorem]{Remark}
\newtheorem{definition}[theorem]{Definition}
\renewcommand{\appendix}{\par
   \setcounter{section}{0}%
   \setcounter{subsection}{0}%
   \setcounter{subsubsection}{0}%
   \gdef\thesection{\@Alph\c@section}%
   \gdef\thesubsection{\@Alph\c@section.\@arabic\c@subsection}%
   \gdef\theHsection{\@Alph\c@section.}%
   \gdef\theHsubsection{\@Alph\c@section.\@arabic\c@subsection}%
   \csname appendixmore\endcsname
 }
\newcommand{\OCG}{\mathring\CG}
\newcommand{\go}[1]{\CG_0^\eta(#1)}
\newcommand{\GOX}[1]{\OCG(#1)}
\newcommand{\GO}[1]{\mathring{\CG}(#1)}
\newcommand{\GOO}[1]{\mathring{\CG}^\eta_0(#1)}

\numberwithin{equation}{section}

\begin{document}
\title{\bf\Large
New Calder\'{o}n Reproducing Formulae with Exponential Decay on Spaces of Homogeneous Type
\footnotetext{\hspace{-0.35cm}
2010 {\it Mathematics Subject Classification}. Primary 42C40; Secondary 42B20, 42B25, 30L99. \endgraf
{\it Key words and phrases.} space of homogeneous type, Calder\'{o}n reproducing formula, approximation of
the identity,  wavelet, space of test functions, distribution.\endgraf
This project is supported by the National Natural Science Foundation of China (Grant Nos.
11771446, 11571039, 11726621, 11761131002 and 11471042).}}
\author{Ziyi He, Liguang Liu, Dachun Yang\footnote{Corresponding author/April 1, 2018/newest version.}\ \ and
Wen Yuan}
\date{}
\maketitle

\vspace{-0.8cm}

\begin{center}
\begin{minipage}{13cm}
{\small {\bf Abstract}\quad
Assume that $(X, d, \mu)$ is a space of homogeneous type in the sense of Coifman and Weiss.
In this article, motivated by the breakthrough work of P. Auscher and T. Hyt\"onen
on orthonormal bases of regular wavelets on spaces of homogeneous type, the authors introduce a new
kind of approximations of the identity  with
exponential decay (for short, $\exp$-ATI). Via such an $\exp$-ATI, motivated by another
creative idea of Y. Han et al. to merge
the aforementioned orthonormal bases of regular wavelets into the frame of
the existed distributional theory on spaces of homogeneous type,
the authors establish the
homogeneous continuous/discrete Calder\'{o}n reproducing formulae on $(X, d, \mu)$,
as well as their inhomogeneous counterparts.
The novelty of this article exists in that $d$ is only assumed to be a quasi-metric
and the underlying measure $\mu$ a doubling measure, not necessary to satisfy
the reverse doubling condition. It is well known that Calder\'{o}n reproducing formulae
are the cornerstone to develop analysis and, especially, harmonic analysis on spaces
of homogeneous type.}
\end{minipage}
\end{center}


\arraycolsep=1pt


\vspace{-0.2cm}

\section{Introduction}\label{intro}

It is well known that the space of homogeneous type introduced 
by Coifman and Weiss \cite{CW71, CW77} provides a natural setting for
the study of function spaces and the boundedness of operators. A \emph{quasi-metric space} $(X,d)$ is a
non-empty set $X$ equipped with a \emph{quasi-metric} $d$, that is, a non-negative function defined on
$X\times X$, satisfying that, for any $x,\ y,\ z\in X$,
\begin{enumerate}
\item $d(x,y)=0$ if and only if $x=y$;
\item $d(x,y)=d(y,x)$;
\item there exists a  constant $A_0\in[1,\infty)$ such that $d(x,z)\le A_0[d(x,y)+d(y,z)]$.
\end{enumerate}
The ball $B$ on $X$ centered at $x_0\in X$ with radius $r\in(0,\fz)$ is defined by setting
$$
B:=\{x\in X:\ d(x,x_0)<r\}=:B(x_0,r).
$$
For any ball $B$ and $\tau\in(0,\infty)$, denote by $\tau B$ the ball with the same center as that of $B$
but of radius $\tau$ times that of $B$.
Given a quasi-metric space $(X,d)$ and a non-negative measure $\mu$, we call $(X,d,\mu)$ a \emph{space of homogeneous type} if $\mu$ satisfies the following {\it doubling condition}:
there exists a positive constant $C_{(\mu)}\in[1,\fz)$ such that,
for any ball $B\subset X$,
$$\mu(2B)\le
C_{(\mu)}\mu(B).$$
The above doubling condition is equivalent to that, for any ball $B$ and any $\lz\in[1,\infty)$,
\begin{equation}\label{eq:doub}
\mu(\lz B)\le C_{(\mu)}\lz^\omega\mu(B),
\end{equation}
where $\omega:=\log_2{C_{(\mu)}}$  measures the \emph{upper dimension} of $X$.
If $A_0=1$, we call $(X, d, \mu)$ a metric measure space of homogeneous type or, simply, a \emph{doubling
measure metric space}.

To develop the real-variable theory of function spaces and the boundedness of
operators on  spaces of homogeneous type, one needs Calder\'{o}n reproducing formulae
which play a very important and fundamental
role. Indeed, Calder\'on reproducing formulae and their variants are
important tools to characterize Hardy spaces (see \cite{Uchi85, GLY08, YZ10, dy03, BT, BDL}),
to study Besov spaces and Triebel-Lizorkin spaces (see \cite{Pee75, FJ85, FJ90,FJW91}), and also more general
scale of function spaces (see \cite{YY08,YY10,YSY}), as well to establish $T1$ or $Tb$-theorems for Besov or Triebel-Lizorkin spaces (see \cite{DJS,  Han94, Han97, HS94, Ro92}).
All these have further applications in the study on
the boundedness of Calder\'on-Zygmund operators (see, for example, \cite{fcy17, lcfy17, lcfy18, LX}).
On the anisotropic Euclidean space, which is a special case of spaces of homogeneous type,
Calder\'{o}n reproducing formulae were  built and then used to study Besov spaces and
Triebel-Lizorkin spaces (see \cite{Bow05, BH06, BLYZ08,LWYY17}).
The reader can also find nice applications of Calder\'{o}n reproducing formulae in the study of Musielak-Orlicz
Hardy spaces and Hardy spaces with variable exponents (see, for example,
\cite{FHLY17, LHY12,YLK17,NS12, ZSY16}).

Classical Calder\'{o}n reproducing formulae on the Euclidean space $\rn$
gain its prototype from Calder\'{o}n \cite{Ca64}, in which problems of the intermediate space and the interpolation
were studied via a
reproducing method. In 1971, Heideman
\cite{Hei71} proved  the (in)homogeneous continuous Calder\'{o}n reproducing formulae in detail and used them
to consider the duality and the
fractional integral on Lipschitz spaces; see also  Calder\'on and Torchinsky \cite{CT75, Ca77}.
Denote by $\mathcal S'(\rn)/\mathcal P(\rn)$  the space $\mathcal S'(\rn)$ of Schwartz distributions modul
the polynomials space $\mathcal P(\rn)$, which is known topologically equivalent to $\CS_\fz'(\rn)$ (see
\cite[Proposition 8.1]{YSY}, \cite[Theorem 6.28]{nns16} and \cite[Theorem 3.1]{s17} for a recent proof), where
$\CS_\fz(\rn)$ denotes the set of all Schwartz functions having infinite vanishing moments and $\CS_\fz'(\rn)$
its dual space equipped with the weak-$*$ topology.
Then the homogeneous continuous Calder\'on reproducing formula in \cite{Hei71, CT75, Ca77} essentially has the
version
$$
f=\sum_{\nu\in\zz} \varphi_\nu\ast\psi_\nu\ast f \qquad \textup{in}\;\;\mathcal S'(\rn)/\mathcal P(\rn),
\leqno(\bf CCRF)
$$
where $\varphi,\ \psi$ belong to the Schwartz class $\mathcal S(\rn)$ with their Fourier transforms
$\widehat\varphi,\ \widehat\psi$ satisfying
$\supp\widehat\varphi,\ \supp\widehat\psi\subset\{\xi\in\rn:\, 1/4<|\xi|<4\},$
and $\varphi_\nu(\cdot):=2^{\nu n}\varphi(2^\nu\cdot)$ and $\psi_\nu(\cdot):=2^{\nu n}\psi(2^\nu \cdot)$ for any $\nu\in\zz$.
The homogeneous  discrete Calder\'on reproducing formula was first established by Frazier and Jawerth \cite{FJ85}, which reads as follows:
$$
f(\cdot)=\sum_{\nu\in\zz} 2^{-\nu n}\sum_{k\in\zz^n}(\varphi_\nu\ast f)(2^{-\nu}k)\,\psi_\nu(\cdot-2^{-\nu}k) \qquad \textup{in}\;\;\mathcal S'(\rn)/\mathcal P(\rn).
\leqno(\bf DCRF)
$$
The reader may also find corresponding inhomogeneous continuous/discrete Calder\'on reproducing formulae in literatures
(see, for example, \cite{FJ90, YSY}).

In the setting of \emph{Ahlfors-$n$ regular spaces} [that is, $\mu(B(x,r))\sim r^n$, for any $x\in X$ and
$r\in(0,\infty)$, with the equivalent positive constants independent of $x$ and $r$], upon assuming
that there exists $\thz\in(0,1]$ such that, for any $x,\ x',\ y\in X$,
\begin{align}\label{regular-d}
|d(x,y)-d(x',y)|\ls[d(x,x')]^\thz[d(x,y)+d(x',y)]^{1-\thz},
\end{align}
then Calder\'{o}n reproducing formulae were established in \cite{Han97, HLY01, HS94} and further used to study
Besov and Triebel-Lizorkin spaces as well as $T1$ or $Tb$-theorems.

Recall that an RD-\emph{space} $(X,d,\mu)$  is a doubling metric space
with $\mu$ further satisfying the following
\emph{reverse doubling
	condition}: there exists a positive constant $\wz{C}\in(0,1]$ and $\kz\in(0,\omega]$
such that, for any ball $B(x,r)$ with $r\in(0,\diam X/2)$ and $\lz\in[1,\diam X/(2r))$,
$$
\wz C\lz^\kz\mu(B(x,r))\le\mu(B(x,\lz r)),
$$
here and hereafter, $\diam X:=\sup\{d(x,y):\ x,\ y\in X\}$.
In RD-spaces, (in)homogeneous continuous/discrete Calder\'on reproducing formulae
analogous to ({\bf CCRF}) and ({\bf DCRF}) were established
in \cite{HMY08}, and further used to built a full theory of (in)homogeneous Besov and Triebel-Lizorkin spaces on RD-spaces.
Also, in \cite{GLY08, HMY06, YZ10}, Calder\'on reproducing formulae  were also used to establish
Littlewood-Paley function characterizations and various maximal function characterizations of
Hardy spaces on RD-spaces. See also \cite{YZ11} to use Calder\'on reproducing formulae and  local Hardy spaces
to obtain new characterizations of Besov and Triebel-Lizorkin spaces on RD-spaces.
More applications of Calder\'on reproducing formulae in analysis on RD-spaces can be found in
\cite{glmy, gly, kyz10, kyz11, yz08, YZ11,dh09}.

Notice that the reverse doubling condition of $\mu$ and the fact that $d$ is a metric play fundamental roles
in establishing the Calder\'on reproducing formulae in \cite{HMY08}.
It is  interesting to
ask whether or not Calder\'on reproducing formulae like ({\bf CCRF}) and ({\bf DCRF}) can be established
without the reverse doubling condition of $\mu$ and with $d$ being a quasi-metric.
This article gives an affirmative answer to this question. It is these
Calder\'{o}n reproducing formulae built in the present article
that impel us to establish,
\emph{without any additional geometrical condition},
various real-variable characterizations of Hardy spaces
on spaces homogeneous type,
which completely answers the question
asked by Coifman and Weiss \cite[p.\,642]{CW77}.
Due to the limited length of this article, the latter part is presented in \cite{hhlly}.

One motivation of this article is the breakthrough work of Auscher and Hyt\"onen \cite{AH13},
where an  orthonormal wavelet basis $\{\psi_\az^k:\ k\in\zz,\ \az\in\CG_k\}$  of $L^2(X)$
with H\"older continuity and exponential decay
on spaces of homogeneous type was constructed, by using the system of random dyadic cubes.
For any $k\in\zz$ and $x,\ y\in X$, if we let
$$Q_k(x,y):=\sum_{\az\in\CG_k}\psi_\az^k(x)\psi_\az^k(y),$$
then
\begin{align}\label{wrf}
f(\cdot)=\sum_{k\in\zz}Q_kf(\cdot)=\sum_{k\in\zz}\sum_{\az\in\CG_k}\lf<f,\psi_\az^k\r>\psi_\az^k(\cdot)
=\sum_{k\in\zz}\int_X Q_k(\cdot,y)f(y)\,d\mu(y)\qquad\textup{in}\; L^2(X)
\end{align}
and the kernels $\{Q_k\}_{k\in\zz}$ were proved in \cite[Lemma~10.1]{AH13} to satisfy conditions (ii), (iii)
and (v) of Definition \ref{def:eti} below.
It was essentially proved in \cite[Lemma~3.6 and (3.22)]{HLW16} that the kernels $\{Q_k\}_{k\in\zz}$ satisfy
the ``second difference regularity condition'', with exponentially decay. This inspires us to introduce a new
kind of \emph{approximations of the identity with exponential decay} (for short, $\exp$-ATI); see Definition
\ref{def:eti} below.

Recall that any  \emph{approximations of the identity} (for short, ATI) on RD-spaces or Ahlfors-$n$ regular
spaces appeared in the literature only has the polynomial decay (see \cite{HMY08,dh09}).
The $\exp$-ATI turns out to be an  approximations of the identity used in \cite{HMY08}.
However, as it is pointed out in Remark \ref{rem-add} below, even the approximations of the identity with
bounded supports can not provide the exponential decay factor like
$$ \exp\lf\{-\nu\lf[\frac{\max\{d(x, \CY^k)),\,d(x,\CY^k)\}}{\dz^k}\r]^a\r\}$$
in the right-hand sides of \eqref{eq:etisize}, \eqref{eq:etiregx} and \eqref{eq:etidreg} below; we explain
the symbols $a,\ \dz$ and $\CY^k$ in Section \ref{pre} below.
The evidence for the importance of such an exponential decay  factor can be found in \cite[Lemma~8.3]{AH13}
which establishes the following estimate
$$
\sum_{\dz^k\ge r}\frac 1{\mu(B(x, \dz^k))}\exp\lf\{-\nu\lf[\frac{d(x,\CY^k)}{\dz^k}\r]^a\r\}
\ls \frac 1{\mu(B(x,r))},\qquad \forall\, x\in X,\ r\in(0,\infty),
$$
with the implicit positive constant independent of $x\in X$ and $r\in(0,\fz)$. Observe that this estimate can
be used as a replacement of the reverse doubling condition of $\mu$.

Another motivation of this article is the creationary works of Han et al.\ \cite{HHL16,hhl17,HLW16}
in which they attempted to show that \eqref{wrf} holds true when $f$ belongs to the distribution space
on spaces of homogeneous type (see Section \ref{ati} below). Indeed, it was established in
\cite[Theorem 3.4]{HLW16} the following \emph{wavelet reproducing formula}: for any $f$ in the space
$\GO{\bz,\gz}$ of test functions with $0<\bz'<\bz\le\eta$ and $0<\gz'<\gz\le\eta$ (see Definition
\ref{def:test} below),
\begin{equation*}
f=\sum_{k\in\zz}\sum_{\az\in\CG_k}\lf<f,\psi_\az^k\r>\psi_\az^k \qquad \textup{in}\; \GO{\bz',\gz'},
\end{equation*}
where $\eta\in(0,1)$ denotes the regularity exponent of the wavelets from \cite{AH13}.
This wavelet reproducing formula was used in \cite{HLW16} to obtain a Littlewood-Paley theory of Hardy spaces
on product spaces. It is the creative combination of the wavelet theory of \cite{AH13} with the existed
distributional theory on spaces of homogeneous type, which motivates us to consider analogous versions of
({\bf CCRF}) and ({\bf DCRF})  on spaces of homogeneous type in the sense of distributions.

Let us mention here that a Calder\'{o}n reproducing formula
for functions in the intersection of $L^2(X)$ and the Hardy space $H^p(X)$ was established in
\cite[Proposition 2.5]{HHL16}, and then was used to obtain atomic decompositions of Hardy spaces defined via
the Littlewood-Paley wavelet square functions. A deficit of \cite[Proposition 2.5]{HHL16} is that it does not
have exactly the analogous version of ({\bf DCRF}).
One might also mention here that the range of $p\in(\omega/(\omega+\eta),1]$ in \cite[Proposition 2.5]{HHL16}
seems to be \emph{problematic}. This is because the regularity exponent of the approximations of the identity in
\cite[p.\ 3438]{HHL16} is $\thz$ [indeed, $\thz$ is from the regularity of the quasi-metric $d$ in
\eqref{regular-d}], which leads to that the regularity exponent in \cite[(2.6)]{HHL16} should be
$\min\{\thz,\eta\}$ and hence the correct range of $p$ in \cite[Proposition 2.5]{HHL16} (indeed, all results of
\cite{HHL16}) seems to be $(\omega/[\omega+\min\{\thz,\eta\}],1]$. This range of $p$ is not optimal.

Via the aforementioned newly introduced $\exp$-ATI, we follow the Coifman idea in \cite{DJS} (see also
\cite{HMY08}) to establish the (in)homogeneous continuous/discrete Calder\'on reproducing formulae.
Let $\{Q_k\}_{k\in\zz}$ bs an  $\exp$-ATI  as in Definition \ref{def:eti}. Then, for any $N\in\nn$, we write
$$
I=\lf(\sum_{k=-\fz}^\fz Q_k\r)\lf(\sum_{l=-\fz}^\fz Q_l\r)=\sum_{|l|>N}\sum_{k=-\fz}^\fz Q_{k+l}Q_k
+\sum_{k=-\fz}^{\fz}Q_k^N Q_k=:R_N+T_N,
$$
where $I$ denotes the \emph{identity operator}.
When $N$ is sufficiently large, if we can prove that the operators norms of $R_N$ on both $L^2(X)$ and the
space $\GOO{\bz,\gz}$ of test functions (see Definition \ref{def:test} below) are all smaller than $1$, then
$T_N$ is invertible in both $L^2(X)$ and $\GOO{\bz,\gz}$, where $\bz,\ \gz\in(0,\eta)$. After setting
$\wz{Q}_k:=T_N^{-1}Q_k$ for any $k\in\zz$, we then have
\begin{equation}\label{eq:fcrf}
I=\sum_{k=-\fz}^\fz\wz{Q}_kQ_k,
\end{equation}
which is the homogeneous continuous Calder\'on reproducing formula. Moreover, \eqref{eq:fcrf}
holds true in the space of test functions, as well as its dual space, and also in $L^p(X)$ with any given
$p\in(1,\fz)$.

The difficulty to establish these Calder\'{o}n reproducing formulae lies in the treatment of $R_N$. This is
mainly because of the lack of the regularity of a quasi-metric.
For any $x_0\in X$ and $r\in(0,\infty)$, let $\GO{x_0, r, \bz,\gz}$ be the space of test functions (see
Definition \ref{def:test} below).
Recall that, in the setting of RD-spaces, the boundedness of $R_N$ on  $\GO{x_0, r, \bz,\gz}$ was ensured by
\cite[Theorem 2.18]{HMY08}. However,
the proof of \cite[Theorem 2.18]{HMY08} needs the existence of the
$1$-ATI with bounded support (see \cite[Definition 2.3 and Theorem 2.6]{HMY08}).
For a space of homogeneous type, the existence of the $\eta$-ATI with bounded support
is  hard to prove due to the lack of the regularity of the quasi-metric $d$. Indeed, it is still unknown
whether or not a corresponding theorem similar to \cite[Theorem 2.18]{HMY08} still holds true on a space of
homogeneous type. To overcome this essential difficulty, we observe that, for any $f\in\GO{x_0, r, \bz,\gz}$
and $x\in X$,
$$R_Nf(x)=\lim_{M\to\fz} R_{N,M}f(x)$$
holds true both in $L^2(X)$ and locally uniformly (see Lemma \ref{lem:ccrf3} below),
where each $R_{N,M}$  is associated to an integral kernel, still denoted by $R_{N,M}$, in the following way
$$
R_{N,M}g(x)=\int_X R_{N,M}(x,y)g(y)\,d\mu(y), \qquad \forall\, g\in\bigcup_{p\in[1,\fz]} L^p(X),\ \forall\,x\in X,
$$
with $R_{N,M}$ being a standard Calder\'on-Zygmund kernel and satisfying the ``second difference regularity
condition''. Thus, the boundedness of $R_N$ on $\GO{x_0, r, \bz,\gz}$ can then be reduced to the
corresponding boundedness of operators like $R_{N,M}$, while the latter is obtained in Theorem \ref{thm:Kbdd}
below. This is the key creative point used in this article to obtain the desired homogeneous continuous
Calder\'on reproducing formulae.

The above discussion mainly works for the proof of homogeneous continuous Calder\'{o}n reproducing formula.
For the homogeneous discrete one, we formally apply the mean value
theorem to \eqref{eq:fcrf}. For the inhomogeneous ones, the
difficulties we meet are similar to those for homogeneous ones.
For their detailed proofs, see Sections \ref{hdrf} and \ref{idrf} below, respectively.

Compared with the Calder\'{o}n reproducing formulae on RD-spaces (or Ahlfors-$n$ regular spaces) in
\cite{HMY08}, which holds true in  the space $\mathring\CG_0^\ez(\bz,\gz)$ of test functions  with $\ez$
\emph{strictly} smaller than the H\"older regularity exponent $\eta$
of the approximations of the identity, here
all Calder\'{o}n reproducing formulae obtained in this article
hold true in $\mathring\CG_0^\eta(\bz,\gz)$, that is, the regularity exponent of the space of test functions
can attain the corresponding one of the approximations of the identity.

Following \cite[pp.\ 587--588]{CW77}, throughout this article, we always make the following assumptions:
for any point $x\in X$, assume that the balls $\{B(x,r)\}_{r\in(0,\infty)}$
form a basis of open neighborhoods of $x$; assume that $\mu$ is Borel regular, which means that open sets are measurable and every set $A\subset X$
is contained in a Borel set $E$ satisfying that $\mu(A)=\mu(E)$; we also
assume that $\mu(B(x, r))\in(0,\fz)$ for any $x\in X$ and $r\in(0,\infty)$.
For the presentation concision,
we always assume that  $(X,d,\mu)$ is non-atomic [namely, $\mu(\{x\})=0$ for any
$x\in X$] and $\diam (X)=\fz$. It is known that $\diam (X)=\fz$ implies that
$\mu(X)=\fz$ (see, for example, \cite[Lemma 8.1]{AH13}).

The organization of this article is as follows.

Section \ref{pre} deals with approximations of the identity on $(X, d, \mu)$.
In Section \ref{ati}, we recall the notions of both the approximations of the identity with polynomial decay
(for short, ATI) and the space of test functions from \cite{HMY08}, and then state some often
used related estimates. In Section \ref{eti}, motivated by the wavelet theory established in \cite{AH13}, we
introduce a new kind of approximations of the identity with exponential decay (for short, $\exp$-ATI), and then
establish several equivalent characterizations of $\exp$-ATIs and discuss 
the relationship between $\exp$-ATIs and ATIs.

Section \ref{BDD} concerns the boundedness of Calder\'{o}n-Zygmund-type operators on spaces of test functions.
In Section \ref{s3.1}, we show that Calder\'{o}n-Zygmund operators whose kernels satisfying the second
difference regularity condition and some other size and regularity conditions
are bounded on spaces of test functions with cancellation, whose proof is long and  separated into two
subsections (see Sections \ref{size} and \ref{reg}). Section \ref{s3.4} deals with the boundedness of
Calder\'{o}n-Zygmund-type  operators on spaces of test functions without cancellation.
Compared with \cite[Theorem 2.18]{HMY08}, the condition used here is a little bit stronger,
but the proof is easier and enough for us to  build the Calder\'{o}n reproducing formulae in Sections
\ref{hcrf}, \ref{hdrf} and \ref{irf} below.

In Section \ref{hcrf}, we start our discussion by dividing the identity into a main operator $T_N$ and a
remainder $R_N$. In Section \ref{com}, we prove that compositions of two $\exp$-ATIs have properties similar
to those of an $\exp$-ATI.
With this and the conclusions in Section \ref{BDD}, we prove, in Section \ref{RN}, that the operator norms of
$R_N$ on both $L^2(X)$ and spaces of test functions can be small enough if
$N$ is sufficiently large. This ensures the existence of $T_N^{-1}$ which leads to the
homogeneous continuous Calder\'{o}n reproducing formulae in Section \ref{pr}.

In Section \ref{hdrf}, by a method similar to that used in \cite[Section 4]{HMY08}, we split the $k$-level
dyadic cubes $Q_\az^k$ into a sum of dyadic subcubes in level $k+j$.
The remainder for the discrete case contains $R_N$ and another part $G_N$ [see \eqref{eq:defGR} below].
In Section \ref{sec5.1}, we treat the boundedness of $G_N$ on both $L^2(X)$ and spaces of test functions, and
further establish homogeneous discrete Calder\'on reproducing formulae in Section \ref{pr2}.

In Section \ref{irf}, we obtain inhomogeneous continuous and discrete Calder\'on reproducing formulae, whose
proofs are similar to those of homogeneous ones presented in Sections \ref{hcrf} and \ref{hdrf}.

Let us make some conventions on notation. Throughout this article, we use $A_0$ to denote the coefficient
appearing in the \emph{quasi-triangular inequality} of $d$, the parameter $\omega$ means the \emph{upper
dimension constant} in \eqref{eq:doub}, and $\eta$ is defined to be the
\emph{smoothness index of wavelets} (see Theorem \ref{thm:wave} below). Denote by $\dz$ a
\emph{small positive number}, for example, $\dz\le (2A_0)^{-10}$, which comes from constructing the dyadic
cubes on $X$ (see Theorem \ref{thm:dys} below).
For any $p\in[1,\fz]$, we use $p'$ to denote its \emph{conjugate index}, namely, $1/p+1/p'=1$.
The \emph{symbol $C$} denotes a positive constant which is independent of the main parameters involved, but
may vary from line to line. We use $C_{(\az,\bz,\ldots)}$ to denote a positive constant depending on the
indicated parameters $\az$, $\bz,\ldots$. The \emph{symbol $A \ls B$} means that
$A \le CB$ for some positive constant $C$, while  $A \sim B$ is used as an abbreviation of $A \ls B \ls A$. We
also use $A\ls_{\az,\bz,\ldots}B$ to indicate that here the implicit positive constant depends on $\az$,
$\bz$, \ldots and, similarly, $A\sim_{\az,\bz,\ldots}B$.
For any (quasi)-Banach spaces $\mathcal X,\,\mathcal Y$  and any operator $T$, we use
$\|T\|_{\mathcal X\to\mathcal Y}$ to denote the \emph{operator norm} of $T$ from $\mathcal X$ to $\mathcal Y$.
For any $j,\ k\in\rr$, let $j\wedge k:=\min\{j,k\}$.

\section{Approximations of the identity}\label{pre}

This section concerns approximations of the identity on $(X, d, \mu)$.
In Section \ref{ati}, we recall the notions of both the approximations of the identity with polynomial decay
and the space of test functions from \cite{HMY08}, and then state some often used related estimates.
In Section \ref{eti}, we recall the dyadic systems established in \cite{HK12} and the wavelet systems built in
\cite{AH13}, which further motivate us to introduce a new kind of approximations of the identity with
exponential decay. Equivalence definitions and properties of $\exp$-ATIs are discussed in Section \ref{eti}.

\subsection{Approximations of the identity with polynomial decay}\label{ati}

For any $x,\ y\in X$ and $r\in(0,\infty)$, we adopt the notation
$$V(x,y):=\mu(B(x,d(x,y)))
\qquad \textup{and}\qquad V_r(x):=\mu(B(x,r)).
$$
We recall the following notion of approximations of the identity constructed in \cite{HMY08}.

\begin{definition}\label{def:ati}
Let $\bz\in(0,1]$ and $\gz\in(0,\fz)$. A sequence $\{P_k\}_{k\in\zz}$ of bounded linear integral operators
on $L^2(X)$ is called an \emph{approximation of the identity of order $(\bz,\gz)$} [for short,
$(\bz,\gz)$-ATI] if there exists a positive constant $C$ such that, for any $k\in\zz$, the kernel of operator
$P_k$, a function on $X\times X$, which is also denoted by $P_k$, satisfying
\begin{enumerate}
\item (the \emph{size condition}) for any $x,\ y\in X$,
\begin{equation}\label{eq:atisize}
|P_k(x,y)|\le C\frac 1{V_{\dz^k}(x)+V(x,y)}\lf[\frac{\dz^k}{\dz^k+d(x,y)}\r]^\gz;
\end{equation}
\item (the \emph{regularity condition}) if $d(x,x')\le (2A_0)^{-1}[\dz^k+d(x,y)]$, then
\begin{align}\label{eq:atisregx}
&|P_k(x,y)-P_k(x',y)|+|P_k(y,x)-P_k(y, x')|\\
&\quad\le C\lf[\frac{d(x,x')}{\dz^k+d(x,y)}\r]^\bz\frac 1{V_{\dz^k}(x)+V(x,y)}
\lf[\frac{\dz^k}{\dz^k+d(x,y)}\r]^\gz;\notag
\end{align}
\item (the \emph{second difference regularity condition}) if $d(x,x')\le (2A_0)^{-2}[\dz^k+d(x,y)]$ and
$d(y,y')\le (2A_0)^{-2}[\dz^k+d(x,y)]$, then
\begin{align}\label{eq:atidreg}
&|[P_k(x,y)-P_k(x',y)]-[P_k(x,y')-P_k(x',y')]|\\
&\quad \le C\lf[\frac{d(x,x')}{\dz^k+d(x,y)}\r]^\bz
\lf[\frac{d(y,y')}{\dz^k+d(x,y)}\r]^\bz\frac 1{V_{\dz^k}(x)+V(x,y)}\lf[\frac{\dz^k}{\dz^k+d(x,y)}\r]^\gz;\noz
\end{align}
\item for any $x,\ y\in X$,
\begin{equation*}
\int_X P_k(x,y')\,d\mu(y')=1=\int_X P_k(x',y)\,d\mu(x').
\end{equation*}
\end{enumerate}
\end{definition}

Let $L_\loc^1(X)$ be the space of all locally integrable functions on $X$.
Denote by $\CM$ the \emph{Hardy-Littlewood maximal operator} defined by setting, for any $f\in L_\loc^1(X)$ and
$x\in X$,
\begin{equation}\label{eq:defmax}
\CM(f)(x):=\sup_{r\in(0,\fz)}\frac 1{\mu(B(x,r))}\int_{B(x,r)} |f(y)|\,d\mu(y).
\end{equation}
For any $p\in(0,\fz]$, we use the \emph{symbol $L^p(X)$} to denote the set of all Lebesgue measurable functions
$f$ such that
$$
\|f\|_{L^p(X)}:=\lf[\int_X |f(x)|^p\,d\mu(x)\r]^{\frac 1p}<\fz
$$
with the usual modification made when $p=\fz$.

Now we list some basic properties of $(\bz,\gz)$-ATIs. For their proofs, see \cite[Proposition 2.7]{HMY08}.
\begin{proposition}\label{prop:basic}
Let $\{P_k\}_{k\in\zz}$ be an $(\bz,\gz)$-{\rm ATI}, with $\bz\in(0,1]$ and $\gz\in(0,\fz)$, and $p\in[1,\fz)$.
Then there exists a positive constant $C$ such that
\begin{enumerate}
\item for any $x,\ y\in X$,
$\int_X|P_k(x,y')|\,d\mu(y')\le C $ and $\int_X|P_k(x',y)|\,d\mu(x')\le C$;
\item for any $f\in L^1_\loc(X)$ and $x\in X$, $|P_kf(x)|\le C\CM(f)(x)$;
\item for any $f\in L^p(X)$, $\|P_kf\|_{L^p(X)}\le C\|f\|_{L^p(X)}$, which also holds true when $p=\fz$;
\item for any $f\in L^p(X)$, $\|f-P_kf\|_{L^p(X)}\to 0$ as $k\to\fz$.
\end{enumerate}
\end{proposition}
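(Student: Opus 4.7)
The plan is to lean on the pointwise size condition \eqref{eq:atisize} combined with an annular decomposition adapted to the doubling quasi-metric setting; the cancellation built into condition (iv) of Definition \ref{def:ati} enters only for the last assertion.

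For part (i), I would split $X=B(x,\dz^k)\cup\bigcup_{j\ge 0}A_j$, where $A_j:=B(x,\dz^k 2^{j+1})\setminus B(x,\dz^k 2^j)$. On the central ball, \eqref{eq:atisize} yields $|P_k(x,y')|\ls 1/V_{\dz^k}(x)$, giving a uniform contribution. On $A_j$ the monotonicity of $V(x,\cdot)$ provides $V_{\dz^k}(x)+V(x,y')\ge V_{\dz^k 2^j}(x)$, the tail factor is $\ls 2^{-j\gz}$, and doubling gives $\mu(A_j)\ls V_{\dz^k 2^j}(x)$ up to a constant depending only on $\omega$; the resulting geometric series converges since $\gz>0$. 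The column bound follows from the same decomposition centered at $y$, where the quasi-triangle inequality plus doubling are used to pass between $V_{\dz^k}(x')$ and $V_{\dz^k}(y)$.

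For (ii) the identical decomposition applied to $\int_X|P_k(x,y)||f(y)|\,d\mu(y)$ contributes, on $A_j$, at most $C\,2^{-j\gz}[V_{\dz^k 2^j}(x)]^{-1}\int_{B(x,\dz^k 2^{j+1})}|f|\,d\mu\ls 2^{-j\gz}\CM(f)(x)$, and summation in $j$ yields $|P_k f(x)|\ls\CM(f)(x)$. Part (iii) for $p\in(1,\fz)$ then follows from (ii) and the $L^p$-boundedness of $\CM$; the case $p=1$ is obtained by Fubini combined with the column bound in (i); and $p=\fz$ is immediate from the row bound. For (iv), the cancellation lets me write $f(x)-P_k f(x)=\int_X P_k(x,y)[f(x)-f(y)]\,d\mu(y)$, so for $f$ continuous of compact support, splitting at a small radius $\ez$ and using uniform continuity of $f$ on the near part together with the tail estimate from (i) on the far part produce pointwise vanishing as $k\to\fz$; dominated convergence, using $|f|+C\CM f$ restricted to a fixed large ball as a majorant, upgrades this to $L^p$-convergence. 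A standard $3\ez$ density argument, anchored by the uniform bound in (iii), then extends the conclusion to all $f\in L^p(X)$.

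The only real subtlety lies in (iv), where one relies on density of continuous compactly supported functions in $L^p(X)$ for $p\in[1,\fz)$; this is available under the standing hypotheses that balls form a neighborhood basis of the topology and $\mu$ is Borel regular and doubling. The remaining parts reduce to straightforward geometric-series bookkeeping in which the doubling exponent $\omega$ and the tail exponent $\gz$ both appear, and no difficulty arises from $A_0>1$.
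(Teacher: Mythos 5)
The paper itself does not prove Proposition~\ref{prop:basic}; it refers the reader to \cite[Proposition~2.7]{HMY08}. Your annular decomposition is the standard route (and is what \cite{HMY08} does), and parts~(i)--(iii) are carried out correctly: the geometry on the annuli $A_j$, the use of doubling to get $\mu(A_j)\ls V_{\dz^k 2^j}(x)$, the deduction $|P_kf(x)|\ls\CM(f)(x)$, the Schur/Fubini argument for $p=1$, and the row bound for $p=\fz$ are all sound.

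Part~(iv), however, has a genuine gap in the dominated convergence step. Your pointwise argument (split at radius $\ez$, use uniform continuity near and the tail bound $\int_{d(x,y)\ge\ez}|P_k(x,y)|\,d\mu(y)\ls[\dz^k/(\dz^k+\ez)]^\gz\to0$ far) does give $P_kf\to f$ uniformly for continuous compactly supported $f$. But since $\mu(X)=\fz$, uniform convergence is not enough, and the majorant you propose does not close the gap: $(|f|+C\CM f)\chi_B$ for a fixed ball $B$ dominates nothing outside $B$, while the unrestricted $\CM f$ is \emph{not} in $L^1(X)$ even for $f\in C_c(X)$ (on $\rn$, $\CM f\sim|x|^{-n}$ at infinity). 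So for $p=1$ your argument simply does not yield $\|f-P_kf\|_{L^1}\to0$, and for $p\in(1,\fz)$ it is incomplete as written because the tail $\int_{X\setminus B}|P_kf|^p$ has not been controlled.

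The fix is to exploit the $\gz$-decay in \eqref{eq:atisize} rather than $\CM f$ off the support of $f$. If $\supp f\subset B(x_0,R)$ and $R':=2A_0(R+1)$, then for $d(x,x_0)\ge R'$ and $\dz^k\le1$ one has $d(x,y)\gtrsim d(x,x_0)$ for all $y\in\supp f$, hence
$$
|P_kf(x)|\ls\frac{1}{V(x_0,x)}\lf[\frac{\dz^k}{d(x_0,x)}\r]^\gz\|f\|_{L^1(X)},
$$
which tends to $0$ pointwise as $k\to\fz$ and is dominated uniformly in $k$ by $V(x_0,x)^{-1}d(x_0,x)^{-\gz}\|f\|_{L^1(X)}$; this majorant lies in $L^p(\{d(\cdot,x_0)\ge R'\})$ for every $p\in[1,\fz)$ by Lemma~\ref{lem-add}(iii) and doubling. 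Combined with your bounded-majorant argument on the finite-measure ball $B(x_0,R')$, this yields $\|f-P_kf\|_{L^p(X)}\to0$ for $f\in C_c(X)$, after which the $3\ez$ density argument with (iii) finishes the proof as you intended.
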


Let us recall the notions of both the space of test functions and the space of distributions, whose
following versions were originally given in \cite{HMY08} (see also \cite{HMY06}).

\begin{definition}[test functions]\label{def:test}
Let $x_1\in X$, $r\in(0,\fz)$, $\bz\in(0,1]$ and $\gz\in(0,\fz)$. A function $f$ on $X$ is called a
\emph{test function of type $(x_1,r,\bz,\gz)$}, denoted by $f\in\CG(x_1,r,\bz,\gz)$, if there exists a positive
constant $C$ such that
\begin{enumerate}
\item (the \emph{size condition}) for any $x\in X$,
\begin{equation}\label{eq:size}
|f(x)|\le C\frac{1}{V_r(x_1)+V(x_1,x)}\lf[\frac r{r+d(x_1,x)}\r]^\gz;
\end{equation}

\item (the \emph{regularity condition}) for any $x,\ y\in X$ satisfying $d(x,y)\le (2A_0)^{-1}[r+d(x_1,x)]$,
\begin{equation}\label{eq:reg}
|f(x)-f(y)|\le C\lf[\frac{d(x,y)}{r+d(x_1,x)}\r]^\bz
\frac{1}{V_r(x_1)+V(x_1,x)}\lf[\frac r{r+d(x_1,x)}\r]^\gz.
\end{equation}
\end{enumerate}
For any $f\in\CG(x_1,r,\bz,\gz)$, define the norm
$$
\|f\|_{\CG(x_1,r,\bz,\gz)}:=\inf\{C\in(0,\fz):\ \text{\eqref{eq:size} and \eqref{eq:reg} hold true}\}.
$$
Define
$$
\mathring{\CG}(x_1,r,\bz,\gz):=\lf\{f\in\CG(x_1,r,\bz,\gz):\ \int_X f(x)\,d\mu(x)=0\r\}
$$
equipped with the norm $\|\cdot\|_{\mathring{\CG}(x_1,r,\bz,\gz)}:=\|\cdot\|_{\CG(x_1,r,\bz,\gz)}$. Fixed
$x_0\in X$,
we denote $\CG(x_0,1,\bz,\gz)$ [resp., $\mathring{\CG}(x_0,1,\bz,\gz)$], simply, by $\CG(\bz,\gz)$ [resp.,
$\mathring{\CG}(\bz,\gz)$].
\end{definition}

Fix $x_0\in X$. For any $x\in X$ and $r\in(0,\fz)$, we find that $\CG(x,r,\bz,\gz)=\CG(x_0,1,\bz,\gz)$
with equivalent norms, but
the equivalent positive constants depend on $x$ and $r$.

Fix $\epsilon\in(0,1]$ and $\bz,\ \gz\in(0,\epsilon]$. Let $\CG^\epsilon_0(\bz,\gz)$ [resp.,
$\mathring\CG^\epsilon_0(\bz,\gz)$] be the completion of the set $\CG(\epsilon,\epsilon)$ [resp.,
$\mathring\CG(\ez,\ez)$] in $\CG(\bz,\gz)$. Furthermore, if $f\in\CG^\epsilon_0(\bz,\gz)$ [resp.,
$f\in\mathring\CG^\epsilon_0(\bz,\gz)$], we then let $\|f\|_{\CG^\epsilon_0(\bz,\gz)}:=\|f\|_{\CG(\bz,\gz)}$
[resp., $\|f\|_{\mathring\CG^\epsilon_0(\bz,\gz)}:=\|f\|_{\CG(\bz,\gz)}$].
The \emph{dual space} $(\CG^\epsilon_0(\bz,\gz))'$ [resp., $(\mathring{\CG}^\epsilon_0(\bz,\gz))'$] is defined
to be the set of all continuous linear functionals from $\CG^\epsilon_0(\bz,\gz)$ [resp.,
$(\mathring{\CG}^\epsilon_0(\bz,\gz))'$] to $\cc$, equipped with the weak-$*$ topology. The spaces
$(\CG^\epsilon_0(\bz,\gz))'$ and  $(\mathring{\CG}^\epsilon_0(\bz,\gz))'$ are called the \emph{spaces of
distributions}.

We conclude this subsection  with some estimates from \cite[Lemma~2.1]{HMY08}, which are proved by using
\eqref{eq:doub}.

\begin{lemma}\label{lem-add}
Let $\bz,\ \gz\in(0,\infty)$.
\begin{enumerate}
\item For any $x,\ y\in X$ and $r\in(0,\fz)$, $V(x,y)\sim  V(y,x)$ and
$$
V_r(x)+V_r(y)+V(x,y)\sim  V_r(x)+V(x,y)\sim  V_r(y)+V(x,y)\sim  \mu(B(x, r+d(x,y))),
$$
where the equivalent positive constants are independent of $x$, $y$ and $r$.
\item There exists a positive constant $C$ such that, for any $x_1\in X$ and $r\in(0,\infty)$,
$$\int_X\frac{1}{V_r(x_1)+V(x_1,y)}\lf[\frac r{r+d(x_1,y)}\r]^\gz\,d\mu(y)\le C.$$
\item There exists a positive constant $C$ such that, for any $x\in X$ and $R\in(0,\infty)$,
$$\int_{d(x,y)\le R}\frac 1{V(x,y)}\lf[\frac{d(x,y)}{R}\r]^\bz\,d\mu(y)\le C
\quad \textup{and}\quad
\int_{d(x, y)\ge R}\frac{1}{V(x,y)}\lf[\frac R{d(x,y)}\r]^\bz\,d\mu(y)\le C.$$
\item There exists a positive constant $C$ such that, for any $x_1\in X$ and $r,\ R\in(0,\infty)$,
$$
\int_{d(x, x_1)\ge R}\frac{1}{V_r(x_1)+V(x_1,x)}\lf[\frac r{r+d(x_1,x)}\r]^\gz\,d\mu(x)
\le C\lf(\frac{r}{r+R}\r)^\gz.
$$
\end{enumerate}
\end{lemma}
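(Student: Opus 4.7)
The proof strategy for all four claims rests on the doubling condition \eqref{eq:doub} together with the quasi-triangle inequality. My plan is to first establish part (i), which provides the volume comparisons that power parts (ii)--(iv) via annular decompositions.

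For part (i), the relation $V(x,y) \sim V(y,x)$ should follow by noting that, for any $z \in B(x, d(x,y))$, the quasi-triangle inequality gives $d(y,z) \le A_0[d(y,x) + d(x,z)] < 2 A_0 d(x,y)$, so $B(x, d(x,y)) \subset B(y, 2A_0 d(x,y))$; the doubling condition then yields $V(x,y) \le \mu(B(y, 2A_0 d(x,y))) \ls V(y,x)$, with the reverse direction symmetric. For the chain of equivalences, the trivial bound $V_r(x),\,V(x,y) \le \mu(B(x, r+d(x,y)))$ handles one direction, while the inclusion $B(x, r+d(x,y)) \subset B(x, 2\max\{r, d(x,y)\})$ combined with \eqref{eq:doub} gives $\mu(B(x, r+d(x,y))) \ls V_r(x) + V(x,y)$. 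The $y$-centered version then follows by combining $V(x,y) \sim V(y,x)$ with $V_r(x) \le \mu(B(y, A_0 r + A_0 d(x,y))) \ls V_r(y) + V(x,y)$ via doubling once more.

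For parts (ii)--(iv), the uniform strategy is a dyadic annular decomposition around the reference point, followed by geometric-series summation. For (ii), I split $X = B(x_1,r) \cup \bigcup_{k=1}^\infty A_k$ with $A_k := B(x_1, 2^k r) \setminus B(x_1, 2^{k-1} r)$. On $B(x_1,r)$ the integrand is $\ls 1/V_r(x_1)$, contributing a bounded constant; on $A_k$ one has $d(x_1, y) \sim 2^k r$, so by (i) the denominator satisfies $V_r(x_1) + V(x_1,y) \sim \mu(B(x_1, 2^k r))$ while the weight is $\sim 2^{-k\gz}$, whence the integral over $A_k$ is $\ls 2^{-k\gz}$, and summation yields a finite bound. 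For (iii), analogous decompositions into annuli $\{2^{-k-1}R \le d(x,y) < 2^{-k}R\}_{k\ge 0}$ and $\{2^k R \le d(x,y) < 2^{k+1}R\}_{k\ge 0}$, combined with $1/V(x,y) \ls 1/\mu(B(x, 2^{\mp k}R))$ from (i), reduce each estimate to $\sum_{k\ge 0} 2^{-k\bz}$. Part (iv) is handled exactly as in (ii), but the sum is restricted to $k \ge 0$ with $2^k R \le d(x_1,x) < 2^{k+1}R$, and a short case split on whether $2^k R$ dominates $r$ or not produces the stated $(r/(r+R))^\gz$ factor.

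The arguments are routine manipulations once the doubling inequality is in hand; there is no conceptual obstacle. The only point requiring mild care is to track the quasi-triangle constant $A_0$ through the various ball inclusions, which affects only implicit constants through repeated applications of \eqref{eq:doub}, and to carry out the case split in (iv) cleanly so that the annular sums are compared against the correct scale $r+R$ rather than $r$ or $R$ separately.
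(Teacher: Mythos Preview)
Your proposal is correct and follows the standard route the paper alludes to: the paper does not prove this lemma but simply cites \cite[Lemma~2.1]{HMY08} and remarks that the estimates are ``proved by using \eqref{eq:doub}'', which is precisely your doubling-plus-annular-decomposition argument. The details you sketch for (i)--(iv) are the expected ones and go through without difficulty.
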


\subsection{Approximations of the identity with exponential decay}\label{eti}

The main aim of this section is to introduce the approximations of the identity with exponential decay.
Recall that
Hyt\"{o}nen and Kariema \cite{HK12} established a system of dyadic cubes, which is re-stated in the following
theorem.

\begin{theorem}[{\cite[Theorem 2.2]{HK12}}]\label{thm:dys}
Fix constants $0<c_0\le C_0<\fz$ and $\dz\in(0,1)$ such that $12A_0^3C_0\dz\le c_0$. Assume that
a set of points, $\{z_\az^k:\ k\in\zz,\ \az\in\CA_k\}\subset X$ with $\CA_k$ for any $k\in\zz$ being a set
of indices, has the following properties: for any $k\in\zz$,
\begin{enumerate}
\item[\rm (i)] $d(z_\az^k,z_\bz^k)\ge c_0\dz^k$ if $\az\neq\bz$;
\item[\rm (ii)] $\min_{\az\in\CA_k} d(x,z_\az^k)\le C_0\dz^k$ for any $x\in X$.
\end{enumerate}
Then there exists a family of sets, $\{Q_\az^k:\  k\in\zz,\ \az\in\CA_k\}$, satisfying
\begin{enumerate}
\item[\rm (iii)] for any $k\in\zz$, $\bigcup_{\az\in\CA_k} Q_\az^k=X$ and $\{ Q_\az^k:\;\; {\az\in\CA_k}\}$ is disjoint;
\item[\rm (iv)] if $k,\ l\in\zz$ and $l\ge k$, then either $Q_\bz^l\subset Q_\az^k$ or
$Q_\bz^l\cap Q_\az^k=\emptyset$;
\item[\rm (v)]  for any $k\in\zz$ and $\az\in\CA_k$, $B(z_\az^k,c_\natural\dz^k)\subset Q_\az^k\subset B(z_\az^k,C^\natural\dz^k)$,
where $c_\natural:=(3A_0^2)^{-1}c_0$, $C^\natural:=2A_0C_0$ and $z_\az^k$ is called ``the center'' of
$Q_\az^k$.
\end{enumerate}
\end{theorem}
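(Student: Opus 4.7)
The plan is to build the cubes $Q_\az^k$ from a parent--child tree on the centers $\{z_\az^k\}$ and then define each cube as a Borel set squeezed between an interior open region and a closed envelope. Concretely, I would first fix a well-ordering on each $\CA_k$ and, for each $k \in \zz$ and each $\az \in \CA_{k+1}$, designate as parent the index $p(\az) \in \CA_k$ that minimizes $d(z_\az^{k+1}, z_\bz^k)$ over $\bz \in \CA_k$, breaking ties by the well-ordering. Property (ii) at scale $k$ guarantees existence with $d(z_\az^{k+1}, z_{p(\az)}^k) \le C_0 \dz^k$, while (i) at scale $k+1$ keeps distinct children of a single parent $c_0 \dz^{k+1}$-separated. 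Iterating $p$ produces, for each $l \ge k$ and $\bz \in \CA_l$, a unique ancestor $p^{l-k}(\bz) \in \CA_k$, and this combinatorial tree is the backbone of the construction.

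Given the tree, I would set, for each $k \in \zz$ and $\az \in \CA_k$,
\[
\underline{Q}_\az^k := \bigcup_{l \ge k}\ \bigcup_{\bz \in \CA_l :\ p^{l-k}(\bz) = \az} B(z_\bz^l, c_\natural \dz^l),
\qquad
\overline{Q}_\az^k := \overline{\bigcup_{l \ge k}\ \bigcup_{\bz \in \CA_l :\ p^{l-k}(\bz) = \az} B(z_\bz^l, C^\natural \dz^l)}.
\]
A key geometric lemma, obtained by iterating the quasi-triangle inequality and summing the geometric series $\sum_{l \ge k} C_0 \dz^l \le C_0 \dz^k/(1-\dz)$, then shows both that all descendants of $z_\az^k$ remain well inside $B(z_\az^k, C^\natural \dz^k)$ and that the interior sets $\{\underline{Q}_\az^k\}_{\az \in \CA_k}$ are pairwise disjoint. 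The actual cube $Q_\az^k$ is then any Borel set with $\underline{Q}_\az^k \subset Q_\az^k \subset \overline{Q}_\az^k$, chosen so that $\{Q_\az^k\}_{\az \in \CA_k}$ partitions $X$; such a partition is achievable because the $\overline{Q}_\az^k$'s cover $X$ by the covering property (ii), and the unavoidable boundary overlap can be removed by assigning each contested point to the $\az$ of smallest index in the well-ordering.

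The verification of (iii)--(v) is then short. Property (iii) is immediate from the partition construction; (v) follows because $B(z_\az^k, c_\natural \dz^k) \subset \underline{Q}_\az^k$ (the $l = k$ term) and $\overline{Q}_\az^k \subset B(z_\az^k, C^\natural \dz^k)$ (the geometric lemma), so $Q_\az^k$ is trapped in between. The nested structure (iv) follows because the parent map sends descendants of $z_\bz^l$ to descendants of $z_\az^k$ whenever $p^{l-k}(\bz) = \az$, which forces $\underline{Q}_\bz^l \subset \underline{Q}_\az^k$ and $\overline{Q}_\bz^l \subset \overline{Q}_\az^k$; combined with the level-$k$ partition, this yields the dichotomy $Q_\bz^l \subset Q_\az^k$ or $Q_\bz^l \cap Q_\az^k = \emptyset$. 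The main obstacle will be the geometric separation step, where one must arrange, using only the quasi-triangle inequality with constant $A_0$, that the outer envelope of a non-ancestor never reaches the inner ball of a descendant at any scale; the hypothesis $12 A_0^3 C_0 \dz \le c_0$ is calibrated exactly to make the relevant quasi-metric estimates close up uniformly across all scales $l \ge k$.
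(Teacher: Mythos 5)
The paper does not prove this theorem itself; it cites it from Hyt\"onen and Kairema \cite{HK12}, so the comparison has to be against the construction there. Your high-level plan matches that construction: build a parent--child tree on the centers using (ii) to select parents and (i) to keep siblings separated, form inner and outer envelopes as unions of descendant balls, bound the outer envelope by summing a geometric series in $A_0\dz$ (which converges because $12A_0^3C_0\dz\le c_0\le C_0$ forces $A_0\dz<1$), and then disjointify. That much is correct and is indeed the right skeleton.

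The genuine gap is in the disjointification and in the claim that (iv) then ``follows.'' You propose to choose, \emph{independently at each level $k$}, a Borel partition with $\underline{Q}_\az^k\subset Q_\az^k\subset\overline{Q}_\az^k$ by sending each contested boundary point to the $\az$ of smallest index. This does not imply the dyadic nesting. Having $\underline{Q}_\bz^l\subset\underline{Q}_\az^k$ and $\overline{Q}_\bz^l\subset\overline{Q}_\az^k$ only traps $Q_\bz^l$ in the closed envelope of $\az$; it does not prevent a boundary point $x\in Q_\bz^l$ (assigned to $\bz$ by the level-$l$ rule) from simultaneously lying in $\overline{Q}_{\az'}^k$ with $\az'<\az$ in the well-ordering, in which case the level-$k$ rule sends $x$ to $Q_{\az'}^k$ even though $p^{l-k}(\bz)=\az\ne\az'$. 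Then $Q_\bz^l$ meets both $Q_\az^k$ and $Q_{\az'}^k$, violating the dichotomy in (iv). Nothing in the separation hypothesis (i) rules this out: the outer envelopes of non-sibling cubes do overlap near their common boundary, and a point can be a limit of descendant balls of two incomparable ancestors at once.

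What is missing is a \emph{single} coherent choice that respects the tree across all scales at once, which is precisely the technical heart of \cite{HK12} (and of Christ's earlier metric-space construction). One standard fix: for each $x\in X$ consider the set of infinite descending chains $(\bz_l)_{l\ge k}$, $\bz_l\in\CA_l$, with $p(\bz_{l+1})=\bz_l$ and $x\in\overline{Q}_{\bz_l}^l$ for all $l$; this set is nonempty by compactness of the tree of ancestors, and one selects the lexicographically least chain (with respect to the fixed well-orderings of the $\CA_l$). Declaring $x\in Q_\az^k$ iff $\az=\bz_k$ for the chosen chain makes the assignment automatically consistent under the parent map, from which (iii) and (iv) follow, and (v) is inherited from the envelope bounds. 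Your argument would be complete if the ``smallest index at each level'' rule were replaced by this (or an equivalent) scale-coherent selection.
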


Throughout this article, we keep the following notation from Theorem \ref{thm:dys}. For any $k\in\zz$, let
$$
\CX^k:=\{z_\az^k\}_{\az\in\CA_k},\qquad \CG_k:=\CA_{k+1}\setminus\CA_k\qquad \textup{and}\qquad
\CY^k:=\{z_\az^{k+1}\}_{\az\in\CG_k}=:\{y_\az^{k}\}_{\az\in\CG_k}.
$$

Recall that Auscher and Hyt\"{o}nen \cite{AH13} constructed the points $\{z_\az^k:\ k\in\zz,\ \az\in\CA_k\}$
satisfying Theorem \ref{thm:dys} and further built a system of random dyadic cubes having the properties (iii),
(iv) and (v) of Theorem \ref{thm:dys}.
With those random dyadic cubes, it was constructed in \cite{AH13} the spline functions and then
the wavelet functions on $(X, d,\mu)$.

\begin{theorem}[{\cite[Theorem 7.1 and Corollary 10.4]{AH13}}]\label{thm:wave}
There exist constants $a\in(0,1]$, $\eta\in(0,1)$, $C,\ \nu\in(0,\fz)$ and wavelet functions
$\{\psi_\az^k:\ k\in\zz,\ \az\in\CG_k\}$ satisfying that, for any $k\in\zz$ and $\az\in\CG_k$,
\begin{enumerate}
\item (the decay condition) for any $x\in X$,
$$
\lf|\psi_\az^k(x)\r|\le
\frac C{\sqrt{V_{\dz^k}(y_\az^k)}}\exp\lf\{-\nu\lf[\frac{d(x,y_\az^k)}{\dz^k}\r]^a\r\};
$$
\item (the H\"{o}lder-regularity condition) if $d(x,x')\le\dz^k$, then
$$
\lf|\psi_\az^k(x)-\psi_\az^k(x')\r|\le\frac C{\sqrt{V_{\dz^k}(y_\az^k)}}\lf[\frac{d(x,x')}{\dz^k}\r]^\eta
\exp\lf\{-\nu\lf[\frac{d(x,y_\az^k)}{\dz^k}\r]^a\r\};
$$
\item (the cancellation condition)
$$
\int_X \psi_\az^k(x)\,d\mu(x)=0.
$$\end{enumerate}
Moreover, the wavelets $\{\psi_\az^k\}_{k,\ \az}$ form an orthonormal basis of $L^2(X)$ and an unconditional
basis of $L^p(X)$ with any given $p\in(1,\fz)\setminus\{2\}$.
\end{theorem}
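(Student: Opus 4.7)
The plan is to follow the three-stage construction of Auscher--Hyt\"onen: first build a system of random dyadic cubes refining Theorem~\ref{thm:dys}, then assemble spline-type averaging operators $E_k$ from them, and finally extract the wavelet basis by orthogonalizing the scale differences $D_k:=E_{k+1}-E_k$. The points $\{y_\az^k\}_{\az\in\CG_k}=\CY^k$ of Theorem~\ref{thm:dys} serve as the reference centers for the final basis.

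For the first stage I would invoke the Hyt\"onen--Kairema randomization on top of a maximal $c_0\dz^k$-separated net, obtaining a probability space of dyadic systems whose cubes satisfy (iii)--(v) of Theorem~\ref{thm:dys} and whose boundary layers have small expected measure in any prescribed annulus. This \emph{boundary thinness in probability} is the mechanism that eventually upgrades the polynomial decay of classical ATIs to the sub-exponential decay in item (i). For the second stage I would define $E_k$ as the expectation, over the random family, of a conditional expectation at scale $k$, smoothed by a tent function adapted to the quasi-metric $d$. Routine bookkeeping shows that $E_k$ is a positive self-adjoint contraction on $L^2(X)$ with $\int_X E_k(x,y)\,d\mu(y)=1$; the randomized averaging then yields H\"older regularity of some exponent $\eta\in(0,1)$ together with a factor $\exp\{-\nu[d(x,y)/\dz^k]^a\}$ via a Kolmogorov-type concentration argument, with constants $a$, $\eta$, $\nu$ depending only on $A_0$, $\dz$ and the doubling exponent $\omega$ in \eqref{eq:doub}.

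For the third stage, $D_k$ inherits both regularity and decay and in addition satisfies the cancellation $\int_X D_k(x,y)\,d\mu(y)=0=\int_X D_k(x,y)\,d\mu(x)$. The ranges of the $D_k$ are mutually orthogonal and $\sum_{k\in\zz}D_k=I$ on $L^2(X)$. Within the range of each $D_k$ I would choose a seed system localized at the points $\{y_\az^k\}_{\az\in\CG_k}$ and run a Gram--Schmidt procedure; the locality of the seeds propagates through the orthogonalization, delivering the orthonormal family $\{\psi_\az^k\}$ together with the pointwise bounds claimed in (i)--(iii).

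The main obstacle is the sub-exponential kernel estimate for $E_k$. A direct computation gives only polynomial decay; one has to track the probability that a chain of nested random cubes connecting $x$ to $y$ crosses many thin transition layers, and concentration of this quantity produces the exponent $a\in(0,1]$ and the constant $\nu$. Once this bound is secured, the orthonormal basis property is immediate from $\sum_k D_k=I$, and the unconditional basis conclusion for $L^p(X)$ with $p\in(1,\fz)\setminus\{2\}$ follows from standard Littlewood--Paley theory applied to $\sum_k\varepsilon_k D_k$ with random signs together with the Calder\'on--Zygmund bounds on the kernels $D_k(x,y)$.
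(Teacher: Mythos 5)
This theorem is not proved in the paper; it is quoted verbatim from Auscher--Hyt\"onen \cite{AH13} (their Theorem~7.1 and Corollary~10.4), so the only thing to assess is whether your reconstruction of their argument would work, and it has a structural gap. You define $E_k$ as an \emph{expectation}, over the random family of dyadic systems, of the scale-$k$ conditional expectation. An average of orthogonal projections is not a projection, and the ranges of the averaged operators are not nested, so the ranges of $D_k = E_{k+1} - E_k$ are \emph{not} mutually orthogonal; your Gram--Schmidt step would then not yield an orthonormal basis of $L^2(X)$, which is the heart of the theorem. What \cite{AH13} actually do is fix a single dyadic system (whose existence with the required small-boundary behaviour is supplied by the random construction) and build, by a deterministic fixed-point iteration, spline functions $s^k_\alpha$ spanning \emph{nested} subspaces $V_k\subset V_{k+1}\subset L^2(X)$; $E_k$ is then the orthogonal projection onto $V_k$, and the nesting is precisely what makes each $D_k=E_{k+1}-E_k$ an orthogonal projection and $\{D_k\}_{k\in\zz}$ a resolution of the identity.

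Relatedly, you attribute the decay $\exp\{-\nu[d(x,y)/\delta^k]^a\}$ to a Kolmogorov-type concentration estimate on chains of random cubes crossing thin boundary layers. In \cite{AH13} this bound is deterministic: it is read off from the geometric-series control of the fixed-point iteration that defines the splines, with the exponent $a\in(0,1]$ coming from balancing the per-step contraction against the quasi-metric distance traversed at each step (see their Theorem~3.1 and the decay estimates of Sections~8--10). The probabilistic aspect of the dyadic construction is used elsewhere --- small boundary layers, Lebesgue differentiation, convergence $E_k\to I$, and the passage to $L^p$ unconditionality --- not for the pointwise kernel decay. Your outline conflates these two roles, and in doing so discards the nesting mechanism that secures the orthogonality you need.
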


For any $x,\ y\in X$ and $k\in\zz$, if we let
\begin{equation*}
Q_k(x,y):=\sum_{\az\in\CG_k}\psi_\az^k(x)\psi_\az^k(y),
\end{equation*}
then $Q_k$ was proved to satisfy all conditions (i) through (v) of Definition \ref{def:eti} below; see
\cite[Lemma~10.1]{AH13} and \cite[Lemma~3.6 and (3.22)]{HLW16}.
This inspires us to introduce a new kind of \emph{approximations of the identity with exponential decay}.

\begin{definition}\label{def:eti}
A sequence $\{Q_k\}_{k\in\zz}$ of bounded linear integral operators on $L^2(X)$ is called an
\emph{approximation of the identity with exponential decay} (for short, $\exp$-ATI) if there exist constants
$C,\ \nu\in(0,\fz)$, $a\in(0,1]$ and $\eta\in(0,1)$ such that, for any $k\in\zz$, the kernel of operator $Q_k$,
a function on $X\times X$, which is still denoted by $Q_k$, satisfying
\begin{enumerate}
\item (the \emph{identity condition}) $\sum_{k=-\fz}^\fz Q_k=I$ in $L^2(X)$, where $I$ is the identity operator
on $L^2(X)$.
\item (the \emph{size condition}) for any $x,\ y\in X$,
\begin{align}\label{eq:etisize}
|Q_k(x,y)|&\le C\frac1{\sqrt{V_{\dz^k}(x)\,V_{\dz^k}(y)}}
\exp\lf\{-\nu\lf[\frac{d(x,y)}{\dz^k}\r]^a\r\}\\
&\quad\times\exp\lf\{-\nu\lf[\frac{\max\{d(x, \CY^k),\,d(y,\CY^k)\}}{\dz^k}\r]^a\r\};\noz
\end{align}
\item (the \emph{regularity condition}) if $d(x,x')\le\dz^k$, then
\begin{align}\label{eq:etiregx}
&|Q_k(x,y)-Q_k(x',y)|+|Q_k(y,x)-Q_k(y, x')|\\
&\quad\le C\lf[\frac{d(x,x')}{\dz^k}\r]^\eta
\frac1{\sqrt{V_{\dz^k}(x)\,V_{\dz^k}(y)}} \exp\lf\{-\nu\lf[\frac{d(x,y)}{\dz^k}\r]^a\r\}\noz\\
&\qquad\times\exp\lf\{-\nu\lf[\frac{\max\{d(x, \CY^k),\,d(y,\CY^k)\}}{\dz^k}\r]^a\r\};\noz
\end{align}
\item (the \emph{second difference regularity condition}) if $d(x,x')\le\dz^k$ and $d(y,y')\le\dz^k$, then
\begin{align}\label{eq:etidreg}
&|[Q_k(x,y)-Q_k(x',y)]-[Q_k(x,y')-Q_k(x',y')]|\\
&\quad \le C\lf[\frac{d(x,x')}{\dz^k}\r]^\eta\lf[\frac{d(y,y')}{\dz^k}\r]^\eta
\frac1{\sqrt{V_{\dz^k}(x)\,V_{\dz^k}(y)}} \exp\lf\{-\nu\lf[\frac{d(x,y)}{\dz^k}\r]^a\r\}\noz\\
&\qquad\times\exp\lf\{-\nu\lf[\frac{\max\{d(x, \CY^k),\,d(y,\CY^k)\}}{\dz^k}\r]^a\r\},\noz
\end{align}
\item (the \emph{cancellation condition}) for any $x,\ y\in X$,
\begin{equation*}
\int_X Q_k(x,y')\,d\mu(y')=0=\int_X Q_k(x',y)\,d\mu(x').
\end{equation*}
\end{enumerate}
\end{definition}
\begin{remark}
In Definition \ref{def:eti}, the existence of $\nu$, which is independent of $k$, $x$, $y$, $x'$ and $y'$, is
far more important than the quantity of $\nu$. Therefore, in the following discussion, we sometimes do not
distinguish the quantity of $\nu$ but focus on the existence of such a $\nu$, which is independent of $k$, $x$,
$y$, $x'$ and $y'$.
\end{remark}

\begin{remark}\label{rem:andef}
Notice that equivalent definitions of $\exp$-ATIs can be given in the following ways:
\begin{enumerate}
\item Due to \eqref{eq:doub},  the factor $\frac1{\sqrt{V_{\dz^k}(x)\,V_{\dz^k}(y)}}$ in
\eqref{eq:etisize}, \eqref{eq:etiregx} and \eqref{eq:etidreg}
can be replaced by $\frac1{V_{\dz^k}(x)}$ or $\frac1{V_{\dz^k}(y)}$, with the factor  $\exp\{-\nu[\frac{d(x,y)}{\dz^k}]^a\}$
replaced by $\exp\{-\nu'[\frac{d(x,y)}{\dz^k}]^a\}$ for some $\nu'\in(0,\nu)$.

\item Since $a\in(0,1]$, the factor $\max\{d(x, \CY^k),\,d(y,\CY^k)\}$ in \eqref{eq:etisize},
\eqref{eq:etiregx} and \eqref{eq:etidreg}
can be replaced by $d(x, \CY^k)$ or $d(y,\CY^k)$, again with the  factor  $\exp\{-\nu[\frac{d(x,y)}{\dz^k}]^a\}$
replaced by $\exp\{-\nu'[\frac{d(x,y)}{\dz^k}]^a\}$ for some $\nu'\in(0,\nu)$.

\item According to the proposition below, conditions (iii) and (iv) of Definition \ref{def:eti} can be equivalently replaced by

\begin{enumerate}
\item[\rm (iii)$'$]if $d(x,x')\le(2A_0)^{-1}[\dz^k+d(x,y)]$, then
\begin{align*}
&|Q_k(x,y)-Q_k(x',y)|+|Q_k(y,x)-Q_k(y, x')|\\
&\quad\le C\lf[\frac{d(x,x')}{\dz^k+d(x,y)}\r]^\eta
\frac1{\sqrt{V_{\dz^k}(x)\,V_{\dz^k}(y)}} \exp\lf\{-\nu'\lf[\frac{d(x,y)}{\dz^k}\r]^a\r\}\noz\\
&\qquad\times\exp\lf\{-\nu'\lf[\frac{\max\{d(x, \CY^k),\,d(y,\CY^k)\}}{\dz^k}\r]^a\r\};\noz
\end{align*}

\item[\rm (iv)$'$]
if $d(x,x')\le(2A_0)^{-2}[\dz^k+d(x,y)]$ and $d(y,y')\le(2A_0)^{-2}[\dz^k+d(x,y)]$, then
\begin{align*}
&|[Q_k(x,y)-Q_k(x',y)]-[Q_k(x,y')-Q_k(x',y')]|\\
&\quad \le C\lf[\frac{d(x,x')}{\dz^k+d(x,y)}\r]^\eta\lf[\frac{d(y,y')}{\dz^k+d(x,y)}\r]^\eta
\frac1{\sqrt{V_{\dz^k}(x)\,V_{\dz^k}(y)}} \exp\lf\{-\nu'\lf[\frac{d(x,y)}{\dz^k}\r]^a\r\}\noz\\
&\qquad\times\exp\lf\{-\nu'\lf[\frac{\max\{d(x, \CY^k),\,d(y,\CY^k)\}}{\dz^k}\r]^a\r\},\noz
\end{align*}
\end{enumerate}
where $\nu'\in(0,\nu)$.
\end{enumerate}
\end{remark}

\begin{proposition}\label{prop:etoa}
Conditions (iii) and (iv) of Definition \ref{def:eti} can be equivalently replaced by $(iii)'$ and $(iv)'$
of Remark \ref{rem:andef}(iii).
Consequently, any $\exp$-{\rm ATI} satisfies \eqref{eq:atisize}, \eqref{eq:atisregx} and \eqref{eq:atidreg} with
$\bz:=\eta$ and any given $\gz\in(0,\fz)$.
\end{proposition}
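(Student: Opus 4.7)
The plan is to prove the equivalence in the nontrivial direction by a case split, exploiting the fact that since $a\in(0,1]$, the exponential decay factor $\exp\{-\nu[d(x,y)/\delta^k]^a\}$ dominates every polynomial in $d(x,y)/\delta^k$, so that a slight decrease of the exponent $\nu$ to some $\nu'\in(0,\nu)$ can absorb any polynomial factor. The converse (conditions (iii)$'$ and (iv)$'$ imply (iii) and (iv)) is immediate: when $d(x,x')\le\delta^k$ one has $d(x,x')/(\delta^k+d(x,y))\le d(x,x')/\delta^k$ and likewise for the $y$-variable, and the exponential factors match after replacing $\nu'$ by any smaller positive number.

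For the nontrivial direction, I would split (iii)$'$ into two subcases. In the easy subcase $d(x,x')\le\delta^k$, apply (iii) and then rewrite
\[
\left[\frac{d(x,x')}{\delta^k}\right]^\eta
=\left[\frac{d(x,x')}{\delta^k+d(x,y)}\right]^\eta\left[1+\frac{d(x,y)}{\delta^k}\right]^\eta,
\]
absorbing the factor $[1+d(x,y)/\delta^k]^\eta$ into the exponential $\exp\{-\nu[d(x,y)/\delta^k]^a\}$ at the cost of replacing $\nu$ by some $\nu_1\in(0,\nu)$. In the harder subcase $d(x,x')>\delta^k$, the hypothesis $d(x,x')\le(2A_0)^{-1}[\delta^k+d(x,y)]$ forces $d(x,y)\ge(2A_0-1)\delta^k$, so $d(x,y)\sim\delta^k+d(x,y)$. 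I would then use $|Q_k(x,y)-Q_k(x',y)|\le|Q_k(x,y)|+|Q_k(x',y)|$ and apply the size bound (ii) to each term; the target factor $[d(x,x')/(\delta^k+d(x,y))]^\eta$ is bounded below by $[\delta^k/(\delta^k+d(x,y))]^\eta=[1+d(x,y)/\delta^k]^{-\eta}$, and this inverse polynomial is precisely what the exponential decay can compensate for (again at the cost of enlarging $\nu$ to some slightly smaller $\nu'$). For $|Q_k(x',y)|$ one must also compare $V_{\delta^k}(x')$ with $V_{\delta^k}(x)$ and $d(x',y)$ with $d(x,y)$; using the doubling condition \eqref{eq:doub},
\[
V_{\delta^k}(x')\le C\bigl[1+d(x,x')/\delta^k\bigr]^\omega V_{\delta^k}(x)
\]
and the quasi-triangle inequality yield $d(x,y)\sim d(x',y)$ up to bounded additive multiples of $\delta^k$, so the extra polynomial factor $[1+d(x,x')/\delta^k]^\omega\le C[1+d(x,y)/\delta^k]^\omega$ is once more absorbed into the exponential. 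The same strategy, now split into four subcases according to the sizes of $d(x,x')$ and $d(y,y')$ relative to $\delta^k$, and applying whichever of (iv), (iii), or (ii) is available in each subcase, yields (iv)$\Rightarrow$(iv)$'$. The main obstacle lies in the subcase $d(x,x')>\delta^k$, where one must carefully track how the doubling defect from re-centering $V_{\delta^k}$ interacts with the exponential decay; all bookkeeping is controlled by the single inequality $[1+t]^N\exp(-\nu t^a)\le C_{N,\nu,\nu'}\exp(-\nu' t^a)$ valid for any $N\in[0,\infty)$ and $t\in[0,\infty)$.

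For the consequence, the idea is to read the $\exp$-ATI bounds through Remark \ref{rem:andef}(i), replacing $1/\sqrt{V_{\delta^k}(x)V_{\delta^k}(y)}$ by $1/V_{\delta^k}(x)$, then to invoke Lemma \ref{lem-add}(i) and \eqref{eq:doub}, which give $V_{\delta^k}(x)+V(x,y)\sim\mu(B(x,\delta^k+d(x,y)))\le C[1+d(x,y)/\delta^k]^\omega V_{\delta^k}(x)$. Consequently
\[
\frac{1}{V_{\delta^k}(x)}\exp\!\left\{-\nu'[d(x,y)/\delta^k]^a\right\}
\le\frac{C[1+d(x,y)/\delta^k]^\omega}{V_{\delta^k}(x)+V(x,y)}\exp\!\left\{-\nu'[d(x,y)/\delta^k]^a\right\},
\]
and for any $\gamma\in(0,\infty)$ the combined factor $[1+d(x,y)/\delta^k]^\omega\exp\{-\nu'[d(x,y)/\delta^k]^a\}$ is dominated by $C_{\gamma}[\delta^k/(\delta^k+d(x,y))]^\gamma$, which yields \eqref{eq:atisize}. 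Plugging the same absorption into the regularity bounds in (iii)$'$ and (iv)$'$ produces \eqref{eq:atisregx} and \eqref{eq:atidreg} with $\beta=\eta$ and arbitrary $\gamma\in(0,\infty)$, completing the proof.
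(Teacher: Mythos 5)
Your proposal is correct and follows essentially the same strategy as the paper's proof: case split on whether $d(x,x')\le\delta^k$ or $\delta^k<d(x,x')\le(2A_0)^{-1}[\delta^k+d(x,y)]$ (and the analogous four-way split for the second difference), using regularity in the first case and the size bound $|Q_k(x,y)-Q_k(x',y)|\le|Q_k(x,y)|+|Q_k(x',y)|$ in the second, with the key mechanism in every subcase being absorption of polynomial factors into the exponential decay via $[1+t]^N\exp(-\nu t^a)\lesssim\exp(-\nu' t^a)$, and the doubling condition and quasi-triangle inequality to re-center the volume and distance from $x'$ to $x$. The derivation of \eqref{eq:atisize}, \eqref{eq:atisregx} and \eqref{eq:atidreg} from the exponential bounds via $\mu(B(x,\delta^k+d(x,y)))\lesssim[1+d(x,y)/\delta^k]^\omega V_{\delta^k}(x)$ and exponential absorption likewise matches the paper's final step.
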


\begin{proof}
Let us first prove (iii)$'$.
It suffices to consider the case $\dz^k< d(x,x')\le(2A_0)^{-1}[\dz^k+d(x,y)]$.
In this case, we have $d(x,x')\le A_0^{-1}\dz^k+d(x',y)$ and hence $d(x,y)\le \dz^k+2A_0d(x',y)$,
which, combined with the inequality
\begin{align*}
A^a\le (A-B)^a+B^a,\qquad\forall\, A>B>0\; \textup{and}\; a\in(0,1],
\end{align*}
implies that
$$[d(x',y)]^a\ge (2A_0)^{-a}[d(x,y)-\dz^k]^a \ge (2A_0)^{-a}\lf\{[d(x,y)]^a-[\dz^k]^a\r\}.$$
Similarly, we have
\begin{align*}
[\max\{d(x, \CY^k),\,d(y,\CY^k)\}]^a
&\le [\max\{\dz^k+2A_0d(x', \CY^k),\,d(y,\CY^k)\}]^a\\
&\le (\dz^k)^a+[2A_0\max\{d(x', \CY^k),\,d(y,\CY^k)\}]^a.
\end{align*}
Thus, we obtain
\begin{align}\label{eq-add5}
&\lf[\frac{\dz^k+d(x,y)}{d(x,x')}\r]^\eta\exp\lf\{-\nu\lf[\frac{d(x',y)}{\dz^k}\r]^a\r\} \exp\lf\{-\nu\lf[\frac{\max\{d(x', \CY^k),\,d(y,\CY^k)\}}{\dz^k}\r]^a\r\}\\
&\quad \le \lf[\frac{\dz^k+d(x,y)}{\dz^k}\r]^\eta\exp\lf\{2(2A_0)^{-a}\nu\r\}\exp\lf\{-\nu\lf[\frac{d(x,y)}{2A_0\dz^k}\r]^a\r\} \notag\\
&\qquad\times\exp\lf\{-\nu\lf[\frac{\max\{d(x, \CY^k),\,d(y,\CY^k)\}}{2A_0\dz^k}\r]^a\r\}.\notag
\end{align}
From this and \eqref{eq:etisize}, we easily deduce (iii)$'$.

Now we prove (iv)$'$. Let us consider the case $\dz^k<d(x,x')\le(2A_0)^{-2}[\dz^k+d(x,y)]$ and
$\dz^k<d(y,y')\le(2A_0)^{-2}[\dz^k+d(x,y)]$ as an example to explain the idea. Write
\begin{align*}
|[Q_k(x,y)-Q_k(x',y)]-[Q_k(x,y')-Q_k(x',y')]|
&\le |Q_k(x,y)-Q_k(x',y)|+|Q_k(x,y')-Q_k(x',y')|.
\end{align*}
For the first term, since $d(x,x')\le(2A_0)^{-1}[\dz^k+d(x,y)]$, then, from (iii)$'$, it follows that
\begin{align}\label{eq-add4}
|Q_k(x,y)-Q_k(x',y)|&\ls\lf[\frac{d(x,x')}{\dz^k+d(x,y)}\r]^\eta\lf[\frac{d(y,y')}{\dz^k}\r]^\eta
\frac1{\sqrt{V_{\dz^k}(x)\,V_{\dz^k}(y)}} \exp\lf\{-\nu'\lf[\frac{d(x,y)}{\dz^k}\r]^a\r\}\\
&\quad\times\exp\lf\{-\nu'\lf[\frac{\max\{d(x, \CY^k),\,d(y,\CY^k)\}}{\dz^k}\r]^a\r\}\notag\\
&\ls\lf[\frac{d(x,x')}{\dz^k+d(x,y)}\r]^\eta\lf[\frac{d(y,y')}{\dz^k+d(x,y)}\r]^\eta
\frac1{\sqrt{V_{\dz^k}(x)\,V_{\dz^k}(y)}}\noz\\
&\quad\times \exp\lf\{-\nu''\lf[\frac{d(x,y)}{\dz^k}\r]^a\r\}
\exp\lf\{-\nu'\lf[\frac{\max\{d(x, \CY^k),\,d(y,\CY^k)\}}{\dz^k}\r]^a\r\},\notag
\end{align}
where  $\nu''\in(0,\nu')\subset(0,\nu)$.

For the  term $|Q_k(x,y')-Q_k(x',y')|$, notice that $d(y,y')\le(2A_0)^{-2}[\dz^k+d(x,y)]$. Thus,
$$
d(x,y)\le A_0[d(y,y')+d(x,y)]\le [\dz^k+d(x,y)]/2+A_0d(x,y'),
$$
which implies that $d(x,y)\le\dz^k+2A_0d(x,y')$ and hence
$$
d(x,x')\le(2A_0)^{-2}[\dz^k+d(x,y)]\le(2A_0)^{-2}[2\dz^k+2A_0d(x,y')]\le(2A_0)^{-1}[\dz^k+d(x,y')].
$$
Therefore, using these, \eqref{eq-add4} with $y$ therein replaced by $y'$ and also \eqref{eq-add5}, we conclude
that
\begin{align*}
|Q_k(x,y')-Q_k(x',y')|&\ls\lf[\frac{d(x,x')}{\dz^k+d(x,y')}\r]^\eta\lf[\frac{d(y,y')}{\dz^k+d(x,y')}\r]^\eta
\frac1{\sqrt{V_{\dz^k}(x)\,V_{\dz^k}(y')}} \exp\lf\{-\nu''\lf[\frac{d(x,y')}{\dz^k}\r]^a\r\}\noz\\
&\quad\times\exp\lf\{-\nu'\lf[\frac{\max\{d(x, \CY^k),\,d(y',\CY^k)\}}{\dz^k}\r]^a\r\}\\
&\ls\lf[\frac{d(x,x')}{\dz^k+d(x,y)}\r]^\eta\lf[\frac{d(y,y')}{\dz^k+d(x,y)}\r]^\eta
\frac1{\sqrt{V_{\dz^k}(x)\,V_{\dz^k}(y)}}\exp\lf\{-\frac{\nu''}2\lf[\frac{d(x,y')}{2A_0\dz^k}\r]^a\r\}\noz\\
&\quad\times\exp\lf\{-\nu'\lf[\frac{\max\{d(x, \CY^k),\,d(y',\CY^k)\}}{2A_0\dz^k}\r]^a\r\}.
\end{align*}
The other cases are similar, the details being omitted. This proves (iv)$'$.

Finally, we fix $\gamma\in(0,\fz)$. Notice that the doubling condition \eqref{eq:doub} implies that
$$
\frac{V_{\dz^k}(x)+V(x,y)}{\sqrt{V_{\dz^k}(x)\,V_{\dz^k}(y)}}
\ls \lf[\frac{\dz^k+d(x,y)}{\dz^k}\r]^{\omega}
$$
with $\omega$ as in \eqref{eq:doub}. From this, \ref{def:eti}(ii) and the above (iii)$'$
and (iv)$'$, we deduce that
$$
\lf[\frac{\dz^k+d(x,y)}{\dz^k}\r]^{\omega}\exp\lf\{-\nu'\lf[\frac{d(x,y)}{\dz^k}\r]^a\r\}
\ls \lf[\frac{\dz^k}{\dz^k+d(x,y)}\r]^{\gamma}.$$
This implies that any $\exp$-ATI satisfies \eqref{eq:atisize}, \eqref{eq:atisregx} and \eqref{eq:atidreg},
which completes the proof of Proposition \ref{prop:etoa}.
\end{proof}

\begin{remark}\label{rem-add}
Observe that, even if $\{P_k\}_{k=-\fz}^\fz$ is an $(\eta,\gamma)$-ATI with $\eta$ as in Definition
\ref{def:eti} and $\gz\in(0,\fz)$ as in Definition \ref{def:ati},
the family $\{P_k-P_{k-1}\}_{k=-\fz}^\fz$ may not be an $\exp$-ATI due to the additional exponential factor
$$\exp\{-\nu[\max\{d(x, \CY^k)),\,d(x,\CY^k)\}/\dz^k]^a\}$$ in Definition \ref{def:eti}.
This exponential factor is important because it is a substitute of the reverse doubling condition;
see \cite[Lemma~8.3]{AH13} or Lemma \ref{lem:expsum} below.
\end{remark}


\section{Calder\'on-Zygmund-type operators on spaces of test functions}\label{BDD}

For any $s\in(0,\eta]$, the \emph{H\"older space} $C^s(X)$ consists of all $f\in L^\fz(X)$ such that
$$
\|f\|_{\dot{C}^s(X)}:=\sup_{x\neq y}\frac{|f(x)-f(y)|}{[d(x,y)]^s}<\fz.
$$
Denote by the \emph{symbol $C^s_b(X)$} the space of all functions in $C^s(X)$ with bounded support, equipped
with the strict inductive limit topology. Write the dual space of $C^s_b(X)$ as $(C^s_b(X))'$, equipped with
the weak-$*$ topology.

For any $s\in(0,\eta]$,  a function $K:\; (X\times X)\setminus\{(x,x):\ x\in X\}\to\cc$ is
called an \emph{$s$-Calder\'on-Zygmund kernel} if there exists a positive constant $C_T$ such that
\begin{enumerate}
\item for any $x,\ y\in X$ with $x\neq y$,
\begin{equation}\label{eq:Ksize}
|K(x,y)|\le C_T\frac 1{V(x,y)};
\end{equation}
\item if $d(x,x')\le(2A_0)^{-1}d(x,y)$ with $x\neq y$, then
\begin{equation}\label{eq:Kreg}
|K(x,y)-K(x',y)|+|K(y,x)-K(y,x')|\le C_T\lf[\frac{d(x,x')}{d(x,y)}\r]^{s}\frac 1{V(x,y)}.
\end{equation}
\end{enumerate}
A linear operator $T:\ C^s_b(X)\rightarrow (C^s_b(X))'$
is called an \emph{$s$-Calder\'on-Zygmund
operator} if $T$ can be extended to a bounded linear operator on $L^2(X)$ and if there exists
an $s$-Calder\'on-Zygmund kernel $K$ such that, for any $f\in C^s_b(X)$,
$$
Tf(x):=  \int_{X}K(x,y)f(y)\,d\mu(y),\quad \forall\; x\notin\supp f.
$$
For more studies on Calder\'{o}n-Zygmund operators over spaces of homogeneous type, we refer the reader to
\cite{CW71,HMY08,AH13,dh09}.

The main aim of this section is to prove the following boundedness of a Calder\'{o}n-Zygmund-type operator on spaces of test functions with or without cancellation.

\subsection{Boundedness of Calder\'{o}n-Zygmund-type operators on $\GO{\bz,\gz}$}\label{s3.1}

The main aim of this section is to prove the following boundedness of Calder\'{o}n-Zygmund-type operators on $\GO{x_1,r,\bz,\gz}$.

\begin{theorem}\label{thm:Kbdd}
Suppose that $s\in(0,\eta)$, $\bz,\ \gz\in(0,s)$ and $T$ is an $s$-Calder\'{o}n-Zygmund-operator
with its kernel $K$ satisfying the following additional conditions:
\begin{enumerate}
\item[{\rm (a)}] there exsits a positive constant $C_T$ such that, if $d(x,x')\le(2A_0)^{-2}d(x,y)$ and
$d(y,y')\le(2A_0)^{-2}d(x,y)$ with $x\neq y$, then
\begin{equation}\label{eq:Kdreg}
|[K(x,y)-K(x',y)]-[K(x,y')-K(x',y')]|\le C_T\lf[\frac{d(x,x')}{d(x,y)}\r]^{s}
\lf[\frac{d(y,y')}{d(x,y)}\r]^{s}\frac 1{V(x,y)};
\end{equation}
\item[{\rm (b)}] for any $f\in C^\bz(X)$ and  $x\in X$,
\begin{equation*}
Tf(x)=\int_X K(x,y)f(y)\,d\mu(y);
\end{equation*}
\item[{\rm (c)}] there exists a constant $c_0$ such that, for any $x\in X$.
\begin{equation}\label{eq:Kcany}
\int_X K(x,y)\,d\mu(y)=c_0.
\end{equation}
\end{enumerate}
Then there exists a positive constant $C_{(\bz,\gz)}$ such that, for any $f\in\mathring{\CG}(x_1,r,\bz,\gz)$
with $x_1\in X$ and $r\in(0,\fz)$,
\begin{equation*}
\|Tf\|_{\CG(x_1,r,\bz,\gz)}\le C_{(\bz,\gz)}\lf[C_T+\|T\|_{L^2(X)\to L^2(X)}+|c_0|\r]\|f\|_{\CG(x_1,r,\bz,\gz)},
\end{equation*}
where $C_{(\bz,\gz)}$ depends on $\bz$, $\gz$ but not on $x_1$, $r$ or $c_0$.
\end{theorem}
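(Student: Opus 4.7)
The plan is to estimate the size and the regularity of $Tf$ pointwise, exploiting the two cancellation identities $\int_X f\,d\mu=0$ and \eqref{eq:Kcany} in order to rewrite the integral representation given by condition (b) in whichever of the following two forms is convenient: first,
\begin{equation*}
Tf(x)=\int_X K(x,y)[f(y)-f(x)]\,d\mu(y)+c_0\,f(x),
\end{equation*}
which cures the diagonal singularity of $K$ through the H\"older regularity of $f$; and second, for $x\ne x_1$,
\begin{equation*}
Tf(x)=\int_X [K(x,y)-K(x,x_1)]f(y)\,d\mu(y),
\end{equation*}
which generates the decay $[r/(r+d(x_1,x))]^\gz$ away from $x_1$ via the H\"older regularity \eqref{eq:Kreg} of $K$ in its second variable. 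Hypothesis (b) is what makes these rewritings legal for any $f\in\mathring\CG(x_1,r,\bz,\gz)$, since every such $f$ is bounded and H\"older-$\bz$ continuous on $X$.

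For the size estimate, I would split according as $d(x_1,x)<2A_0 r$ or $d(x_1,x)\ge 2A_0 r$. In the first regime the factor $[r/(r+d(x_1,x))]^\gz$ is comparable to $1$, so it suffices to use the first rewriting: the term $|c_0 f(x)|$ is immediate, while the remaining integral is split at $d(x,y)\sim r+d(x_1,x)$, applying the size of $K$ combined with the regularity of $f$ on the near part and with the size of $f$ on the far part; the conditions $\bz<s$ and $\gz<s$ enter through parts (iii) and (iv) of Lemma \ref{lem-add}. In the second regime I use the second rewriting and split at $d(y,x_1)=d(x_1,x)/(2A_0)^2$: on the near part the H\"older regularity of $K$ in its second variable generates a factor $[d(y,x_1)/d(x_1,x)]^s$ which, combined with the size of $f$ and $\gz<s$, yields the required $[r/d(x_1,x)]^\gz$; on the far part the pointwise size bounds of $K$ and of $f$ alone suffice.

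For the regularity estimate, fix $x,x'\in X$ with $d(x,x')\le(2A_0)^{-1}[r+d(x_1,x)]$. If $d(x,x')>(2A_0)^{-3}[r+d(x_1,x)]$, the already established size estimate applied to $Tf(x)$ and to $Tf(x')$ suffices, since then $[d(x,x')/(r+d(x_1,x))]^\bz\sim 1$. Otherwise I use $\int_X[K(x,y)-K(x',y)]\,d\mu(y)=c_0-c_0=0$ to write, for any constant $A$,
\begin{equation*}
Tf(x)-Tf(x')=\int_X[K(x,y)-K(x',y)][f(y)-A]\,d\mu(y).
\end{equation*}
When $d(x_1,x)<2A_0 r$, I take $A=f(x)$ and split at $d(x,y)=2A_0 d(x,x')$, treating the near part with the size of $K$ and the regularity of $f$, and the far part with the regularity \eqref{eq:Kreg} of $K$ and the size of $f$; the constraint $\bz<s$ gives the desired $[d(x,x')/r]^\bz$. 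When $d(x_1,x)\ge 2A_0 r$, I additionally use $\int_X f\,d\mu=0$ to insert $-[K(x,x_1)-K(x',x_1)]$ inside the integrand and then invoke the second-difference regularity \eqref{eq:Kdreg} on the region where $d(y,x_1)\le d(x_1,x)/(2A_0)^2$, obtaining the joint decay $[d(x,x')/d(x_1,x)]^\bz[r/d(x_1,x)]^\gz$; on the complementary region the first rewriting combined with the pointwise size estimates suffices.

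The main obstacle I anticipate is the delicate bookkeeping forced by the quasi-triangle inequality: each time one passes between ``$y$ close to'' and ``$y$ far from'' one of the reference points $x,x',x_1$, the radii must be shrunk by factors of $(2A_0)^{-1}$ or $(2A_0)^{-2}$ so as to fit the regions of validity of \eqref{eq:Kreg} and \eqref{eq:Kdreg}, and in the intermediate regime where $x$ is at moderate distance from $x_1$ and $d(x,x')$ is comparable to $d(x_1,x)$ but smaller than $r$, one must carefully interpolate between the two rewritings so as not to lose the full exponent $\bz$ in $d(x,x')$ or the full exponent $\gz$ in $r/(r+d(x_1,x))$. The contribution of $\|T\|_{L^2\to L^2}$ is expected to appear only as a safety additive term arising from a borderline case; the principal exponents are driven by $C_T$ and $|c_0|$.
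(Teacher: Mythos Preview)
Your overall strategy is the right one, and the case splits match the paper's. But both of your ``rewritings'' hide a divergent integral that you cannot control with the stated hypotheses, and repairing this is exactly where the paper's argument differs from yours.

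For the first rewriting in Case~1 of the size estimate, you propose to bound
\[
\int_X |K(x,y)|\,|f(y)-f(x)|\,d\mu(y)
\]
by splitting at $d(x,y)\sim r$. On the far part you would use $|f(y)-f(x)|\le |f(y)|+|f(x)|$; the piece with $|f(y)|$ is fine, but the piece $|f(x)|\int_{d(x,y)>cr}|K(x,y)|\,d\mu(y)$ is in general \emph{infinite}: the only available bound is $|K(x,y)|\le C_T/V(x,y)$, and $1/V(x,y)$ is not integrable at infinity (think of $\mathbb{R}^n$). The same problem recurs in your regularity argument for Case~2. Your second rewriting has the dual defect: in the region $d(y,x_1)>(2A_0)^{-2}d(x_1,x)$ the point $y=x$ is admissible, so $\int |K(x,y)|\,|f(y)|\,d\mu(y)$ over that region contains the full diagonal singularity of $K$, which you have not tamed.

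The paper's fix is to never write $\int_X K(x,y)\,d\mu(y)$ or $\int_X K(x,y)[f(y)-f(x)]\,d\mu(y)$ over all of $X$. Instead one inserts a smooth cutoff $\theta$ (Lemma~\ref{lem:scf}) and writes, e.g.,
\[
Tf(x)=\int K(x,y)[f(y)-f(x)]\theta(y)\,d\mu(y)+\int K(x,y)f(y)(1-\theta)(y)\,d\mu(y)+f(x)\,T\theta(x).
\]
The first two integrals are now absolutely convergent and handled as you describe. The third term is controlled by Lemma~\ref{lem:3.24}, which gives $|T\theta(x)|\lesssim C_T+\|T\|_{L^2\to L^2}+|c_0|$. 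This lemma is where $\|T\|_{L^2\to L^2}$ enters, and it is essential, not a ``safety additive term'': without it there is no way to bound $T$ on a non-decaying bump. Similarly, in Case~2 of the size estimate the paper decomposes $f=f_1+f_2+f_3$ with cutoffs near $x$, near $x_1$, and elsewhere; $Tf_1$ again requires Lemma~\ref{lem:3.24}, while $Tf_2$ uses the cancellation of $f$ essentially as in your second rewriting, but only after the diagonal piece $f_1$ has been removed.
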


\begin{remark}\label{rem:bdd}
Theorem \ref{thm:Kbdd} remains true if $d$ is a metric, the condition (b) is removed and the condition (c) is
replaced by the assumption that $T1=0$ in $(\mathring{C}^\bz_b(X))'$, which means $\langle T1,f\rangle=0$ for
any
$$
f\in\mathring{C}^\bz_b(X):=\lf\{f\in C^\bz_b(X):\ \int_X f(x)\,d\mu(x)=0\r\};
$$
see \cite[Theorem 2.18]{HMY08}. It is still unknown whether or not \cite[Theorem 2.18]{HMY08} still holds true
if $d$ is just a quasi-metric.
\end{remark}

To prove Theorem \ref{thm:Kbdd}, we need the following two lemmas.

\begin{lemma}[{\cite[Corollary 4.2]{AH13}}]\label{lem:scf}
Suppose $x\in X$ and $r\in(0,\fz)$. Then there exists a function $f$ such that
$\chi_{B(x,r)}\le f\le\chi_{B(x,2A_0r)}$ and $\|f\|_{\dot{C}^\eta(X)}\le Cr^{-\eta}$, where $C$ is a positive
constant independent of $x$ and $r$.
\end{lemma}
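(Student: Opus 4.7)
The plan is to build $f$ as a normalized ``distance-to-complement'' cutoff, using a snowflake refinement of the quasi-metric $d$.

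The structural input is the Mac\'{\i}as--Segovia snowflaking theorem underlying the Auscher--Hyt\"onen wavelet framework: there exist a quasi-metric $\rho$ on $X$ equivalent to $d$ and an exponent $\alpha\in[\eta,1]$ such that $\rho^\alpha$ satisfies the triangle inequality, i.e., $\rho^\alpha$ is a genuine metric on $X$. (The regularity index $\eta$ in Theorem~\ref{thm:wave} is selected so that $\eta\le\alpha$.) Setting $E:=X\setminus B(x,2A_0r)$, I would define
\[
h(y):=\inf_{z\in E}\rho(y,z)^\alpha,\qquad f(y):=\min\!\Bigl\{1,\,\frac{h(y)}{c_1 r^\alpha}\Bigr\},
\]
where $c_1>0$ is the constant identified below.

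Two elementary observations finish the construction. First, the triangle inequality for $\rho^\alpha$ makes $h$ a $1$-Lipschitz function with respect to $\rho^\alpha$, so $|h(y)-h(y')|\le\rho(y,y')^\alpha\ls [d(y,y')]^\alpha$ via the equivalence $\rho\sim d$. Second, for $y\in B(x,r)$ and $z\in E$, the quasi-triangle inequality forces
$2A_0 r\le d(x,z)\le A_0[d(x,y)+d(y,z)]<A_0 r+A_0 d(y,z),$
hence $d(y,z)>r$ and thus $\rho(y,z)^\alpha\ge c_1 r^\alpha$ for an absolute $c_1>0$; so $h\ge c_1 r^\alpha$ throughout $B(x,r)$. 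Together these yield $f\equiv1$ on $B(x,r)$, $f\equiv0$ off $B(x,2A_0r)$, and the intermediate bound $|f(y)-f(y')|\le Cr^{-\alpha}[d(y,y')]^\alpha$.

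The final step upgrades the $\alpha$-H\"older estimate to the desired $\eta$-H\"older estimate by splitting on the scale $r$: for $d(y,y')\le r$ one has $(d(y,y')/r)^\alpha\le(d(y,y')/r)^\eta$ because $\alpha\ge\eta$ and $d(y,y')/r\le1$; for $d(y,y')>r$ the trivial bound $|f(y)-f(y')|\le 1\le(d(y,y')/r)^\eta$ suffices. Both cases give $\|f\|_{\dot{C}^\eta(X)}\le Cr^{-\eta}$ with $C$ independent of $x$ and $r$. The only genuine obstacle is the structural input in the first step---producing a snowflake metric $\rho^\alpha$ with $\alpha\ge\eta$; once that is in hand, the remainder is a standard cutoff argument.
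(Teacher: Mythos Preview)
Your argument is correct. Note, however, that the paper itself does not supply a proof of this lemma: it is quoted verbatim from \cite[Corollary~4.2]{AH13}, so there is no in-paper proof to compare against.

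For context, in \cite{AH13} the result is obtained as a by-product of the spline construction (their Theorem~4.1): the spline functions $s_\alpha^k$ built there are already $\eta$-H\"older bump functions adapted to dyadic cubes, and Corollary~4.2 extracts from this the existence of a bump at any center and scale. Your route is the more classical one---Mac\'{\i}as--Segovia snowflaking followed by a distance-to-complement cutoff---and it is both more elementary and fully self-contained, avoiding the spline machinery entirely. The trade-off is that the \cite{AH13} proof gives the bump ``for free'' once the splines are in hand (which are needed anyway for the wavelets), whereas your argument requires invoking the snowflaking theorem as a separate black box; but for the purposes of this paper either route is perfectly adequate. One small remark: under the standing assumption $\diam X=\infty$ the set $E=X\setminus B(x,2A_0r)$ is automatically nonempty, so the infimum defining $h$ is always finite; in a general setting you would want to dispose of the trivial case $E=\emptyset$ by taking $f\equiv 1$.
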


\begin{lemma}\label{lem:3.24}
Let $T$ be a Calder\'{o}n-Zygmund operator with its kernel $K$ satisfying that, for any $x\in X$,
$$
\int_X K(x,y)\,d\mu(y)=c_0,
$$
where $c_0\in\cc$ is a constant. Assume that $x_0\in X$, $t\in(0,\fz)$ and $\thz$ is a function satisfying
$|\thz|\le\chi_{B(x_0,t)}$ and $\|\thz\|_{\dot{C}^\eta(X)}\le t^{-\eta}$. Then there exists a positive constant
$C$, independent of $x_0$, $t$ and $c_0$, such that, for any $x\in X$,
\begin{equation}\label{eq:claim}
|T\thz(x)|\le C\lf[C_T+\|T\|_{L^2(X)\to L^2(X)}+|c_0|\r].
\end{equation}
\end{lemma}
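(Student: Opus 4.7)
The plan is to decompose $T\thz(x)$ via a H\"older-continuous cut-off $\zeta$ and to estimate the pieces using the kernel bounds, condition (c), and the $L^2$-boundedness of $T$. If $d(x, x_0) \ge 4A_0 t$, then $x \notin B(x_0, t) \supset \supp\thz$, so by (b) the integral $T\thz(x) = \int_{B(x_0, t)} K(x, y) \thz(y)\,d\mu(y)$ is absolutely convergent; the size bound \eqref{eq:Ksize}, $V(x, y) \sim V(x, x_0)$ uniformly on $B(x_0, t)$, and the doubling condition give $|T\thz(x)| \ls C_T$. So assume henceforth $d(x, x_0) < 4A_0 t$.

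Using Lemma \ref{lem:scf}, pick $\zeta \in C^\eta(X)$ with $\chi_{B(x_0, 16 A_0^2 t)} \le \zeta \le \chi_{B(x_0, 32 A_0^3 t)}$ and $\|\zeta\|_{\dot{C}^\eta(X)} \le C t^{-\eta}$, so $\zeta \equiv 1$ on $B(x_0, 16 A_0^2 t) \supset \supp\thz \cup \{x\}$. Decompose
$$T\thz(x) = T\phi(x) + \thz(x)\,T\zeta(x), \qquad \phi := \thz - \thz(x)\zeta,$$
where $\phi(x) = 0$, $\supp\phi \subset \supp\zeta$, and $\|\phi\|_{\dot{C}^\eta(X)} \ls t^{-\eta}$. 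By (b), (c) and $\phi(x) = 0$, the identity $T\phi(x) = \int K(x, y)[\phi(y) - \phi(x)]\,d\mu(y)$ is rigorous with absolutely convergent right-hand side; combining \eqref{eq:Ksize} with Lemma \ref{lem-add}(iii) yields
$$|T\phi(x)| \ls C_T t^{-\eta} \int_{\supp\zeta} \frac{d(x, y)^\eta}{V(x, y)}\,d\mu(y) \ls C_T.$$

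For $T\zeta(x)$, let $B_2 := B(x, t) \subset \{\zeta = 1\}$ and write $T\zeta(x) = [T\zeta(x) - (T\zeta)_{B_2}] + (T\zeta)_{B_2}$. Cauchy--Schwarz and the $L^2$-boundedness of $T$ give $|(T\zeta)_{B_2}| \le \|T\zeta\|_{L^2(X)}/\mu(B_2)^{1/2} \ls \|T\|_{L^2(X)\to L^2(X)}$ (using $\|\zeta\|_{L^2} \ls \mu(B_2)^{1/2}$ by doubling). For each $z \in B_2$, conditions (b) and (c) combine so that the $c_0$ contributions cancel:
$$T\zeta(x) - T\zeta(z) = \int [K(x, y) - K(z, y)][\zeta(y) - 1]\,d\mu(y).$$
Since $\zeta(y) - 1$ vanishes on $B(x_0, 16 A_0^2 t)$, the integrand is supported where $d(x, y) \ge 12 A_0 t \ge 2A_0 d(x, z)$, so the regularity \eqref{eq:Kreg} and Lemma \ref{lem-add}(iii) yield $|T\zeta(x) - T\zeta(z)| \ls C_T$. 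Assembling, $|T\thz(x)| \ls C_T + \|T\|_{L^2(X)\to L^2(X)}$, which is dominated by \eqref{eq:claim}.

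The main obstacle is the rigorous justification of the key cancellation in the display for $T\zeta(x) - T\zeta(z)$: it rests on the principal-value interpretation of $\int_X K(x, y)\,d\mu(y) = c_0 = \int_X K(z, y)\,d\mu(y)$ from condition (c), which is why $|c_0|$ appears as a slack term in the stated bound \eqref{eq:claim}.
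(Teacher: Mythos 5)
Your proof is correct, and it is essentially the standard decomposition that the paper (via the cited argument in [HLW16, (3.24)]) has in mind: split $T\thz = T\phi + \thz(x)\,T\zeta$ with $\phi(x)=0$ and $\zeta\equiv 1$ near $x$; handle $T\phi$ by the kernel size bound and H\"older cancellation of $\phi$; control $T\zeta(x)$ by comparing with its average over $B(x,t)$, bounding the average via Cauchy--Schwarz and $L^2$-boundedness, and the fluctuation via the kernel regularity \eqref{eq:Kreg} after using condition (c) to insert $\zeta - 1$.

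One small observation worth flagging: your argument actually yields the stronger bound $|T\thz(x)|\ls C_T + \|T\|_{L^2(X)\to L^2(X)}$ with no $|c_0|$ on the right. This is because condition (c) gives the \emph{exact} cancellation $\int_X[K(x,y)-K(z,y)]\,d\mu(y)=c_0-c_0=0$, so $c_0$ never survives. The paper's remark about $|c_0|$ entering the estimate traces back to [HLW16], which only assumes the weaker condition $T1=0$ in $(\mathring{C}_b^\beta(X))'$ and hence cannot exploit an absolutely convergent integral identity; with the stronger hypothesis (c) the $|c_0|$ term in \eqref{eq:claim} is simply harmless slack. Your concern in the last paragraph about the rigor of the cancellation step is well taken in general, but it is moot in the intended application: the kernels $R_{N,M}$ (being finite sums of compositions $Q_{k+l}Q_k$) have exponential decay and no diagonal singularity, so all integrals in (b), (c), and in your display for $T\zeta(x)-T\zeta(z)$ converge absolutely, and the algebraic manipulations are fully justified.
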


\begin{proof}
With the condition $\int_X K(x,y)\,d\mu(y)=c_0$ for any $x\in X$ replaced by $T1=0$ in
$(\mathring{C}^{\bz}_b(X))'$, the proof of this lemma was essentially given in \cite[(3.24)]{HLW16} (see also
\cite[Lemma~2.15]{HMY08} on the setting of RD-spaces).
Following the proof of \cite[(3.24)]{HLW16}, we observe that here the condition
$$
\int_X K(x,y)\,d\mu(y)=c_0,\quad\forall\, x\in X
$$
leads to the additional term $|c_0|$ in \eqref{eq:claim}, the details being omitted. This finishes the proof of
Lemma \ref{lem:3.24}.
\end{proof}
Using Lemmas \ref{lem:scf} and \ref{lem:3.24}, we now show Theorem \ref{thm:Kbdd}.
\begin{proof}[Proof of Theorem \ref{thm:Kbdd}]
Let all the notation be as in Theorem \ref{thm:Kbdd}. Without loss of generality, we may as well assume that
$\|f\|_{\CG(x_1,r,\bz,\gz)}=1$ and
$\max\{C_T,\,\|T\|_{L^2(X)\to L^2(X)},\, |c_0|\}\le1$. To show Theorem \ref{thm:Kbdd}, it suffices to prove
that, for any $x\in X$,
\begin{equation}\label{eq:Tfsize}
|Tf(x)|\ls\frac 1{V_r(x_1)+V(x_1,x)}\lf[\frac r{r+d(x_1,x)}\r]^\gz
\end{equation}
and, for any $x,\ x'\in X$ satisfying $d(x,x')\le (2A_0)^{-1}[r+d(x_1,x)]$,
\begin{equation}\label{eq:Tfreg}
|Tf(x)-Tf(x')|\ls\lf[\frac {d(x,x')}{r+d(x_1,x)}\r]^\bz\frac 1{V_r(x_1)+V(x_1,x)}
\lf[\frac r{r+d(x_1,x)}\r]^\gz.
\end{equation}
We conclude the proof of Theorem \ref{thm:Kbdd} by proving \eqref{eq:Tfsize} and \eqref{eq:Tfreg} in Sections
\ref{size} and \ref{reg} below, respectively.
\end{proof}

\subsection{Proof of the size estimate \eqref{eq:Tfsize}}\label{size}

\begin{proof}[Proof of \eqref{eq:Tfsize}]
The proof is given by considering  two cases: $d(x_1,x)<2A_0r$ and $d(x_1,x)\ge 2A_0r$.

{\it Case 1) $d(x_1,x)<2A_0r$}. In this case, the right-hand side of \eqref{eq:Tfsize} is comparable with
$\frac1{V_r(x_1)}\sim \frac1{V_r(x)}$. By Lemma \ref{lem:scf}, there exists a function $\theta$ such that
$\chi_{B(x_1,4A_0^2r)}\le\thz\le\chi_{B(x_1,8A_0^3r)}$ and $\|\thz\|_{\dot{C}^\eta(X)}\ls r^{-\eta}$. Let
$\psi:=1-\thz$. By the fact that $f\in \CG(x_1,r,\bz,\gz)$ belongs to $C^\beta(X)$, we can use (b) to write,
for any $x\in X$,
\begin{align*}
Tf(x)&=\int_X K(x,y)f(y)\,d\mu(y)\\
&=\int_X K(x,y)[f(y)-f(x)]\thz(y)\,d\mu(y)+\int_X K(x,y)f(y)\psi(y)\,d\mu(y)
+f(x)\int_X K(x,y)\thz(y)\,d\mu(y)\\
&=:\RZ_1+\RZ_2+\RZ_3
\end{align*}

From the fact $\supp\theta\subset B(x_1,8A_0^3r)$ and the assumption $d(x_1,x)<r$, we deduce that any $y$ with
$\theta(y)\neq0$ satisfies $d(x,y)\le 10A_0^4r$.
Then, applying \eqref{eq:Ksize}, the H\"{o}lder regularity condition of $f$  and Lemma \ref{lem-add}(iii), the
size condition of $f$ and \eqref{eq:doub}, we conclude that
\begin{align*}
|\RZ_1|&\le\int_{d(x,y)\le(2A_0)^{-1}[r+d(x_1,x)]}|K(x,y)||f(y)-f(x)|\,d\mu(y)\\
&\quad+\int_{\gfz{d(x,y)>(2A_0)^{-1}[r+d(x_0,x)]}{d(x,y)<10A_0^4r}}|K(x,y)|[|f(y)|+|f(x)|]\,d\mu(y)\\
&\ls \frac 1{V_r(x_1)}\int_{d(x,y)\le(2A_0)^{-1}[r+d(x_1,x)]}\frac 1{V(x,y)}\lf[\frac{d(x,y)}{r+d(x_1,x)}\r]^\bz
\,d\mu(y)\\
&\quad+\frac 1{V_r(x_1)}\int_{(2A_0)^{-1}r<d(x,y)<10A_0^4r}\frac 1{V(x,y)}\,d\mu(y)
\ls \frac 1{V_r(x_1)},
\end{align*}
as desired.

Notice that $\psi(y)\neq 0$ implies that $d(x_1,y)\ge 4A_0^2r$, which further implies $d(x,y)\ge 2A_0r$.
By this, \eqref{eq:Ksize} and Lemma \ref{lem-add}(ii), we know that
$$
|\RZ_2|\ls\int_{d(x_1,y)\ge 4A_0^2r}\frac 1{V(x,y)}|f(y)|\,d\mu(y)
\ls\frac 1{V_r(x)}\int_X |f(y)|\,d\mu(y)\ls \frac 1{V_r(x_1)}.
$$
From \eqref{eq:claim} and the size condition of $f$, we deduce that
$
|\RZ_3|\ls|f(x)|\ls\frac 1{V_r(x_1)}.
$
Thus, \eqref{eq:Tfsize} holds true when $d(x_1,x)<2A_0r$.

{\it Case 2) $R:=d(x_1,x)\ge 2A_0r$}. In this case, by Lemma \ref{lem:scf}, we can choose $u_1$ and $u_2$ such
that $\chi_{B(x,(2A_0)^{-3}R)}\le u_1\le\chi_{B(x,(2A_0)^{-2}R)}$,
$\|u_1\|_{\dot{C}^\eta(X)}\ls R^{-\eta}$, $\chi_{B(x_1,(2A_0)^{-3}R)}\le u_2\le\chi_{B(x_1,(2A_0)^{-2}R)}$ and
$\|u_2\|_{\dot{C}^\eta(X)}\ls R^{-\eta}$.
Let $u_3:=1-u_1-u_2$.
Write $f=f_1+f_2+f_3$, where $f_i:=fu_i$ for any $i\in\{1,2,3\}$. We claim that,
for any $y,\ y'\in X$,
\begin{gather}
|f_1(y)|\ls\frac 1{V_R(x_1)}\lf(\frac rR\r)^\gz;\label{eq:1}\\
|f_1(y)-f_1(y')|\ls\lf[\frac{d(y,y')}R\r]^\bz\frac 1{V_R(x_1)}\lf(\frac rR\r)^\gz;\label{eq:2}\\
|f_3(y)|\ls\frac 1{V_r(x_1)+V(x_1,y)}\lf[\frac r{r+d(x_1,y)}\r]^\gz
\chi_{\{y\in X:\ d(x,y)\ge(2A_0)^{-3}R\}}(y);\label{eq:3}\\
\int_X |f_3(y)|\,d\mu(y)\ls \lf(\frac rR\r)^\gz;\label{eq:4}\\
\lf|\int_X f_2(y)\,d\mu(y)\r|\ls \lf(\frac rR\r)^\gz.\label{eq:5}
\end{gather}

We now prove \eqref{eq:1} through \eqref{eq:5}. If $y\in\supp u_1\subset B(x,(2A_0)^{-2}R)$, then
$$
R=d(x_1,x)\le A_0[d(x_1,y)+d(y,x)]<A_0d(x_1,y)+d(x_1,x)/2,
$$
which implies that $R\le 2A_0 d(x_1,y)$. From this and the size condition of $f$, it follows that
$$
|f_1(y)|\le |f(y)|\chi_{B(x,(2A_0)^{-2}R)}(y)\le\frac 1{V_r(x_1)+V(x_1,y)}\lf[\frac r{r+d(x_1,y)}\r]^\gz
\ls\frac 1{V_R(x_1)}\lf(\frac rR\r)^\gz,
$$
which proves \eqref{eq:1}.

Next we prove \eqref{eq:2}.
By $\supp u_1\subset B(x,(2A_0)^{-2}R)$, we know that $|f_1(y)-f_1(y')|\neq0$ implies either
$y\in B(x,(2A_0)^{-2}R)$ or $y'\in B(x,(2A_0)^{-2}R)$.
Due to the symmetry, we may as well assume that $y\in B(x,(2A_0)^{-2}R)$. Then, like the discussion in the
estimation of \eqref{eq:1}, we still have $R\le 2A_0 d(x_1,y)$. If $d(y,y')> (2A_0)^{-1}[r+d(x_1,y)]$, then, by
\eqref{eq:1} and $R\le 2A_0 d(x_1,y)$, we obtain
$$
|f_1(y)-f_1(y')|\le|f(y)|+|f(y')|\ls\lf[\frac{d(y,y')}{r+d(x_1,y)}\r]^\bz\frac 1{V_R(x_1)}
\lf(\frac rR\r)^\gz\ls\lf[\frac{d(y,y')}R\r]^\bz\frac 1{V_R(x_1)}\lf(\frac rR\r)^\gz.
$$
If $d(y,y')\le (2A_0)^{-1}[r+d(x_1,y)]$, then, by the definition of $u_1$ and the regularity condition of $f$,
we find that
\begin{align*}
|f_1(y)-f_1(y')|&\le|f(y)||u_1(y)-u_1(y')|+|u_1(y')||f(y)-f(y')|\\
&\ls\frac 1{V_1(x_1 )+V(x_1 ,y)}\lf[\frac r{r+d(x_1,y)}\r]^\gz
\min\lf\{1,\;\lf[\frac{d(y,y')}R\r]^\eta\r\}\\
&\quad+\lf[\frac{d(y,y')}{r+d(x_1,y)}\r]^\bz\frac 1{V_r(x_1)+V(x_1,y)}
	\lf[\frac r{r+d(x_1 ,y)}\r]^\gz\\
	&\ls\lf[\frac{d(y,y')}R\r]^\bz\frac 1{V_R(x_1 )}\lf(\frac rR\r)^\gz.
	\end{align*}
This proves  \eqref{eq:2}.

Since $B(x,(2A_0)^{-2}R)\cap B(x_1,(2A_0)^{-2}R)=\emptyset$, it follows that
$0\le u_3\le 1$ and
$
u_3(x)=0$ if $x\in B(x,(2A_0)^{-3}R)\cup B(x_1,(2A_0)^{-3}R)$. Combining this with the size condition of $f$ implies \eqref{eq:3}.
From \eqref{eq:3} and Lemma \ref{lem-add}(iv), it follows directly \eqref{eq:4}.

To prove \eqref{eq:5}, since  $f\in\mathring{\CG}(x_1,r,\bz,\gz)$, we deduce that $\int_X f(y)\,d\mu(y)=0$,
which, together with  \eqref{eq:4}, \eqref{eq:1} and $\supp f_1\subset B(x,(2A_0)^{-2}R)$, implies that
$$
\lf|\int_X f_2(y)\,d\mu(y)\r|=\lf|\int_X [f_1(y)+f_3(y)]\,d\mu(y)\r|\ls
\int_{B(x,(2A_0)^{-2}R)}|f_1(y)|\,d\mu(y)+\lf(\frac rR\r)^\gz\ls \lf(\frac rR\r)^\gz.
$$
Therefore, we complete the proofs of \eqref{eq:1} through \eqref{eq:5}.
	
Now we continue the proof of \eqref{eq:Tfsize} in the case $R=d(x_1, x)\ge 2A_0r$. Notice that, in this case,
the right-hand side of \eqref{eq:Tfsize} is comparable with $\frac 1{V_R(x_1)}(\frac rR)^\gz$.
By Lemma \ref{lem:scf}, we find a function $u$ such that
$\chi_{B(x,(2A_0)^{-2}R)}\le u\le\chi_{B(x, (2A_0)^{-1}R)}$ and $\|u\|_{\dot{C}^\eta(X)}\ls R^{-\eta}$. Then $f_1=uf_1$ and
\begin{align*}
Tf_1(x)&=\int_ X K(x,y)u(y)f_1(y)\,d\mu(y)\\
&=\int_X K(x,y)u(y)[f_1(y)-f_1(x)]\,d\mu(y)+f_1(x)\int_X K(x,y)u(y)\,d\mu(y)=:\Gamma_1+\Gamma_2.
\end{align*}
By Lemma \ref{lem:3.24} and \eqref{eq:1}, we conclude that
$|\Gamma_2|\ls|f_1(x)|\ls\frac 1{V_R(x_1)}(\frac rR)^\gz$.
From the fact that $\supp u\subset B(x,(2A_0)^{-1}R)$, \eqref{eq:Ksize}, \eqref{eq:2} and Lemma
\ref{lem-add}(iii), we deduce that
\begin{align*}
|\Gamma_1|&\le \int_{d(x,y)<(2A_0)^{-1}R}|K(x,y)||f_1(y)-f_1(x)|\,d\mu(y)\\
&\ls\frac 1{V_R(x_1)}\lf(\frac rR\r)^\gz
\int_{d(x,y)<(2A_0)^{-1}R}\frac 1{V(x,y)}\lf[\frac{d(x,y)}R\r]^\bz\,d\mu(y)\ls\frac 1{V_R(x_1)}\lf(\frac rR\r)^\gz.
\end{align*}
Combining the estimates of $\Gamma_1$ and $\Gamma_2$, we obtain the desired estimate of $Tf_1(x)$.

Now we estimate $Tf_2(x)$. Indeed, from \eqref{eq:Ksize}, \eqref{eq:Kreg}, \eqref{eq:5}, $|f_2|\le |f|$, the
size condition of $f$ and Lemma \ref{lem-add}(iii),  we deduce that
\begin{align*}
|Tf_2(x)|&\le\lf|\int_X [K(x,y)-K(x,x_1)]f_2(y)\,d\mu(y)\r|+|K(x,x_1)|\lf|\int_X f_2(y)\,d\mu(y)\r|\\
&\ls \int_{d(x_1,y)<(2A_0)^{-2}R} |K(x,y)-K(x,x_1)||f_2(y)|\,d\mu(y)+\frac 1{V(x,x_1)}\lf(\frac rR\r)^\gz\\
&\ls\int_{d(x_1,y)<(2A_0)^{-2}R}\lf[\frac{d(x_1,y)}R\r]^{s}\frac 1{V(x,x_1)}\lf(\frac r{r+d(x_1,y)}\r)^\gz
\frac 1{V(x_1,y)}\,d\mu(y)+\frac 1{V_R(x_1)}\lf(\frac rR\r)^\gz\\
&\ls\frac 1{V_R(x_1)}\lf(\frac rR\r)^\gz\lf\{\int_{d(x_1,y)<(2A_0)^{-2}R}\lf[\frac{d(x_1,y)}R\r]^{s-\gz}
\frac 1{V(x_1,y)}\,d\mu(y)+1\r\}\sim\frac 1{V_R(x_1)}\lf(\frac rR\r)^\gz.
\end{align*}

Finally, by \eqref{eq:3}, \eqref{eq:Ksize} and \eqref{eq:4}, we conclude that
\begin{align*}
|Tf_3(x)|&\le\int_{d(x,y)\ge (2A_0)^{-3}R}|K(x,y)||f_3(y)|\,d\mu(y)\\
&
\ls\int_{d(x,y)\ge (2A_0)^{-3}R} \frac 1{V(x,y)}|f_3(y)|\,d\mu(y)\ls\frac 1{V_R(x_1)}\lf(\frac rR\r)^\gz.
\end{align*}

Since $Tf=Tf_1+Tf_2+Tf_3$, we summarize the estimates of $\{Tf_i(x)\}_{i=1}^3$ and then complete
the proof of \eqref{eq:Tfsize} in the case $R=d(x_1, x)\ge 2A_0r$. This finishes the proof of
\eqref{eq:Tfsize}.
\end{proof}


\subsection{Proof of the regularity estimate \eqref{eq:Tfreg}}\label{reg}

\begin{proof}[Proof of \eqref{eq:Tfreg}]
Assume that $\rho:=d(x,x')\le(2A_0)^{-1}[r+d(x_1,x)]$, we show  \eqref{eq:Tfreg} by considering the following three cases.

{\it Case 1) $\rho\ge (3A_0)^{-4}[r+d(x_1,x)]$.} In this case, we have
$d(x,x')\sim r+d(x_1,x)\sim r+d(x_1,x')$,  so that the size condition \eqref{eq:Tfsize} and the doubling
condition \eqref{eq:doub} directly imply \eqref{eq:Tfreg}.

{\it Case 2) $\rho<(3A_0)^{-4}[r+d(x_1,x)]$ and $R:=d(x_1,x)<r$.}
In this case, notice that $d(x_1, x)<r$ implies that the right-hand side of \eqref{eq:Tfsize} is comparable
with $(\frac\rho{r})^\bz\frac 1{V_r(x_1)}$.
By Lemma \ref{lem:scf}, we choose a function $\vz$ such that
$\chi_{B(x,(2A_0)^2\rho)}\le\vz\le\chi_{B(x,(2A_0)^3\rho)}$ and $\|\vz\|_{\dot{C}^\eta(X)}\ls\rho^{-\eta}$.
Let $\zeta:=1-\vz$. Using (b), we write
\begin{align*}
Tf(x)&=\int_X K(x,y)[f(y)-f(x)]\vz(y)\,d\mu(y)+\lf\{\int_X K(x,y)f(y)\zeta(y)\,d\mu(y)\r.\\
&\quad\lf.+f(x)\int_X K(x,y)\vz(y)\,d\mu(y)\r\}=:\Gamma_1(x)+\Gamma_2(x).
\end{align*}

Let us first estimate $|\Gamma_1(x)-\Gamma_1(x')|$. Noticing that $\supp\varphi\subset B(x,(2A_0)^3\rho)$, we
then have
\begin{align*}
|\Gamma_1(x)|&\le
\int_{\gfz{d(x,y)<8A_0^3\rho}{d(x,y)\le(2A_0)^{-1}[r+d(x_1,x)]}}|K(x,y)||f(y)-f(x)|\,d\mu(y)\\
&\quad+\int_{\gfz{d(x,y)<8A_0^3\rho}{d(x,y)>(2A_0)^{-1}[r+d(x_1,x)]}
}|K(x,y)[|f(y)|+|f(x)|]\,d\mu(y)=:\Gamma_{1,1}+\Gamma_{1,2}.
\end{align*}
By \eqref{eq:Ksize}, the  regularity condition of $f$ and Lemma \ref{lem-add}(ii), we conclude that
\begin{align*}
\Gamma_{1,1}&\ls\int_{d(x,y)<8A_0^3\rho}\frac 1{V(x,y)}\lf[\frac{d(x,y)}{r+d(x_1,x)}\r]^\bz
\frac 1{V_r(x_1)}\,d\mu(y)\ls \lf(\frac\rho{r}\r)^\bz\frac 1{V_r(x_1)}.
\end{align*}
Likewise, using the size condition of $f$,  we obtain
\begin{align*}
\Gamma_{1,2}&\ls\frac 1{V_r(x_1)}
\int_{d(x,y)<8A_0^3\rho}\frac 1{V(x,y)}\lf[\frac{d(x,y)}{r+d(x_1,x)}\r]^\bz\,d\mu(y)\ls \lf(\frac\rho{r}\r)^\bz\frac 1{V_r(x_1)}.
\end{align*}
Therefore,
$
|\Gamma_1(x)|\ls (\frac\rho{r})^\bz\frac 1{V_r(x_1)}.
$

Notice that, if $y\in\supp\varphi\subset B(x,(2A_0)^3\rho)$, then  $d(x',y)<9A_0^4\rho$, so, by an argument
similar to the estimation of $\Gamma_1(x)$, we also conclude that
$|\Gamma_1(x')|\ls (\frac\rho{r})^\bz\frac 1{V_r(x_1)}$.
Thus, we have
$$|\Gamma_1(x)-\Gamma_1(x')|\le |\Gamma_1(x)|+|\Gamma_1(x')|\ls \lf(\frac\rho{r}\r)^\bz\frac 1{V_r(x_1)}.$$

To estimate $|\Gamma_2(x)-\Gamma_2(x')|$, by \eqref{eq:Kcany} and the fact $\int_X K(x,y)\,d\mu(y)=\int_X K(x',y)\,d\mu(y)$, we write
\begin{align*}
\Gamma_2(x)-\Gamma_2(x')
&=\int_X [K(x,y)-K(x',y)][f(y)-f(x)]\zeta(y)\,d\mu(y)\\
&\quad+[f(x)-f(x')]\int_X K( x',y)\vz(y)\,d\mu(y)=:\Gamma_{2,1}+\Gamma_{2,2}.
\end{align*}
To estimate $\Gamma_{2,1}$, by  $0\le\zeta\le 1$ and the support condition of $\zeta$, we have
\begin{align*}
|\Gamma_{2,1}|&\le\int_{\gfz{d(x,y)\ge(2A_0)^2\rho}{d(x,y)\le (2A_0)^{-1}[r+d(x_1,x)]}}
|K(x,y)-K(x',y)||f(y)-f(x)|\,d\mu(y)\\
&\quad+\int_{\gfz{d(x,y)\ge(2A_0)^2\rho}{(x,y)>(2A_0)^{-1}[r+d(x_1,x)]}}
|K(x,y)-K(x',y)|[|f(y)|+|f(x)|]\,d\mu(y)=:\Gamma_{2,1,1}+\Gamma_{2,1,2}.
\end{align*}
By \eqref{eq:Ksize}, the regularity condition of $f$, $d(x,x_1)<r$ and Lemma \ref{lem-add}(iii), we have
\begin{align*}
\Gamma_{2,1,1}&\ls\frac 1{V_r(x_1)}
\int_{d(x,y)\ge (2A_0)^2\rho}\lf[\frac{d(x,x')}{d(x,y)}\r]^{s}\frac 1{V(x,y)}
\lf[\frac{d(x,y)}{r}\r]^\bz\,d\mu(y)\\
&\ls\lf(\frac\rho{r}\r)^\bz\frac 1{V_r(x_1)}
\int_{d(x,y)\ge (2A_0)^2\rho}\lf[\frac\rho{d(x,y)}\r]^{s-\bz}\frac 1{V(x,y)}
\,d\mu(y)\ls\lf(\frac\rho{r}\r)^\bz\frac 1{V_r(x_1)}.
\end{align*}
From \eqref{eq:Ksize}, the size condition of $f$, $d(x,x_1)<r$ and Lemma \ref{lem-add}(iii), we deduce that
\begin{align*}
\Gamma_{2,1,2}&\ls\frac 1{V_r(x_1)}
\int_{\gfz{d(x,y)\ge(2A_0)^2\rho }{ d(x,y)>(2A_0)^{-1}[r+d(x_1,x)]}}
\lf[\frac{d(x,x')}{d(x,y)}\r]^{s}\frac 1{V(x,y)}\,d\mu(y)\\
&\ls \lf(\frac\rho{r}\r)^\bz\frac 1{V_r(x_1)}
\int_{
d(x,y)\ge(2A_0)^2\rho}\lf[\frac{\rho}{d(x,y)}\r]^{s-\bz}\frac 1{V(x,y)}\,d\mu(y)\ls \lf(\frac\rho{r}\r)^\bz\frac 1{V_r(x_1)}.
\end{align*}
We therefore obtain $
|\Gamma_{2,1}|\ls(\frac\rho{r})^\bz\frac 1{V_r(x_1)}.$

Now we estimate $\Gamma_{2,2}$. By Lemma \ref{lem:3.24}, the regularity of $f$ and $d(x, x_1)<r$, we know that
$$
|\Gamma_{2,2}|\ls|f(x)-f(x')|\ls\lf[\frac{d(x,x')}{r+d(x_1,x)}\r]^\bz\frac 1{V_r(x_1)+V(x_1,x)}
\lf[\frac r{r+d(x_1,x)}\r]^\gz\sim \lf(\frac\rho{r}\r)^\bz\frac 1{V_r(x_1)}.
$$
Therefore, $|\Gamma_2(x)-\Gamma_2(x')|\ls(\frac\rho{r})^\bz\frac 1{V_r(x_1)}$. Altogether, we find that
\eqref{eq:Tfreg} holds true when $\rho<(3A_0)^{-4}[r+d(x_1,x)]$ and $d(x_1,x)< r$.

{\it Case 3) $\rho<(3A_0)^{-4}[r+d(x_1,x)]$ and $R=d(x_1,x)\ge r$.}  The proof for this case was essentially
given in \cite[(3.12)]{HLW16}. We include some details here for the completeness of this article.
Notice that, in this case, the right-hand side of \eqref{eq:Tfreg} is comparable with
$(\frac\rho R)^\bz\frac 1{V_R(x_1)}(\frac rR)^\gz$. Let $u_i$ and $f_i$ be
defined as in Section \ref{size}. Then
$$
|Tf(x)-Tf(x')|\le \sum_{i=1}^3 |Tf_i(x)-Tf_i(x')|.
$$

To estimate the term $|Tf_1(x)-Tf_1(x')|$,  we write
\begin{align*}
Tf_1(x)&=\int_X K(x,y)f_1(y)\,d\mu(y)\\
&=\int_X K(x,y)\wt u(y)\lf[f_1(y)-f_1(x)\r]\,d\mu(y)\\
&\quad+\lf[\int_X K(x,y)\wt v(y)f_1(y)\,d\mu(y)+f_1(x)\int_X K(x,y)\wt u(y)\,d\mu(y)\r]
=:\Gamma_3(x)+\Gamma_4(x),
\end{align*}
where $\wt u$ is a function satisfying $\chi_{B(x,2A_0\rho)}\le\wt u\le\chi_{B(x,4A_0^2\rho)}$ and
$\|\wt{u}\|_{\dot{C}^\eta(X)}\ls\rho^{-\eta}$, and $\wt v:=1-\wt u$.
By $\supp u\subset B(x,4A_0^2\rho)$, \eqref{eq:Ksize},  \eqref{eq:1} and Lemma \ref{lem-add}(iii), we conclude that
\begin{align*}
|\Gamma_3(x)-\Gamma_3(x')|
&\le\lf|\Gamma_3(x)\r|+\lf|\Gamma_3(x')\r|\\
&\le \int_{d(x,y)<4A_0^2\rho}|K(x,y)|\lf|f_1(y)-f(x)\r|\,d\mu(y)\\
&\quad+\int_{d(x,y)<4A_0^2\rho}|K(x',y)|\lf|f_1(y)-f(x')\r|\,d\mu(y)\\
&\ls\int_{d(x,y)<4A_0^2\rho}\frac 1{V(x,y)}
\lf[\frac{d(x,y)}R\r]^\bz\frac 1{V_R(x_1)}\lf(\frac rR\r)^\gz\,d\mu(y)\\
&\quad+\int_{d(x',y)<5A_0^3\rho}\frac 1{V(x',y)}\lf[\frac{d(x',y)}R\r]^\bz
\frac 1{V_R(x_1)}\lf(\frac rR\r)^\gz\,d\mu(y)
\ls\lf(\frac\rho R\r)^\bz
\frac 1{V_R(x_1)}\lf(\frac rR\r)^\gz.
\end{align*}
To deal with the term $\Gamma_4$, using the assumption that $\int_X K(x,y)\,d\mu(y)
=\int_X K(x',y)\,d\mu(y)$, we write
\begin{align*}
\Gamma_4(x)-\Gamma_4(x')
&=\int_X [K(x,y)-K(x',y)]\wt v(y)\lf[f_1(y)-f_1(x)\r]\,d\mu(y)\\
&\quad+\lf[f_1(x)-f_1(x')\r]\int_X K(x',y)\wt u(y)\,d\mu(y)=:\Gamma_{4,1}+\Gamma_{4,2}.
\end{align*}
From $\supp\wt v\subset B(x, 2A_0\rho)^\complement$, \eqref{eq:Kreg}, \eqref{eq:2}, $\bz<s$ and Lemma
\ref{lem-add}(iii), we deduce that
\begin{align*}
|\Gamma_{4,1}|&\le \int_{d(x,y)\ge 2A_0\rho} |K(x,y)-K(x',y)|\lf|f_1(y)-f_1(x)\r|\,d\mu(y)\\
&\ls\int_{d(x,y)\ge 2A_0\rho}\lf[\frac\rho{d(x,y)}\r]^{s}\frac 1{V(x,y)}
\lf[\frac{d(x,y)}R\r]^\bz\frac 1{V_R(x_1)}\lf(\frac rR\r)^\gz\,d\mu(y)\\
&\sim\lf(\frac\rho R\r)^\bz\frac 1{V_R(x_1)}\lf(\frac rR\r)^\gz
\int_{d(x,y)\ge 2A_0\rho}\frac 1{V(x,y)}\lf[\frac\rho{d(x,y)}\r]^{s-\bz}\,d\mu(y)
\ls\lf(\frac\rho R\r)^\bz\frac 1{V_R(x_1)}\lf(\frac rR\r)^\gz.
\end{align*}
By \eqref{eq:2} and Lemma \ref{lem:3.24}, we know that
$$
|\Gamma_{4,2}|\ls\lf|f_1(x)-f_1(x')\r|\ls\lf(\frac\rho R\r)^\bz\frac 1{V_R(x_1)}\lf(\frac rR\r)^\gz,
$$
so does $|\Gamma_4(x)-\Gamma_4(x')|$. This shows that $|Tf_1(x)-Tf_1(x')|$ satisfies the desired estimate.

Now we consider the term $|Tf_2(x)-Tf_2(x')|$. Indeed, we have
\begin{align}\label{eq-reg1}
\lf|Tf_2(x)-Tf_2(x')\r|&=\lf|\int_X[K(x,y)-K(x',y)]f_2(y)\,d\mu(y)\r|\\
&\le\int_X |K(x,y)-K(x',y)-K(x,x_1)-K(x',x_1)|\lf|f_2(y)\r|\,d\mu(y)\noz\\
&\quad+|K(x,x_1)-K(x',x_1)|\lf|\int_X f_2(y)\,d\mu(y)\r|=:\Gamma_5+\Gamma_6.\noz
\end{align}
By $\supp f_2\subset B(x_1, (2A_0)^{-2}R)$, the fact that $\rho=d(x, x')\le (2A_0)^{-2}R$, \eqref{eq:Kdreg},
 the size condition of $f$,  $\gz<s$ and Lemma \ref{lem-add}(iii), we conclude that
\begin{align*}
\Gamma_5&=\int_{d(x_1,y)\le(2A_0)^{-2}R}|K(x,x_1)-K(x',x_1)-K(x,y)+K(x',y)||f(y)|\,d\mu(y)\\
&\ls\int_{d(x_1,y)\le(2A_0)^{-2}R}\frac 1{V(x, x_1)}\lf[\frac{d(x,x')} {d(x, x_1)}\r]^{s}\lf[\frac{d(x_1,y)}{d(x, x_1)}\r]^{s}
\frac1{V_r(x_1)+V(x_1,y)}\lf[\frac r{r+d(x_1,y)}\r]^\gz\,d\mu(y)\\
&\ls\lf(\frac\rho R\r)^\bz\frac 1{V_R(x)}\lf(\frac rR\r)^\gz
\int_{d(x_1,y)\le(2A_0)^{-2}R}\frac 1{V(x_1,y)}\lf[\frac{d(x_1,y)}R\r]^{s-\gz}\,d\mu(y)
\ls\lf(\frac\rho R\r)^\bz\frac 1{V_R(x_1)}\lf(\frac rR\r)^\gz.
\end{align*}
From the fact that $\rho=d(x, x')\le (2A_0)^{-1}R$, \eqref{eq:Kreg} and \eqref{eq:5}, we deduce that
$$
\Gamma_6\ls \frac 1{V(x, x_1)}\lf[\frac{d(x,x')} {d(x, x_1)}\r]^{s}\lf(\frac rR\r)^\gz
\ls\lf(\frac\rho R\r)^\bz\frac 1{V_R(x_1)}\lf(\frac rR\r)^\gz.
$$
Combining the estimates of $\Gamma_5$ and $\Gamma_6$, we know that
$$
\lf|Tf_2(x)-Tf_2(x')\r|\ls\lf(\frac\rho R\r)^\bz\frac 1{V_R(x_1)}\lf(\frac rR\r)^\gz.
$$

Finally, by \eqref{eq:Kreg}, \eqref{eq:3} and \eqref{eq:4}, we conclude that
\begin{align*}
\lf|Tf_3(x)-Tf_3(x')\r|&\le\int_{d(x,y)\ge (2A_0)^{-3}R\ge 2A_0\rho}
|K(x,y)-K(x',y)|\lf|f_3(y)\r|\,d\mu(y)\\
&\ls\int_{d(x,y)\ge (2A_0)^{-3}R\ge 2A_0\rho}\frac 1{V(x,y)}\lf[\frac{d(x,x')}{d(x,y)}\r]^s\lf|f_3(y)\r|
\,d\mu(y)\\
&\ls\lf(\frac\rho R\r)^\bz\frac 1{V_R(x)}\int_X \lf|f_3(y)\r|\,d\mu(y)
\ls\lf(\frac\rho R\r)^\bz\frac 1{V_R(x_1)}\lf(\frac rR\r)^\gz.
\end{align*}
Thus,  when $d(x,x')\le (3A_0)^{-4}[1+d(x_0,x)]$ and $R\ge\rho$, we also have
$$
|Tf(x)-Tf(x')|\ls\lf(\frac\rho R\r)^\bz\frac 1{V_R(x_1)}\lf(\frac rR\r)^\gz,
$$
as desired.

Summarizing all three cases, we complete  the proof of \eqref{eq:Tfreg} and hence of Theorem \ref{thm:Kbdd}.
\end{proof}

\subsection{Boundedness of Calder\'{o}n-Zygmund-type operators on $\CG(\bz,\gz)$}\label{s3.4}

The aim of this section is to show the following boundedness of Calder\'{o}n-Zygmund-type operators on $\CG(\bz,\gz)$, whose proof is
similar to that of Theorem \ref{thm:Kbdd}.

\begin{theorem}\label{thm:Kibdd}
Let all the notation be
as in Theorem \ref{thm:Kbdd}. Assume that there exist positive constants $C_T$,
$r_0$ and $\sigma$ such that
\begin{enumerate}
\item[{\rm (d)}] if $d(x,y)\ge r_0$, then
\begin{equation}\label{eq:Ksizeb}
|K(x,y)|\le C_T\frac 1{V(x,y)}\lf[\frac{r_0}{d(x,y)}\r]^\sigma;
\end{equation}
\item[{\rm (e)}] if $d(x,y)\ge r_0$ and $d(x,x')\le(2A_0)^{-1}d(x,y)$, then
\begin{equation}\label{eq:Kregb}
|K(x,y)-K(x',y)|\le C_T\lf[\frac{d(x,x')}{d(x,y)}\r]^{s}\frac 1{V(x,y)}\lf[\frac{r_0}{d(x,y)}\r]^\sigma.
\end{equation}
\end{enumerate}
Let $\bz\in(0,s)$ and $\gz\in(0,s)\cap(0,\sigma]$. Then, for any
 $f\in\CG(x_1,r_0,\bz,\gz)$ with $x_1\in X$,
$$
\|Tf\|_{\CG(x_1,r_0,\bz,\gz)}\le C\lf[C_T+\|T\|_{L^2(X)\to L^2(X)}+|c_0|\r]\|f\|_{\CG(x_1,r_0,\bz,\gz)},
$$
where $C$ is a positive constant independent of $x_1, r_0$, $c_0$ and $f$.
Moreover, if $T$ further  satisfies that $
\int_X K(x,y)\,d\mu(x)=0$ for any $y\in X$,
then $Tf\in\mathring{\CG}(x_1,r_0,\bz,\gz)$ for any $f\in\CG(x_1,r_0,\bz,\gz)$.
\end{theorem}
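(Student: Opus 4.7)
The plan is to mimic the proof of Theorem \ref{thm:Kbdd}, establishing the size bound
$$|Tf(x)|\ls \frac{1}{V_{r_0}(x_1)+V(x_1,x)}\lf[\frac{r_0}{r_0+d(x_1,x)}\r]^\gz$$
and its regularity counterpart (with $r_0$ playing the role of $r$ throughout), and then deducing the cancellation statement. The decisive point is that in Theorem \ref{thm:Kbdd} the mean-zero hypothesis $\int_X f\,d\mu=0$ entered only through the bound \eqref{eq:5}; since $f\in\CG(x_1,r_0,\bz,\gz)$ is no longer mean-zero, the enhanced kernel bounds (d) and (e) are deployed to make up for this missing cancellation.

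In the short-range sub-cases ($d(x_1,x)<2A_0 r_0$ for the size, and the analogous regime for the regularity), I would repeat the arguments of Case 1 of Section \ref{size} and of Case 2 of Section \ref{reg} essentially verbatim. These arguments invoke only the size/regularity of $f$, condition (b), Lemma \ref{lem:3.24} (via (c)), and the basic kernel bounds \eqref{eq:Ksize}-\eqref{eq:Kreg}, and never touch $\int_X f\,d\mu$. The mild enlargement from $R<r_0$ to $R:=d(x_1,x)<2A_0 r_0$ only inflates implicit constants, since then $r_0+R\sim r_0$.

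In the long-range case $R\ge 2A_0 r_0$, decompose $f=f_1+f_2+f_3$ via the cutoffs of Section \ref{size}. Properties \eqref{eq:1}-\eqref{eq:4} do not rely on mean-zero and persist, so the estimates of $Tf_1$ and $Tf_3$ carry over unchanged from Theorem \ref{thm:Kbdd}. For $Tf_2$, the key observation is that the support condition $\supp f_2\subset B(x_1,(2A_0)^{-2}R)$ combined with the quasi-triangle inequality forces $d(x,y)\ge R/(2A_0)\ge r_0$ for every $y\in\supp f_2$, so (d) and (e) apply directly on the support. Bypassing the $K(x,x_1)$-splitting used in the original proof, I would estimate $Tf_2$ crudely by integrating $|K|\cdot|f|$: condition (d) yields
$$|Tf_2(x)|\ls \int_X|K(x,y)||f_2(y)|\,d\mu(y)\ls \frac{1}{V_R(x_1)}\lf(\frac{r_0}{R}\r)^\sigma\|f\|_{L^1(X)}\ls \frac{1}{V_R(x_1)}\lf(\frac{r_0}{R}\r)^\gz,$$
using Lemma \ref{lem-add}(ii), $\gz\le\sigma$ and $r_0\le R$; analogously, (e) gives $|Tf_2(x)-Tf_2(x')|\ls V_R(x_1)^{-1}(\rho/R)^s(r_0/R)^\sigma$, which is absorbed into the target $V_R(x_1)^{-1}(\rho/R)^\bz(r_0/R)^\gz$ via $\bz<s$ and $\gz\le\sigma$.

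For the final assertion that $Tf\in\GO{x_1,r_0,\bz,\gz}$ under the extra hypothesis $\int_X K(x,y)\,d\mu(x)=0$ for every $y$, I would argue by approximation and Fubini: since $Tf\in L^1(X)$ is already established, write $\int_X Tf(x)\,d\mu(x)=\lim_{n\to\fz}\int_{B(x_1,n)}Tf(x)\,d\mu(x)$, swap the order of integration on each bounded set $B(x_1,n)$, and pass to the limit by dominated convergence to obtain $\int_X f(y)\bigl(\int_X K(x,y)\,d\mu(x)\bigr)\,d\mu(y)=0$. The main obstacle I anticipate is the delicate balance of exponents in the long-range $Tf_2$ bound: the cancellation-derived factor $(r_0/R)^\gz$ of the original proof must be recovered from the (d)/(e)-derived factor $(r_0/R)^\sigma$, and likewise $(\rho/R)^\bz$ from $(\rho/R)^s$; these substitutions are legitimized by, and in fact dictate, the hypotheses $\bz\in(0,s)$ and $\gz\in(0,s)\cap(0,\sigma]$.
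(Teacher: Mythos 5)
Your size and regularity estimates coincide with the paper's own proof: the paper likewise observes that the cancellation of $f$ enters only through \eqref{eq:5} and hence only in the estimate of $Tf_2$ in the long-range case $R=d(x_1,x)\ge 2A_0r_0$, notices that $d(x,y)\ge(2A_0)^{-1}R\ge r_0$ for every $y\in\supp f_2$, and replaces the $K(\cdot,x_1)$-splitting by a direct integration of $|K(x,y)||f_2(y)|$ using (d) [and, for the regularity, of $|K(x,y)-K(x',y)||f_2(y)|$ using (e)], absorbing $(r_0/R)^\sigma$ into $(r_0/R)^\gz$ via $\gz\le\sigma$ and $(\rho/R)^s$ into $(\rho/R)^\bz$ via $\bz<s$. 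The short-range case and the estimates of $Tf_1,Tf_3$ are inherited unchanged, as you anticipate, so the main body of your argument is exactly the paper's argument.

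The weak point is your treatment of the ``Moreover'' clause. The interchange of integration order on $B(x_1,n)\times X$ that you invoke is not a Tonelli application: $\int_{B(x_1,n)}\int_X|K(x,y)||f(y)|\,d\mu(y)\,d\mu(x)=\fz$, because near the diagonal $|K(x,y)|\sim 1/V(x,y)$ is not locally integrable, and the diagonal singularity lies inside $B(x_1,n)\times B(x_1,n)$ for large $n$, so truncating the $x$-variable to a ball does not tame it. In the paper this cancellation statement is only ever applied to operators (such as $R_{N,M}$, $\FR_{N,M}$, $G_N^{(M)}$) whose kernels are \emph{bounded} finite sums of compositions of $\exp$-ATIs, for which the double integral is absolutely convergent and Fubini is immediate; the paper's own proof of Theorem \ref{thm:Kibdd} indeed stops at the regularity estimate and leaves this last claim unargued. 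A rigorous general argument would need either an $L^1(X)$ hypothesis on $K(\cdot,y)$, or a truncation of $K$ \emph{along the diagonal} (say $K_\ez(x,y):=K(x,y)\mathbf{1}_{\{d(x,y)>\ez\}}$) together with a limiting argument in which the near-diagonal piece is controlled via the H\"older regularity of $f$ and condition (c) --- a genuinely different step from the ball-truncation of the $x$-variable that you proposed.
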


\begin{proof}
Similarly to the proof of Theorem \ref{thm:Kbdd}, we only need  to prove \eqref{eq:Tfsize} and \eqref{eq:Tfreg}, but now $f\in\CG(x_1,r_0,\bz,\gz)$
has no cancellation.

To show \eqref{eq:Tfsize}, following the arguments used in Section \ref{size}, we consider the cases
$R:=d(x_1,x)<2A_0r_0$ and $R:=d(x_1,x)\ge 2A_0r_0$, respectively. Observe that the cancellation condition of
$f$ is only used in the proof of \eqref{eq:5} and hence in the estimation of $Tf_2(x)$ (this is for the case
$R\ge 2A_0r_0$).
So we need to re-estimate  $Tf_2(x)$ without using  \eqref{eq:5}.   Indeed, if
$f_2(y)\neq 0$, then $d(x_1,y)< (2A_0)^{-2}R$ and hence $d(y,x)\ge(2A_0)^{-1}R\ge r_0$. From this, assumption
(b) of Theorem \ref{thm:Kbdd}, \eqref{eq:Ksizeb}, the size condition of $f$ and
Lemma \ref{lem-add}(ii), we deduce that
\begin{align*}
|Tf_2(x)|&\le \int_{d(x_1,y)\le (2A_0)^{-2}R} |K(x,y)f_2(y)|\,d\mu(y)
\ls\int_{d(x_1,y)\le (2A_0)^{-2}R} \frac 1{V(x,y)}\lf[\frac{r_0}{d(x,y)}\r]^\sigma|f(y)|\,d\mu(y)\\
&\ls\frac 1{V_{R}(x)}\lf(\frac{r_0}{R}\r)^\sigma\ls\frac 1{V_{r_0}(x_1)+V(x_1,x)}
\lf[\frac{r_0}{r_0+d(x_1,x)}\r]^\gz.
\end{align*}
This finishes the proof of the size condition \eqref{eq:Tfsize}.

Next we assume that  $\rho:=d(x,x')\le(2A_0)^{-1}[r_0+d(x_1,x)]$ and show \eqref{eq:Tfreg} by following the
arguments used in Section \ref{reg}.
Again, the  cancellation condition of $f$ was only used in dealing with the regularity of $Tf_2$ [see
\eqref{eq-reg1} in Case 3) therein].
So we re-estimate $|Tf_2(x)-Tf_2(x')|$ as follows.
Indeed, with the assumption of Case) 3, if $y\in X$ satisfies $f_2(y)\neq 0$, then $d(x,y)\ge(2A_0)^{-1}R>r_0$
and hence $d(x,x')=\rho\le (3A_0)^{-4}[r_0+R]\le (2A_0)^{-1}d(x,y)$. By this,
\eqref{eq:Kregb} and Lemma \ref{lem-add}(ii), we conclude that
\begin{align*}
|Tf_2(x)-Tf_2(x')|&\le\int_{d(x_1,y)\le (2A_0)^{-2}R} |K(x,y)-K(x',y)||f_2(y)|\,d\mu(y)\\
&\ls\int_{d(x_1,y)\le (2A_0)^{-2}R}\lf[\frac{d(x,x')}{d(x,y)}\r]^s
\frac 1{V(x,y)}\lf[\frac{r_0}{d(x,y)}\r]^\sigma|f(y)|\,d\mu(y)\\
&\ls\lf[\frac{d(x,x')}{R}\r]^s\frac 1{V_R(x)}\lf(\frac{r_0}{R}\r)^\sigma\\
&\ls\lf[\frac{d(x,x')}{r_0+d(x_1,x)}\r]^s\frac 1{V_{r_0}(x_1)+V(x_1,x)}
\lf[\frac{r_0}{r_0+d(x_1,x)}\r]^\gz.
\end{align*}
This proves \eqref{eq:Tfreg} and hence finishes the proof of Theorem \ref{thm:Kibdd}.
\end{proof}


\section{Homogeneous continuous Calder\'{o}n reproducing formulae}\label{hcrf}

Suppose that $\{Q_k\}_{k=-\fz}^\fz$ is an $\exp$-ATI. For any fixed $N\in\nn$, we have
\begin{equation}\label{eq:defRT}
I=\sum_{k=-\fz}^\fz Q_k=\sum_{k=-\fz}^\fz\sum_{l=-\fz}^\fz Q_{k+l}Q_k=
\sum_{k=-\fz}^\fz Q_k^NQ_k+\sum_{k=-\fz}^\fz\sum_{|l|>N} Q_{k+l}Q_k=:T_N+R_N
\quad\text{in}\; L^2(X),
\end{equation}
where $Q_k^N:=\sum_{|l|\le N}Q_{k+l}$ for any $k\in\zz$. To deal with the remainder $R_N$
(see Section \ref{RN}), we need to estimate compositions of $\exp$-ATIs first (see Section \ref{com}).
Then the homogeneous continuous reproducing formula is established in  Section \ref{pr}.

\subsection{Compositions of two exp-ATIs}\label{com}

In this subsection, we consider the compositions of two $\exp$-ATIs.
\begin{lemma}\label{lem:ccrf1}
Let $\{\wz Q_j\}_{j\in\zz}$ and $\{{Q}_k\}_{k\in\zz}$ be two $\exp$-{\rm ATI}s.
Fix $\eta'\in(0,\eta)$. Then, for any $j,\ k\in\zz$, the kernel of
$\wz{Q}_jQ_k$, still denoted by $\wz{Q}_jQ_k$, has the following properties:
\begin{enumerate}
\item for any $x,\ y\in X$,
\begin{align}\label{eq:mixsize}
\lf|\wz{Q}_jQ_k(x,y)\r|&\le C\dz^{|j-k|\eta}\frac 1{V_{\dz^{j\wedge k}}(x)}
\exp\lf\{-c\lf[\frac{d(x,y)}{\dz^{j\wedge k}}\r]^a\r\}\exp\lf\{-c\lf[\frac{d(x,\CY^{j\wedge k})}{\dz^{j\wedge k}}\r]^a\r\};
\end{align}
\item if $d(x,x')\le(2A_0)^{-2}d(x,y)$ with $x\neq y$, then
\begin{align}\label{eq:mixregx}
&\lf|\wz{Q}_jQ_k(x,y)-\wz{Q}_jQ_k(x',y)\r|+\lf|\wz{Q}_jQ_k(y,x)-\wz{Q}_jQ_k(y,x')\r|\\
&\quad\le C\dz^{|j-k|(\eta-\eta')}\lf[\frac{d(x,x')}{\dz^{j\wedge k}}\r]^{\eta'}
\frac 1{V_{\dz^{j\wedge k}}(x)}
\exp\lf\{-c\lf[\frac{d(x,y)}{\dz^{j\wedge k}}\r]^a\r\}\exp\lf\{-c\lf[\frac{d(x,\CY^{j\wedge k})}{\dz^{j\wedge k}}\r]^a\r\};\noz
\end{align}
\item if $d(x,x')\le(2A_0)^{-3}d(x,y)$ and $d(y,y')\le(2A_0)^{-3}d(x,y)$ with $x\neq y$, then
\begin{align}\label{eq:mixdreg}
&\lf|\lf[\wz{Q}_jQ_k(x,y)-\wz{Q}_jQ_k(x',y)\r]-\lf[\wz{Q}_jQ_k(x,y')-\wz{Q}_jQ_k(x',y')\r]\r|\\
&\noz\quad\le C\dz^{|j-k|(\eta-\eta')}\lf[\frac{d(x,x')}{\dz^{j\wedge k}}\r]^{\eta'}
\lf[\frac{d(y,y')}{\dz^{j\wedge k}}\r]^{\eta'}
\frac 1{V_{\dz^{j\wedge k}}(x)}
\exp\lf\{-c\lf[\frac{d(x,y)}{\dz^{j\wedge k}}\r]^a\r\}\\
&\qquad\times\exp\lf\{-c\lf[\frac{d(x,\CY^{j\wedge k})}{\dz^{j\wedge k}}\r]^a\r\};
\noz
\end{align}
\item for any $x,\ y\in X$,
\begin{equation}\label{eq:mixcan}
\int_X \wz{Q}_jQ_k(x,y')\,d\mu(y')=0=\int_X \wz{Q}_jQ_k(x',y)\,d\mu(x'),
\end{equation}
\end{enumerate}
where, in (i), (ii) and (iii), $C$ and $c$ are positive constants independent of $j$, $k$, $x$, $y$, $x'$ and
$y'$.
Moreover, according to Remark \ref{rem:andef}, in (i), (ii) and (iii), the factors ${V_{\dz^{j\wedge k}}(x)}$
and $d(x,\CY^{j\wedge k})$ can be replaced, respectively, by ${V_{\dz^{j\wedge k}}(y)}$
and $d(y,\CY^{j\wedge k})$, but with the factor $\exp\{-c[\frac{d(x,y)}{\dz^{j\wedge k}}]^a\}$
replaced by $\exp\{-c'[\frac{d(x,y)}{\dz^{j\wedge k}}]^a\}$ for some $c'\in(0,c)$ independent of $j$, $k$, $x$,
$y$, $x'$ and $y'$.
\end{lemma}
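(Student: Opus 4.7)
The cancellation property (iv) is immediate from Fubini's theorem together with the individual cancellation conditions of $\wz Q_j$ and $Q_k$. For (i)--(iii), the roles of $(j,\wz Q_j)$ and $(k,Q_k)$ are symmetric (this symmetry also furnishes the alternate form claimed in the last sentence of the lemma), so I assume throughout $j\ge k$; hence $j\wedge k=k$ and $\dz^j\le\dz^k$. The main identity, obtained from the cancellation of $\wz Q_j$ in its second argument, is
$$
\wz Q_j Q_k(x,y)=\int_X \wz Q_j(x,z)\bigl[Q_k(z,y)-Q_k(x,y)\bigr]\,d\mu(z).
$$

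To prove the size estimate (i), I split the $z$-integral according to whether $d(x,z)\le(2A_0)^{-1}\dz^k$ or not. On the small region, Proposition \ref{prop:etoa} (in the equivalent form (iii)$'$) yields $|Q_k(z,y)-Q_k(x,y)|\ls[d(x,z)/\dz^k]^\eta$ times the size kernel of $Q_k$, and integration against the exponential localization of $\wz Q_j$ at scale $\dz^j$ produces the desired gain $\dz^{(j-k)\eta}$. On the large region, I apply the elementary absorbing inequality
$$
\exp\lf\{-\nu[d(x,z)/\dz^j]^a\r\}\le C_\eta\,\dz^{(j-k)\eta}\exp\lf\{-\nu'[d(x,z)/\dz^j]^a\r\},
$$
valid for some $\nu'\in(0,\nu)$ when $d(x,z)\ge(2A_0)^{-1}\dz^k$, to absorb the worst-case factor $[d(x,z)/\dz^k]^\eta\ge 1$ into the exponential of $\wz Q_j$. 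In both regions, the quasi-triangle inequality, the identity $A^a\le(A-B)^a+B^a$ already exploited in Proposition \ref{prop:etoa}, and the bound $d(z,\CY^k)\ge d(x,\CY^k)/A_0-d(x,z)$ allow me to pull the exponentials $\exp\{-c[d(x,y)/\dz^k]^a\}$ and $\exp\{-c[d(x,\CY^k)/\dz^k]^a\}$ outside the $z$-integral as constant factors, while the doubling condition \eqref{eq:doub} converts $V_{\dz^j}(x)^{-1}$ into the prefactor $V_{\dz^k}(x)^{-1}$.

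For the regularity estimates (ii) and (iii), I use a case analysis on $d(x,x')$ relative to $\dz^k$ (and, in (iii), also on $d(y,y')$). When $d(x,x')>\dz^k$ the bound $[d(x,x')/\dz^k]^{\eta'}\ge 1$ combined with $\dz^{|j-k|\eta}\le\dz^{|j-k|(\eta-\eta')}$ (since $\dz\in(0,1)$) makes (ii) a formal consequence of (i) applied at the two endpoints. When $d(x,x')\le\dz^k$, I re-run the cancellation argument, invoking the regularity of $Q_k$ in its first argument (for (ii)) or its second difference regularity from Definition \ref{def:eti}(iv) (for (iii)), each delivering a factor $[d(x,x')/\dz^k]^\eta$ (and $[d(y,y')/\dz^k]^\eta$ in (iii)); sacrificing $\eta-\eta'$ of smoothness then produces the stated $\dz^{(j-k)(\eta-\eta')}$ loss together with the required $\eta'$-H\"older factor. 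Regularity in the $y$-argument is handled by reversing the roles of $\wz Q_j$ and $Q_k$ at the outset. The chief technical obstacle is the careful bookkeeping of the exponential constants: each invocation of $A^a\le(A-B)^a+B^a$ or of the absorbing inequality above weakens $\nu$ slightly, so the argument must be organized to perform only finitely many such reductions, with the final constants $c$, $c'$ chosen accordingly small. The alternate form claimed at the end of the lemma (with $y$ in place of $x$ and $d(y,\CY^{j\wedge k})$ in place of $d(x,\CY^{j\wedge k})$) then follows from Remark \ref{rem:andef} together with the standard doubling exchange $V_{\dz^k}(x)\leftrightarrow V_{\dz^k}(y)$ financed by the exponential decay in $d(x,y)$.
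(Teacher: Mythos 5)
Your overall strategy matches the paper's: (iv) by Fubini, and for (i)--(iii) you reduce to $j\ge k$, insert the cancellation of the finer-scale operator $\wz Q_j$ to write $\wz Q_jQ_k(x,y)=\int\wz Q_j(x,z)[Q_k(z,y)-Q_k(x,y)]\,d\mu(z)$, split the $z$-integral, absorb powers of $\dz^{j-k}$ into the exponential of $\wz Q_j$ on the far region, and finish the regularity estimates by a geometric mean with the size bound. Your absorbing inequality is correct as stated for $j\ge k$. The identification $\wz Q_jQ_k(y,x)=Q_k^*\wz Q_j^*(x,y)$, though not what the paper literally does, is a legitimate alternative way to dispatch the second term of (ii) and (iii): the paper instead estimates $|\wz Q_jQ_k(y,x)-\wz Q_jQ_k(y,x')|$ directly (without subtracting a reference value) because there the $d(x,x')$-dependence already enters at the correct scale $\dz^k$ through the second-argument regularity of $Q_k$, which is slightly shorter.

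The one substantive imprecision is in your account of where the H\"older factor $[d(x,x')/\dz^k]^\eta$ comes from in (ii) (and its analogue in (iii)). You say that, in the regime $d(x,x')\le\dz^k$, the regularity of $Q_k$ in its first argument "delivers a factor $[d(x,x')/\dz^k]^\eta$". It does not: applied to $Q_k(z,y)-Q_k(x,y)$ it delivers $[d(x,z)/\dz^k]^\eta$. The $d(x,x')$-dependence emerges only after integrating in $z$, and the way it emerges depends on the size of $d(x,z)$ relative to $d(x,x')$; a second split on $d(x,z)$ (the paper's $\RY_4$, $\RY_5$, $\RY_6$, i.e.\ $d(x,z)<2A_0d(x,x')$, intermediate, and $d(x,z)>(2A_0)^{-1}d(x,y)$) is essential. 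In the near region the size of $\wz Q_j$ plus the regularity of $Q_k$ plus integration over the small ball produces $[d(x,x')/\dz^k]^\eta$; in the middle and far regions the H\"older factor comes from the regularity of $\wz Q_j$ at scale $\dz^j$, and one must then use the regularity (or size, when $d(x,z)>\dz^k$) of $Q_k$ together with the $z$-integral to convert the scale from $\dz^j$ to $\dz^k$. Your binary case split on $d(x,x')$ versus $\dz^k$ does not by itself handle the delicate intermediate situation $\dz^j<d(x,x')\le\dz^k$, where the unmodified regularity of $\wz Q_j$ from Definition \ref{def:eti}(iii) is unavailable and one must pass through the equivalent form from Remark \ref{rem:andef}(iii); this is precisely what the paper's $\RY_5$ region is designed for. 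For (iii), the same caveat applies and in fact the paper needs one more case analysis (a four-way split according to the sizes of $d(y,y')$ and $d(x,z)$ against $\dz^k$, encapsulated in the claim \eqref{eq-x0}) before integrating against $\wz Q_j(x,z)-\wz Q_j(x',z)$ over the $W_1,W_2,W_3$ regions. So the architecture of your argument is right, but as a certificate it is missing the inner split on $d(x,z)$ that actually produces the stated H\"older factor.
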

\begin{proof}
For any $j,\ k\in\zz$ and $x,\ y\in X$, we have $\wz{Q}_jQ_k(x,y)=\int_X\wz{Q}_j(x,z)Q_k(z,y)\,d\mu(z)$.
By \eqref{eq:etisize}, the dominated convergence theorem and the cancellation property of
$\wz{Q}_j$ and $Q_k$ (see Definition \ref{def:eti}), we then have \eqref{eq:mixcan}.
So it remains to prove \eqref{eq:mixsize}, \eqref{eq:mixregx} and \eqref{eq:mixdreg}.
By symmetry, we may as well assume that $j\ge k$.

First we show  \eqref{eq:mixsize}. By the cancellation of $\wz{Q}_j$, we have
\begin{align*}
\lf|\wz{Q}_jQ_k(x,y)\r|& 
=\lf|\int_X\wz{Q}_j(x,z)[Q_k(z,y)-Q_k(x,y)]\,d\mu(z)\r|\\
&\le\int_{d(x,z)\le\dz^k}\lf|\wz{Q}_j(x,z)\r||Q_k(z,y)-Q_k(x,y)|\,d\mu(z)+
\int_{d(x,z)>\dz^k}\lf|\wz{Q}_j(x,z)\r||Q_k(z,y)|\,d\mu(z)\\
&\quad+|Q_k(x,y)|\int_{d(x,z)>\dz^k}\lf|\wz{Q}_j(x,z)\r|\,d\mu(z)=:\RY_1+\RY_2+\RY_3.
\end{align*}
From the size condition of $\wz{Q}_j$, the smooth condition of $Q_k$ and Lemma \ref{lem-add}(ii), we deduce
that
\begin{align*}
\RY_1&\ls\int_{d(x,z)\le\dz^k}\frac 1{\sqrt{V_{\dz^j}(x)V_{\dz^j}(z)}}
\exp\lf\{-\nu\lf[\frac{d(x,z)}{\dz^j}\r]^a\r\}
\lf[\frac{d(x,z)}{\dz^k}\r]^\eta\frac 1{\sqrt{V_{\dz^k}(x)V_{\dz^k}(y)}}\\
&\quad\times\exp\lf\{-\nu\lf[\frac{d(x,y)}{\dz^k}\r]^a\r\}\exp\lf\{-\nu\lf[\frac{d(x,\CY^k)}{\dz^k}\r]^a\r\}
\,d\mu(z)\\
&\ls\dz^{(j-k)\eta}\frac 1{\sqrt{V_{\dz^k}(x)V_{\dz^k}(y)}}\exp\lf\{-\nu\lf[\frac{d(x,y)}{\dz^k}\r]^a\r\}
\exp\lf\{-\nu\lf[\frac{d(x,\CY^k)}{\dz^k}\r]^a\r\}\\
&\quad\times\int_{d(x,z)\le\dz^k}\frac 1{\sqrt{V_{\dz^j}(x)V_{\dz^j}(z)}}\lf[\frac{d(x,z)}{\dz^j}\r]^\eta
\exp\lf\{-\nu\lf[\frac{d(x,z)}{\dz^j}\r]^a\r\}\,d\mu(z)\\
&\ls\dz^{(j-k)\eta}\frac 1{\sqrt{V_{\dz^k}(x)V_{\dz^k}(y)}}\exp\lf\{-\nu\lf[\frac{d(x,y)}{\dz^k}\r]^a\r\}
\exp\lf\{-\nu\lf[\frac{d(x,\CY^k)}{\dz^k}\r]^a\r\}.
\end{align*}
By the size conditions of $\wz{Q}_j$, $Q_k$ and Lemma \ref{lem-add}(ii), we conclude that
\begin{align*}
\RY_2&\ls\int_{d(x,z)>\dz^k} \frac 1{\sqrt{V_{\dz^j}(x)V_{\dz^j}(z)}}\exp\lf\{-\nu\lf[\frac{d(x,z)}{\dz^j}\r]^a\r\}\\
&\quad\times
\frac 1{\sqrt{V_{\dz^k}(x)V_{\dz^k}(y)}}\exp\lf\{-\nu\lf[\frac{d(y,z)}{\dz^k}\r]^a\r\}
\exp\lf\{-\nu\lf[\frac{d(y,\CY^k)}{\dz^k}\r]^a\r\}\,d\mu(z).
\end{align*}
Notice that $a\in(0,1]$. From the inequality $[d(x,y)]^a\le A_0^a([d(x,z)]^a+[d(y,z)]^a)$, it follows that
\begin{align}\label{eq-add7}
\exp\lf\{-\frac\nu2\lf[\frac{d(x,z)}{\dz^j}\r]^a\r\}\exp\lf\{-\frac\nu2\lf[\frac{d(y,z)}{\dz^k}\r]^a\r\}\le \exp\lf\{-\frac\nu2\lf[\frac{d(x,y)}{A_0\dz^k}\r]^a\r\}.
\end{align}
Similarly, from  the fact $[d(x,\CY^k)]^a\le A_0^a([d(x,y)]^a+[d(y,\CY^k)]^a)$, it also follows that
\begin{align}\label{eq-add8}
\exp\lf\{-\frac\nu4\lf[\frac{d(x,y)}{A_0\dz^k}\r]^a\r\}
\exp\lf\{-\nu\lf[\frac{d(y,\CY^k)}{\dz^k}\r]^a\r\}
\le \exp\lf\{-\frac\nu4\lf[\frac{d(x,\CY^k)}{A_0\dz^k}\r]^a\r\}.
\end{align}
Combining the above two formulae and Lemma \ref{lem-add}(ii), together with the fact
$\exp\{-\frac\nu4[\frac{d(x,z)}{\dz^j}]^a\}\le  \exp\{-\frac\nu4\dz^{(k-j)a}\}\ls \dz^{(j-k)\eta}$, we find that
\begin{align*}
\RY_2&\ls\dz^{(j-k)\eta}\frac 1{\sqrt{V_{\dz^k}(x)V_{\dz^k}(y)}} \exp\lf\{-\frac\nu4\lf[\frac{d(x,y)}{A_0\dz^k}\r]^a\r\}\exp\lf\{-\frac\nu4\lf[\frac{d(x,\CY^k)}{A_0\dz^k}\r]^a\r\}.
\end{align*}
Again, by the size conditions of $\wz{Q}_j$ and $Q_k$, and Lemma \ref{lem-add}(ii), we obtain
\begin{align*}
\RY_3&\ls\frac 1{\sqrt{V_{\dz^k}(x)V_{\dz^k}(y)}}\exp\lf\{-\nu\lf[\frac{d(x,\CY^k)}{\dz^k}\r]^a\r\}
\exp\lf\{-\nu\lf[\frac{d(x,y)}{\dz^k}\r]^a\r\}\\
&\quad\times\int_{d(x,z)>\dz^k}\frac 1{\sqrt{V_{\dz^j}(x)V_{\dz^j}(z)}}\exp\lf\{-\nu\lf[\frac{d(x,z)}{\dz^j}\r]^a\r\}\,d\mu(z)\\
&\ls\exp\lf\{-\frac\nu 2 \dz^{(k-j)a}\r\}\frac 1{\sqrt{V_{\dz^k}(x)V_{\dz^k}(y)}}\exp\lf\{-\nu\lf[\frac{d(x,\CY^k)}{\dz^k}\r]^a\r\}
\exp\lf\{-\nu\lf[\frac{d(x,y)}{\dz^k}\r]^a\r\},
\end{align*}
which is desired, because $\exp\{-\frac\nu 2 \dz^{(k-j)a}\}\ls \dz^{(j-k)\eta}$.
Therefore, we obtain the size condition \eqref{eq:mixsize}.

Next we consider $|\wz{Q}_jQ_k(x,y)-\wz{Q}_jQ_k(x',y)|$ in \eqref{eq:mixregx}, where
$d(x,x')\le(2A_0)^{-2}d(x,y)$ with $x\neq y$. 
Notice that $d(x,x')\le(2A_0)^{-2}d(x,y)$ implies that  $(\frac 43A_0)^{-1}d(x',y)\le d(x,y)\le \frac 43A_0d(x',y)$.
On one hand, by this and \eqref{eq:mixsize}, we find that
\begin{align}\label{eq-add6}
&\lf|\wz{Q}_jQ_k(x,y)-\wz{Q}_jQ_k(x',y)\r|\\
&\quad\le\lf|\wz{Q}_jQ_k(x,y)\r|+\lf|\wz{Q}_jQ_k(x',y)\r|\notag\\
&\quad\ls\dz^{(j-k)\eta}\frac 1{\sqrt{V_{\dz^k}(x)V_{\dz^k}(y)}} \exp\lf\{-c\lf[\frac{d(x,y)}{\dz^k}\r]^a\r\}\exp\lf\{-c\lf[\frac{d(x,\CY^k)}{\dz^k}\r]^a\r\}.\notag
\end{align}
On the other hand, we write
\begin{align*}
&\lf|\wz{Q}_jQ_k(x,y)-\wz{Q}_jQ_k(x',y)\r|\\
&\quad=\lf|\int_X\lf[\wz{Q}_j(x,z)-\wz{Q}_j(x',z)\r][Q_k(z,y)-Q_k(x,y)]\,d\mu(z)\r|\\
&\quad\le\int_{d(x,z)<2A_0d(x,x')}\lf|\wz{Q}_j(x,z)-\wz{Q}_j(x',z)\r||Q_k(z,y)-Q_k(x,y)|\,d\mu(z)\\
&\qquad+\int_{2A_0d(x,x')\le d(x,z)\le(2A_0)^{-1}d(x,y)}\cdots
+\int_{d(x,z)>(2A_0)^{-1}d(x,y)}\cdots=:\RY_4+\RY_5+\RY_6.
\end{align*}
For the term $\RY_4$, noticing that $d(x,z)<2A_0d(x,x')\le (2A_0)^{-1}d(x,y)$, we apply the
regularity condition to $|Q_k(z,y)-Q_k(x,y)|$ [see Remark \ref{rem:andef}(iii)], and then use Lemma
\ref{lem-add}(ii) to deduce that
\begin{align*}
\RY_4&\ls\frac 1{\sqrt{V_{\dz^k}(x)V_{\dz^k}(y)}}\exp\lf\{-\nu'\lf[\frac{d(x,y)}{\dz^k}\r]^a\r\}
\exp\lf\{-\nu'\lf[\frac{d(x,\CY^k)}{\dz^k}\r]^a\r\}\\
&\quad\times\int_{d(x,z)<2A_0d(x,x')}
\lf[\lf|\wz{Q}_j(x,z)\r|+\lf|\wz{Q}_j(x',z)\r|\r]\lf[\frac{d(x,z)}{\dz^k}\r]^\eta\,d\mu(z)
\\
&\ls\lf[\frac{d(x,x')}{\dz^k}\r]^\eta\frac 1{\sqrt{V_{\dz^k}(x)V_{\dz^k}(y)}}\exp\lf\{-\nu'\lf[\frac{d(x,y)}{\dz^k}\r]^a\r\}
\exp\lf\{-\nu'\lf[\frac{d(x,\CY^k)}{\dz^k}\r]^a\r\}.
\end{align*}
To deal with $\RY_5$, we first use the regularity conditions to estimate both $|\wz{Q}_j(x,z)-\wz{Q}_j(x',z)|$
and $|Q_k(z,y)-Q_k(x,y)|$ [see Remark \ref{rem:andef}(iii)], and then  use Lemma \ref{lem-add}(ii) to conclude
that
\begin{align*}
\RY_5&\ls\frac 1{\sqrt{V_{\dz^k}(x)V_{\dz^k}(y)}}\exp\lf\{-\nu'\lf[\frac{d(x,y)}{\dz^k}\r]^a\r\}
\exp\lf\{-\nu'\lf[\frac{d(x,\CY^k)}{\dz^k}\r]^a\r\} \\
&\quad\times\int_X \lf[\frac{d(x,x')}{\dz^j}\r]^\eta
\frac 1{\sqrt{V_{\dz^j}(x)V_{\dz^j}(z)}}\exp\lf\{-\nu'\lf[\frac{d(x,z)}{\dz^j}\r]^a\r\}
\lf[\frac{d(x,z)}{\dz^k}\r]^\eta\,d\mu(z)\\
&\ls\lf[\frac{d(x,x')}{\dz^k}\r]^\eta\frac 1{\sqrt{V_{\dz^k}(x)V_{\dz^k}(y)}}\exp\lf\{-\nu'\lf[\frac{d(x,y)}{\dz^k}\r]^a\r\}
\exp\lf\{-\nu'\lf[\frac{d(x,\CY^k)}{\dz^k}\r]^a\r\}.
\end{align*}
For the term $\RY_6$, noticing that $d(x,x')\le (2A_0)^{-2}d(x,y)<(2A_0)^{-1}d(x,z)$, we  apply the regularity of $|\wz{Q}_j(x,z)-\wz{Q}_j(x',z)|$ [see Remark \ref{rem:andef}(iii)],
together with the size conditions of $Q_k$ when $d(x,z)> \dz^k$ and the regularity of $Q_k$ when $d(x,z)\le \dz^k$, 
to obtain
\begin{align*}
\RY_6&\ls\int_{d(x,z)>(2A_0)^{-1}d(x,y)}
\lf[\frac{d(x,x')}{\dz^j}\r]^\eta\frac 1{\sqrt{V_{\dz^j}(x)V_{\dz^j}(z)}}\exp\lf\{-\nu'\lf[\frac{d(x,z)}{\dz^j}\r]^a\r\}
\min\lf\{1,\;\lf[\frac{d(x,z)}{\dz^k}\r]^\eta\r\}\\
&\quad\times\frac 1{V_{\dz^k}(y)}\lf(\exp\lf\{-\nu'\lf[\frac{d(y,z)}{\dz^k}\r]^a\r\}
+\exp\lf\{-\nu'\lf[\frac{d(x,y)}{\dz^k}\r]^a\r\}\r)\exp\lf\{-\nu'\lf[\frac{d(y,\CY^k)}{\dz^k}\r]^a\r\}\,d\mu(z).
\end{align*}
As in the estimation of \eqref{eq-add7}, we always have
$$
\exp\lf\{-\frac{\nu'} 2\lf[\frac{d(x,z)}{\dz^j}\r]^a\r\}
\lf(\exp\lf\{-\nu'\lf[\frac{d(y,z)}{\dz^k}\r]^a\r\}
+\exp\lf\{-\nu'\lf[\frac{d(x,y)}{\dz^k}\r]^a\r\}\r)
\ls \exp\lf\{-\frac{\nu'}2\lf[\frac{d(x,y)}{A_0\dz^k}\r]^a\r\}
$$
for some positive constant $c$ independent of $x,\ y,\ z$ and $k,\ j$. Consequently,
\begin{align*}
\RY_6&\ls
\lf[\frac{d(x,x')}{\dz^k}\r]^\eta\frac 1{V_{\dz^k}(y)}\exp\lf\{-\frac{\nu'}2\lf[\frac{d(x,y)}{A_0\dz^k}\r]^a\r\} \exp\lf\{-\nu'\lf[\frac{d(y,\CY^k)}{\dz^k}\r]^a\r\}\\
&\quad\times
\int_{d(x,z)>(2A_0)^{-1}d(x,y)}
\lf[\frac{d(x,z)}{\dz^j}\r]^\eta\frac 1{\sqrt{V_{\dz^j}(x)V_{\dz^j}(z)}}\exp\lf\{-\frac{\nu'} 2\lf[\frac{d(x,z)}{\dz^j}\r]^a\r\}\,d\mu(z)\\
&\ls\lf[\frac{d(x,x')}{\dz^k}\r]^\eta\frac 1{\sqrt{V_{\dz^k}(x)V_{\dz^k}(y)}}\exp\lf\{-\frac{\nu'}4\lf[\frac{d(x,y)}{A_0^2\dz^k}\r]^a\r\} \exp\lf\{-\frac{\nu'}4\lf[\frac{d(x,\CY^k)}{A_0^2\dz^k}\r]^a\r\},
\end{align*}
where in the last step we also used \eqref{eq-add8}.

Combining the estimates of $\RY_4$ through $\RY_6$, we conclude that
\begin{align}\label{eq-add9}
&\lf|\wz{Q}_jQ_k(x,y)-\wz{Q}_jQ_k(x',y)\r|\\
&\quad\ls \lf[\frac{d(x,x')}{\dz^k}\r]^\eta\frac 1{\sqrt{V_{\dz^k}(x)V_{\dz^k}(y)}}\exp\lf\{-\frac{\nu'}4\lf[\frac{d(x,y)}{A_0\dz^k}\r]^a\r\} \exp\lf\{-\nu'\lf[\frac{d(x,\CY^k)}{A_0\dz^k}\r]^a\r\}.\notag
\end{align}
Taking the geometric means between \eqref{eq-add6} and \eqref{eq-add9}, we then obtain the desired estimate of
$|\wz{Q}_jQ_k(x,y)-\wz{Q}_jQ_k(x',y)|$ in \eqref{eq:mixregx}.

Now we estimate $|\wz{Q}_jQ_k(y,x)-\wz{Q}_jQ_k(y,x')|$ in  \eqref{eq:mixregx}.
Again $d(x,x')\le(2A_0)^{-2}d(x,y)$ implies that  $(\frac 43A_0)^{-1}d(x',y)\le d(x,y)\le \frac 43A_0d(x',y)$.
On one hand, by  \eqref{eq:mixsize}, we have
\begin{align}\label{eq-add66}
\lf|\wz{Q}_jQ_k(y,x)-\wz{Q}_jQ_k(y,x')\r|\ls\frac {\dz^{(j-k)\eta}}{\sqrt{V_{\dz^k}(x)V_{\dz^k}(y)}} \exp\lf\{-c\lf[\frac{d(x,y)}{\dz^k}\r]^a\r\}\exp\lf\{-c\lf[\frac{d(x,\CY^k)}{\dz^k}\r]^a\r\}.
\end{align}
On the other hand, using the size condition of $\wz Q_j$, as well the size and the regularity conditions of
$Q_k$, and invoking (i) and (ii) of Remark \ref{rem:andef}, we find that
\begin{align*}
&\lf|\wz{Q}_jQ_k(y,x)-\wz{Q}_jQ_k(y,x')\r|\\
&\quad=\lf|\int_X\wz{Q}_j(y,z)[Q_k(z,x)-Q_k(z,x')]\,d\mu(z)\r|\\
&\quad\ls \int_X \frac1{V_{\dz^j}(y)V_{\dz^k}(z)}
\min\lf\{1,\lf[\frac{d(x,x')}{\dz^k}\r]^\eta\r\}\exp\lf\{-\nu'\lf[\frac{d(z,\CY^k)}{\dz^k}\r]^a\r\}\\
&\qquad\times
\exp\lf\{-\nu'\lf[\frac{d(y,z)}{\dz^j}\r]^a\r\}
\lf(\exp\lf\{-\nu'\lf[\frac{d(z,x)}{\dz^k}\r]^a\r\}
+\exp\lf\{-\nu'\lf[\frac{d(z,x')}{\dz^k}\r]^a\r\}\r)\,d\mu(z).
\end{align*}
Noticing that $k\le j$, by \eqref{eq-add8}, we have
\begin{align*}
&\min\lf\{1,\lf[\frac{d(x,x')}{\dz^k}\r]^\eta\r\}\exp\lf\{-\nu'\lf[\frac{d(z,\CY^k)}{\dz^k}\r]^a\r\}
\exp\lf\{-\frac{\nu'}{4}\lf[\frac{d(y,z)}{\dz^j}\r]^a\r\}\\
&\quad\ls\lf[\frac{d(x,x')}{\dz^k}\r]^\eta \exp\lf\{-\nu'\lf[\frac{d(z,\CY^k)}{\dz^k}\r]^a\r\}
\exp\lf\{-\frac{\nu'}{4}\lf[\frac{d(y,z)}{\dz^k}\r]^a\r\}\\
&\quad\ls\lf[\frac{d(x,x')}{\dz^k}\r]^\eta \exp\lf\{-\frac{\nu'}{4}\lf[\frac{d(y,\CY^k)}{A_0\dz^k}\r]^a\r\}.
\end{align*}
By \eqref{eq:doub}, we obtain
$
\frac1{V_{\dz^k}(z)}\ls \frac1{V_{\dz^k}(x)} [1+\frac{d(x,z)}{\dz^k}]^\omega,
$
which, together with $d(x,x')\ls d(x,y)$, further implies that
\begin{align*}
&\frac1{V_{\dz^k}(z)}\lf(\exp\lf\{-\nu'\lf[\frac{d(z,x)}{\dz^k}\r]^a\r\}
+\exp\lf\{-\nu'\lf[\frac{d(z,x')}{\dz^k}\r]^a\r\}\r)\\
&\quad\ls \frac1{V_{\dz^k}(x)}\lf[1+\frac{d(x,y)}{\dz^k}\r]^\omega \lf(\exp\lf\{-\frac{\nu'}2\lf[\frac{d(z,x)}{\dz^k}\r]^a\r\}
+\exp\lf\{-\frac{\nu'}2\lf[\frac{d(z,x')}{\dz^k}\r]^a\r\}\r).
\end{align*}
As in the estimation of \eqref{eq-add7}, we always have
\begin{align*}
&\exp\lf\{-\frac{\nu'}2\lf[\frac{d(y,z)}{\dz^j}\r]^a\r\}\lf(\exp\lf\{-\frac{\nu'}2\lf[\frac{d(z,x)}{\dz^k}\r]^a\r\}
+\exp\lf\{-\frac{\nu'}2\lf[\frac{d(z,x')}{\dz^k}\r]^a\r\}\r)\\
&\quad\le \exp\lf\{-\frac{\nu'}2\lf[\frac{d(y,x)}{A_0\dz^k}\r]^a\r\}
+\exp\lf\{-\frac{\nu'}2\lf[\frac{d(y,x')}{A_0\dz^k}\r]^a\r\} \le 2\exp\lf\{-\frac{\nu'}2\lf[\frac{d(y,x)}{2A_0^2\dz^k}\r]^a\r\}.
\end{align*}
Applying the above estimates, Lemma \ref{lem-add}(ii) and \eqref{eq-add8}, we conclude that
\begin{align*}
&\lf|\wz{Q}_jQ_k(y,x)-\wz{Q}_jQ_k(y,x')\r|\\
&\quad\ls \lf[\frac{d(x,x')}{\dz^k}\r]^\eta
\frac1{V_{\dz^k}(x)}\exp\lf\{-\frac{\nu'}4\lf[\frac{d(y,\CY^k)}{\dz^k}\r]^a\r\}
\exp\lf\{-\frac{\nu'}4\lf[\frac{d(y,x)}{2A_0^2\dz^k}\r]^a\r\}\\
&\qquad\times\int_X \frac1{V_{\dz^j}(y)}
\exp\lf\{-\frac{\nu'} 4\lf[\frac{d(y,z)}{\dz^j}\r]^a\r\}\,d\mu(z)\\
&\quad\ls \lf[\frac{d(x,x')}{\dz^k}\r]^\eta \frac1{V_{\dz^k}(x)}
\exp\lf\{-c\lf[\frac{d(x,\CY^k)}{\dz^k}\r]^a\r\}\exp\lf\{-c\lf[\frac{d(y,x)}{\dz^k}\r]^a\r\}.
\end{align*}
Taking the  geometric means between the last estimate and \eqref{eq-add66}, we obtain the desired estimate of
$|\wz{Q}_jQ_k(y,x)-\wz{Q}_jQ_k(y,x')|$ in \eqref{eq:mixregx}.

Finally, we prove \eqref{eq:mixdreg}.
If $d(x,x')\le(2A_0)^{-3}d(x,y)$ and $d(y,y')\le(2A_0)^{-3}d(x,y)$ with $x\neq y$, by \eqref{eq:mixsize} and
(i) and (ii) of Remark \ref{rem:andef}, we conclude that
\begin{align}\label{eq:*1}
&\lf|\lf[\wz{Q}_jQ_k(x,y)-\wz{Q}_jQ_k(x',y)\r]-\lf[\wz{Q}_jQ_k(x,y')-\wz{Q}_jQ_k(x',y')\r]\r|\\
&\quad\ls\dz^{(j-k)\eta}\frac{1}{V_{\dz^k}(x)}\exp\lf\{-c\lf[\frac{d(x,y)}{\dz^k}\r]^a\r\}
\exp\lf\{-c\lf[\frac{d(y,\CY^k)}{\dz^k}\r]^a\r\}.\noz
\end{align}
Once we have proved that, when $d(x,x')\le(2A_0)^{-3}d(x,y)$ and $d(y,y')\le(2A_0)^{-3}d(x,y)$,
\begin{align}\label{eq:mixdreg1}
&\lf|\lf[\wz{Q}_jQ_k(x,y)-\wz{Q}_jQ_k(x',y)\r]-\lf[\wz{Q}_jQ_k(x,y')-\wz{Q}_jQ_k(x',y')\r]\r|\\
&\quad\ls\lf[\frac{d(x,x')}{\dz^k}\r]^\eta\lf[\frac{d(y,y')}{\dz^k}\r]^\eta\frac{1}{V_{\dz^k}(x)}
\exp\lf\{-c\lf[\frac{d(x,y)}{\dz^k}\r]^a\r\}\exp\lf\{-c\lf[\frac{d(y,\CY^k)}{\dz^k}\r]^a\r\},\noz
\end{align}
then taking the geometric mean between \eqref{eq:*1} and \eqref{eq:mixdreg1} gives \eqref{eq:mixdreg}.
Thus, it remains to prove \eqref{eq:mixdreg1}.

To show \eqref{eq:mixdreg1}, we observe that
\begin{align*}
&\lf|\lf[\wz{Q}_jQ_k(x,y)-\wz{Q}_jQ_k(x',y)\r]-\lf[\wz{Q}_jQ_k(x,y')-\wz{Q}_jQ_k(x',y')\r]\r|\\
&\quad\le\int_X\lf|\wz{Q}_j(x,z)-\wz{Q}_j(x',z)\r||Q_k(x,y)-Q_k(z,y)-Q_k(x,y')+Q_k(z,y')|\,d\mu(z)\\
&\quad=\sum_{i=1}^3\int_{W_i}\lf|\wz{Q}_j(x,z)-\wz{Q}_j(x',z)\r|
|Q_k(x,y)-Q_k(z,y)-Q_k(x,y')+Q_k(z,y')|\,d\mu(z)=:\sum_{i=1}^3\RZ_i,
\end{align*}
where
\begin{align*}
W_1&:=\{z\in X:\ d(x,z)<2A_0d(x,x')\},\\
W_2&:=\{z\in X:\ 2A_0d(x,x')\le d(x,z)\le  (2A_0)^{-2}d(x,y)\}
\end{align*}
and
$$
W_3:=\{z\in X:\ d(x,z)\ge(2A_0)^{-2}d(x,y)\}.
$$
Notice that $d(x,z)<2A_0d(x,x')\le(2A_0)^{-2}d(x,y)$ when $z\in W_1$. By this, (i) and (iii) of Remark
\ref{rem:andef} for $Q_k$ and Lemma \ref{lem-add}(iii), we conclude that
\begin{align*}
\RZ_1&\ls\lf[\frac{d(y,y')}{\dz^k}\r]^\eta\frac 1{V_{\dz^k}(x)}
\exp\lf\{-\frac{\nu'}2\lf[\frac{d(x,y)}{\dz^k}\r]^a\r\}
\exp\lf\{-\nu'\lf[\frac{d(x,\CY^k)}{\dz^k}\r]^a\r\}\\
&\quad\times\int_{W_1}\lf[\lf|\wz{Q}_j(x,z)\r|+\lf|\wz{Q}_j(x',z)\r|\r]\lf[\frac{d(x,z)}{\dz^k}\r]^\eta
\,d\mu(z)\\
&\ls\lf[\frac{d(x,x')}{\dz^k}\r]^\eta\frac 1{V_{\dz^k}(x)}\lf[\frac{d(y,y')}{\dz^k}\r]^\eta
\exp\lf\{-\nu'\lf[\frac{d(x,y)}{\dz^k}\r]^a\r\}\exp\lf\{-\frac{\nu'}2\lf[\frac{d(x,\CY^k)}{\dz^k}\r]^a\r\}.
\end{align*}

From (i) and (iii) of Remark \ref{rem:andef} for $Q_k$ and $\wz{Q}_j$, together with Lemma \ref{lem-add}(ii),
we deduce that
\begin{align*}
\RZ_2&\ls\lf[\frac{d(y,y')}{\dz^k}\r]^\eta\frac 1{V_{\dz^k}(x)}\exp\lf\{-\nu'\lf[\frac{d(x,y)}{\dz^k}\r]^a\r\}
\exp\lf\{-\nu'\lf[\frac{d(x,\CY^k)}{\dz^k}\r]^a\r\}\\
&\quad\times\int_{W_2}\lf[\frac{d(x,x')}{\dz^j}\r]^\eta\frac{1}{V_{\dz^j}(x)}
\exp\lf\{-\nu'\lf[\frac{d(x,z)}{\dz^j}\r]^a\r\}\lf[\frac{d(x,z)}{\dz^k}\r]^\eta\,d\mu(z)\\
&\ls\lf[\frac{d(x,x')}{\dz^k}\r]^\eta\lf[\frac{d(y,y')}{\dz^k}\r]^\eta\frac 1{V_{\dz^k}(x)}
\exp\lf\{-\nu'\lf[\frac{d(x,y)}{\dz^k}\r]^a\r\}\exp\lf\{-\nu'\lf[\frac{d(x,\CY^k)}{\dz^k}\r]^a\r\}.
\end{align*}

To estimate $\RZ_3$, we claim that, for any $z\in X$,
\begin{align}\label{eq-x0}
&|[Q_k(x,y)-Q_k(x,y')]-[Q_k(z,y)-Q_k(z,y')]|\\
&\quad\ls\lf[\frac{d(y,y')}{\dz^k}\r]^\eta\lf[\frac{d(x,z)}{\dz^k}\r]^\eta
\lf[\frac{1}{V_{\dz^k}(y)}
\exp\lf\{-\nu'\lf[\frac{d(x,y)}{\dz^k}\r]^a\r\}\exp\lf\{-\nu'\lf[\frac{d(y,\CY^k)}{\dz^k}\r]^a\r\}\r.\noz\\
&\qquad
+\frac{1}{V_{\dz^k}(y')}
\exp\lf\{-\nu'\lf[\frac{d(x,y')}{\dz^k}\r]^a\r\}\exp\lf\{-\nu'\lf[\frac{d(y',\CY^k)}{\dz^k}\r]^a\r\}\noz\\
&\qquad
+\frac{1}{V_{\dz^k}(y)}
\exp\lf\{-\nu'\lf[\frac{d(z,y)}{\dz^k}\r]^a\r\}\exp\lf\{-\nu'\lf[\frac{d(y,\CY^k)}{\dz^k}\r]^a\r\}\noz\\
&\qquad
\lf.+\frac{1}{V_{\dz^k}(y')}
\exp\lf\{-\nu'\lf[\frac{d(z,y')}{\dz^k}\r]^a\r\}\exp\lf\{-\nu'\lf[\frac{d(y',\CY^k)}{\dz^k}\r]^a\r\}\r].\noz
\end{align}

Indeed, if  $d(y,y')\le\dz^k$ and $d(x,z)\le \dz^k$, then  \eqref{eq-x0} follows from the second difference
regularity condition of $Q_k$ and Remark \ref{rem:andef}(i).
If $d(y,y')>\dz^k$ and $d(x,z)>\dz^k$, then we use the size condition of $Q_k$ and Remark \ref{rem:andef}(i)
to obtain \eqref{eq-x0}. If $d(y,y')\le\dz^k$ and $d(x,z)> \dz^k$, then, by the regularity of $Q_k$ and Remark
\ref{rem:andef}(i), we have
\begin{align}\label{eq-x1}
&|[Q_k(x,y)-Q_k(x,y')]-[Q_k(z,y)-Q_k(z,y')]|\\
&\quad\le |Q_k(x,y)-Q_k(x,y')|+|Q_k(z,y)-Q_k(z,y')|\noz\\
&\quad\ls\lf[\frac{d(y,y')}{\dz^k}\r]^\eta \exp\lf\{-\nu'\lf[\frac{d(y,\CY^k)}{\dz^k}\r]^a\r\}\frac{1}{V_{\dz^k}(y)}
\noz\\
&\qquad\times\lf(\exp\lf\{-\nu'\lf[\frac{d(x,y)}{\dz^k}\r]^a\r\}+\exp\lf\{-\nu'\lf[\frac{d(z,y)}{\dz^k}\r]^a\r\}\r).\noz
\end{align}
Using $d(x,z)> \dz^k$, we directly multiply $[d(x,z)/\dz^k]^\eta$ in the right-hand side
of \eqref{eq-x1} to obtain \eqref{eq-x0}. When $d(y,y')>\dz^k$ and  $d(x,z)\le \dz^k$, a symmetric argument
also leads to \eqref{eq-x0}.

The treatment for the four terms in the bracket of the right-hand side of \eqref{eq-x0} is similar. We only
consider the last term, which is also the
most difficult one. Thus, the estimation of $\RZ_3$ is reduced to the estimation of
\begin{align*}
\wz\RZ_3
:={}&\int_{W_3} \lf|\wz{Q}_j(x,z)-\wz{Q}_j(x',z)\r| \lf[\frac{d(y,y')}{\dz^k}\r]^\eta\lf[\frac{d(x,z)}{\dz^k}\r]^\eta \\
&\times
\frac{1}{V_{\dz^k}(y')}
\exp\lf\{-\nu'\lf[\frac{d(z,y')}{\dz^k}\r]^a\r\}\exp\lf\{-\nu'\lf[\frac{d(y',\CY^k)}{\dz^k}\r]^a\r\}\,d\mu(z).
\end{align*}
When $z\in W_3$, we have $d(x,x')\le(2A_0)^{-3}d(x,y)\le (2A_0)^{-1}d(x,z)$, so that we can apply (i) and
(iii) of Remark \ref{rem:andef} for $\wz Q_j$, \eqref{eq-add7} and Lemma \ref{lem-add}(ii) to conclude that
\begin{align*}
\wz\RZ_3
&\ls \int_{W_3} \lf[\frac{d(x,x')}{\dz^j}\r]^\eta \lf[\frac{d(y,y')}{\dz^k}\r]^\eta\lf[\frac{d(x,z)}{\dz^k}\r]^\eta \frac{1}{V_{\dz^j}(x)V_{\dz^k}(y')} \\
&\quad\times
\exp\lf\{-\nu'\lf[\frac{d(x,z)}{\dz^j}\r]^a\r\}
\exp\lf\{-\nu'\lf[\frac{d(z,y')}{\dz^k}\r]^a\r\}\exp\lf\{-\nu'\lf[\frac{d(y',\CY^k)}{\dz^k}\r]^a\r\}\,d\mu(z)\\
&\ls\lf[\frac{d(x,x')}{\dz^k}\r]^\eta \lf[\frac{d(y,y')}{\dz^k}\r]^\eta
\frac{1}{V_{\dz^k}(y')} \exp\lf\{-\frac{\nu'} 2\lf[\frac{d(x,y')}{A_0\dz^k}\r]^a\r\}\exp\lf\{-\nu'\lf[\frac{d(y',\CY^k)}{\dz^k}\r]^a\r\}.
\end{align*}
By $d(y,y')\le (2A_0)^{-2}d(x,y)$, we have $(\frac 43A_0)^{-1}d(x,y)\le d(x,y')\le \frac 43A_0 d(x,y)$ and hence
$$d(x, \CY^k)\le A_0^2[d(x,y)+d(y,y')+d(y',\CY^k)]\le 2A_0^2 [d(x,y)+d(y',\CY^k)],$$
which further implies that
$$
\exp\lf\{-\frac{\nu'} 2\lf[\frac{d(x,y')}{A_0\dz^k}\r]^a\r\}\exp\lf\{-\nu'\lf[\frac{d(y',\CY^k)}{\dz^k}\r]^a\r\}
\le \exp\lf\{-\frac{\nu'} 8\lf[\frac{d(x,y)}{2A_0^2\dz^k}\r]^a\r\}\exp\lf\{-\frac{\nu'} 8\lf[\frac{d(x,\CY^k)}{2A_0^2\dz^k}\r]^a\r\}.
$$
Also, the condition $d(y,y')\le (2A_0)^{-2}d(x,y)$ and \eqref{eq:doub} imply that
$$
\frac{1}{V_{\dz^k}(y')}\ls \frac{1}{V_{\dz^k}(x)} \lf[\frac{\dz^k+d(y',x)}{\dz^k}\r]^\omega \ls
\frac{1}{V_{\dz^k}(y)} \lf[1+\frac{d(x,y)}{\dz^k}\r]^\omega.
$$
Therefore, we obtain
\begin{align*}
\wz\RZ_3
\ls\lf[\frac{d(x,x')}{\dz^k}\r]^\eta \lf[\frac{d(y,y')}{\dz^k}\r]^\eta
\frac{1}{V_{\dz^k}(x)} \exp\lf\{-c\lf[\frac{d(x,y)}{\dz^k}\r]^a\r\}\exp\lf\{-c\lf[\frac{d(x,\CY^k)}{\dz^k}\r]^a\r\},
\end{align*}
so does $\RZ_3$. Combining the estimates of $\RZ_1$, $\RZ_2$ and $\RZ_3$, we obtain
\eqref{eq:mixdreg1} and hence \eqref{eq:mixdreg}. This finishes the proof of Lemma \ref{lem:ccrf1}.
\end{proof}

\begin{corollary}\label{cor:mixb}
Let all the notation be as in Lemma \ref{lem:ccrf1}. Then $\wz{Q}_jQ_k$ satisfies the following estimates:
\begin{enumerate}
\item if $d(x,x')\le(2A_0)^{-1}d(x,y)$ with $x\neq y$, then
\begin{align}\label{eq:mixregxb}
&\lf|\wz{Q}_jQ_k(x,y)-\wz{Q}_jQ_k(x',y)\r|+\lf|\wz{Q}_jQ_k(y,x)-\wz{Q}_jQ_k(y,x')\r|\\
&\quad\le C\dz^{|j-k|(\eta-\eta')}\lf[\frac{d(x,x')}{d(x,y)}\r]^{\eta'}
\frac 1{V_{\dz^{j\wedge k}}(x)}\exp\lf\{-c'\lf[\frac{d(x,y)}{\dz^{j\wedge k}}\r]^a\r\}\noz\\
&\qquad\times\exp\lf\{-c'\lf[\frac{d(x,\CY^{j\wedge k})}{\dz^{j\wedge k}}\r]^a\r\};\noz
\end{align}
\item if $d(x,x')\le(2A_0)^{-2}d(x,y)$ and $d(y,y')\le(2A_0)^{-2}d(x,y)$ with $x\neq y$, then
\begin{align}\label{eq:mixdregb}
&\lf|\lf[\wz{Q}_jQ_k(x,y)-\wz{Q}_jQ_k(x',y)\r]-\lf[\wz{Q}_jQ_k(x,y')-\wz{Q}_jQ_k(x',y')\r]\r|\\
&\noz\quad\le C\dz^{|j-k|(\eta-\eta')}\lf[\frac{d(x,x')}{\dz^{j\wedge k}}\r]^{\eta'}
\lf[\frac{d(y,y')}{\dz^{j\wedge k}}\r]^{\eta'}
\frac 1{V_{\dz^{j\wedge k}}(x)}\exp\lf\{-c'\lf[\frac{d(x,y)}{\dz^{j\wedge k}}\r]^a\r\}\\
&\qquad\times\exp\lf\{-c'\lf[\frac{d(x,\CY^{j\wedge k})}{\dz^{j\wedge k}}\r]^a\r\},
\noz
\end{align}
\end{enumerate}
where $C$ and $c'$ are positive constants independent of $k,\ j\in\zz$ and $x,\ y,\ x',\ y'\in X$.
\end{corollary}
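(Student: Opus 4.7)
The strategy is to reduce Corollary \ref{cor:mixb} to Lemma \ref{lem:ccrf1} by case analysis on the relative sizes of $d(x,x'),\ d(y,y'),\ d(x,y)$ and $\dz^{j\wedge k}$. For part (i), I would first note the factorization $[d(x,x')/\dz^{j\wedge k}]^{\eta'}=[d(x,x')/d(x,y)]^{\eta'}\cdot[d(x,y)/\dz^{j\wedge k}]^{\eta'}$ and absorb the second factor into the exponential $\exp\{-c[d(x,y)/\dz^{j\wedge k}]^a\}$ via the standard inequality $t^{\eta'}e^{-ct^a}\ls e^{-c't^a}$. Thus in the range $d(x,x')\le(2A_0)^{-2}d(x,y)$, Lemma \ref{lem:ccrf1}(ii) immediately yields \eqref{eq:mixregxb}. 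In the extended range $(2A_0)^{-2}d(x,y)<d(x,x')\le(2A_0)^{-1}d(x,y)$, one has $d(x,x')\sim d(x,y)$, so $[d(x,x')/d(x,y)]^{\eta'}$ is bounded below by a positive constant, and it suffices to dominate $|\wz Q_jQ_k(x,y)-\wz Q_jQ_k(x',y)|$ by $|\wz Q_jQ_k(x,y)|+|\wz Q_jQ_k(x',y)|$ and apply the size bound \eqref{eq:mixsize}; the comparable values $V_{\dz^{j\wedge k}}(x)\sim V_{\dz^{j\wedge k}}(x')$ together with the quasi-triangle inequality (at the cost of a slight reduction of the exponential constant) reduce the $x'$-centered bound to one centered at $x$. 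The term $|\wz Q_jQ_k(y,x)-\wz Q_jQ_k(y,x')|$ is handled symmetrically.

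For part (ii), the case $d(x,x')\le(2A_0)^{-3}d(x,y)$ and $d(y,y')\le(2A_0)^{-3}d(x,y)$ follows directly from Lemma \ref{lem:ccrf1}(iii). Otherwise, by symmetry I may assume $d(x,x')>(2A_0)^{-3}d(x,y)$, so $d(x,x')\sim d(x,y)$. When $d(x,y)\ge\dz^{j\wedge k}$, the factor $[d(x,x')/\dz^{j\wedge k}]^{\eta'}\sim[d(x,y)/\dz^{j\wedge k}]^{\eta'}\ge 1$ (after absorbing the excess power into the exponential), and I would split the second difference as $|\wz Q_jQ_k(x,y)-\wz Q_jQ_k(x,y')|+|\wz Q_jQ_k(x',y)-\wz Q_jQ_k(x',y')|$ and apply Lemma \ref{lem:ccrf1}(ii) to each $y$-variable first-order difference. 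When $d(x,y)<\dz^{j\wedge k}$, all exponentials are effectively bounded, and I return to the integral representation
\begin{equation*}
\wz Q_jQ_k(x,y)-\wz Q_jQ_k(x',y)-\wz Q_jQ_k(x,y')+\wz Q_jQ_k(x',y')=\int_X[\wz Q_j(x,z)-\wz Q_j(x',z)]\{[Q_k(z,y)-Q_k(z,y')]-[Q_k(x,y)-Q_k(x,y')]\}\,d\mu(z),
\end{equation*}
valid by the cancellation $\int[\wz Q_j(x,z)-\wz Q_j(x',z)]\,d\mu(z)=0$, and then estimate the integrand using the regularity of $\wz Q_j$ in its first argument together with the second difference regularity of $Q_k$, mimicking the $W_1$-$W_3$ split from the proof of \eqref{eq:mixdreg1}.

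The principal obstacle is the final subcase of (ii) where $d(x,x')$ lies in the transition interval $((2A_0)^{-3}d(x,y),(2A_0)^{-2}d(x,y)]$ and $d(x,y)<\dz^{j\wedge k}$: Lemma \ref{lem:ccrf1}(iii) does not directly apply because the $W_2$-region from its proof degenerates, so one must adapt the bilinear integration argument with a split involving only near and far regions of $z$, combining the cancellations and regularities of $\wz Q_j$ and $Q_k$ to recover the full factor $\dz^{|j-k|(\eta-\eta')}[d(x,x')/\dz^{j\wedge k}]^{\eta'}[d(y,y')/\dz^{j\wedge k}]^{\eta'}$; careful bookkeeping of the exponential constants across the several reductions, so that a single $c'\in(0,c)$ works uniformly in the statement, is also required.
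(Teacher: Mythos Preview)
Your approach to (i) is exactly the paper's: invoke \eqref{eq:mixregx} and absorb $[d(x,y)/\dz^{j\wedge k}]^{\eta'}$ into the exponential via \eqref{eq:*2} in the inner range, then fall back on \eqref{eq:mixsize} in the residual strip.

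For (ii) the paper writes only ``similar by using (i) and \eqref{eq:*2}'', meaning that in the strip not already covered by Lemma~\ref{lem:ccrf1}(iii) one bounds the second difference by two first-order pieces and applies the freshly proved part (i). You follow this plan but go further, correctly flagging that when $t:=d(x,y)/\dz^{j\wedge k}<1$ the needed factor $[d(x,x')/\dz^{j\wedge k}]^{\eta'}\sim t^{\eta'}<1$ cannot be manufactured from the first-order bound --- here \eqref{eq:*2} points the wrong way --- and proposing instead to revisit the bilinear integral. This is a genuine refinement over the paper's one-liner, and your two-region split is the right idea. It can be shortened: in the regime $t<1$ one has $d(x,x'),\,d(y,y'),\,d(x,y)<\dz^{j\wedge k}\le\dz^k$, so on $W_1=\{z:d(x,z)<2A_0d(x,x')\}$ the second-difference regularity of $Q_k$ from Definition~\ref{def:eti}(iv) applies \emph{directly} (no need for the extended version in Remark~\ref{rem:andef}), while on $W_3=W_1^{\complement}$ one has $d(x,x')\le(2A_0)^{-1}d(x,z)$ and proceeds as in the original $\RZ_3$-estimate; this yields \eqref{eq:mixdreg1} under the weaker $(2A_0)^{-2}$ hypothesis, and the geometric mean with \eqref{eq:*1} --- which already extends to $(2A_0)^{-2}$ since it uses only size --- gives \eqref{eq:mixdregb}.
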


\begin{proof}
For (i), if $d(x,x')\le(2A_0)^{-2}d(x,y)$, then \eqref{eq:mixregxb} holds true by \eqref{eq:mixregx} and
\begin{equation}\label{eq:*2}
\lf[\frac{d(x,y)}{\dz^{j\wedge k}}\r]^\eta\exp\lf\{-c\lf[\frac{d(x,y)}{\dz^{j\wedge k}}\r]^a\r\}
\ls\exp\lf\{-c'\lf[\frac{d(x,y)}{\dz^{j\wedge k}}\r]^a\r\}.
\end{equation}
If $(2A_0)^{-2}d(x,y)<d(x,x')\le(2A_0)^{-1}d(x,y)$, then \eqref{eq:mixregxb} remains true by
using \eqref{eq:mixsize} and \eqref{eq:*2}. The proof of (ii) is similar by using (i) and \eqref{eq:*2}.
This finishes the proof of Corollary \ref{cor:mixb}.
\end{proof}

\begin{remark}\label{rem:mix}
Checking the proof of Lemma \ref{lem:ccrf1} and Corollary \ref{cor:mixb}, we observe that, if $j\ge k$, then
we only use the cancellation of $\wz{Q}_j$ while the cancellation of $Q_k$ is not necessary. This observation
will be used later in Section \ref{irf}.
\end{remark}

\subsection{Boundedness of the remainder $R_N$}\label{RN}

For any Banach space $\CB$, denote by $\CL(\CB)$ the set of all bounded linear operators on $\CB$.
The aim of this section is to estimate the operator norms of the remainder $R_N$ on both $L^2(X)$ and
$\GO{\bz,\gz}$. To this end, we need the following two lemmas.

\begin{lemma}[{\cite{CW77}}]\label{lem:HL}
For any given $p\in(1,\fz]$, the Hardy-Littlewood maximal operator $\CM$ defined in \eqref{eq:defmax} is
bounded on $L^p(X)$.
\end{lemma}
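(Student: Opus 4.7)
The plan is to follow the classical Coifman--Weiss strategy: establish the endpoint cases $p=\infty$ and a weak-type $(1,1)$ inequality, then obtain the strong $(p,p)$ bound for $p\in(1,\infty)$ by Marcinkiewicz interpolation. The case $p=\infty$ is immediate: for any $f\in L^\infty(X)$, the definition \eqref{eq:defmax} gives $\CM(f)(x)\le\|f\|_{L^\infty(X)}$ pointwise, so $\|\CM(f)\|_{L^\infty(X)}\le\|f\|_{L^\infty(X)}$.

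The main work is the weak-type $(1,1)$ estimate: there exists $C\in(0,\infty)$ such that, for any $f\in L^1(X)$ and any $\lambda\in(0,\infty)$,
\begin{equation*}
\mu(\{x\in X:\ \CM(f)(x)>\lambda\})\le \frac{C}{\lambda}\|f\|_{L^1(X)}.
\end{equation*}
First I would fix $\lambda>0$ and, for each $x$ in the level set $E_\lambda:=\{\CM(f)>\lambda\}$, select a ball $B_x=B(x,r_x)$ with
$\int_{B_x}|f|\,d\mu>\lambda\mu(B_x)$. Truncating the radii if necessary (to $r_x\le \lambda^{-1}\|f\|_{L^1(X)}/\mu$ of a fixed reference ball, or by exhausting $E_\lambda$ on an increasing sequence of bounded sets), I would then apply a Vitali-type covering lemma valid on quasi-metric spaces: from the family $\{B_x\}_{x\in E_\lambda}$ one extracts a pairwise disjoint subfamily $\{B_{x_i}\}_i$ such that the dilates $\{\tau B_{x_i}\}_i$ with $\tau$ depending only on $A_0$ cover $\bigcup_x B_x\supset E_\lambda$. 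Such a covering lemma works in the quasi-metric setting because only the doubling property \eqref{eq:doub} of $\mu$ is needed to pass from a disjoint subfamily to a covering by dilates. Combining the covering with disjointness and \eqref{eq:doub} yields
\begin{equation*}
\mu(E_\lambda)\le \sum_i \mu(\tau B_{x_i})\le C_{(\mu)}\tau^{\omega}\sum_i\mu(B_{x_i})\le \frac{C_{(\mu)}\tau^{\omega}}{\lambda}\sum_i\int_{B_{x_i}}|f|\,d\mu\le\frac{C}{\lambda}\|f\|_{L^1(X)},
\end{equation*}
as desired.

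With the endpoint weak-type $(1,1)$ bound in hand together with the trivial $L^\infty\to L^\infty$ bound, the Marcinkiewicz interpolation theorem (which holds in any measure space) then produces the strong-type inequality $\|\CM(f)\|_{L^p(X)}\le C_p\|f\|_{L^p(X)}$ for every $p\in(1,\infty)$, completing the proof. The one delicate step is the Vitali-type selection: one must be careful that the standard $5r$-covering argument uses only the quasi-triangle inequality with constant $A_0$, so that the dilation factor $\tau$ depends on $A_0$ alone, and that the radii $r_x$ can be assumed bounded so the selection procedure terminates; both are handled by the standard truncation-and-exhaustion device. Since this is precisely the argument given by Coifman and Weiss in \cite{CW77}, I would simply invoke their result, noting that non-atomicity of $\mu$ plays no role here.
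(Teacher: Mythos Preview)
Your proposal is correct and follows the classical Coifman--Weiss argument, but note that the paper does not actually prove this lemma: it is stated with a direct citation to \cite{CW77} and no proof is given. Your outline reproduces precisely the standard proof from that reference, so there is nothing to compare.
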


\begin{lemma}[{\cite[pp.\ 279--280]{Stein93}}]\label{lem:CSlem}
Let
$\{\gamma(j)\}_{j=-\fz}^\fz\subset(0,\fz)$ be such that $A:=\sum_{j=-\fz}^\fz \gz(j)<\fz$.
Suppose that $\{T_j\}_{j=-\fz}^\fz$ is a sequence of bounded linear operators on $L^2(X)$  satisfying that,
for any $j,\ k\in\zz$,
$$
\lf\|T_j^*T_k\r\|_{L^2(X)\to L^2(X)}\le[\gz(j-k)]^2\quad\text{and}\quad
\lf\|T_jT_k^*\r\|_{L^2(X)\to L^2(X)}\le[\gz(j-k)]^2.
$$
Then, for any $f\in L^2(X)$, the series $\sum_{j=-\fz}^\fz T_jf$ converges to an element in $L^2(X)$, denoted by $Tf$.
Moreover, the operator $T$ is bounded on $L^2(X)$ and $\|T\|_{L^2(X)\to L^2(X)}\le A$.
\end{lemma}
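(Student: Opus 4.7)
The plan is to carry out the classical Cotlar--Stein almost-orthogonality argument. The heart of the proof is a uniform boundedness estimate on finite partial sums: for any finite $F\subset\zz$, set $S_F:=\sum_{j\in F}T_j$, and I would show $\|S_F\|_{L^2(X)\to L^2(X)}\le A$, with the bound independent of $F$. Strong convergence of $\sum_j T_j f$ in $L^2(X)$, and the resulting norm bound $\|T\|_{L^2(X)\to L^2(X)}\le A$, will then follow from this uniform estimate via a Cauchy argument on the partial sums $\sum_{|j|\le N}T_j f$, combined with the summability $\sum_j\gz(j)<\fz$ to control the tails.

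To establish the uniform bound, for each positive integer $m$ I would invoke the $C^*$-identity $\|S_F\|^{2m}=\|(S_F^*S_F)^m\|$ and expand
$$(S_F^*S_F)^m=\sum_{j_1,k_1,\ldots,j_m,k_m\in F}T_{j_1}^*T_{k_1}T_{j_2}^*T_{k_2}\cdots T_{j_m}^*T_{k_m}.$$
Each summand admits two distinct norm estimates. Grouping the factors as $(T_{j_i}^*T_{k_i})$ and applying the first hypothesis yields $\prod_{i=1}^{m}[\gz(j_i-k_i)]^2$. Grouping them instead as $T_{j_1}^*(T_{k_1}T_{j_2}^*)(T_{k_2}T_{j_3}^*)\cdots(T_{k_{m-1}}T_{j_m}^*)T_{k_m}$ and applying the second hypothesis together with the a priori bound $\|T_j\|\le\gz(0)$ (obtained from $\|T_j\|^2=\|T_j^*T_j\|\le[\gz(0)]^2$) yields $[\gz(0)]^2\prod_{i=1}^{m-1}[\gz(k_i-j_{i+1})]^2$. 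Taking the geometric mean of these two bounds controls the norm of each summand by $\gz(0)\prod_{i=1}^{m}\gz(j_i-k_i)\prod_{i=1}^{m-1}\gz(k_i-j_{i+1})$.

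I would then sum this estimate over all multi-indices. Fixing $j_1\in F$ (which contributes a factor $|F|$) and then summing $k_1,j_2,k_2,\ldots,k_m$ in turn, each sum is controlled by $\sum_{l\in\zz}\gz(l)=A$; this gives $\|(S_F^*S_F)^m\|\le|F|\gz(0)A^{2m-1}\le|F|A^{2m}$, so that $\|S_F\|\le|F|^{1/(2m)}A$. Sending $m\to\fz$ yields the desired uniform bound $\|S_F\|\le A$.

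The main obstacle is the combinatorial bookkeeping in the geometric-mean step: the two factor-pairings must be aligned so that their product yields a chain of $\gz$-differences in which every summation index but one appears in exactly two $\gz$-factors. This is precisely what allows each index to be summed out against a single $A$, leaving behind the decisive $|F|^{1/(2m)}$ factor that tends to $1$ as $m\to\fz$ and closes the argument.
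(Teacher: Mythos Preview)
The paper does not supply its own proof of this lemma; it simply cites \cite[pp.\ 279--280]{Stein93}. Your proposal is precisely the classical Cotlar--Stein argument found there: the $C^*$-identity $\|S_F\|^{2m}=\|(S_F^*S_F)^m\|$, the two alternate pairings of the $2m$-fold product, the geometric mean producing the telescoping chain of $\gz$-differences, and letting $|F|^{1/(2m)}\to1$. The uniform bound $\|S_F\|\le A$ is handled correctly and in full agreement with the cited reference.

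One point deserves more care than your single sentence gives it. The uniform bound $\|S_F\|\le A$ by itself does not make the tail sums $\bigl\|\sum_{N<|j|\le M}T_jf\bigr\|$ small --- it only says they are bounded by $A\|f\|$ --- so ``a Cauchy argument combined with the summability $\sum_j\gz(j)<\fz$'' is not yet a proof of strong convergence. The standard way to close this (also in Stein) is to first apply the same Cotlar--Stein machinery to the positive operators $T_j^*T_j$, using $\|(T_j^*T_j)(T_k^*T_k)\|\le\gz(0)^2[\gz(j-k)]^2$, to obtain $\sum_j\|T_jf\|^2\le\gz(0)A\|f\|^2<\fz$; this square-summability, fed back into the expansion of $\bigl\|\sum_{N<|j|\le M}T_jf\bigr\|^2$ together with the almost-orthogonality bounds, then yields the Cauchy property. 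This is a refinement of your sketch rather than a different approach.
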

We first establish the boundedness of $R_N$ on $L^2(X)$ and, moreover, estimate its operator norm.
\begin{lemma}\label{lem:ccrf2}
For any $n\in\nn$, suppose that $R_N$ is defined as in \eqref{eq:defRT} and $\eta'\in(0,\eta)$. Then there
exists a positive constant $C$, independent of $N$, such that
\begin{equation}\label{eq:RNL2}
\|R_N\|_{L^2(X)\to L^2(X)}\le C\dz^{\eta' N}.
\end{equation}
\end{lemma}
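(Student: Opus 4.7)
The plan is to decompose $R_N=\sum_{|l|>N}T_l$ with $T_l:=\sum_{k\in\zz}Q_{k+l}Q_k$, to show via the Cotlar--Stein almost-orthogonality lemma (Lemma \ref{lem:CSlem}) that $\|T_l\|_{L^2(X)\to L^2(X)}\lesssim\dz^{|l|\eta''}$ for any fixed $\eta''\in(\eta',\eta)$, and then to sum the geometric series over $|l|>N$ to obtain the desired bound by $\dz^{N\eta'}$.

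The preparation step is to convert the kernel estimates of Lemma \ref{lem:ccrf1} into $L^2$-operator estimates via the Schur test. The size bound \eqref{eq:mixsize}, once the exponential factor is absorbed into a polynomial-decay factor and integrated by means of Lemma \ref{lem-add}(ii), gives $\|Q_{k+l}Q_k\|_{L^2\to L^2}\lesssim\dz^{|l|\eta}$ uniformly in $k$. Since the size, regularity, second-difference and cancellation conditions of Definition \ref{def:eti} are all symmetric under exchange of the two kernel variables, the family $\{Q_k^*\}_{k\in\zz}$ is itself an $\exp$-ATI. Applying Lemma \ref{lem:ccrf1} to the composition $Q_{j+l}^*Q_{k+l}$, whose two $\exp$-ATI factors sit at levels $j+l$ and $k+l$, and again appealing to Schur, produces the almost-orthogonality estimate $\|Q_{j+l}^*Q_{k+l}\|_{L^2\to L^2}\lesssim\dz^{|j-k|\eta}$. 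The uniform $L^2$-boundedness of every $\exp$-ATI follows from Proposition \ref{prop:etoa} and Proposition \ref{prop:basic}(iii).

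Setting $S_{k,l}:=Q_{k+l}Q_k$ and writing $S_{j,l}^*S_{k,l}=Q_j^*\bigl(Q_{j+l}^*Q_{k+l}\bigr)Q_k$, the bookend uniform bounds of $Q_j^*,Q_k$ combined with the middle almost-orthogonality give $\|S_{j,l}^*S_{k,l}\|_{L^2\to L^2}\lesssim\dz^{|j-k|\eta}$. Interpolating this against the crude submultiplicative bound $\|S_{j,l}^*S_{k,l}\|_{L^2\to L^2}\le\|S_{j,l}\|\,\|S_{k,l}\|\lesssim\dz^{2|l|\eta}$ via a weighted geometric mean with weight $\theta\in(0,1)$ yields
$$\lf\|S_{j,l}^*S_{k,l}\r\|_{L^2\to L^2}\lesssim\dz^{2|l|\eta(1-\theta)+|j-k|\eta\theta},$$
and the symmetric estimate for $S_{j,l}S_{k,l}^*$ holds by the same argument. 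Feeding these into Lemma \ref{lem:CSlem} with $[\gamma(j-k)]^2$ equal to the right-hand side above, so that $\gamma(m)\lesssim\dz^{|l|\eta(1-\theta)+|m|\eta\theta/2}$, one obtains $\|T_l\|_{L^2\to L^2}\lesssim\sum_m\gamma(m)\lesssim\dz^{|l|\eta(1-\theta)}$. Choosing $\theta$ small enough that $\eta(1-\theta)>\eta'$ and summing over $|l|>N$ then gives the desired bound $\|R_N\|_{L^2\to L^2}\lesssim\dz^{N\eta'}$. The principal technical point is the Schur-type integral
$$\sup_x\int_X\frac1{V_{\dz^m}(x)}\exp\!\bigl\{-c[d(x,y)/\dz^m]^a\bigr\}\,d\mu(y)<\fz$$
(and its symmetric variant), but this is routine: the exponential factor dominates any polynomial, so one reduces via Lemma \ref{lem-add}(ii) to a standard computation.
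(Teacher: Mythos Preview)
Your proof is correct and follows essentially the same strategy as the paper: both rely on the almost-orthogonality estimate $\|Q_{j}Q_{k}^*\|_{L^2\to L^2}+\|Q_{j}^*Q_{k}\|_{L^2\to L^2}\lesssim\dz^{|j-k|\eta}$ from Lemma~\ref{lem:ccrf1}, take a geometric mean with the individual bound $\|Q_{k+l}Q_k\|_{L^2\to L^2}\lesssim\dz^{|l|\eta}$, and apply the Cotlar--Stein lemma. The only difference is organizational: you fix $l$ and apply Cotlar--Stein to $T_l=\sum_kQ_{k+l}Q_k$ before summing in $l$, whereas the paper groups as $R_N=\sum_k\bigl(\sum_{|l|>N}Q_{k+l}Q_k\bigr)$ and applies Cotlar--Stein once over $k$, handling the $l_1,l_2$ sums inside the block estimates; the resulting bounds are identical.
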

\begin{proof}
By \eqref{eq:mixsize}, Proposition \ref{prop:basic}(ii) and Lemma \ref{lem:HL}, we know that, for any
$k,\ l\in\zz$ and $f\in L^2(X)$,
$$
\lf\|Q_{k+l}Q_k(f)\r\|_{L^2(X)}\ls\dz^{\eta|l|}\|\CM(f)\|_{L^2(X)}\ls\dz^{\eta|l|}\|f\|_{L^2(X)}.
$$
Therefore, for any $k_1,\ k_2,\ l_1,\ l_2\in\zz$, we have
$$
\lf\|Q_{k_1+l_1}Q_{k_1}\lf(Q_{k_2+l_2}Q_{k_2}\r)^*\r\|_{L^2(X)\to L^2(X)}\ls\dz^{|l_1|\eta}\dz^{|l_2|\eta}.
$$
On the other hand,
$$
\lf\|Q_{k_1+l_1}Q_{k_1}\lf(Q_{k_2+l_2}Q_{k_2}\r)^*\r\|_{L^2(X)\to L^2(X)}
=\lf\|Q_{k_1+l_1}Q_{k_1}Q_{k_2}^*Q_{k_2+l_2}^*\r\|_{L^2(X)\to L^2(X)}\ls\dz^{|k_1-k_2|\eta}.
$$
Taking the geometric means between the above two formulae, we conclude that, for any $\thz\in(0,1)$,
\begin{equation*}
\lf\|Q_{k_1+l_1}Q_{k_1}\lf(Q_{k_2+l_2}Q_{k_2}\r)^*\r\|_{L^2(X)\to L^2(X)}
\ls\dz^{(|l_1|+|l_2|)\eta\thz}\dz^{|k_1-k_2|\eta(1-\thz)}.
\end{equation*}
Consequently,
\begin{align}\label{eq:**}
&\lf\|\lf(\sum_{|l_1|>N}Q_{k_1+l_1}Q_{k_1}\r)\lf(\sum_{|l_2|>N}Q_{k_2+l_2}Q_{k_2}\r)^*\r\|_{L^2(X)\to L^2(X)}\\
&\quad\le\sum_{|l_1|>N}\sum_{|l_2|>N}
\lf\|Q_{k_1+l_1}Q_{k_1}\lf(Q_{k_2+l_2}Q_{k_2}\r)^*\r\|_{L^2(X)\to L^2(X)}\noz\\
&\quad\ls\sum_{|l_1|>N}\sum_{|l_2|>N}\dz^{(|l_1|+|l_2|)\eta\thz}\dz^{|k_1-k_2|\eta(1-\thz)}\noz
\sim \dz^{2N\eta\thz}\dz^{|k_1-k_2|\eta(1-\thz)}.
\end{align}
Combining this with Lemma \ref{lem:CSlem} implies that
$
\|R_N\|_{L^2(X)\to L^2(X)}\ls \dz^{N\eta\thz}.
$
Taking $\eta':=\eta\thz$, we then complete the proof of Lemma \ref{lem:ccrf2}.
\end{proof}

To consider the boundedness of $R_N$ on spaces of test functions, we begin with the following proposition.

\begin{proposition}\label{prop:sizeRN}
For any $N\in\nn$, suppose that $R_N$ is defined as in \eqref{eq:defRT} and $\eta'\in(0,\eta)$. Then
$R_N$ satisfies \eqref{eq:Ksize}, \eqref{eq:Kreg} and \eqref{eq:Kdreg}
with $C_T$ therein replaced by $C\dz^{(\eta-\eta')N}$, where $C$ is a positive
constant $C$ independent of $N$.
\end{proposition}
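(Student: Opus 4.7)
\textbf{Proof plan for Proposition \ref{prop:sizeRN}.} The kernel of $R_N$ is
$$R_N(x,y)=\sum_{|l|>N}\sum_{k\in\zz}(Q_{k+l}Q_k)(x,y),$$
and I would verify \eqref{eq:Ksize}, \eqref{eq:Kreg} and \eqref{eq:Kdreg} term by term via Lemma \ref{lem:ccrf1} and Corollary \ref{cor:mixb}, summing first in $k$ and then in $l$.

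First I would establish the size estimate. Setting $m:=k\wedge(k+l)$ and observing that, for each fixed $l\in\zz$, the assignment $k\mapsto m$ is a bijection of $\zz$, Lemma \ref{lem:ccrf1}(i) yields
$$|(Q_{k+l}Q_k)(x,y)|\ls \dz^{|l|\eta}\,\frac{1}{V_{\dz^{m}}(x)}\,\exp\lf\{-c\lf[\frac{d(x,y)}{\dz^{m}}\r]^a\r\}\exp\lf\{-c\lf[\frac{d(x,\CY^{m})}{\dz^{m}}\r]^a\r\}.$$
I would split the sum in $m$ at $\dz^{m}\sim d(x,y)$. On the tail $\dz^{m}\le d(x,y)$, a standard dyadic decomposition based only on \eqref{eq:doub} and the super-polynomial decay of the exponential gives
$$\sum_{\dz^{m}\le d(x,y)}\frac{1}{V_{\dz^{m}}(x)}\exp\lf\{-c\lf[\frac{d(x,y)}{\dz^{m}}\r]^a\r\}\ls \frac{1}{V(x,y)}.$$
On the tail $\dz^{m}>d(x,y)$, the factor $\exp\{-c[d(x,\CY^{m})/\dz^{m}]^a\}$ is exactly what \cite[Lemma~8.3]{AH13} controls, producing
$$\sum_{\dz^{m}>d(x,y)}\frac{1}{V_{\dz^{m}}(x)}\exp\lf\{-c\lf[\frac{d(x,\CY^{m})}{\dz^{m}}\r]^a\r\}\ls \frac{1}{V_{d(x,y)}(x)}\sim \frac{1}{V(x,y)}.$$
Adding the two halves, the $k$-sum is $\ls 1/V(x,y)$ for each $l$, and summing in $|l|>N$ against the geometric factor $\dz^{|l|\eta}$ yields $|R_N(x,y)|\ls \dz^{N\eta}/V(x,y)\le \dz^{N(\eta-\eta')}/V(x,y)$, which is \eqref{eq:Ksize} with $C_T$ replaced by $C\dz^{(\eta-\eta')N}$.

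The regularity estimates \eqref{eq:Kreg} and \eqref{eq:Kdreg} are treated identically, now invoking Corollary \ref{cor:mixb}(i) and (ii) in place of Lemma \ref{lem:ccrf1}(i). The corollary costs a power $\dz^{|l|\eta'}$ in the prefactor, producing $\dz^{|l|(\eta-\eta')}$, but supplies the desired dimensionless factors $[d(x,x')/d(x,y)]^{\eta'}$ for \eqref{eq:Kreg} and $[d(x,x')/d(x,y)]^{\eta'}[d(y,y')/d(x,y)]^{\eta'}$ for \eqref{eq:Kdreg}. The $k$-summation is carried out with the same splitting and the same appeal to \cite[Lemma~8.3]{AH13} (replacing $d(x,\CY^m)$ by $d(y,\CY^m)$ via Remark \ref{rem:andef}(ii) when needed for the second summand of \eqref{eq:mixregxb}), and the final sum in $|l|>N$ gives $\sum_{|l|>N}\dz^{|l|(\eta-\eta')}\sim \dz^{N(\eta-\eta')}$, exactly the constant claimed.

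The main obstacle is the $k$-summation on the large-scale side $\dz^{m}>d(x,y)$: without a reverse doubling condition one cannot collapse $\sum_{m} 1/V_{\dz^{m}}(x)$ to $1/V(x,y)$, and it is precisely the exponential decay factor $\exp\{-c[d(x,\CY^{m})/\dz^{m}]^a\}$ built into the definition of an $\exp$-ATI (absent from ordinary ATIs) that makes \cite[Lemma~8.3]{AH13} applicable and serves as a substitute for reverse doubling. All remaining estimates are then straightforward applications of Lemma \ref{lem-add} and the elementary inequality used in \eqref{eq-add7}.
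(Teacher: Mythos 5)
Your proposal is correct and follows essentially the same route as the paper: apply Lemma \ref{lem:ccrf1}(i) and Corollary \ref{cor:mixb}, sum first in $k$ for fixed $l$, and then sum geometrically in $|l|>N$. The only cosmetic difference is that you re-derive the $k$-sum bound by hand via the split at $\dz^{m}\sim d(x,y)$ together with \cite[Lemma~8.3]{AH13}, whereas the paper packages exactly that computation as Lemma \ref{lem:sum2} and cites it directly.
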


The proof of Proposition \ref{prop:sizeRN} is based on the following several lemmas.
\begin{lemma}[{\cite[Lemma 8.3]{AH13}}]\label{lem:expsum}
For any fixed $a,\ c\in(0,\fz)$, there exists a positive constant $C$ such that, for any $r\in(0,\fz)$
and $x\in X$,
$$
\sum_{\dz^k\ge r}\frac 1{V_{\dz^k}(x)}\exp\lf\{-c\lf[\frac{d(x,\CY^k)}{\dz^k}\r]^a\r\}
\le \frac C{V_r(x)}.
$$
\end{lemma}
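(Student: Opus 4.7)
The plan is to exploit two structural facts: (i) the exponential factor $\exp\{-c[d(x,\CY^k)/\delta^k]^a\}$ dominates any polynomial in $d(x,\CY^k)/\delta^k$, which effectively \emph{truncates} the sum to those scales where $d(x,\CY^k)\lesssim\delta^k$; (ii) the collections $\{\CY^k\}_{k\in\zz}$ are pairwise disjoint and $\bigsqcup_{k\le K}\CY^k\subset\CX^{K+1}$, which is $c_0\delta^{K+1}$-separated. The combination of these facts turns an infinite sum into something controlled by the volume of a single ball of radius $r$.

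First, I would fix $x\in X$ and $r\in(0,\fz)$, choose the unique $K\in\zz$ with $\delta^{K+1}< r\le\delta^K$, and reduce the problem to estimating the tail $S:=\sum_{k\le K}V_{\delta^k}(x)^{-1}\exp\{-c[d(x,\CY^k)/\delta^k]^a\}$. For each $k\le K$ with $\CY^k\ne\emptyset$, pick a (near-)minimizer $y_k^\ast\in\CY^k$ with $d(x,y_k^\ast)\le 2d(x,\CY^k)=:2d_k$. Because the $\CY^k$ are disjoint and sit inside $\CX^{K+1}$, the family $\{y_k^\ast:k\le K\}$ is $c_0\delta^{K+1}$-separated, hence the balls $B_k:=B(y_k^\ast,c_0\delta^{K+1}/(2A_0))$ are pairwise disjoint.

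Next, for each $k$ I would absorb the scale $\delta^k$ into the volume: using \eqref{eq:doub} to write $V_{\delta^k+d_k}(x)\ls V_{\delta^k}(x)(1+d_k/\delta^k)^\omega$, and picking $M\in\nn$ with $M>\omega$, the pointwise bound $\exp\{-c t^a\}\ls_M(1+t)^{-M}$ gives
\[
\frac{1}{V_{\delta^k}(x)}\exp\lf\{-c\lf[\frac{d_k}{\delta^k}\r]^a\r\}
\ls\frac{1}{V_{\delta^k+d_k}(x)}\,\exp\lf\{-\tfrac{c}{2}\lf[\tfrac{d_k}{\delta^k}\r]^a\r\}
\ls\frac{1}{V_r(x)+V(x,y_k^\ast)},
\]
since $\delta^k+d_k\gtrsim r+d(x,y_k^\ast)$ and Lemma \ref{lem-add}(i) identifies $V_{\delta^k+d_k}(x)$ with $V(B(x,\delta^k+d_k))\gtrsim V_r(x)+V(x,y_k^\ast)$. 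For any $y\in B_k$, doubling gives $V_r(x)+V(x,y)\sim V_r(x)+V(x,y_k^\ast)$ and $\mu(B_k)\sim V_r(y_k^\ast)\sim V(B(x,r+d(x,y_k^\ast)))\sim V_r(x)+V(x,y_k^\ast)$; averaging over $B_k$ therefore yields
\[
\frac{1}{V_r(x)+V(x,y_k^\ast)}\ls\int_{B_k}\frac{1}{V_r(x)+V(x,y)}\,d\mu(y)\cdot\frac{1}{\mu(B_k)}\cdot\mu(B_k)\cdot\frac{1}{V_r(x)+V(x,y_k^\ast)},
\]
which simplifies to $\frac{1}{V_r(x)+V(x,y_k^\ast)}\ls\frac{1}{\mu(B_k)}\int_{B_k}\frac{d\mu(y)}{V_r(x)+V(x,y)}\cdot\mu(B_k)\cdot\frac{1}{V_r(x)+V(x,y_k^\ast)}$; more cleanly, I would simply bound the summand by $\int_{B_k}[V_r(x)+V(x,y)]^{-1}\,d\mu(y)/\mu(B_k)\cdot 1$, i.e., distribute a unit-mass bump.

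Finally, summing over $k\le K$ and invoking disjointness of the balls $B_k$, the sum telescopes into a single integral
\[
S\ls\int_{X}\frac{1}{V_r(x)+V(x,y)}\lf[\frac{r}{r+d(x,y)}\r]^\gamma d\mu(y)\ls\frac{1}{V_r(x)}
\]
by Lemma \ref{lem-add}(ii), where the auxiliary factor $[r/(r+d(x,y))]^\gamma$ is squeezed in by retaining a small portion of the exponential decay. The key obstacle I anticipate is making the replacement $V(x,y_k^\ast)\sim V(x,y)$ for $y\in B_k$ uniform in $k$ when $d(x,y_k^\ast)$ may be much larger than $r$; this is where the disjointness of $B_k$ at scale $\delta^{K+1}\sim r$ is indispensable, and also where the choice of $c_1$ must be small enough (depending only on $A_0$ and $C_{(\mu)}$) to keep $V(x,y)$ comparable to $V(x,y_k^\ast)$ via \eqref{eq:doub}.
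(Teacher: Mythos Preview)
The paper does not give its own proof of this lemma; it is quoted verbatim from \cite[Lemma~8.3]{AH13}. So there is nothing in the present paper to compare your argument against, and the question is simply whether your sketch is correct. It is not: there is a genuine gap at the averaging step.

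Your structural observations are sound: the sets $\CY^k$ are pairwise disjoint, the chosen points $y_k^\ast$ all lie in $\CX^{K+1}$, and the balls $B_k=B(y_k^\ast,c_0\dz^{K+1}/(2A_0))$ are pairwise disjoint. The reduction
\[
\frac{1}{V_{\dz^k}(x)}\exp\lf\{-c\lf[\frac{d_k}{\dz^k}\r]^a\r\}
\ \ls\ \frac{1}{V_r(x)+V(x,y_k^\ast)}
\]
is also correct. The trouble is the claimed equivalence $\mu(B_k)\sim V_r(y_k^\ast)\sim V_r(x)+V(x,y_k^\ast)$. Only the first $\sim$ and the inequality $\mu(B_k)\ls V_r(x)+V(x,y_k^\ast)$ are true; the reverse bound $V_r(x)+V(x,y_k^\ast)\ls V_r(y_k^\ast)$ would force $\mu(B(x,r+d(x,y_k^\ast)))\ls\mu(B(y_k^\ast,r))$, which fails badly once $d(x,y_k^\ast)\gg r$ since the left ball is then much larger. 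Without this, the ``telescoping'' step leaves you with $\sum_k\mu(B_k)^{-1}\int_{B_k}[V_r(x)+V(x,y)]^{-1}\,d\mu(y)$, and the uncontrolled weights $\mu(B_k)^{-1}$ block the passage to a single integral over $X$.

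Your proposed rescue---squeezing in a factor $[r/(r+d(x,y))]^\gz$ from the residual exponential---does not work either. What survives after the doubling step is $\exp\{-\tfrac c2(d_k/\dz^k)^a\}\ls(\dz^k/R_k)^\gz$ with $R_k=\dz^k+d_k$, not $(r/R_k)^\gz$. When $d_k\ls\dz^k$ but $\dz^k\gg r$ (which is exactly the regime that matters in the absence of reverse doubling), the exponential is $\sim1$ while $(r/R_k)^\gz$ is arbitrarily small, so no such factor is available. In effect your argument uses only the separation of $\{y_k^\ast\}$ at the single finest scale $\dz^{K+1}$; that is not enough to bound $\sum_k[V_r(x)+V(x,y_k^\ast)]^{-1}$ by $V_r(x)^{-1}$. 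The argument in \cite{AH13} needs the stronger multi-scale information that for every $k\le K$ the whole family $\{y_j^\ast:j\le k\}$ lies in $\CX^{k+1}$ and is therefore $c_0\dz^{k+1}$-separated; it is this scale-adapted separation, not merely the finest-scale one, that makes the volumes $V_{\dz^k}(x)$ grow fast enough for the sum to converge.
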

\begin{lemma}\label{lem:sum2}
For any fixed $a,\ c\in(0,\fz)$, there exists a positive constant $C$ such that, for any $x,\ y\in X$
with $x\neq y$,
$$
\sum_{k=-\fz}^\fz\frac 1{V_{\dz^k}(x)}\exp\lf\{-c\lf[\frac{d(x,y)}{\dz^k}\r]\r\}
\exp\lf\{-c\lf[\frac{d(x,\CY^k)}{\dz^k}\r]^a\r\}
\le \frac C{V(x,y)}.
$$
\end{lemma}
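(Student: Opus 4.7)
The plan is to split the sum at the scale $\dz^k \sim d(x,y)$ and estimate the two pieces separately. Let $k_0\in\zz$ be the largest integer with $\dz^{k_0}\ge d(x,y)$, so that the tail $k\le k_0$ corresponds to ``large scales'' and $k>k_0$ to ``small scales''. On the large-scale piece the first exponential factor is bounded by $1$, so I would use Lemma \ref{lem:expsum} directly with $r:=d(x,y)$ to obtain
\begin{equation*}
\sum_{k\le k_0}\frac 1{V_{\dz^k}(x)}\exp\lf\{-c\lf[\frac{d(x,y)}{\dz^k}\r]\r\}\exp\lf\{-c\lf[\frac{d(x,\CY^k)}{\dz^k}\r]^a\r\}
\le \sum_{\dz^k\ge d(x,y)}\frac 1{V_{\dz^k}(x)}\exp\lf\{-c\lf[\frac{d(x,\CY^k)}{\dz^k}\r]^a\r\}\ls \frac{1}{V_{d(x,y)}(x)},
\end{equation*}
which is comparable with $1/V(x,y)$ by Lemma \ref{lem-add}(i).

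For the small-scale piece ($k>k_0$), I would drop the $\CY^k$-exponential (bounded by $1$) and use the doubling condition \eqref{eq:doub} to compare $V_{\dz^k}(x)$ and $V(x,y)$: since $\dz^k<d(x,y)$,
\begin{equation*}
\frac 1{V_{\dz^k}(x)}\le \frac{C}{V(x,y)}\lf[\frac{d(x,y)}{\dz^k}\r]^{\omega}.
\end{equation*}
Setting $t_k:=d(x,y)/\dz^k$, we have $t_k>1$ for $k>k_0$ and $t_k$ grows geometrically with ratio $\dz^{-1}>1$ as $k$ increases. Thus the remaining sum is dominated by
\begin{equation*}
\frac{C}{V(x,y)}\sum_{k>k_0}t_k^{\omega}\exp\{-c\,t_k\}\le \frac{C}{V(x,y)}\sum_{j\ge 0}(\dz^{-j}t_{k_0+1})^{\omega}\exp\{-c\,\dz^{-j}t_{k_0+1}\},
\end{equation*}
and this last series converges uniformly in $t_{k_0+1}\in(1,\dz^{-1}]$ because the exponential decay dominates any polynomial growth. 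Collecting both pieces yields the desired bound.

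The estimates are routine: no single step looks like a genuine obstacle. The only point that requires a little care is making sure the polynomial factor $t_k^{\omega}$ produced by the doubling comparison in the small-scale range is absorbed by the super-polynomial factor $\exp\{-c\,t_k\}$, which is guaranteed by the geometric spacing of the $t_k$'s. Note that I use the first exponential in the form $\exp\{-c[d(x,y)/\dz^k]\}$ exactly as stated (without the power $a$); if only the weaker factor $\exp\{-c[d(x,y)/\dz^k]^a\}$ with $a\in(0,1]$ were available, the same geometric-series argument would still work since $\exp\{-c\,t^a\}$ still beats any polynomial in $t$ as $t\to\infty$.
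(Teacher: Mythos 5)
Your proposal is correct and follows essentially the same route as the paper: split the sum at the scale $\dz^k\sim d(x,y)$, invoke Lemma~\ref{lem:expsum} on the large-scale tail, and on the small-scale tail drop the $\CY^k$-factor, compare $V_{\dz^k}(x)$ with $V(x,y)$ via the doubling condition \eqref{eq:doub}, and absorb the resulting polynomial $[d(x,y)/\dz^k]^{\omega}$ into the exponential decay (the paper makes this explicit by choosing $\Gamma>\omega$ and bounding $\exp\{-c\,t^a\}\ls t^{-\Gamma}$ so that the small-scale sum becomes a convergent geometric series, which is the same estimate you write in exponential form). Your concluding remark about the power $a$ is also apt: the paper's displayed proof indeed uses the factor with the exponent $a$, and either form suffices since both dominate any polynomial as $t\to\infty$.
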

\begin{proof}
Take $r:=d(x,y)$. Due to Lemma \ref{lem:expsum}, to show this lemma, it suffices to prove that
\begin{equation*}
\RY:=\sum_{\dz^k<d(x,y)}\frac 1{V_{\dz^k}(x)}\exp\lf\{-c\lf[\frac{d(x,y)}{\dz^k}\r]^a\r\}\ls\frac 1{V(x,y)}.
\end{equation*}
Take $\Gamma\in(\omega,\fz)$. Indeed, by \eqref{eq:doub}, we have
\begin{equation*}
\RY\ls\sum_{\dz^k<d(x,y)}\frac{1}{V(x,y)}\frac{V(x,y)}{V_{\dz^k}(x)}\lf[\frac{\dz^k}{d(x,y)}\r]^\Gamma
\ls\frac{1}{V(x,y)}\sum_{\dz^k<d(x,y)}\lf[\frac{\dz^k}{d(x,y)}\r]^{\Gamma-\omega}
\sim\frac{1}{V(x,y)}.
\end{equation*}
This finishes the proof of Lemma \ref{lem:sum2}.
\end{proof}

\begin{proof}[Proof of Proposition \ref{prop:sizeRN}]
For any $N\in\nn$, we first prove that $R_N$ satisfies \eqref{eq:Ksize}. By \eqref{eq:mixsize} and Lemma
\ref{lem:sum2}, we conclude that, for any $x,\ y\in X$ with $x\neq y$,
\begin{align*}
|R_N(x,y)|&\le\sum_{k=-\fz}^\fz\sum_{|l|>N}|Q_{k+l}Q_{k}(x,y)|\\
&\ls\sum_{l=N+1}^\fz\dz^{\eta l}\sum_{k=-\fz}^\fz
\frac 1{V_{\dz^k}(x)}\exp\lf\{-c\lf[\frac{d(x,y)}{\dz^k}\r]^a\r\}
\exp\lf\{-c\lf[\frac{d(x,\CY^k)}{\dz^k}\r]^a\r\}\\
&\quad+\sum_{l=-\fz}^{-N-1}\dz^{-\eta l}\sum_{k=-\fz}^\fz
\frac 1{V_{\dz^{k+l}}(x)}\exp\lf\{-c\lf[\frac{d(x,y)}{\dz^{k+l}}\r]^a\r\}
\exp\lf\{-c\lf[\frac{d(x,\CY^{k+l})}{\dz^{k+l}}\r]^a\r\}\ls\dz^{\eta N}\frac 1{V(x,y)}.
\end{align*}

Next we prove \eqref{eq:Kreg}. Suppose that $d(x,x')\le (2A_0)^{-1}d(x,y)$ with $x\neq y$. Then, from
\eqref{eq:mixregxb} and Lemma \ref{lem:sum2}, we deduce that
\begin{align*}
&|R_N(x,y)-R_N(x',y)|+|R_N(y,x)-R_N(y,x')|\\
&\quad\le\sum_{|l|>N}^\fz\sum_{k=-\fz}^\fz
[|Q_{k+l}Q_{k}(x,y)-Q_{k+l}Q_k(x',y)|+|Q_{k+l}Q_{k}(y,x)-Q_{k+l}Q_k(y,x')|]\\
&\quad\ls\lf[\frac{d(x,x')}{d(x,y)}\r]^{\eta'}\lf[\sum_{l=N+1}^\fz\dz^{(\eta-\eta') l}\sum_{k=-\fz}^\fz
\frac 1{V_{\dz^k}(x)}\exp\lf\{-c'\lf[\frac{d(x,y)}{\dz^k}\r]^a\r\}
\exp\lf\{-c'\lf[\frac{d(x,\CY^k)}{\dz^k}\r]^a\r\}\r.\\
&\qquad\lf.+\sum_{l=-\fz}^{-N-1}\dz^{-(\eta-\eta')l}\sum_{k=-\fz}^\fz
\frac 1{V_{\dz^{k+l}}(x)}\exp\lf\{-c'\lf[\frac{d(x,y)}{\dz^{k+l}}\r]^a\r\}
\exp\lf\{-c'\lf[\frac{d(x,\CY^{k+l})}{\dz^{k+l}}\r]^a\r\}\r]\\
&\quad\ls\dz^{(\eta-\eta') N}\lf[\frac{d(x,x')}{d(x,y)}\r]^{\eta'}\frac 1{V(x,y)}.
\end{align*}

Finally, we show \eqref{eq:Kdreg}. By \eqref{eq:mixdregb} and Lemma \ref{lem:sum2}, we have
\begin{align*}
&|[R_N(x,y)-R_N(x',y)]-[R_N(x,y')-R_N(x',y')]|\\
&\quad\le\sum_{|l|>N}^\fz\sum_{k=-\fz}^\fz
|[Q_{k+l}Q_{k}(x,y)-Q_{k+l}Q_k(x',y)]-[Q_{k+l}Q_{k}(x,y')-Q_{k+l}Q_k(x',y')]|\\
&\quad\ls\lf[\frac{d(x,x')}{d(x,y)}\r]^{\eta'}\lf[\frac{d(y,y')}{d(x,y)}\r]^{\eta'}\\
&\qquad\times\lf[\sum_{l=N+1}^\fz\dz^{(\eta-\eta') l}\sum_{k=-\fz}^\fz
\frac 1{V_{\dz^k}(x)}\exp\lf\{-c'\lf[\frac{d(x,y)}{\dz^k}\r]^a\r\}
\exp\lf\{-c'\lf[\frac{d(x,\CY^k)}{\dz^k}\r]^a\r\}\r.\\
&\qquad\lf.+\sum_{l=-\fz}^{-N-1}\dz^{-(\eta-\eta')l}\sum_{k=-\fz}^\fz
\frac 1{V_{\dz^{k+l}}(x)}\exp\lf\{-c'\lf[\frac{d(x,y)}{\dz^{k+l}}\r]^a\r\}
\exp\lf\{-c'\lf[\frac{d(x,\CY^{k+l})}{\dz^{k+l}}\r]^a\r\}\r]\\
&\quad\ls\dz^{(\eta-\eta') N}\lf[\frac{d(x,x')}{d(x,y)}\r]^{\eta'}\lf[\frac{d(y,y')}{d(x,y)}\r]^{\eta'}
\frac 1{V(x,y)}.
\end{align*}
This finishes the proof of Proposition \ref{prop:sizeRN}.
\end{proof}

Let  $x_1\in X$, $r\in(0,\fz)$ and $\bz,\ \gz\in(0,\eta)$.
To prove the boundedness of $R_N$ on $\mathring{\CG}(x_1,r,\bz,\gz)$, we cannot use Theorem \ref{thm:Kbdd}
directly, since it is not clear whether or not $R_N$ satisfies conditions (b) and (c) of Theorem
\ref{thm:Kbdd}. To overcome this difficulty, for any $M\in\nn$, define
\begin{equation}\label{eq:defRNM}
R_{N,M}:=\sum_{|k|\le M}\sum_{N<|l|\le M}Q_{k+l}Q_k.
\end{equation}
Clearly, it is easy to see that, for any $f\in C^\bz(X)$ with $\bz\in(0,\eta]$ and $x\in X$,
$$
R_{N,M}f(x)=\int_X R_{N,M}(x,y)f(y)\,d\mu(y).
$$
Moreover, for any $x\in X$,
$$
\int_X R_{N,M}(x,y)\,d\mu(y)=0=\int_X R_{N,M}(y,x)\,d\mu(y).
$$
Notice that Lemma \ref{lem:ccrf2} and Proposition \ref{prop:sizeRN} hold true with $R_N$ replaced by $R_{N,M}$, with
all the constants involved independent of $M$ and $N$. Besides,
since $\int_{X} R_{N,M}(x,y)\,d\mu(x)=0$ for any $y\in X$, from the Fubini theorem, it
follows that $\int_X R_{N,M}f(x)\,d\mu(x)=0$. Applying these and
Theorem \ref{thm:Kbdd}, we know that, for any $f\in\mathring{\CG}(x_1,r,\bz,\gz)$,
\begin{equation}\label{eq:RNMG}
\|R_{N,M}f\|_{\mathring\CG(x_1,r,\bz,\gz)}\ls\dz^{(\eta-\eta')N}\|f\|_{\mathring\CG(x_1,r,\bz,\gz)}
\end{equation}
with $\eta'\in(\max\{\bz,\gz\},\eta)$, where the implicit positive constant is independent of $M, N$, $x_1$ and
$r$. To pass form $R_{N,M}$ in \eqref{eq:RNMG} to $R_N$, we consider the relationship between $R_Nf$ and
$R_{N,M}f$.

\begin{lemma}\label{lem:ccrf3} Let $f\in\CG(x_1,r,\bz,\gz)$ with $x_1\in X$, $r\in(0,\fz)$ and $\bz,\ \gz\in(0,\eta)$.
Fix $N\in\nn$. Then the following assertions hold true:
\begin{enumerate}
\item $\lim_{M\to\fz} R_{N,M}f=R_Nf$ in $L^2(X)$;
\item for any $x\in X$, the sequence $\{R_{N,M}f(x)\}_{M=N+1}^\infty$ converges locally uniformly to some
element, denoted by $\widetilde{R_N} f(x)$, where $\widetilde{R_N }f$ differs from $R_N f$ at most on a set of
$\mu$-measure $0$;
\item the operator $\widetilde {R_N}$ can be uniquely extended from $\CG(x_1,r,\bz,\gz)$ to $L^2(X)$, with the
extension operator coincides with $R_N$. In this sense, for any $f\in\CG(x_1,r,\bz,\gz)$ and almost every
$x\in X$,
\begin{equation*}
\lim_{M\to\fz} R_{N,M}f(x)=\widetilde{R_N}f(x)=R_Nf(x).
\end{equation*}
\end{enumerate}
\end{lemma}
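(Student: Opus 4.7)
The plan is to prove the three assertions in turn, building on the operator-norm estimates from Section~\ref{RN} and the composition bounds from Lemma~\ref{lem:ccrf1}.

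For (i), I will adapt the Cotlar-Stein computation of Lemma~\ref{lem:ccrf2}. Write
$$R_N - R_{N,M} = \sum_{|k|>M}\sum_{|l|>N} Q_{k+l}Q_k + \sum_{|k|\le M}\sum_{|l|>M} Q_{k+l}Q_k,$$
so that the same geometric-mean almost-orthogonality estimate \eqref{eq:**} fed into Lemma~\ref{lem:CSlem} yields $\|R_N - R_{N,M}\|_{L^2(X)\to L^2(X)}\to 0$ as $M\to\fz$. Since the size condition on $f\in\CG(x_1,r,\bz,\gz)$ together with Lemma~\ref{lem-add}(ii) guarantees $f\in L^2(X)$ (indeed $\|f\|_{L^2(X)}^2\ls 1/V_r(x_1)$), this operator-norm convergence upgrades to $R_{N,M}f\to R_Nf$ in $L^2(X)$.

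For (ii), the goal is absolute and locally uniform convergence of the double sum $\sum_{k\in\zz}\sum_{|l|>N} Q_{k+l}Q_k f(x)$. The main estimate I would establish is a pointwise bound of the form
$$|Q_{k+l}Q_k f(x)|\ls \dz^{|l|\eta'}\min\lf\{(\dz^k/r)^{\bz'},\,(r/\dz^k)^{\gz'}\r\}\frac{1}{V_r(x_1)+V(x_1,x)}\lf[\frac{r}{r+d(x_1,x)}\r]^{\gz'}$$
for appropriate small exponents $\bz',\gz',\eta'>0$. For $\dz^k\le r$ I would exploit the cancellation $\int_X Q_{k+l}Q_k(x,y)\,d\mu(y)=0$ from \eqref{eq:mixcan} to replace $f(y)$ by $f(y)-f(x)$, split the integral at $d(x,y)\le(2A_0)^{-1}[r+d(x_1,x)]$, and apply respectively the regularity (near) and size (far) conditions of $f$ together with the exponential decay of $Q_{k+l}Q_k$ from Lemma~\ref{lem:ccrf1} and the integral estimates of Lemma~\ref{lem-add}. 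For $\dz^k>r$ I would instead use the exponential decay of $Q_{k+l}Q_k$ concentrated on scale $\dz^k$ and the decay of $f$ governed by Lemma~\ref{lem-add}(iv) to extract the factor $(r/\dz^k)^{\gz'}$. The resulting bound is summable in both $k\in\zz$ and $|l|>N$, and its $x$-profile is bounded on any compact subset of $X$, producing locally uniform absolute convergence of the double series to a well-defined function $\widetilde{R_N}f(x)$.

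For (iii), by (i) a subsequence of $\{R_{N,M}f\}_M$ converges $\mu$-almost everywhere to $R_N f$, while by (ii) the full sequence converges pointwise (indeed locally uniformly) to $\widetilde{R_N}f$; hence $\widetilde{R_N}f = R_N f$ $\mu$-almost everywhere. Combined with the fact that $\CG(x_1,r,\bz,\gz)$ embeds continuously and densely in $L^2(X)$, this forces $R_N$ (bounded on $L^2(X)$ by Lemma~\ref{lem:ccrf2}) to be the unique bounded extension of $\widetilde{R_N}$ from $\CG(x_1,r,\bz,\gz)$ to $L^2(X)$. The chief obstacle will be the two-regime pointwise estimate in (ii): carefully combining cancellation, the sharp exponential decay from Lemma~\ref{lem:ccrf1}, and the summability estimates of Lemmas~\ref{lem-add} and \ref{lem:sum2} so that the resulting bound is simultaneously summable in both $k$ and $l$ and still retains the test-function profile in $x$ needed for local uniformity.
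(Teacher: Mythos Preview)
Your handling of (i) and (iii) is essentially the paper's argument: the same splitting of $R_N-R_{N,M}$, Cotlar--Stein via Lemma~\ref{lem:CSlem}, then the Riesz subsequence to match the pointwise limit with the $L^2$ limit, and finally density of $\CG(x_1,r,\bz,\gz)$ in $L^2(X)$.

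There is, however, a genuine gap in your plan for (ii). The pointwise majorant you propose,
\[
|Q_{k+l}Q_k f(x)|\ls \dz^{|l|\eta'}\min\lf\{(\dz^k/r)^{\bz'},\,(r/\dz^k)^{\gz'}\r\}\frac{1}{V_r(x_1)+V(x_1,x)}\lf[\frac{r}{r+d(x_1,x)}\r]^{\gz'},
\]
cannot be obtained on the large-scale side $\dz^{k\wedge(k+l)}>r$ under the hypotheses of the paper. Since $f\in\CG(x_1,r,\bz,\gz)$ carries no cancellation, the only size information available for that regime is $\int_X|f|\ls 1$ together with $\|Q_{k+l}Q_k(x,\cdot)\|_{L^\infty}\ls \dz^{|l|\eta}/V_{\dz^{k\wedge(k+l)}}(x)$, which yields at best $\dz^{|l|\eta}/V_{\dz^{k\wedge(k+l)}}(x)$. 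Upgrading this to your bound near $x_1$ would force $V_{\dz^k}(x_1)/V_r(x_1)\gtrsim(\dz^k/r)^{\gz'}$, i.e.\ a reverse doubling inequality---precisely what the paper refuses to assume. Lemma~\ref{lem-add}(iv) only controls the tail $\int_{d(x_1,y)\ge R}|f(y)|\,d\mu(y)$ and gives nothing for the bulk $B(x_1,R)$, so it cannot produce the factor $(r/\dz^k)^{\gz'}$ either.

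The paper's route around this is to \emph{retain} the extra exponential factor $\exp\{-c[d(x,\CY^{k\wedge(k+l)})/\dz^{k\wedge(k+l)}]^a\}$ coming from Lemma~\ref{lem:ccrf1}(i), arriving at the non--test-function majorant \eqref{claim-add} for large scales, and then invoke Lemma~\ref{lem:expsum} (the Auscher--Hyt\"onen substitute for reverse doubling) to sum $\sum_{\dz^{k\wedge(k+l)}\ge r} V_{\dz^{k\wedge(k+l)}}(x)^{-1}\exp\{-c[d(x,\CY^{k\wedge(k+l)})/\dz^{k\wedge(k+l)}]^a\}\ls V_r(x)^{-1}$. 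Your small-scale estimate via cancellation and regularity of $f$ is fine and matches the paper; it is only the large-scale summability mechanism that needs to be replaced by the $\CY^k$-factor plus Lemma~\ref{lem:expsum}.
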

\begin{proof}
We first prove (i). We only need to prove that, for any $f\in L^2(X)$,
\begin{equation}\label{eq:limL2}
\lim_{M\to\fz} \lf\|R_{N,M}f-R_Nf\r\|_{L^2(X)}=0.
\end{equation}
Indeed, when $M>N$, write
$$
R_N-R_{N,M}=\sum_{|k|\le M}\sum_{|l|>M}Q_{k+l}Q_k+\sum_{|k|>M}\sum_{|l|>N}Q_{k+l}Q_k
$$
An argument similar to that used in the proof of \eqref{eq:**} implies that, for any $k_1,\ k_2\in\zz$ and
$\thz\in(0,1)$,
$$
\lf\|\lf(\sum_{|l_1|>M}Q_{k_1+l_1}Q_{k_1}\r)^*\lf(\sum_{|l_2|>M}Q_{k_2+l_2}Q_{k_2}\r)\r\|_{L^2(X)\to L^2(X)}
\ls\dz^{2M\eta\thz}\dz^{|k_1-k_2|\eta(1-\thz)}
$$
and
$$
\lf\|\lf(\sum_{|l_1|>M}Q_{k_1+l_1}Q_{k_1}\r)\lf(\sum_{|l_2|>M}Q_{k_2+l_2}Q_{k_2}\r)^*\r\|_{L^2(X)\to L^2(X)}
\ls\dz^{2M\eta\thz}\dz^{|k_1-k_2|\eta(1-\thz)}.
$$
Combining this with Lemma \ref{lem:CSlem}, we obtain
$$
\lf\|\sum_{|k|\le M}\sum_{|l|>M}Q_{k+l}Q_k\r\|_{L^2(X)\to L^2(X)}\ls \dz^{M\eta\thz}.
$$
Therefore,  $\sum_{|k|\le M}\sum_{|l|>M}Q_{k+l}Q_kf\to 0$ in $L^2(X)$ when $M\to\fz$.

By Lemmas \ref{lem:ccrf2} and \ref{lem:CSlem}, we know that
$$
R_Nf=\sum_{k=-\fz}^\fz\sum_{|l|>N}Q_{k+l}Q_kf=\lim_{M\to\fz}\sum_{|k|\le M}\sum_{|l|>N}Q_{k+l}Q_kf
\quad\text{in $L^2(X)$},
$$
which implies that
$$
\lim_{M\to\fz}\lf\|\sum_{|k|>M}\sum_{|l|>N}Q_{k+l}Q_kf\r\|_{L^2(X)}=0.
$$
Thus, we obtain \eqref{eq:limL2}. This finishes the proof of (i).

Now we prove (ii). Let $f\in\CG(x_1,r,\bz,\gz)$.
To prove that $\{R_{N,M}f\}_{M=1}^\fz$ is a locally uniformly convergent sequence, fixing an arbitrary point $x\in X$,
we only need to find a positive sequence $\{c_{k,l}\}_{k,\ l=-\fz}^\fz$ such that
\begin{equation*}
\sup_{y\in B(x,r)}|Q_{k+l}Q_kf(y)|\le c_{k,l}\qquad \textup{and}\qquad \sum_{k=-\fz}^\fz\sum_{l=-\fz}^\fz c_{k,l}<\fz.
\end{equation*}
We claim that
\begin{align}\label{claim-add}
\sup_{y\in B(x,r)}|Q_{k+l}Q_kf(y)|\ls
\begin{cases}
\displaystyle \dz^{\eta |l|}\frac 1{V_{\dz^{[k\wedge (k+l)]}}(x)}
\exp\lf\{-c\lf[\frac{d(x,\CY^{[k\wedge (k+l)]})}{A_0\dz^{[k\wedge (k+l)]}}\r]^a\r\}
&\textup{if}\;\dz^{[k\wedge (k+l)]}\ge r,\\
\displaystyle\dz^{\eta |l|}\frac 1{V_r(x_1)}\lf(\frac{\dz^{[k\wedge (k+l)]}}r\r)^\bz
&\textup{if}\;\dz^{[k\wedge (k+l)]}<r.
\end{cases}
\end{align}
With $c_{k,l}$ defined as in the right-hand side of \eqref{claim-add}, we apply Lemma \ref{lem:expsum} to
obtain
$$
\sum_{k=-\fz}^\fz\sum_{l=-\fz}^\fz c_{k,l}\ls 1.
$$

To prove \eqref{claim-add}, due to the symmetry, we only consider the case $l\in\nn$.
When $\dz^k\ge r$, by \eqref{eq:mixsize}, Lemma \ref{lem-add}(ii) and
\eqref{eq-add8}, we conclude that, for any $y\in B(x,r)$,
\begin{align*}
|Q_{k+l}Q_kf(y)|&\le\int_X\lf|Q_{k+l}Q_k(y,z)\r||f(z)|\,d\mu(z)\\
&\ls\dz^{\eta l}\frac 1{V_{\dz^k}(y)}\exp\lf\{-c\lf[\frac{d(y,\CY^k)}{\dz^k}\r]^a\r\}
\int_X|f(x)|\,d\mu(z)\ls\dz^{\eta l}\frac 1{V_{\dz^k}(x)}\exp\lf\{-c\lf[\frac{d(x,\CY^k)}{A_0\dz^k}\r]^a\r\},
\end{align*}
as desired.
When $\dz^k<r$,  by \eqref{eq:mixcan}, we know that, for any $y\in B(x,r)$,
\begin{align*}
|Q_{k+l}Q_kf(y)|&=\lf|\int_X Q_{k+l}Q_k(y,z)[f(z)-f(y)]\,d\mu(y)\r|
\le\int_X|Q_{k+l}Q_k(y,z)||f(z)-f(y)|\,d\mu(z).
\end{align*}
For any $y,\ z\in X$,  we use the regularity of $f$ when $d(y,z)\le (2A_0)^{-1}[r+d(x_1,y)]$, or the size condition of
$f$ when $d(y,z)>(2A_0)^{-1}[r+d(x_1,y)]$ to conclude that
\begin{align*}
|f(z)-f(y)|\ls \lf[\frac{d(y,z)}{r+d(x_1,y)}\r]^\bz\frac 1{V_r(x_1)} \ls \lf[\frac{d(y,z)}{r}\r]^\bz\frac 1{V_r(x_1)}.
\end{align*}
From this, \eqref{eq:mixsize} and Lemma \ref{lem-add}(ii), it follows that
\begin{align*}
|Q_{k+l}Q_kf(y)|&\ls\dz^{\eta l}\frac 1{V_r(x_1)} \lf(\frac{\dz^k}r\r)^\bz
\int_X\frac 1{V_{\dz^k}(y)}
\exp\lf\{-c\lf[\frac{d(y,z)}{\dz^k}\r]^a\r\}\lf[\frac{d(y,z)}{\dz^k}\r]^\bz\,d\mu(z)\\
&\ls\dz^{\eta l}\frac 1{V_r(x_1)}\lf(\frac{\dz^k}r\r)^\bz.
\end{align*}
This finishes the proof of \eqref{claim-add}. Thus, for any $x\in X$, the sequence $\{R_{N,M}f(x)\}_{M=N+1}^\infty$ converges locally uniformly to some element, which is denoted by $\widetilde{R_N} f(x)$.

By \eqref{eq:limL2} and the Riesz theorem, we know that
there exists an increasing sequence $\{M_j\}_{j\in\nn}\subset\nn$ which tends to $\infty$ such that
\begin{equation*}
\lim_{j\to\fz} R_{N,M_j}f(x)=R_Nf(x)\quad\text{$\mu$-almost every $x\in X$}.
\end{equation*}
Consequently,
$\wz{R_N}f(x)=\lim_{M\to\fz} R_{N,M}f(x)=\lim_{j\to\fz} R_{N,M_j}f(x)=R_Nf(x)$ for $\mu$-almost every $x\in X$.
This finishes the proof of (ii).

For any $f\in \CG(x_1,r,\bz,\gz)$, since $\wz{R_N} f$ is well defined and
$\wz{R_N}f(x)=R_Nf(x)$ for $\mu$-almost every $x\in X$, it follows, from the boundedness of $R_N$ on $L^2(X)$
and the density of $\CG(x_1,r,\bz,\gz)$ in $L^2(X)$, that $\wz{R_N}$ can be uniquely extended to a boundedness
operator on $L^2(X)$. The extension operator, still denoted by $\wz{R_N}$,
satisfies that $R_Ng=\wz{R_N}g$ both in $L^2(X)$ and almost everywhere for all $g\in L^2(X)$. This finishes the
proof of (iii) and hence of Lemma \ref{lem:ccrf3}.
\end{proof}

Due to Lemma \ref{lem:ccrf3}, it is not necessary to distinguish $\wz R_N$ and $R_N$.
As a consequence of \eqref{eq:RNMG} and the
dominated convergence theorem, we easily deduce the following boundedness of $R_N$, the details being omitted.
\begin{proposition}\label{prop-add}
Let  $x_1\in X$, $r\in(0,\fz)$ and $\bz,\ \gz\in(0,\eta)$.
Then, for any $N\in\nn$ and $f\in\mathring{\CG}(x_1,r,\bz,\gz)$,
\begin{equation*}
\|R_Nf\|_{\mathring{\CG}(x_1,r,\bz,\gz)}\le C\dz^{(\eta-\eta')N}\|f\|_{\mathring{\CG}(x_1,r,\bz,\gz)},
\end{equation*}
where $\eta'\in(\max\{\bz,\gz\},\eta)$ and $C$ is a  positive constant independent of $x_1$, $r$, $N$ and $f$.
\end{proposition}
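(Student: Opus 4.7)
The plan is to obtain Proposition \ref{prop-add} as a direct consequence of \eqref{eq:RNMG}, the pointwise/locally uniform convergence in Lemma \ref{lem:ccrf3}(ii), and the dominated convergence theorem. The essential observation is that the bound \eqref{eq:RNMG} on $R_{N,M}$ has a constant independent of $M$; thus all one has to do is transfer each defining inequality of the test function norm from $R_{N,M}f$ to $R_Nf$ via a limit argument.

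First I would fix $f\in\mathring{\CG}(x_1,r,\bz,\gz)$ and, for any $M>N$, recall from \eqref{eq:RNMG} that $R_{N,M}f\in\mathring{\CG}(x_1,r,\bz,\gz)$ with
$$
\|R_{N,M}f\|_{\CG(x_1,r,\bz,\gz)}\le C\dz^{(\eta-\eta')N}\|f\|_{\CG(x_1,r,\bz,\gz)},
$$
where $C$ does not depend on $M$. Unpacking the definition of $\|\cdot\|_{\CG(x_1,r,\bz,\gz)}$, this means that the size bound \eqref{eq:size} and regularity bound \eqref{eq:reg} hold for $R_{N,M}f$ with constant $C\dz^{(\eta-\eta')N}\|f\|_{\CG(x_1,r,\bz,\gz)}$, uniformly in $M$. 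By Lemma \ref{lem:ccrf3}(ii), for every $x\in X$ the sequence $\{R_{N,M}f(x)\}_{M>N}$ converges (locally uniformly) to $R_Nf(x)$. Passing to the limit $M\to\fz$ in \eqref{eq:size} and \eqref{eq:reg} applied to $R_{N,M}f$ yields the same inequalities for $R_Nf$ with the same constant $C\dz^{(\eta-\eta')N}\|f\|_{\CG(x_1,r,\bz,\gz)}$, which gives
$$
\|R_Nf\|_{\CG(x_1,r,\bz,\gz)}\le C\dz^{(\eta-\eta')N}\|f\|_{\CG(x_1,r,\bz,\gz)}.
$$

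It remains to verify the cancellation $\int_X R_Nf\,d\mu=0$ so that $R_Nf\in\mathring{\CG}(x_1,r,\bz,\gz)$. Each $R_{N,M}f$ satisfies $\int_X R_{N,M}f\,d\mu=0$ (as noted just before the proposition). The uniform-in-$M$ size estimate above provides an integrable majorant for $\{R_{N,M}f\}_{M>N}$ via Lemma \ref{lem-add}(iv), and the pointwise limit is $R_Nf$ by Lemma \ref{lem:ccrf3}(ii). The dominated convergence theorem therefore gives $\int_X R_Nf\,d\mu=\lim_{M\to\fz}\int_X R_{N,M}f\,d\mu=0$, completing the proof. I expect no real obstacle here; the only point to handle with care is that the pointwise convergence of Lemma \ref{lem:ccrf3}(ii) (rather than merely $L^2$-convergence) is exactly what allows the defining inequalities of the test function space to pass to the limit.
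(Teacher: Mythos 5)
Your argument is correct and is precisely the fleshed-out version of the proof the paper omits: the paper states that Proposition \ref{prop-add} follows from \eqref{eq:RNMG} and the dominated convergence theorem, together with Lemma \ref{lem:ccrf3}, and that is exactly what you do (passing the size and regularity bounds to the limit pointwise, then verifying cancellation). The only slip is a citation: for the integrable majorant in the dominated convergence step you should invoke Lemma \ref{lem-add}(ii), which shows that $\frac{1}{V_r(x_1)+V(x_1,\cdot)}\bigl[\frac{r}{r+d(x_1,\cdot)}\bigr]^{\gz}$ integrates to a finite constant over $X$, rather than Lemma \ref{lem-add}(iv), which only controls the tail integral.
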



\subsection{Proofs of homogeneous continuous Calder\'{o}n reproducing formulae}\label{pr}

This section is devoted to the proofs of homogeneous Calder\'{o}n continuous reproducing formulae.
We start with following several lemmas.
\begin{lemma}\label{lem:propQkN}
Let $\{Q_k\}_{k\in\zz}$ be an $\exp$-{\rm ATI} and $Q_k^N:=\sum_{|l|\le N}Q_{k+l}$ for any $k\in\zz$ and $N\in\nn$.
Then there exist positive constants $C_{(N)}$ and $c_{(N)}$, depending on $N$, but being independent of $k$,
such that
\begin{enumerate}
\item for any $x,\ y\in X$,
\begin{equation}\label{eq:QkNsize}
\lf|Q_k^N(x,y)\r|\le C_{(N)}\frac 1{V_{\dz^k}(x)}\exp\lf\{-c_{(N)}\lf[\frac{d(x,y)}{\dz^k}\r]^a\r\};
\end{equation}
\item for any $x,\ x',\ y\in X$ with $d(x,x')\le\dz^k$,
\begin{align}\label{eq:QkNregx}
&\lf|Q_k^N(x,y)-Q_k^N(x',y)\r|+\lf|Q_k^N(y,x')-Q_k^N(y,x)\r|\\
&\quad\le C_{(N)}\lf[\frac{d(x,x')}{\dz^k}\r]^\eta
\frac 1{V_{\dz^k}(x)}\exp\lf\{-c_{(N)}\lf[\frac{d(x,y)}{\dz^k}\r]^a\r\};\noz
\end{align}
\item for any $x,\ x',\ y,\ y'\in X$ with $d(x,x')\le\dz^k$ and $d(y,y')\le\dz^k$,
\begin{align*}
&\lf|\lf[Q_k^N(x,y)-Q_k^N(x',y)\r]-\lf[Q_k^N(x,y')-Q_k^N(x',y')\r]\r|\\
&\quad\le C_{(N)}\lf[\frac{d(x,x')}{\dz^k}\r]^\eta\lf[\frac{d(y,y')}{\dz^k}\r]^\eta
\frac 1{V_{\dz^k}(x)}\exp\lf\{-c_{(N)}\lf[\frac{d(x,y)}{\dz^k}\r]^a\r\};\noz
\end{align*}
\item for any $x,\ y\in X$, $\int_X Q_k^N(x,y')\,d\mu(y')=0=\int_X Q_k^N(x',y)\,d\mu(x')$.
\end{enumerate}
\end{lemma}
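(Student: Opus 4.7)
The approach is to reduce each of (i)--(iv) directly to the corresponding properties of the individual summands $Q_{k+l}$ with $|l|\le N$. Because $|l|\le N$, the scale $\dz^{k+l}$ differs from $\dz^k$ by at most the fixed multiplicative factor $\dz^{-N}$, so the doubling condition \eqref{eq:doub} gives $V_{\dz^{k+l}}(x)\sim V_{\dz^k}(x)$ with constants of the form $C\dz^{-N\omega}$. Likewise, since $a\in(0,1]$,
$$
\exp\lf\{-\nu\lf[\frac{d(x,y)}{\dz^{k+l}}\r]^a\r\}\le \exp\lf\{-\nu\dz^{Na}\lf[\frac{d(x,y)}{\dz^k}\r]^a\r\},
$$
so the change of scale inside the exponential can be absorbed into a new constant $c_{(N)}:=\nu\dz^{Na}$. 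The exponential factor involving $\CY^{k+l}$ that appears in Definition \ref{def:eti} is not needed in the conclusion and will simply be bounded by $1$.

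Item (iv) is immediate from the linearity of the sum and the cancellation property of each $Q_{k+l}$. For item (i), I would sum the size estimate \eqref{eq:etisize} of $Q_{k+l}$ over $|l|\le N$ and invoke the two scale-comparison remarks above; the $(2N+1)$ summands and the scale distortion together produce the constants $C_{(N)}$ and $c_{(N)}$ in \eqref{eq:QkNsize}.

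The main subtlety is in item (ii), because the regularity hypothesis \eqref{eq:etiregx} of $Q_{k+l}$ requires $d(x,x')\le \dz^{k+l}$, which may fail when $l>0$ and $\dz^{k+l}<d(x,x')\le \dz^k$. I would split into two sub-cases for each $l$. If $d(x,x')\le \dz^{k+l}$, I apply \eqref{eq:etiregx} of $Q_{k+l}$ and transfer all quantities to scale $\dz^k$ as above, noting $[d(x,x')/\dz^{k+l}]^\eta\le \dz^{-N\eta}[d(x,x')/\dz^k]^\eta$. If instead $\dz^{k+l}<d(x,x')\le \dz^k$, then $[d(x,x')/\dz^k]^{-\eta}\le \dz^{-l\eta}\le \dz^{-N\eta}$, so by the triangle inequality and \eqref{eq:etisize},
$$
\lf|Q_{k+l}(x,y)-Q_{k+l}(x',y)\r|\le \lf|Q_{k+l}(x,y)\r|+\lf|Q_{k+l}(x',y)\r|\ls \dz^{-N\eta}\lf[\frac{d(x,x')}{\dz^k}\r]^\eta\cdot R(x,y),
$$
where $R(x,y)$ denotes the right-hand side of \eqref{eq:QkNsize} already controlled in the proof of (i). The same argument, with the roles of $x$ and $y$ swapped, handles $|Q_{k+l}^N(y,x)-Q_{k+l}^N(y,x')|$; summing over $|l|\le N$ yields \eqref{eq:QkNregx}.

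For item (iii), I would apply the same case-splitting idea simultaneously on the pairs $(d(x,x'),\dz^{k+l})$ and $(d(y,y'),\dz^{k+l})$. When both $d(x,x')\le\dz^{k+l}$ and $d(y,y')\le\dz^{k+l}$, the second difference estimate \eqref{eq:etidreg} of $Q_{k+l}$ applies directly and is transferred to scale $\dz^k$ as before. In each of the three remaining mixed cases one of the increments exceeds $\dz^{k+l}$; for that coordinate one invokes the already-established first-order regularity (or even the size bound when both increments exceed $\dz^{k+l}$), paying a factor $\dz^{-N\eta}$ each time to compensate for the mismatch between $\dz^{k+l}$ and $\dz^k$. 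There is no deep obstacle: the entire argument is careful bookkeeping of the scale distortion that is forced by $|l|\le N$, and this distortion is absorbed into the $N$-dependent constants $C_{(N)}$ and $c_{(N)}$.
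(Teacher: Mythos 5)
Your proof is correct and follows essentially the same route as the paper's: reduce to the individual summands $Q_{k+l}$, absorb the fixed scale distortion $\dz^{-N}$ into $N$-dependent constants, and handle the failure of the hypothesis $d(x,x')\le\dz^{k+l}$ (when $l>0$) by retreating to the size bound and paying a factor $\dz^{-N\eta}$ to manufacture the required prefactor $[d(x,x')/\dz^k]^\eta$ (and analogously for (iii)). The paper compresses exactly this case-splitting into a single ``observation'' about extending the regularity estimates to $d(x,x')\le\tau\dz^{k+l}$; your writeup simply unpacks it.
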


\begin{proof}
From the cancellation of $Q_k$, it is easy to see that (iv) holds true. Noticing that the constants $C_{(N)}$
and $c_{(N)}$ are allowed to depend on $N$, we obtain (i) directly by the size condition of $Q_k$
and Remark \ref{rem:andef}(i).

To see (ii) and (iii), we make the following observation. Fix $N\in\nn$ and $\tau\in(0,\fz)$. Then, for any
$k\in\zz$ and $l\in\{-N,-N+1,\ldots,N-1,N\}$,
when $d(x,x')\le\tau\dz^{k+l}$ and $d(y,y')\le\tau\dz^{k+l}$, the regularity condition (resp., the second
difference regularity condition) of $Q_{k+l}$ in \eqref{eq:etiregx} [resp., \eqref{eq:etidreg}] remains true by
using the size conditon of $Q_{k+l}$ (resp., the regularity of $Q_{k+l}$),
with all constants involved depending on $\tau$ but independent of $k$, $l$, $x$, $y$, $x'$ and $y'$.

Using the above observation, we easily obtain (ii) and (iii), via taking
$\tau:=1$ when $l\in\{0,\ldots,N\}$, and $\tau:=\dz^{-N}$ when $l\in\{-N,-N+1,\ldots,-1\}$. This finishes the
proof of Lemma \ref{lem:propQkN}.
\end{proof}

\begin{lemma}\label{lem-add2} Let $\{Q_k\}_{k=-\fz}^\fz$ be an $\exp$-{\rm ATI}.
For any $k\in\zz$, let $E_k:=Q_k^NQ_k=\sum_{|l|\le N} Q_{k+l}Q_k$. Then there exist positive constants
$C_{(N)}$ and $c_{(N)}$, depending on $N$, but being independent of $k$, such that the integral kernel of
$E_k$, still denoted by $E_k$, satisfies the following:
\begin{enumerate}
\item for any $x,\ y\in X$,
\begin{equation*}
|E_k(x,y)|\le C_{(N)}\frac{1}{V_{\dz^k}(x)}\exp\lf\{-c_{(N)}\lf[\frac{d(x,y)}{\dz^{k}}\r]^a\r\}
\exp\lf\{-c_{(N)}\lf[\frac{d(x,\CY^k)}{\dz^{k}}\r]^a\r\};
\end{equation*}
\item for any $x,\ y,\ y'\in X$ with $d(y,y')\le\dz^k$ or $d(y,y')\le (2A_0)^{-1}[\dz^k+d(x, y)]$,
\begin{align*}
&|E_k(x,y)-E_k(x,y')|\\
&\quad\le C_{(N)}\lf[\frac{d(y,y')}{\dz^k}\r]^\eta
\frac{1}{V_{\dz^k}(x)}\exp\lf\{-c_{(N)}\lf[\frac{d(x,y)}{\dz^{k}}\r]^a\r\}
\exp\lf\{-c_{(N)}\lf[\frac{d(x,\CY^k)}{\dz^{k}}\r]^a\r\}.
\end{align*}
\end{enumerate}
\end{lemma}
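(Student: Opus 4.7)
The plan is to apply Lemma \ref{lem:ccrf1} and Corollary \ref{cor:mixb} termwise to each summand $Q_{k+l}Q_k$ with $\wz{Q}_j := Q_{k+l}$, and then sum the resulting bounds over the $2N+1$ values of $l$. Because $|l|\le N$, every scale ratio $\dz^{(k+l)\wedge k}/\dz^k$ lies in the bounded interval $[\dz^N, \dz^{-N}]$, so all scale shifts can be absorbed into $N$-dependent constants and the finite sum contributes at most a factor $2N+1$.

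For part (i), \eqref{eq:mixsize} applied to $Q_{k+l}Q_k$ gives
\begin{equation*}
|Q_{k+l}Q_k(x,y)| \le C\dz^{|l|\eta}\frac{1}{V_{\dz^{(k+l)\wedge k}}(x)}\exp\lf\{-c\lf[\tfrac{d(x,y)}{\dz^{(k+l)\wedge k}}\r]^a\r\}\exp\lf\{-c\lf[\tfrac{d(x,\CY^{(k+l)\wedge k})}{\dz^{(k+l)\wedge k}}\r]^a\r\}.
\end{equation*}
For $l\ge 0$ this already matches the form in (i). For $l<0$, the scaling identity $[d/\dz^{k+l}]^a = \dz^{-la}[d/\dz^k]^a$ with $\dz^{-la}\ge\dz^{Na}$ allows me to convert the exponential at scale $\dz^{k+l}$ into one at scale $\dz^k$ with the smaller constant $c_{(N)}:=c\dz^{Na}$, and \eqref{eq:doub} controls $V_{\dz^{k+l}}(x)$ in terms of $V_{\dz^k}(x)$. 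To replace $\CY^{k+l}$ by $\CY^k$, I would return to the integral representation $Q_{k+l}Q_k(x,y)=\int_X Q_{k+l}(x,z)Q_k(z,y)\,d\mu(z)$; applying Remark \ref{rem:andef}(ii) on $Q_k(z,y)$ pulls the factor $\exp\{-c[d(y,\CY^k)/\dz^k]^a\}$ outside the integral, and the triangle-type inequality $[d(x,\CY^k)]^a \le A_0^a([d(x,y)]^a+[d(y,\CY^k)]^a)$ (as in \eqref{eq-add8}) converts $d(y,\CY^k)$ into $d(x,\CY^k)$ at the cost of absorbing part of the available $d(x,y)$-decay.

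For part (ii), the pattern is the same, but now the second-variable regularity portions of \eqref{eq:mixregx} and \eqref{eq:mixregxb} are applied to each $Q_{k+l}Q_k$. A short case analysis on whether $d(y,y')\le\dz^{(k+l)\wedge k}$ reduces matters either to a direct application of those regularity bounds (producing the factor $[d(y,y')/\dz^{(k+l)\wedge k}]^\eta$) or, in the subcase $l>0$ with $\dz^{k+l}<d(y,y')\le\dz^k$, to a use of the size bound from (i) at both $(x,y)$ and $(x,y')$ combined with the observation that $d(y,y')/\dz^{k+l}\le\dz^{-N}$ is bounded by an $N$-dependent constant; the subsequent conversion to scale $\dz^k$ with the $\CY^k$-factor proceeds exactly as in (i).

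The hard part is the replacement of $\CY^{k+l}$ by $\CY^k$ when $l<0$: these are genuinely distinct finite subsets of $X$ with no obvious comparability, so the only way to recover a $\CY^k$-factor is by routing through the second kernel $Q_k$, which requires carefully rebalancing the available exponential decay between the $d(x,y)$-term and the new $d(x,\CY^k)$-term. The remaining steps are routine bookkeeping of finitely many $N$-dependent constants.
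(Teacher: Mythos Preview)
Your approach is correct, but it takes a different and somewhat more circuitous route than the paper. The paper does \emph{not} apply Lemma~\ref{lem:ccrf1} termwise. Instead it first invokes Lemma~\ref{lem:propQkN}, which packages $Q_k^N=\sum_{|l|\le N}Q_{k+l}$ into a single kernel satisfying size and regularity bounds at the fixed scale $\dz^k$ \emph{without any $\CY$-factor}. Then it reruns the proof of Lemma~\ref{lem:ccrf1}(i)--(ii) once, with the pair $(Q_k^N,Q_k)$ in place of $(\wz Q_j,Q_k)$; the factor $\exp\{-c[d(x,\CY^k)/\dz^k]^a\}$ now comes entirely from the second kernel $Q_k$, so the issue you flagged as ``the hard part'' simply never arises.

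Your termwise strategy reproduces this outcome but at the cost of extra work: for $l<0$ you correctly observe that Lemma~\ref{lem:ccrf1} yields $\CY^{k+l}$ rather than $\CY^k$, and you are forced to abandon the lemma and return to the integral $\int Q_{k+l}(x,z)Q_k(z,y)\,d\mu(z)$ to extract the $\CY^k$-decay from $Q_k$ directly. That works (and your scaling bookkeeping $\dz^{-la}\ge\dz^{Na}$ is right), but it means you are effectively redoing the composition estimate rather than citing it. The paper's route via Lemma~\ref{lem:propQkN} is cleaner precisely because it absorbs the scale mismatches \emph{before} composing, so a single pass through the argument of Lemma~\ref{lem:ccrf1} suffices and no level-changing of $\CY$ is ever needed. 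Both proofs ultimately rest on the same ingredients; the paper just organizes them so that the ``hard part'' disappears.
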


\begin{proof}
Applying Lemma \ref{lem:propQkN} and Remark \ref{rem:andef}(i)
and following the proofs of (i) and (ii) of Lemma \ref{lem:ccrf1}, we
directly obtain (i) and (ii) for $d(y,y')\le\dz^k$.
Further,  applying (i) and proceeding as in the proof of Proposition \ref{prop:etoa} [see Remark
\ref{rem:andef}(iii)], we find that (ii) remains true when  $d(y,y')\le(2A_0)^{-1}[\dz^k+d(x,y)]$. This
finishes the proof of Lemma \ref{lem-add2}.
\end{proof}

\begin{lemma}\label{lem-add3}
Let $\{Q_k\}_{k=-\fz}^\fz$ be an $\exp$-{\rm ATI}.
Then, for any $k\in\zz$ and $f\in\CG(\eta,\eta)$, $Q_kf\in\CG(\eta,\eta)$.
\end{lemma}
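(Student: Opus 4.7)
\emph{Plan.} Fix $k\in\zz$ and $f\in\CG(\eta,\eta)=\CG(x_0,1,\eta,\eta)$. Because the identification $\CG(x,r,\eta,\eta)=\CG(x_0,1,\eta,\eta)$ as sets holds with $(x,r)$-dependent equivalent norms (as noted immediately after Definition \ref{def:test}), it suffices to verify that $Q_kf$ obeys size and regularity bounds of the form \eqref{eq:size}--\eqref{eq:reg} centered at $x_0$ at some scale $r$ depending on $k$ (a natural choice being $r:=\max\{1,\dz^k\}$), with all constants allowed to depend on $k$ and $\|f\|_{\CG(\eta,\eta)}$.

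For the \emph{size estimate}, I would exploit the cancellation of $Q_k$ (Definition \ref{def:eti}(v)) to write
\begin{equation*}
Q_kf(x)=\int_X Q_k(x,y)[f(y)-f(x)]\,d\mu(y),
\end{equation*}
and then split the $y$-integral at the threshold $d(x,y)=(2A_0)^{-1}[1+d(x_0,x)]$. On the inner region I apply the $\eta$-regularity of $f$, which supplies the factor $[d(x,y)/(1+d(x_0,x))]^\eta$; combining this with the exponential size bound \eqref{eq:etisize} and Lemma \ref{lem-add}(ii) produces the required polynomial decay $[1/(1+d(x_0,x))]^\eta$ together with the volume factor $[V_1(x_0)+V(x_0,x)]^{-1}$. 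On the outer region I use the size of $f$ together with the exponential decay of $Q_k$, which --- being super-polynomial --- absorbs any tail uniformly in $d(x_0,x)$.

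For the \emph{regularity estimate}, assuming $d(x,x')\le(2A_0)^{-1}[1+d(x_0,x)]$, I would again use cancellation to write
\begin{equation*}
Q_kf(x)-Q_kf(x')=\int_X [Q_k(x,y)-Q_k(x',y)][f(y)-f(x)]\,d\mu(y),
\end{equation*}
and split according to whether $d(x,x')\le\dz^k$ or $d(x,x')>\dz^k$. In the first subcase, the $\eta$-regularity of $Q_k$ in the first variable (condition (iii) of Definition \ref{def:eti}) yields the factor $[d(x,x')/\dz^k]^\eta$, after which the analysis proceeds exactly as in the size estimate. In the second subcase, I would bound $|Q_k(x,y)-Q_k(x',y)|\le|Q_k(x,y)|+|Q_k(x',y)|$ and use $[d(x,x')/\dz^k]^\eta>1$ to artificially insert the regularity factor, reducing the estimate again to (two copies of) the size analysis at the cost of a $k$-dependent constant.

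The main obstacle is not any individual estimate (all ingredients are available from Definition \ref{def:eti} and Lemma \ref{lem-add}), but rather converting the $\dz^k$-scale bounds naturally produced by $Q_k$ into the fixed scale $r=1$ required by $\CG(\eta,\eta)$. This is handled by absorbing the discrepancy $[\dz^k/(1+\dz^k)]^{\pm\eta}$ and similar factors into the final $k$-dependent constant, using the equivalence of $\CG(x_0,\dz^k,\eta,\eta)$ with $\CG(x_0,1,\eta,\eta)$.
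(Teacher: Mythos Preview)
Your overall plan is sound and close to the paper's argument, but there is a genuine gap in the regularity estimate in the subcase $d(x,x')>\dz^k$. You propose to bound $|Q_k(x,y)-Q_k(x',y)|\le|Q_k(x,y)|+|Q_k(x',y)|$, reduce to two copies of the size analysis, and then ``artificially insert'' the factor $[d(x,x')/\dz^k]^\eta>1$. But the size analysis, as you describe it, yields only the size bound $[V_1(x_0)+V(x_0,x)]^{-1}[1/(1+d(x_0,x))]^\eta$; multiplying this by $[d(x,x')/\dz^k]^\eta$ does \emph{not} give the target regularity bound $[d(x,x')/(r+d(x_0,x))]^\eta\cdot(\text{size})$, because $\dz^k$ may be much smaller than $r+d(x_0,x)$ when $d(x_0,x)$ is large. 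No $k$-dependent constant absorbs this discrepancy.

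The fix is that your cancellation-based size analysis actually yields the \emph{stronger} bound
\[
|Q_kf(x)|\ \ls_k\ \lf[\frac{\dz^k}{1+d(x_0,x)}\r]^\eta\cdot\frac{1}{V_1(x_0)+V(x_0,x)}\lf[\frac{1}{1+d(x_0,x)}\r]^\eta,
\]
the extra factor $[\dz^k/(1+d(x_0,x))]^\eta$ coming from the $\eta$-regularity of $f$ on the inner region (after rescaling $d(x,y)$ to units of $\dz^k$) and from the super-polynomial tail of $Q_k$ on the outer region. With this in hand, the subcase $d(x,x')>\dz^k$ gives $[\dz^k/(1+d(x_0,x))]^\eta\le[d(x,x')/(1+d(x_0,x))]^\eta$, which is exactly what is needed.

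The paper organizes things slightly differently and avoids this issue. For the size estimate it does \emph{not} use cancellation at all: it simply pairs the size of $Q_k$ with the size of $f$ and splits according to whether $d(x,y)\ge d(x_0,x)/(2A_0)$ or $d(x_0,y)\ge d(x_0,x)/(2A_0)$. For the regularity estimate it works at scale $\dz^k$ throughout (assuming $\|f\|_{\CG(x_0,\dz^k,\eta,\eta)}=1$), uses cancellation as you do, but instead of splitting on $d(x,x')\lessgtr\dz^k$ it invokes Proposition~\ref{prop:etoa}/Remark~\ref{rem:andef}(iii) to obtain the $Q_k$-regularity directly in the extended range $d(x,x')\le(2A_0)^{-1}[\dz^k+d(x,y)]$; the $y$-integral is then split into three pieces ($\RZ_1,\RZ_2,\RZ_3$) according to the size of $d(x,y)$ relative to $\dz^k+d(x_0,x)$, and the combination $\min\{1,[d(x,x')/(\dz^k+d(x,y))]^\eta\}\cdot[d(x,y)/(\dz^k+d(x_0,x))]^\eta$ produces the correct regularity factor in one stroke.
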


\begin{proof}
Notice that  $\CG(x_0,\dz^k,\eta,\eta)$ and $\CG(\eta,\eta)$ coincide in the sense of equivalent norms, with
the equivalent positive constants depending on $k$, but this is harmless for the proof of this lemma. Thus,
without loss of generality, we may as well assume that $\|f\|_{ \CG(x_0,\dz^k,\eta,\eta)}= 1$ and, to prove
this lemma, it suffices to show $Q_k f\in \CG(x_0,\dz^k,\eta,\eta)$.

For any $x\in X$, by the size conditions of $Q_k$ and $f$, we write
\begin{align*}
|Q_kf(x)|&=\lf|\int_X Q_k(x,y)f(y)\,d\mu(y)\r|\\
&\ls\int_X \frac{1}{V_{\dz^k}(x)}\exp\lf\{-\nu'\lf[\frac{d(x,y)}{\dz^k}\r]^a\r\} \frac 1{V_{\dz^k}(x_0)+V(x_0,y)}\lf[\frac{\dz^k}{\dz^k+d(x_0,y)}\r]^\eta\,d\mu(y).
\end{align*}
Observe that, for any $y\in X$, the quasi-triangle inequality of $d$ implies that either $d(x, y)\ge d(x_0, x)/(2A_0)$
or $d(x_0, y)\ge d(x_0, x)/(2A_0)$. Also, notice that
\begin{align}\label{eq-xxx}
\frac{1}{V_{\dz^k}(x)}\ls \frac{1}{\mu(B(x, \dz^k+d(x,y)))}\lf[\frac{\dz^k+d(x,y)}{\dz^k}\r]^\omega.
\end{align}
From these and Lemma \ref{lem-add}(ii), it follows that, for any $x\in X$,
\begin{align}\label{eq-x4}
|Q_kf(x)|\ls \frac1{V_{\dz^k}(x_0)+V(x_0,x)}
\lf[\frac{\dz^k}{\dz^k+d(x_0,x)}\r]^\eta.
\end{align}

Now we consider the regularity of $Q_k f$.
For any $x,\ x'\in X$ satisfying $d(x,x')\le(2A_0)^{-2}[\dz^k+d(x_0,x)]$,
by the fact that $\int_X [Q_k(x,y)-Q_k(x',y)]\,d\mu(y)=0$, we write
\begin{align*}
|Q_kf(x)-Q_kf(x')|&=\lf|\int_X [Q_k(x,y)-Q_k(x',y)][f(y)-f(x)]\,d\mu(y)\r|\\
&\le\int_{d(x,y)\le(2A_0)^{-1}[\dz^k+d(x_0,x)]} |Q_k(x,y)-Q_k(x',y)||f(y)-f(x)|\,d\mu(y)\\
&\quad+\int_{d(x,y)>(2A_0)^{-1}[\dz^k+d(x_0,x)]}|Q_k(x,y)-Q_k(x',y)||f(y)|\,d\mu(y)\\
&\quad+|f(x)|\int_{d(x,y)>(2A_0)^{-1}[\dz^k+d(x_0,x)]}|Q_k(x,y)-Q_k(x',y)|\,d\mu(y)
=:\RZ_1+\RZ_2+\RZ_3.
\end{align*}
We first deal with $\RZ_1$. By the size condition of $Q_k$ and  Remark \ref{rem:andef}(i), we conclude that
$$
|Q_k(x,y)-Q_k(x',y)|\ls\frac{1}{V_{\dz^k}(y)}\lf(\exp\lf\{-\nu'\lf[\frac{d(x,y)}{\dz^k}\r]^a\r\}
+\exp\lf\{-\nu'\lf[\frac{d(x',y)}{\dz^k}\r]^a\r\}\r).
$$
If, in addition, $d(x,x')\le(2A_0)^{-1}[\dz^k+d(x,y)]$, then, by Remark \ref{rem:andef}(iii), the right-hand
side of the above formula can be multiplied by another
term $[\frac{d(x,x')}{\dz^k+d(x,y)}]^\eta$ by the regularity of $Q_k$. By this, the regularity
of $f$ and Lemma \ref{lem-add}(ii), we have
\begin{align*}
\RZ_1&\ls\frac 1{V_{\dz^k}(x_0)+V(x_0,x)}\lf[\frac{\dz^k}{\dz^k+d(x_0,x)}\r]^\eta
\int_{d(x,y)\le(2A_0)^{-1}[\dz^k+d(x_0,x)]}\lf[\frac{d(x,y)}{\dz^k+d(x_0,x)}\r]^\eta\\
&\quad\times\min\lf\{1,\lf[\frac{d(x,x')}{\dz^k+d(x,y)}\r]^\eta\r\}\frac 1{V_{\dz^k}(y)}
\lf(\exp\lf\{-\nu'\lf[\frac{d(x,y)}{\dz^k}\r]^a\r\}
+\exp\lf\{-\nu'\lf[\frac{d(x',y)}{\dz^k}\r]^a\r\}\r)\,d\mu(y)\\
&\ls\lf[\frac{d(x,x')}{\dz^k+d(x_0,x)}\r]^\eta\frac1{V_{\dz^k}(x_0)+V(x_0,x)}
\lf[\frac{\dz^k}{\dz^k+d(x_0,x)}\r]^\eta.
\end{align*}
Notice that, when $d(x,y)>(2A_0)^{-1}[\dz^k+d(x_0,x)]$, we have
$$d(x,x')\le(2A_0)^{-2}[\dz^k+d(x_0,x)]
<(2A_0)^{-1}d(x,y)\le(2A_0)^{-1}[\dz^k+d(x,y)].$$ Thus, from the regularity of $Q_k$, Remark \ref{rem:andef}(i)
and Lemma \ref{lem-add}(ii), we deduce that
\begin{align*}
\RZ_2&\ls\int_{d(x,y)>(2A_0)^{-1}[\dz^k+d(x_0,x)]}\lf[\frac{d(x,x')}{\dz^k+d(x,y)}\r]^\eta
\frac{1}{V_{\dz^k}(x)+V(x,y)}\exp\lf\{-\nu'\lf[\frac{d(x,y)}{\dz^k}\r]^a\r\}|f(y)|\,d\mu(y)\\
&\ls\lf[\frac{d(x,x')}{\dz^k+d(x_0,x)}\r]^\eta
\frac{1}{V_{\dz^k}(x)+V(x_0,x)}\lf[\frac{\dz^k}{\dz^k+d(x_0,x)}\r]^\eta.
\end{align*}
Similarly, by Remark \ref{rem:andef}(i) and Lemma \ref{lem-add}(ii), we conclude that
\begin{align*}
\RZ_3&\ls\frac 1{V_{\dz^k}(x_0)+V(x_0,x)}\lf[\frac{\dz^k}{\dz^k+d(x_0,x)}\r]^\eta\\
&\quad\times\int_{d(x,y)>(2A_0)^{-1}[\dz^k+d(x_0,x)]}\lf[\frac{d(x,x')}{\dz^k+d(x,y)}\r]^\eta
\frac{1}{V_{\dz^k}(x)+V(x,y)}\exp\lf\{-\nu'\lf[\frac{d(x,y)}{\dz^k}\r]^a\r\}\,d\mu(y)\\
&\ls\lf[\frac{d(x,x')}{\dz^k+d(x_0,x)}\r]^\eta
\frac 1{V_{\dz^k}(x_0)+V(x_0,x)}\lf[\frac{\dz^k}{\dz^k+d(x_0,x)}\r]^\eta.
\end{align*}
Combining this with $\RZ_1$ through $\RZ_3$, we find that, when $d(x,x')\le(2A_0)^{-2}[\dz^k+d(x_0,x)]$,
\begin{align}\label{eq-x5}
|Q_kf(x)-Q_kf(x')|\ls\lf[\frac{d(x,x')}{\dz^k+d(x_0,x)}\r]^\eta
\frac 1{V_{\dz^k}(x_0)+V(x_0,x)}\lf[\frac{\dz^k}{\dz^k+d(x_0,x)}\r]^\eta,
\end{align}

When $(2A_0)^{-2}[\dz^k+d(x_0,x)]<d(x,x')\le(2A_0)^{-1}[\dz^k+d(x_0,x)]$, we have
$$
\dz^k+d(x_0, x)\sim \dz^k+d(x_0, x').
$$
From this and \eqref{eq-x4}, we deduce that \eqref{eq-x5} also holds true when
$d(x,x')\le(2A_0)^{-1}[\dz^k+d(x_0,x)]$, which completes the proof of Lemma \ref{lem-add3}.
\end{proof}

\begin{theorem}\label{thm:hcrf}
Suppose that $\bz,\ \gz\in(0,\eta)$ and $\{Q_k\}_{k\in\zz}$ is an $\exp$-{\rm ATI}.
Then there exists a sequence $\{\wz{Q}_k\}_{k\in\zz}$ of bounded linear operators on $L^2(X)$ such that, for
any $f$ in $\GOO{\bz,\gz}$ [resp., $L^p(X)$ with any given $p\in(1,\fz)$],
\begin{equation}\label{eq:hcrf}
f=\sum_{k=-\fz}^\fz \wz{Q}_kQ_kf,
\end{equation}
where the series converges in $\mathring{\CG}^\eta_0(\bz,\gz)$ [resp., $L^p(X)$ with any given $p\in(1,\fz)$].
Moreover, for any $k\in\zz$, the kernel of $\wz{Q}_k$ satisfies
the size condition \eqref{eq:atisize}, the regularity condition \eqref{eq:atisregx}
only for the first variable, and also the following cancellation condition:  for any $x,\ y\in X$,
\begin{equation}\label{eq:wzQcan}
\int_X \wz{Q}_k(x',y)\,d\mu(x')=0=\int_X\wz{Q}_k(x,y')\,d\mu(y').
\end{equation}
\end{theorem}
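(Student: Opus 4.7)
The plan is to split $I = T_N + R_N$ as in \eqref{eq:defRT}, invert $T_N$ on the relevant spaces via a Neumann series, and define $\wz{Q}_k := T_N^{-1} Q_k^N$. For $N$ large enough the operator norm of $R_N$ is strictly less than $1/2$ on each of $L^2(X)$ (by Lemma \ref{lem:ccrf2}), on $\mathring{\CG}^\eta_0(\bz,\gz)$ (by Proposition \ref{prop-add}), and on $L^p(X)$ for any given $p\in(1,\fz)$ (by combining the small $L^2$ norm of $R_N$ with the Calder\'{o}n--Zygmund kernel bounds in Proposition \ref{prop:sizeRN} and an interpolation argument between $L^2(X)$ and a fixed auxiliary Lebesgue space). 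Choose such $N$; then $T_N^{-1} = \sum_{j\ge 0} R_N^j$ converges in operator norm on each of these three spaces.

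For the reproducing formula, note that $T_N f = f - R_N f$ lies in the same space as $f$, while the series $T_N f = \sum_k Q_k^N Q_k f$ converges in $L^2(X)$ by Definition \ref{def:eti}(i) together with Lemma \ref{lem:CSlem}, in $L^p(X)$ by Calder\'{o}n--Zygmund theory on the partial sums, and in $\mathring{\CG}^\eta_0(\bz,\gz)$ by density from $\mathring{\CG}(\eta,\eta)$, where an almost-orthogonality bound of the form $\|Q_k^N Q_k f\|_{\mathring{\CG}(\bz,\gz)} \ls \dz^{|k|\vartheta}\|f\|_{\mathring{\CG}(\eta,\eta)}$ (obtained by pairing the cancellation of one factor against the regularity or the size decay of the other) guarantees summability for some $\vartheta>0$. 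Applying the bounded operator $T_N^{-1}$ term by term then yields, in the appropriate space,
\[
f = T_N^{-1}(T_N f) = \sum_{k\in\zz}T_N^{-1}(Q_k^N Q_k f) = \sum_{k\in\zz} \wz{Q}_k Q_k f,
\]
which is \eqref{eq:hcrf}.

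It remains to verify the kernel properties of $\wz{Q}_k$. Cancellation in the second variable is immediate, since $\wz{Q}_k 1 = T_N^{-1}(Q_k^N 1) = 0$ by Lemma \ref{lem:propQkN}(iv). For the first variable, I expand $\wz{Q}_k = \sum_{j\ge 0}R_N^j Q_k^N$ and observe that $\int_X R_N(x,y)\,d\mu(x) = 0$ (by Fubini and the first-variable cancellation of each $Q_{k+l}$); induction on $j$ then yields $\int_X (R_N^jQ_k^N)(x',y)\,d\mu(x') = 0$, and summing in $j$ gives \eqref{eq:wzQcan}. For the size and regularity in the first variable, fix $y\in X$ and regard $Q_k^N(\cdot,y)$ as an element of $\mathring{\CG}(y,\dz^k,\eta,\gz)$: the exponential-decay bounds in Lemma \ref{lem:propQkN}, combined with \eqref{eq-xxx} and Lemma \ref{lem-add}(i), give $\|Q_k^N(\cdot,y)\|_{\mathring{\CG}(y,\dz^k,\eta,\gz)}\le C_{(N,\gz)}$ uniformly in $k$ and $y$. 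Since the constant in Theorem \ref{thm:Kbdd} (and therefore in Proposition \ref{prop-add}) is independent of the reference point and scale, $T_N^{-1}$ is bounded on $\mathring{\CG}(y,\dz^k,\bz,\gz)$ with norm uniform in $y$ and $k$, so $\wz{Q}_k(\cdot,y) = T_N^{-1}[Q_k^N(\cdot,y)] \in \mathring{\CG}(y,\dz^k,\bz,\gz)$ with a uniform bound; Lemma \ref{lem-add}(i) then translates this into \eqref{eq:atisize} and the first-variable part of \eqref{eq:atisregx}.

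The main obstacle is to justify that the pointwise function $\wz{Q}_k(x,y):=T_N^{-1}[Q_k^N(\cdot,y)](x)$ genuinely represents the abstract operator $\wz{Q}_k = T_N^{-1}Q_k^N$ as an integral kernel, so that the test-function estimates on $\wz{Q}_k(\cdot,y)$ can be read back as the kernel bounds demanded in \eqref{eq:atisize}--\eqref{eq:wzQcan}. I would address this at the level of each term of the Neumann series: since Proposition \ref{prop:sizeRN} iterates to give pointwise Calder\'{o}n--Zygmund-type kernel bounds for each $R_N^j$, one verifies $(R_N^jQ_k^N)g(x) = \int_X (R_N^jQ_k^N)(x,y)g(y)\,d\mu(y)$ via Fubini's theorem for test functions $g$, and then the smallness of $\|R_N\|$ permits the interchange of the sum over $j$ with the integration in $y$ via dominated convergence, thereby supplying the desired kernel representation of $\wz{Q}_k$ and completing the proof.
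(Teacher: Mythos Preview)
Your overall strategy matches the paper's: invert $T_N=I-R_N$ via the Neumann series, set $\wz{Q}_k(x,y):=T_N^{-1}(Q_k^N(\cdot,y))(x)$, and read off the kernel bounds from the uniform boundedness of $T_N^{-1}$ on $\mathring{\CG}(y,\dz^k,\bz,\gz)$. Two points, however, need more than you give them. First, for $L^p$: making $\|R_N\|_{L^p\to L^p}<1/2$ by interpolating the small $L^2$ norm against the Calder\'on--Zygmund kernel constants of Proposition~\ref{prop:sizeRN} forces $N$ to depend on $p$ (the Marcinkiewicz constants blow up as $p\to1$ or $p\to\infty$), and then $\wz{Q}_k=T_N^{-1}Q_k^N$ would depend on $p$, contrary to the statement. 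The paper fixes $N$ from the $L^2$ and $\mathring{\CG}(x_1,r,\bz,\gz)$ bounds alone, and for the $L^p$ convergence shows instead that the partial sums $\wz{T}_L=\sum_{|k|\le L}\wz{Q}_kQ_k$ carry \emph{uniformly} (in $L$) bounded Calder\'on--Zygmund kernels (see \eqref{eq:wTLsize}--\eqref{eq:wTLregy}); uniform $L^p$-boundedness of $\wz{T}_L$ plus density of $\mathring{\CG}_0^\eta(\bz,\gz)$ in $L^p(X)$ then gives \eqref{eq:hcrf} in $L^p$ with a single $\{\wz{Q}_k\}$.

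Second, convergence in $\mathring{\CG}_0^\eta(\bz,\gz)$ rather than merely in $\mathring{\CG}(\bz,\gz)$: Proposition~\ref{prop-add} only bounds $R_N$ on $\mathring{\CG}(\bz,\gz)$, and nothing in your argument shows that $R_N$, $T_N^{-1}$, or the partial sums remain inside the closed subspace $\mathring{\CG}_0^\eta(\bz,\gz)$. The paper supplies the missing invariance via Lemma~\ref{lem-add3} ($Q_k$ preserves $\CG(\eta,\eta)$): then $Q_k^NQ_kh\in\mathring{\CG}(\eta,\eta)$ whenever $h\in\mathring{\CG}(\eta,\eta)$, so $T_Nh=\sum_kQ_k^NQ_kh\in\mathring{\CG}_0^\eta(\bz,\gz)$, hence $R_Nh=h-T_Nh\in\mathring{\CG}_0^\eta(\bz,\gz)$, and by the Neumann series $T_N^{-1}$ preserves $\mathring{\CG}_0^\eta(\bz,\gz)$. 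A related minor point: your one-liner ``$\wz{Q}_k1=T_N^{-1}(Q_k^N1)=0$'' for the second-variable cancellation in \eqref{eq:wzQcan} is formal since $1\notin L^2(X)$; the paper instead proves $\int_X(R_N)^j(Q_k^N(\cdot,y))(x)\,d\mu(y)=0$ by induction on $j$, applying Fubini at each step through the truncations $R_{N,M}$ of Lemma~\ref{lem:ccrf3}. (First-variable cancellation is indeed immediate from $\wz{Q}_k(\cdot,y)\in\mathring{\CG}(y,\dz^k,\bz,\gz)$.)
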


\begin{proof}
Fix $\eta'\in(\max\{\bz,\gz\},\eta)$. According to \eqref{eq:RNL2} and Proposition \ref{prop-add}, we have
$$
\|R_N\|_{L^2(X)\to L^2(X)}\le \wz C\dz^{\eta'N}\qquad
\textup{and}\qquad
\|R_N\|_{\mathring{\CG}(x_1,r,\bz,\gz)\to\mathring{\CG}(x_1,r,\bz,\gz)}\le \wz C\dz^{(\eta-\eta')N},
$$
where $\wz C$ is a positive constant independent of $x_1$, $r$ and $N$.
Choose $N\in\nn$ sufficiently large such that $\max\{\wz C\dz^{\eta'N},\wz C\dz^{(\eta-\eta')N}\}\le 1/2$.
Then $T_N^{-1}=(I-R_N)^{-1}$ exists as a bounded operator on $L^2(X)$ and also on $\GO{x_1,r,\bz,\gz}$,
with operator norms at most $2$.
For any $k\in\zz$ and $x,\ y\in X$, we define
$$
\wz{Q}_k(x,y):=T_N^{-1}\lf(Q_k^N(\cdot,y)\r)(x) \quad \textup{with} \quad Q_k^N=\sum_{|l|\le N}Q_{k+l}.
$$
Notice that, for any $k\in\zz$ and $y\in X$,
based on Proposition \ref{prop:etoa}, every $Q_k(x,y)$ viewed as a function of $x$ belongs to the
space of test functions, ${\mathring{\CG}(y,\dz^k,\bz,\gz)}$, with
$\|\cdot\|_{{\mathring{\CG}(y,\dz^k,\bz,\gz)}}$-norm independent of $y\in X$ and $k\in\zz$.
This implies that $\wz{Q}_k$ satisfies \eqref{eq:atisize} and \eqref{eq:atisregx} for the first variable.

Next, we prove \eqref{eq:wzQcan}. For any $y\in X$, by $\wz{Q}_k(\cdot,y)\in \mathring{\CG}(y,\dz^k,\bz,\gz)$,
we have
$\int_X \wz{Q}_k(x,y)\,d\mu(x)=0$. Now we show the second equality in \eqref{eq:wzQcan}.
To this end, for any $x, \ y\in X$, we write
$$
\wz{Q}_k(x,y)=\sum_{j=0}^\fz(R_N)^j\lf(Q_k^N(\cdot,y)\r)(x),
$$
which converges in $\GO{y,\dz^k,\bz,\gz}$  as a function of variable $x$. %
By the dominated convergence theorem, we can obtain the second equality of \eqref{eq:wzQcan}, provided  we can show that, for any $j\in\zz_+$ and $x\in X$,
\begin{equation}\label{eq:RNjcan}
\int_X(R_N)^j\lf(Q_k^N(\cdot,y)\r)(x)\,d\mu(y)=0.
\end{equation}
We show \eqref{eq:RNjcan} via a method of induction.
Indeed, when $j=0$, \eqref{eq:RNjcan} follows directly from the cancellation of $Q_k^N$.
Assuming that \eqref{eq:RNjcan} holds true for some $j\in\zz_+$, we prove it for $j+1$.
To this end, for any $M\in\nn$, let $R_{N,M}$ be defined as in
\eqref{eq:defRNM}. By \eqref{eq:RNMG}, we conclude that
$$
\lf\|R_{N,M}R_N^j\lf(Q_k^N(\cdot,y)\r)\r\|_{\CG(y,\dz^k,\bz,\gz)}
\ls 2^{-j}\lf\|Q_k^N(\cdot,y)\r\|_{\CG(y,\dz^k,\bz,\gz)}\ls 2^{-j}.
$$
Combining this with the dominated convergence theorem and the Fubini theorem, together with Lemma \ref{lem:ccrf3}, we conclude that, for any $x\in X$,
\begin{align*}
\int_X (R_N)^{j+1}\lf(Q_k^N(\cdot,y)\r)(x)\,d\mu(y)
&=\lim_{M\to\fz} \int_X R_{N,M}(R_N)^{j}\lf(Q_k^N(\cdot,y)\r)(x)\,d\mu(y)\\
&= \lim_{M\to\fz} \int_X\int_X  R_{N,M}(x, z)(R_N)^{j}\lf(Q_k^N(\cdot,y)\r)(z)\,d\mu(x)\,d\mu(z)
=0,
\end{align*}
where, in the last step, we used the indiction hypothesis. This proves \eqref{eq:RNjcan} and hence finishes the
proof of \eqref{eq:wzQcan}.

For any $k\in\zz$ and $x,\ y\in X$, since
$T_N\wz{Q}_k(x,y)=T_N(\wz{Q}_k(\cdot,y))(x)=Q_k^N(x,y)$, it follows that
$T_N^{-1}Q_k^N=\wz{Q}_k$. Invoking the expression of $T_N$ in \eqref{eq:defRT}, we have
$$
f=T_N^{-1}T_Nf=T_N^{-1}\lf(\sum_{k=-\fz}^\fz Q_k^NQ_k\r)(f)=\sum_{k=-\fz}^\fz T_N^{-1}\lf(Q_k^N\r)Q_kf
=\sum_{k=-\fz}^\fz \wz{Q}_kQ_kf\qquad \textup{in}\;\; L^2(X).
$$
Further, for any $L\in\nn$ and $f\in L^2(X)$,
\begin{align}\label{eq:finsum}
\sum_{|k|\le L}\wz{Q}_kQ_kf
&=T_N^{-1}\lf(\sum_{|k|\le L}Q_k^NQ_k\r)f\\
&=T_N^{-1}\lf(T_N-\sum_{|k|\ge L+1}Q_k^NQ_k\r)f=f-T_N^{-1}\lf(\sum_{|k|\ge L+1}Q_k^NQ_kf\r)\qquad \textup{in}\;\; L^2(X).\noz
\end{align}
The remaining arguments are divided into the following three steps.

{\it Step 1) Proof of the convergence of \eqref{eq:hcrf} in $\mathring{\CG}(\bz,\gz)$
when $f\in \mathring{\CG}(\bz',\gz')$ with $\bz'\in(\bz,\eta)$ and $\gz'\in(\gz,\eta)$.}

Without loss of generality, we may assume that  $f\in \mathring{\CG}(\bz',\gz')$ with
$\|f\|_{\CG(\bz',\gz')}=1$. Due to \eqref{eq:finsum}  and the boundedness of $T_N^{-1}$ on
$\mathring{\CG}(\bz,\gz)$, we have
\begin{align*}
\lim_{L\to\fz}\lf\|f-\sum_{|k|\le L}\wz{Q}_kQ_kf\r\|_{\mathring{\CG}(\bz,\gz)}
&=\lim_{L\to\fz}\lf\|T_N^{-1}\lf(\sum_{|k|\ge L+1}Q_k^NQ_kf\r)\r\|_{\mathring{\CG}(\bz,\gz)}\ls \lim_{L\to\fz}\lf\|\sum_{|k|\ge L+1}Q_k^NQ_kf\r\|_{\mathring{\CG}(\bz,\gz)}.
\end{align*}
Assume for the moment that there exists $\sigma\in(0,\fz)$, independent of $k$, $L$ and $f$, such that
\begin{equation}\label{eq:sumsize}
\lf|Q_k^NQ_kf(x)\r|\ls \dz^{\sigma |k|}\frac 1{V_1(x_0)+V(x_0,x)}\lf[\frac 1{1+d(x_0,x)}\r]^\gz,
\qquad \forall\,x\in X
\end{equation}
and that, for any $x,\ x'\in X$ satisfying  $d(x,x')\le (2A_0)^{-1}[1+d(x_0,x)]$,
\begin{equation}\label{eq:sumreg}
\lf|Q_k^NQ_kf(x)-Q_k^NQ_kf(x')\r|
\ls\dz^{\sigma |k|}\lf[\frac{d(x,x')}{1+d(x_0,x)}\r]^\bz\frac 1{V_1(x_0)+V(x_0,x)}\lf[\frac 1{1+d(x_0,x)}\r]^\gz.
\end{equation}
Indeed, once we have proved  \eqref{eq:sumsize} and \eqref{eq:sumreg}, then
\begin{equation*}
\lim_{L\to\fz}\lf\|\sum_{|k|\ge L+1}Q_k^NQ_kf\r\|_{{\CG}(\bz,\gz)}\le\lim_{L\to\fz}
\sum_{|k|\ge L+1}\lf\|Q_k^NQ_kf\r\|_{{\CG}(\bz,\gz)}\ls \lim_{L\to\fz}\sum_{|k|\ge L+1} \dz^{\sigma|k|}=0.
\end{equation*}
Moreover, by \eqref{eq:sumsize}, the Fubini theorem and the cancelation property of $Q_k$, we obtain
$$
\int_X\sum_{|k|\ge L+1}Q_k^NQ_kf(x)\,d\mu(x)=0.
$$
Therefore, we have $\lim_{L\to\fz}\lf\|\sum_{|k|\ge L+1}Q_k^NQ_kf\r\|_{\mathring{\CG}(\bz,\gz)}=0$ and hence
$f=\sum_{k\in\zz}\wz{Q}_kQ_kf$ in $\mathring{\CG}(\bz,\gz)$, which is the desired conclusion.

Once we have proved \eqref{eq:sumsize}, then we can use it to show \eqref{eq:sumreg} in the following way.
Indeed, when $d(x,x')\le(2A_0)^{-1}[1+d(x_0,x)]$, we have $(2A_0)^{-1} d(x_0, x')\le d(x_0, x)\le 2A_0 d(x_0, x')$ and $1+d(x_0,x)\sim 1+d(x_0,x')$
so that \eqref{eq:sumsize} implies that
\begin{align}\label{eq-x3}
&\lf|Q_k^NQ_kf(x)-Q_k^NQ_kf(x')\r|\\
&\quad\ls\dz^{\sigma|k|}\lf\{\frac 1{V_1(x_0)+V(x_0,x)}\lf[\frac 1{1+d(x_0,x)}\r]^{\gz}
+\frac 1{V_1(x_0)+V(x_0,x')}\lf[\frac 1{1+d(x_0,x')}\r]^{\gz}\r\}\noz\\
&\quad\sim\dz^{\sigma|k|}\frac 1{V_1(x_0)+V(x_0,x)}\lf[\frac 1{1+d(x_0,x)}\r]^{\gz}.\noz
\end{align}
Notice that \eqref{eq:sumsize} and Proposition  \ref{prop:basic}(iii) imply that  $Q_k^NQ_k$ is a bounded
operator on $L^2(X)$, with operator norm independent of $k$. Moreover, since $Q_k^N Q_k=\sum_{|l|\le N}
Q_{k+l}Q_k$, by Lemma \ref{lem:ccrf1} and Corollary \ref{cor:mixb}, we find that every $Q_{k+l}Q_k$
with $l\in\{-N,-N+1,\ldots,N-1,N\}$ satisfies \eqref{eq:Ksize}, \eqref{eq:Kreg} and
\eqref{eq:Kdreg} with $C_T$ therein being a positive constant independent of $k$ (but $C_T$ may depend on
$N$), so does $Q_k^NQ_k$. Thus, applying Theorem \ref{thm:Kbdd}, we know that $Q_k^N Q_k$ is a bounded operator
on $\mathring\CG(\bz',\gz')$. In particular, we have $Q_k^N Q_kf\in \mathring\CG(\bz',\gz')$, which implies
that, when $d(x,x')\le (2A_0)^{-1}[1+d(x_0,x)]$,
\begin{align*}
\lf|Q_k^NQ_kf(x)-Q_k^NQ_kf(x')\r|
\ls\lf[\frac{d(x,x')}{1+d(x_0,x)}\r]^{\bz'}\frac 1{V_1(x_0)+V(x_0,x)}\lf[\frac 1{1+d(x_0,x)}\r]^{\gz}.
\end{align*}
Taking the geometric means between the last inequality as above and \eqref{eq-x3}, we obtain \eqref{eq:sumreg}.

It remains to show \eqref{eq:sumsize}. Let us first prove \eqref{eq:sumsize} for the case $k\in\zz_+$. For any
$x\in X$, we write
\begin{align*}
\lf|Q_{k}^NQ_kf(x)\r|&=\sum_{k=L+1}^\fz\lf|\int_X E_k(x,y)[f(y)-f(x)]\,d\mu(y)\r|\\
&\le\int_{d(x,y)\le(2A_0)^{-1}[1+d(x_0,x)]}|E_k(x,y)||f(y)-f(x)|\,d\mu(y)\\
&\quad+\int_{d(x,y)>(2A_0)^{-1}[1+d(x_0,x)]}|E_k(x,y)||f(y)|\,d\mu(y)\\
&\quad+|f(x)|\int_{d(x,y)>(2A_0)^{-1}[1+d(x_0,x)]}|E_k(x,y)|\,d\mu(y)
=:\RZ_{1,1}+\RZ_{1,2}+\RZ_{1,3}.
\end{align*}
From Lemma \ref{lem-add2}(i), the regularity condition of $f$, Lemma \ref{lem-add}(ii)  and $\gz'>\gz$, we
deduce that
\begin{align*}
\RZ_{1,1}&\ls\frac 1{V_1(x_0)+V(x_0,x)}\lf[\frac 1{1+d(x_0,x)}\r]^{\gz'}\int_X
\frac 1{V_{\dz^k}(x)}\exp\lf\{-c\lf[\frac{d(x,y)}{\dz^{k}}\r]^a\r\}
\lf[\frac{d(x,y)}{1+d(x_0,y)}\r]^{\bz'}\,d\mu(y)\\
&\ls\frac 1{V_1(x_0)+V(x_0,x)}\lf[\frac 1{1+d(x_0,x)}\r]^{\gz'}\int_X\frac {\dz^{k\bz'}}{V_{\dz^k}(x)}\exp\lf\{-c\lf[\frac{d(x,y)}{\dz^{k}}\r]^a\r\}
\lf[\frac{d(x,y)}{\dz^k}\r]^{\bz'}\,d\mu(y)\\
&\ls\dz^{|k|\bz'}\frac 1{V_1(x_0)+V(x_0,x)}\lf[\frac 1{1+d(x_0,x)}\r]^{\gz}.
\end{align*}
For the term $\RZ_{1,2}$, applying Lemma \ref{lem-add2}(i) and the regularity condition of $f$, we conclude that
\begin{align*}
\RZ_{1,2}&\ls\int_{d(x,y)>(2A_0)^{-1}[1+d(x_0,x)]}\frac 1{V_{\dz^k}(x)}
\exp\lf\{-c\lf[\frac{d(x,y)}{\dz^{k}}\r]^a\r\}\frac 1{V_1(x_0)}\,d\mu(y).
\end{align*}
Observe that the doubling condition \eqref{eq:doub} implies that $V_1(x_0)+V(x_0,x)\ls [1+d(x_0, x)]^\omega V_1(x_0)$.
Meanwhile, if $d(x,y)>(2A_0)^{-1}[1+d(x_0,x)]$ and $k\in\zz_+$, then
$$\exp\lf\{-\frac c2\lf[\frac{d(x,y)}{\dz^{k}}\r]^a\r\}\le \exp\lf\{-\frac c 2\lf[\frac{(2A_0)^{-1}[1+d(x_0,x)]}{\dz^{k}}\r]^a\r\}
\ls \lf[\frac{\dz^{k}}{1+d(x_0, x)}\r]^{\omega+\gz+\bz'}.$$
By these and Lemma \ref{lem-add}(ii), we further obtain
\begin{align*}
\RZ_{1,2}&\ls\dz^{|k|(\omega+\gz+\bz')}\frac 1{V_1(x_0)+V(x_0,x)}\lf[\frac 1{1+d(x_0,x)}\r]^{\gz}.
\end{align*}
Now we estimate $\RZ_{1,3}$. Again, using the fact that the conditions $d(x,y)>(2A_0)^{-1}[1+d(x_0,x)]$ and
$k\in\zz_+$, we obtain $\exp\{-\frac c2[\frac{d(x,y)}{\dz^{k}}]^a\}\ls \dz^{k\bz'}$.
From this, Lemma \ref{lem-add2}(i), the regularity condition of $f$ and Lemma \ref{lem-add}(ii), it follows that
\begin{align*}
\RZ_{1,3}&\ls|f(x)|
\int_{d(x,y)>(2A_0)^{-1}[1+d(x_0,x)]}\frac 1{V_{\dz^k}(x)}\exp\lf\{-c\lf[\frac{d(x,y)}{\dz^{k}}\r]^a\r\}\,d\mu(y)\\
&\ls\dz^{k\bz'}\frac 1{V_1(x_0)+V(x_0,x)}\lf[\frac 1{1+d(x_0,x)}\r]^{\gz}.
\end{align*}
Combining the estimates of $\RZ_{1,1}$ through $\RZ_{1,3}$, we obtain \eqref{eq:sumsize} when $k\in\zz_+$.

Next we  prove \eqref{eq:sumsize} for the case $k\in\zz\setminus\zz_+$. Notice that, for any $x\in X$,
\begin{align*}
\lf|Q_{k}^NQ_kf(x)\r|
&=\lf|\int_X [E_k(x,y)-E_k(x,x_0)]|f(y)|\,d\mu(y)\r|\\
&\le\int_{d(x_0,y)\le(2A_0)^{-1}[\dz^k+d(x_0, x)]}|E_k(x,y)-E_k(x,x_0)||f(y)|\,d\mu(y)\\
&\quad+\int_{d(x_0,y)>(2A_0)^{-1}[\dz^k+d(x_0, x)]}|E_k(x,y)||f(y)|\,d\mu(y)\\
&\quad+|E_k(x,x_0)|\int_{d(x_0,y)>(2A_0)^{-1}[\dz^k+d(x_0, x)]}|f(y)|\,d\mu(y)
=:\RZ_{1,4}+\RZ_{1,5}+\RZ_{1,6}.
\end{align*}
To estimate $\RZ_{1,4}$, we choose $\wz\gz\in(\gz,\gz')$. By  Lemma \ref{lem-add2}(ii), $\wz\gz<\eta$ and the
size condition of $f$, we have
\begin{align*}
\RZ_{1,4}&\ls\int_X
\lf[\frac{d(x_0,y)}{\dz^k}\r]^{\wz\gz}\frac 1{V_{\dz^k}(x_0)}\exp\lf\{-c\lf[\frac{d(x_0,x)}{\dz^k}\r]^a\r\}
\frac 1{V_1(x_0)+V(x_0,y)}\lf[\frac 1{1+d(x_0,y)}\r]^{\gz'}\,d\mu(y).
\end{align*}
Since $\dz^k\ge 1$, it follows that
\begin{align}\label{eq-x2}
&\frac 1{V_{\dz^k}(x_0)}\exp\lf\{-c\lf[\frac{d(x_0,x)}{\dz^k}\r]^a\r\}\\
&\quad\ls \frac 1{\mu(B(x_0, \dz^k+d(x_0, x)))} \lf[\frac{\dz^k+d(x_0, x)}{\dz^k}\r]^\omega \lf[1+\frac{d(x_0,x)}{\dz^k}\r]^{-\omega-\gz}\noz\\
&\quad\ls \frac 1{\mu(B(x_0, 1+d(x_0, x)))} \lf[\frac{\dz^k}{\dz^k+d(x_0,x)}\r]^{\gz}
\ls \dz^{k\gz}\frac 1{V_1(x_0)+V(x_0,x)}\lf[\frac 1{1+d(x_0,x)}\r]^{\gz}.\noz
\end{align}
Also, notice that
$[\frac{d(x_0,y)}{\dz^k}]^{\wz\gz} [\frac 1{1+d(x_0,y)}]^{\gz'} \le \dz^{-k\wz\gz}[\frac 1{1+d(x_0,y)}]^{\gz'-\wz\gz}.$
Combining these with Lemma \ref{lem-add}(ii), we find that
\begin{align*}
\RZ_{1,4}&\ls\dz^{k(\gz-\wz\gz)}\frac 1{V_1(x_0)+V(x_0,x)}\lf[\frac 1{1+d(x_0,x)}\r]^{\gz}\\
&\quad\times\int_{d(x_0,y)\le\dz^k}
\frac 1{V_1(x_0)+V(x_0,y)}\lf[\frac 1{1+d(x_0,y)}\r]^{\gz'-\wz\gz}\,d\mu(y)\\
&\ls\dz^{(\wz\gz-\gz)|k|}\frac 1{V_1(x_0)+V(x_0,x)}\lf[\frac 1{1+d(x_0,x)}\r]^{\gz}.
\end{align*}
For the term $\RZ_{1,5}$, from Lemma \ref{lem-add2}(i), the size condition of $f$ and Lemma \ref{lem-add}(iii),
we deduce that
\begin{align*}
\RZ_{1,5}&\ls\int_{d(x_0,y)>(2A_0)^{-1}[\dz^k+d(x_0, x)]}\lf[\frac{d(x_0,y)}{\dz^k}\r]^{\gz'-\gz}\frac 1{V_{\dz^k}(x)}
\exp\lf\{-c\lf[\frac{d(x,y)}{\dz^{k}}\r]^a\r\}\\
&\quad\times\frac 1{V_1(x_0)+V(x_0,y)}\lf[\frac 1{1+d(x_0,y)}\r]^{\gz'}\,d\mu(y)\\
&\ls\frac 1{V_1(x_0)+V(x_0,x)}\lf[\frac 1{1+d(x_0,x)}\r]^{\gz}\dz^{(\gz-\gz')k}\int_X\frac 1{V_{\dz^k}(x)}
\exp\lf\{-c\lf[\frac{d(x,y)}{\dz^{k}}\r]^a\r\}\,d\mu(y)\\
&\ls \dz^{(\gz'-\gz)|k|}\frac 1{V_1(x_0)+V(x_0,x)}\lf[\frac 1{1+d(x_0,x)}\r]^{\gz}.
\end{align*}
To estimate $\RZ_{1,6}$, we again choose $\wz\gz\in(\gz,\gz')$.
Applying  Lemma \ref{lem-add2}(i), \eqref{eq-x2}, the size condition of $f$ and Lemma \ref{lem-add}(ii), we
proceed as in the estimate of $\RZ_{1,4}$ to derive that
\begin{align*}
\RZ_{1,6}&\ls\frac 1{V_{\dz^k}(x_0)}\exp\lf\{-c\lf[\frac{d(x,x_0)}{\dz^{k}}\r]^a\r\}
\int_{d(x_0,y)>(2A_0)^{-1}[\dz^k+d(x_0,x)]}\lf[\frac{d(x_0,y)}{\dz^k}\r]^{\wz\gz}\\
&\qquad\times\frac 1{V_1(x_0)+V(x_0,y)}\lf[\frac{1}{1+d(x_0,y)}\r]^{\gz'}\,d\mu(y)\\
&\ls\dz^{(\wz\gz-\gz)|k|}\frac 1{V_1(x_0)+V(x_0,x)}\lf[\frac 1{1+d(x_0,x)}\r]^{\gz}.
\end{align*}
Combining the estimates of $\RZ_{1,4}$ through $\RZ_{1,6}$, we obtain \eqref{eq:sumsize} when
$k\in\zz\setminus\zz_+$. This finishes the proof of \eqref{eq:sumsize} and hence of Step 1).

{\it Step 2) Proof of the convergence of \eqref{eq:hcrf} in $\mathring{\CG}^\eta_0(\bz,\gz)$ when $f\in \mathring{\CG}^\eta_0(\bz,\gz)$.}

If $f\in \mathring{\CG}^\eta_0(\bz,\gz)$, then  there exists a sequence
$\{g_n\}_{m=1}^\infty\subset\GO{\eta,\eta}$ such that $\lim_{n\to\infty}\|f-g_n\|_{\GO{\bz,\gz}}=0$.
By the already proved result in Step 1), we know that every $g_n$ satisfies
$$
\lim_{L\to\infty}\lf\|g_n-\sum_{|k|\le L}\wz{Q}_kQ_kg_n\r\|_{\mathring{\CG}(\bz,\gz)}=0.
$$
For any $N,\ L\in\nn$, define
\begin{align}\label{TNL}
\wz T_{N,L}:=\sum_{|k|\le L}{Q}_k^NQ_k=\sum_{|k|\le L}\sum_{|l|\le N}Q_{k+l}Q_k.
\end{align}
Repeating the proof of Lemma \ref{lem:ccrf2}, we find that, for any fixed $\thz\in(0,1)$ and any
$k_1,\ k_2\in\zz$,
\begin{align*}
&\lf\|\lf(\sum_{|l_1|\le N}Q_{k_1+l_1}Q_{k_1}\r)\lf(\sum_{|l_2|\le N}Q_{k_2+l_2}Q_{k_2}\r)^*\r\|
_{L^2(X)\to L^2(X)}\\
&\quad\le\sum_{|l_1|\le N}\sum_{|l_2|\le N}
\lf\|Q_{k_1+l_1}Q_{k_1}\lf(Q_{k_2+l_2}Q_{k_2}\r)^*\r\|_{L^2(X)\to L^2(X)}\noz\\
&\quad\ls\sum_{l_1=-\fz}^\fz\sum_{l_2=-\fz}^\fz\dz^{(|l_1|+|l_2|)\eta\thz}\dz^{|k_1-k_2|\eta(1-\thz)}
\sim \dz^{|k_1-k_2|\eta(1-\thz)}\noz.
\end{align*}
Similar estimate also holds true for
$$
\lf\|\lf(\sum_{|l_1|\le N}Q_{k_1+l_1}Q_{k_1}\r)^*\lf(\sum_{|l_2|\le N}Q_{k_2+l_2}Q_{k_2}\r)\r\|
_{L^2(X)\to L^2(X)}
$$
due to the symmetry. Then, by Lemma \ref{lem:CSlem}, we conclude that $\wz{T}_{N,L}$ is bounded on $L^2(X)$
with its operator norm independent of $N$ and $L$.
Moreover, repeating the proof of Proposition \ref{prop:sizeRN} with the summation $\sum_{|l|>N}$ therein
replaced by $\sum_{|l|\le N}$, we know that the kernel $\wz T_{N,L}$ satisfies all the conditions of Theorem
\ref{thm:Kbdd}, with $c_0:=0$ and $C_T$ therein being a positive constant independent of $L$ and $N$.
Thus, from Theorem \ref{thm:Kbdd}, it follows that $\wz T_{N,L}$ is a bounded operator on
$\mathring\CG(\bz,\gz)$.

Further, recall that $T_N^{-1}Q_k^N=\wz{Q}_k$ and $\|T_N^{-1}\|_{\mathring{\CG}(\bz,\gz)\to
\mathring{\CG}(\bz,\gz)}\le 2$. Therefore,
\begin{align*}
&\lf\|f-\sum_{|k|\le L}\wz{Q}_kQ_kf\r\|_{\mathring{\CG}(\bz,\gz)}\\
&\quad\le \lf\|f-g_n\r\|_{\mathring{\CG}(\bz,\gz)}+\lf\|g_n-\sum_{|k|\le L}\wz{Q}_kQ_kg_n\r\|_{\mathring{\CG}(\bz,\gz)}
+\lf\|T_N^{-1}\lf(\sum_{|k|\le L}Q_k^NQ_k(g_n-f)\r)\r\|_{\mathring{\CG}(\bz,\gz)}\\
&\quad\ls \|f-g_n\|_{\mathring{\CG}(\bz,\gz)}+\lf\|g_n-\sum_{|k|\le L}\wz{Q}_kQ_kg_n\r\|_{\mathring{\CG}(\bz,\gz)},
\end{align*}
which tends to $0$ as $n,\ L\to \infty$.

We still need to prove that $\sum_{|k|\le L}\wz{Q}_kQ_kf$ can be approximated by a sequence of functions in
$\mathring\CG(\eta,\eta)$ in the norm of $\mathring{\CG}(\bz,\gz)$.
Notice that the boundedness of $T_N^{-1}$ and $\wz{T}_{N,L}$ on $\GO{\bz,\gz}$ implies that
$$
\lim_{n\to\infty}\lf\|\sum_{|k|\le L}\wz{Q}_kQ_kf
-\sum_{|k|\le L}\wz{Q}_kQ_kg_n\r\|_{\mathring{\CG}(\bz,\gz)}
=\lim_{n\to\infty}\lf\|T_N^{-1}\wz{T}_{N,L}(f-g_n)\r\|_{\mathring{\CG}(\bz,\gz)}=0.
$$
If we know that $\sum_{|k|\le L}\wz{Q}_kQ_kg_n\in \mathring{\CG}_0^\eta(\bz,\gz)$, then
$\sum_{|k|\le L}\wz{Q}_kQ_kg_n$ can be approximated by functions in $\mathring\CG(\eta,\eta)$, so does $\sum_{|k|\le L}\wz{Q}_kQ_kf$.

Suppose that $h\in\mathring{\CG}^\eta_0(\bz,\gz)$. Then there exists
$\{h_j\}_{j=1}^\infty\subset\mathring{\CG}(\eta,\eta)$ such that $\|h-h_j\|_{\mathring{\CG}(\bz,\gz)}\to 0$ as $j\to\infty$.
By Lemma \ref{lem-add3}, we find that $Q_k^NQ_k h_j=\sum_{|l|\le N}Q_{k+l}Q_k h_j\in\mathring{\CG}(\eta,\eta)$ for
any $k\in\zz$ and $j\in\nn$. By the definition of $T_N$ and the arguments in the proof of Step 1), we conclude that
$\wz T_{N,L}h_j=\sum_{|k|\le L}Q_k^NQ_k h_j$ converges to $T_N h_j$ in $\mathring{\CG}(\bz,\gz)$ as $L\to\infty$. Therefore, every $T_N h_j\in\mathring{\CG}^\eta_0(\bz,\gz)$.
Recall that $R_N$ is bounded on $\mathring{\CG}(\bz,\gz)$
with operator norm at most $1/2$. Thus, $T_N$ is also bounded on $\mathring{\CG}(\bz,\gz)$, which implies that
 $\|T_Nh-T_Nh_j\|_{\mathring{\CG}(\bz,\gz)}\to 0$ as $j\to\infty$. We therefore obtain $T_N h\in \mathring{\CG}^\eta_0(\bz,\gz)$, so does
 $R_N h$ for any $h\in \mathring{\CG}^\eta_0(\bz,\gz)$. Further, applying
 \begin{align*}
 T_N^{-1} h =(I-R_N)^{-1}= \sum_{j=0}^\infty R_N^j h,
 \end{align*}
we know that $T_N^{-1} h \in \mathring{\CG}^\eta_0(\bz,\gz)$.

Again applying Lemma \ref{lem-add3}, we also know that $h:=Q_k^NQ_k g_n\in \mathring\CG(\eta,\eta)\subset
\mathring\CG_0^\eta(\bz,\gz)$. Thus, $\wz{Q}_kQ_kg_n=T_N^{-1}h$ belongs to $ \mathring\CG_0^\eta(\bz,\gz)$, so
does $\sum_{|k|\le L}\wz{Q}_kQ_kg_n$. This finishes the proof of Step 2).

{\it Step 3) Proof of the convergence of \eqref{eq:hcrf} in $L^p(X)$ when $f\in L^p(X)$ with any given
$p\in(1,\infty)$.}

For any $L\in\nn$, define $\wz{T}_L:=\sum_{|k|\le L}\wz{Q}_kQ_k$. Notice that  $\wz{T}_L$ is associated to an
integral kernel
$$
\wz T_L(x,y)=\sum_{|k|\le L}\wz{Q}_kQ_k(x,y) =\sum_{|k|\le L}\int_X\wz{Q}_k(x,z)Q_k(z,y)\,d\mu(y),
\quad\forall\;x,\ y\in X.
$$
With $\wz T_{N,L}$ as defined in \eqref{TNL}, we have $\wz T_L=T_N^{-1}T_{N,L}$.
Recall that both $T_N^{-1}$ and $T_{N,L}$ are  bounded  on $L^2(X)$ with the operator norm independent of $N$
and $L$, so does $\wz T_L$.

Next, we show that $\wz T_L$ is a standard Calder\'on-Zygmund kernel.
We first prove the size condition. Indeed, for any $x,\ y\in X$ with $x\neq y$, by Remark \ref{rem:andef}(i)
and the size condition of $\wz Q_k$, we have
\begin{align*}
&\lf|\wz{Q}_kQ_k(x,y)\r|\\
&\quad=\lf|\int_X\wz{Q}_k(x,z)Q_k(z,y)\,d\mu(y)\r|\\
&\quad\ls\exp\lf\{-\nu'\lf[\frac{d(y,\CY^k)}{\dz^k}\r]^a\r\} \int_X \frac 1{V_{\dz^k}(x)+V(x,z)}
\lf[\frac{\dz^k}{\dz^k+d(x,z)}\r]^\gz\frac 1{V_{\dz^k}(y)}
\exp\lf\{-\nu'\lf[\frac{d(z,y)}{\dz^k}\r]^a\r\}\,d\mu(z).
\end{align*}
For the last integral, we separate $X$ into two domains $\{z\in X:\ d(z,x)\ge(2A_0)^{-1}d(x,y)\}$ and
$\{z\in X:\ d(z,y)\ge(2A_0)^{-1}d(x,y)\}$. Then, from Lemma \ref{lem-add}(ii), we deduce that
\begin{align*}
&\int_X \frac 1{V_{\dz^k}(x)+V(x,z)}\lf[\frac{\dz^k}{\dz^k+d(x,z)}\r]^\gz\frac 1{V_{\dz^k}(y)}
\exp\lf\{-\nu'\lf[\frac{d(z,y)}{\dz^k}\r]^a\r\}\,d\mu(z)\\
&\quad\ls\frac 1{V_{\dz^k}(x)+V(x,y)}\lf[\frac{\dz^k}{\dz^k+d(x,z)}\r]^\gz\int_{d(z,x)\ge (2A_0)^{-1}d(x,y)}
\frac 1{V_{\dz^k}(z)}\exp\lf\{-\nu'\lf[\frac{d(z,y)}{\dz^k}\r]^a\r\}\,d\mu(z)\\
&\qquad+\frac{1}{V_{\dz^k}(y)}\exp\lf\{-\frac{\nu'} 2\lf[\frac{d(x,y)}{\dz^k}\r]^a\r\}
\int_{d(z,y)\ge (2A_0)^{-1}d(x,y)}\frac 1{V_{\dz^k}(x)+V(x,z)}\lf[\frac{\dz^k}{\dz^k+d(x,z)}\r]^\gz\,d\mu(z)\\
&\quad\ls\frac 1{V_{\dz^k}(x)+V(x,y)}\lf[\frac{\dz^k}{\dz^k+d(x,z)}\r]^\gz.
\end{align*}
Therefore, by the two formulae above, we find that
$$
\lf|\wz{Q}_kQ_k(x,y)\r|\ls\frac 1{V_{\dz^k}(x)+V(x,y)}\lf[\frac{\dz^k}{\dz^k+d(x,z)}\r]^\gz
\exp\lf\{-\nu'\lf[\frac{d(y,\CY^k)}{\dz^k}\r]^a\r\},
$$
which, consequently, implies that
\begin{align}\label{eq:wTLsize}
\lf|\wz{T}_L(x,y)\r|
\ls\sum_{\dz^k\ge d(x,y)}\frac 1{V_{\dz^k}(y)}\exp\lf\{-\nu'\lf[\frac{d(y,\CY^k)}{\dz^k}\r]^a\r\}
+\sum_{\dz^k<d(x,y)}\frac 1{V(x,y)}\lf[\frac{\dz^k}{d(x,y)}\r]^\gz\ls\frac 1{V(x,y)}.
\end{align}
Next we consider the regularity condition for the $x$ variable. To this end, we choose
$\bz_1\in(0,\bz\wedge\gz)$. When $d(x,x')\le(2A_0)^{-1}\min\{d(x,y),\dz^k\}$ with $x\neq y$,
applying the regularity of $\wz Q_k$ and the size condition of $Q_k$, and then,
following the same argument as that used in the proof of the size condition of $\wz Q_kQ_k$, we deduce that
\begin{align*}
&\lf|\wz{Q}_kQ_k(x,y)-\wz{Q}_kQ_k(x',y)\r|\\
&\quad=\lf|\int_X\lf[\wz{Q}_k(x,z)-\wz{Q}_k(x',z)\r]Q_k(z,y)\,d\mu(z)\r|\\
&\quad\ls\int_X \lf[\frac{d(x,x')}{\dz^k+d(x,z)}\r]^\bz\frac 1{V_{\dz^k}(x)+V(x,z)}
\lf[\frac{\dz^k}{\dz^k+d(x,z)}\r]^\gz\frac 1{V_{\dz^k}(z)}\exp\lf\{-\nu'\lf[\frac{d(z,y)}{\dz^k}\r]^a\r\}\\
&\qquad\times\exp\lf\{-\nu'\lf[\frac{d(y,\CY^k)}{\dz^k}\r]^a\r\}\,d\mu(z)\\
&\quad\ls\lf[\frac{d(x,x')}{\dz^k}\r]^{\bz_1}\frac1{V_{\dz^k}(x)+V(x,y)}\lf[\frac{\dz^k}{\dz^k+d(x,y)}\r]^\gz
\exp\lf\{-\nu'\lf[\frac{d(y,\CY^k)}{\dz^k}\r]^a\r\}.
\end{align*}
When $(2A_0)^{-1}\dz^k<d(x,x')\le (2A_0)^{-1}d(x,y)$ with $x\neq y$, the size conditions of $\wz Q_k$ and
$Q_k$ imply that
\begin{align*}
\lf|\wz{Q}_kQ_k(x,y)-\wz{Q}_kQ_k(x',y)\r|&\ls\frac1{V_{\dz^k}(x)+V(x,y)}\lf[\frac{\dz^k}{\dz^k+d(x,y)}\r]^\gz
+\frac1{V_{\dz^k}(x')+V(x',y)}\lf[\frac{\dz^k}{\dz^k+d(x',y)}\r]^\gz\\
&\ls\lf[\frac{d(x,x')}{\dz^k}\r]^{\bz_1}\frac1{V(x,y)}\lf[\frac{\dz^k}{d(x,y)}\r]^{\gz}
\sim\lf[\frac{d(x,x')}{d(x,y)}\r]^{\bz_1}\frac1{V(x,y)}\lf[\frac{\dz^k}{d(x,y)}\r]^{\gz-\bz_1}.
\end{align*}
Thus, by the above two formulae and Lemma \ref{lem:expsum}, we conclude that, when
$d(x,x')\le(2A_0)^{-1}d(x,y)$ with $x\neq y$,
\begin{align}\label{eq:wTLregx}
&\lf|\wz{T}_L(x,y)-\wz{T}_L(x',y)\r|\\
&\quad\le\sum_{k=-\fz}^\fz\lf|\wz{Q}_kQ_k(x,y)-\wz{Q}_kQ_k(x',y)\r|\noz\\
&\quad\ls\lf[\frac{d(x,x')}{d(x,y)}\r]^{\bz_1}\lf(\sum_{\dz^k\ge d(x,y)}\frac 1{V_{\dz^k}(y)}
\exp\lf\{-\nu'\lf[\frac{d(y,\CY^k)}{\dz^k}\r]^a\r\}
+\sum_{\dz^k<d(x,y)}\frac 1{V(x,y)}\lf[\frac{\dz^k}{d(x,y)}\r]^\gz\r)\noz\\
&\quad\ls\lf[\frac{d(x,x')}{d(x,y)}\r]^{\bz_1}\frac 1{V(x,y)}.\noz
\end{align}
Similarly, by symmetry, when $d(y,y')\le(2A_0)^{-1}d(x,y)$ with $x\neq y$, we have
\begin{equation}\label{eq:wTLregy}
\lf|\wz{T}_L(x,y)-\wz{T}_L(x,y')\r|\ls\lf[\frac{d(y,y')}{d(x,y)}\r]^{\bz_1}\frac 1{V(x,y)}.
\end{equation}
Combining \eqref{eq:wTLsize}, \eqref{eq:wTLregx} and \eqref{eq:wTLregy}, we find that $\wz T_L$ is a
$\bz_1$-Calder\'on-Zygmund kernel.
The well-known Calder\'on-Zygmund theory on spaces of homogeneous type developed in \cite{CW71} then implies
that the operator $\wz T_L$ is bounded on $L^p(X)$ for any given $p\in(1,\infty)$ with the operator norm
independent of $L$.

Now we suppose $f\in L^p(X)$ with any given $p\in(1,\fz)$. It follows, from \cite[Corollary 10.4]{AH13},
that $\GO{\eta,\eta}$ is dense in $L^p(X)$, so is $\GOO{\bz,\gz}$. Therefore, for any given $\ez\in(0,\fz)$,
there exists $g\in\GOO{\bz,\gz}$ such that $\|f-g\|_{L^p(X)}<\ez$. By the already proved conclusion in
Step 2), there exists $L_0\in\nn$ such that, for any $L\ge L_0$,
$\|g-\sum_{|k|\le L}\wz{Q}_kQ_kg\|_{\GOO{\bz,\gz}}<\ez$.
Observe that Lemma \ref{lem-add}(ii) shows that the space $\GOO{\bz,\gz}$ is continuously embedded into
$L^p(X)$ with the embedding constant depending on $x_0$ and $p$.
Combining these and the boundedness of $\wz T_L$ on $L^p(X)$, we obtain
\begin{align*}
\lf\|f-\sum_{|k|\le L}\wz{Q}_kQ_kf\r\|_{L^p(X)}&\le\|f-g\|_{L^p(X)}+
\lf\|g-\sum_{|k|\le L}\wz{Q}_kQ_kg\r\|_{L^p(X)}+\lf\|\sum_{|k|\le L}\wz{Q}_kQ_k(g-f)\r\|_{L^p(X)}
\ls\ez.
\end{align*}
Meanwhile, notice that Proposition \ref{prop:basic}(iii) implies that $\sum_{|k|\le L}\wz{Q}_kQ_kf\in L^p(X)$.
This proves the convergence of \eqref{eq:hcrf} in $L^p(X)$ when $f\in L^p(X)$, which completes the proof of Theorem \ref{thm:hcrf}.
\end{proof}

Similarly to the proof of Theorem \ref{thm:hcrf}, we obtain another homogeneous continuous Calder\'{o}n
reproducing formula, the details being omitted.
\begin{theorem}\label{thm:hcrf2}
Suppose that $\bz,\ \gz\in(0,\eta)$ and $\{Q_k\}_{k\in\zz}$ is an $\exp$-{\rm ATI}.
Then there exists a sequence $\{\overline{Q}_k\}_{k\in\zz}$ of bounded linear operators on $L^2(X)$ such that,
for any $f$ in  $\GOO{\bz,\gz}$ [resp., $L^p(X)$ with any given $p\in(1,\fz)$],
\begin{equation}\label{eq:hcrf2}
f=\sum_{k=-\fz}^\fz Q_k\overline{Q}_kf,
\end{equation}
where the series converges in $\mathring{\CG}^\eta_0(\bz,\gz)$ [resp., $L^p(X)$ with any given $p\in(1,\fz)$].
Moreover,  for any $k\in\zz$, the kernel of $\overline{Q}_k$ satisfies the size condition \eqref{eq:atisize},
the regularity condition \eqref{eq:atisregx} only for the second variable, and the cancellation condition
\eqref{eq:wzQcan}.
\end{theorem}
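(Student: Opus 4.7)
The plan is to mirror the proof of Theorem \ref{thm:hcrf} with the roles of the two composition orders interchanged. Using the identity condition $\sum_{k\in\zz}Q_k=I$ in $L^2(X)$, I would expand
$$
I=\sum_{k=-\fz}^\fz Q_k\sum_{l=-\fz}^\fz Q_l
 =\sum_{k=-\fz}^\fz Q_kQ_k^N+\sum_{k=-\fz}^\fz\sum_{|l|>N}Q_kQ_{k+l}
 =: S_N+R_N',
$$
with $Q_k^N:=\sum_{|l|\le N}Q_{k+l}$. If $N$ is large enough that $R_N'$ has small operator norm on both $L^2(X)$ and every $\mathring{\CG}(x_1,r,\bz,\gz)$, then $S_N=I-R_N'$ is invertible on both spaces, and setting $\overline{Q}_k:=Q_k^NS_N^{-1}$ gives $\sum_{k\in\zz}Q_k\overline{Q}_k=S_NS_N^{-1}=I$, as desired.

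The estimates on $R_N'$ go through essentially verbatim. Lemma \ref{lem:ccrf1} and Corollary \ref{cor:mixb} are stated for arbitrary pairs of $\exp$-ATIs, so the composition $Q_kQ_{k+l}$ satisfies the same size, regularity (in both variables), and second difference regularity estimates as $\wz Q_jQ_k$. The Cotlar--Stein argument in Lemma \ref{lem:ccrf2} then yields $\|R_N'\|_{L^2(X)\to L^2(X)}\le C\dz^{\eta'N}$ for any $\eta'\in(0,\eta)$. For the boundedness on the test function space, I would introduce the truncation
$$
R'_{N,M}:=\sum_{|k|\le M}\sum_{N<|l|\le M}Q_kQ_{k+l},
$$
verify (as in Proposition \ref{prop:sizeRN}) that its kernel is a Calder\'on--Zygmund kernel satisfying the second difference regularity condition with constant $\ls\dz^{(\eta-\eta')N}$, and note that the cancellation
$$
\int_X R'_{N,M}(x,y)\,d\mu(y)=0=\int_X R'_{N,M}(x,y)\,d\mu(x)
$$
follows from the cancellation of the outermost $Q_k$ and innermost $Q_{k+l}$ in each summand. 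Theorem \ref{thm:Kbdd} then provides the desired bound on $\mathring{\CG}(x_1,r,\bz,\gz)$, and a density argument parallel to Lemma \ref{lem:ccrf3} transfers the estimate from $R'_{N,M}$ to $R_N'$.

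Finally, to read off the properties of $\overline{Q}_k$, I would exploit the adjoint structure. Since the conditions in Definition \ref{def:eti} are symmetric in the two spatial variables, $\{Q_k^*\}_{k\in\zz}$, whose kernel is $Q_k^*(x,y):=Q_k(y,x)$, is again an $\exp$-ATI; moreover $S_N^*=\sum_{k}(Q_k^N)^*Q_k^*=\sum_k(Q_k^*)^NQ_k^*$ coincides with the operator $T_N$ associated to this swapped $\exp$-ATI. Consequently, for fixed $x\in X$,
$$
\overline{Q}_k(x,y)=\bigl(S_N^*\bigr)^{-1}\bigl(Q_k^N(x,\cdot)\bigr)(y),
$$
and applying Theorem \ref{thm:hcrf} to $\{Q_k^*\}$ (whose proof shows $(S_N^*)^{-1}$ is bounded on $\mathring{\CG}(x,\dz^k,\bz,\gz)$) together with the fact that $Q_k^N(x,\cdot)\in\mathring{\CG}(x,\dz^k,\bz,\gz)$ uniformly in $x$ (Lemma \ref{lem:propQkN}) yields the size condition \eqref{eq:atisize}, the regularity \eqref{eq:atisregx} in the second variable, and the cancellation $\int_X\overline{Q}_k(x,y)\,d\mu(y)=0$. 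The remaining cancellation $\int_X\overline{Q}_k(x,y)\,d\mu(x)=0$ follows via the Neumann expansion $\overline{Q}_k=Q_k^N\sum_{j\ge0}(R_N')^j$ together with a Fubini-type argument relying on $\int_X Q_k^N(x,z)\,d\mu(x)=0$, exactly as in \eqref{eq:RNjcan} but with the roles of the two variables interchanged. The convergence of $\sum_{k}Q_k\overline{Q}_kf$ in $\mathring{\CG}^\eta_0(\bz,\gz)$ (respectively, $L^p(X)$) is then handled exactly as in Steps 1)--3) of the proof of Theorem \ref{thm:hcrf}. The main obstacle, as in the original theorem, lies in the $\mathring{\CG}(x_1,r,\bz,\gz)$-boundedness of $R_N'$, which requires the $R'_{N,M}$ truncation together with Theorem \ref{thm:Kbdd}; once this is in hand, the remaining steps are formal symmetrizations of the arguments already detailed.
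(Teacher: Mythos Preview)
Your proposal is correct and follows essentially the same route as the paper, which simply says the proof is ``similar to the proof of Theorem~\ref{thm:hcrf}'' and then clarifies the construction in Remark~\ref{rem:r1} via duality: one defines $\overline{Q}_k(x,y)=(T_N^*)^{-1}(Q_k^N(x,\cdot))(y)$, recognizing that $T_N^*$ is precisely the operator $T_N$ built from the $\exp$-ATI $\{Q_k^*\}$.

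One simplification you overlook: your remainder $R_N'=\sum_k\sum_{|l|>N}Q_kQ_{k+l}$ is \emph{equal} to the original $R_N=\sum_k\sum_{|l|>N}Q_{k+l}Q_k$ after the reindexing $k\mapsto k+l$, $l\mapsto -l$, and hence your $S_N$ coincides with $T_N$. So there is no need to re-establish the $L^2$ and $\mathring{\CG}(x_1,r,\bz,\gz)$ bounds via a new truncation $R'_{N,M}$; the estimates from Lemma~\ref{lem:ccrf2} and Proposition~\ref{prop-add} apply directly. With this observed, your definition $\overline{Q}_k=Q_k^NS_N^{-1}=Q_k^NT_N^{-1}$ and your use of the adjoint structure to obtain regularity in the second variable agree exactly with the paper's construction.
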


\begin{remark}\label{rem:r1}
Based on \eqref{eq:defRT}, we have
$
f=T_Nf+R_Nf=\sum_{k=-\fz}^\fz Q_kQ_k^Nf+R_Nf.
$
Taking the duality in both side of this equality, we obtain
$$
f=T_N^*f+R_N^*f=\sum_{k=-\fz}^\fz\lf(Q_k^N\r)^*Q_k^*f+R_N^*f.
$$
Based on the proof of Theorem \ref{thm:hcrf}, we find that  $\overline{Q}_k$ in Theorem
\ref{thm:hcrf2} is defined by setting, for any $x,\ y\in X$,
$$
\lf(\overline{Q}_k\r)^*(x,y)=\lf(T_N^*\r)^{-1}\lf(\lf(Q_k^N\r)^*(\cdot,y)\r)(x)
=\lf(T_N^*\r)^{-1}\lf(Q_k^N(y,\cdot)\r)(x),
$$
which further implies that $\overline{Q}_k(x,y)=(T_N^*)^{-1}(Q_k^N(x,\cdot))(y)$ for any
$x,\ y\in X$.
\end{remark}

Using Theorems \ref{thm:hcrf} and \ref{thm:hcrf2}, via a duality argument, we have the following conclusion, the
details being omitted.

\begin{theorem}\label{thm:hcrf3}
Let all the notation be as in Theorems \ref{thm:hcrf} and \ref{thm:hcrf2}. Then, for any $f\in(\GOO{\bz,\gz})'$
with $\bz,\ \gz\in(0,\eta)$, both \eqref{eq:hcrf} and \eqref{eq:hcrf2} hold true in $(\GOO{\bz,\gz})'$.
\end{theorem}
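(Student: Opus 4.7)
The approach is a standard duality argument. Fix $f\in(\GOO{\bz,\gz})'$ and any test function $g\in\GOO{\bz,\gz}$. Since each finite partial sum $\sum_{|k|\le L}\wz{Q}_k Q_k$ is a bounded operator on $L^2(X)$ and $\GOO{\bz,\gz}\hookrightarrow L^2(X)$, the distributional pairing unpacks as
\[
\lf\langle \sum_{|k|\le L}\wz{Q}_k Q_k f,\,g\r\rangle
=\lf\langle f,\,\sum_{|k|\le L}Q_k^*\wz{Q}_k^* g\r\rangle.
\]
Thus establishing the weak-$*$ convergence $\sum_k\wz{Q}_k Q_k f\to f$ in $(\GOO{\bz,\gz})'$ reduces to proving $\sum_{|k|\le L}Q_k^*\wz{Q}_k^* g\to g$ in $\GOO{\bz,\gz}$ as $L\to\fz$, and the proof of \eqref{eq:hcrf2} in $(\GOO{\bz,\gz})'$ reduces symmetrically to showing $\sum_{|k|\le L}\overline{Q}_k^* Q_k^* g\to g$ in $\GOO{\bz,\gz}$.

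To verify these convergences, first note that $\{Q_k^*\}_{k\in\zz}$ is itself an $\exp$-ATI, because Definition \ref{def:eti} is manifestly symmetric in its two kernel variables. Next, from the construction $\wz{Q}_k=T_N^{-1}Q_k^N$ given in the proof of Theorem \ref{thm:hcrf}, taking adjoints yields $\wz{Q}_k^*=(Q_k^N)^*(T_N^*)^{-1}$, so the partial sums telescope as
\[
\sum_{|k|\le L}Q_k^*\wz{Q}_k^*=\Bigl(\sum_{|k|\le L}Q_k^N Q_k\Bigr)^{\!*}(T_N^*)^{-1}=T_{N,L}^*\,(T_N^*)^{-1},
\]
where $T_{N,L}:=\sum_{|k|\le L}Q_k^N Q_k$. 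The arguments behind Theorem \ref{thm:hcrf} show $T_{N,L}\to T_N$ in the operator norm of $\GOO{\bz,\gz}$ (this is implicit in Step 2 of that proof, where the tail $\sum_{|k|>L}Q_k^N Q_k g$ was controlled). Passing to adjoints and combining with the boundedness of $(T_N^*)^{-1}$ on $\GOO{\bz,\gz}$ — which follows from that of $T_N^{-1}$, since the kernel of $R_N^*$ satisfies the exact same estimates as $R_N$ after swapping the two variables, so Propositions \ref{prop:sizeRN} and \ref{prop-add} apply verbatim to $R_N^*$ — gives $T_{N,L}^*(T_N^*)^{-1}g\to g$ in $\GOO{\bz,\gz}$. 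The analogous telescoping for $\overline{Q}_k^* Q_k^*$ (using Remark \ref{rem:r1}) produces the counterpart needed for \eqref{eq:hcrf2}.

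The main obstacle is ensuring that the entire chain of estimates developed in Sections \ref{com}--\ref{pr} remains valid under the transposition of the two kernel variables — in particular, that the adjoints $\wz{Q}_k^*$ and $\overline{Q}_k^*$ satisfy size/regularity/cancellation conditions allowing one to invoke Theorem \ref{thm:Kbdd} and obtain the boundedness of $(T_N^*)^{-1}$ on $\GOO{\bz,\gz}$. Fortunately, Definition \ref{def:eti}, Lemma \ref{lem:ccrf1}, Corollary \ref{cor:mixb}, and Proposition \ref{prop:sizeRN} are stated symmetrically in the two variables, and the only asymmetric feature of $\wz{Q}_k$ (regularity solely in the first variable) becomes a symmetric-looking statement for $\wz{Q}_k^*$ (regularity solely in the second variable), which exactly matches the hypotheses used in the second-variable application of Theorem \ref{thm:hcrf2}. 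The proof therefore reduces to routine bookkeeping once the duality scheme above is set in place.
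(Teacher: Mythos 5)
Your duality scheme is the intended one: reduce weak-$*$ convergence of $\sum_k\wz Q_kQ_kf$ to strong convergence of the transposed partial sums on $\GOO{\bz,\gz}$, and notice that the transposed partial sums are precisely those of Theorem \ref{thm:hcrf2} applied to the dual family $\{Q_k^*\}_{k\in\zz}$, which is again an $\exp$-ATI because Definition \ref{def:eti}(ii)--(v) is symmetric in the kernel variables. The paper's one-line ``duality argument'' is exactly this, so your argument matches in substance. Two small imprecisions worth flagging: (a) what Steps 1--2 of the proof of Theorem \ref{thm:hcrf} establish is \emph{strong} operator convergence $T_{N,L}h\to T_Nh$ in $\GOO{\bz,\gz}$ for each $h$, not operator-norm convergence on $\GOO{\bz,\gz}$ (the rate $\dz^{\sigma|k|}$ in \eqref{eq:sumsize} is controlled by $\|f\|_{\CG(\bz',\gz')}$ with $\bz'>\bz$, $\gz'>\gz$); and ``passing to adjoints'' does not by itself transfer strong convergence on $\GOO{\bz,\gz}$ to the adjoints acting on $\GOO{\bz,\gz}$. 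Fortunately your parenthetical observation is the correct fix: since the kernel bounds in Lemma \ref{lem:ccrf1}, Corollary \ref{cor:mixb}, Proposition \ref{prop:sizeRN} and Proposition \ref{prop-add} are symmetric in the two variables, the machinery applies verbatim to $\{Q_k^*\}$, yielding $T_{N,L}^*h\to T_N^*h$ strongly in $\GOO{\bz,\gz}$ directly (one should also note that $T_{N,L}^*$ and the dual $\exp$-ATI's $T_{N,L}$ differ only by re-indexing of the bounded range $|l|\le N$, which is harmless in the limit). With these points tidied up the proof is correct.
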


\begin{remark}\label{rem:r2}
Observe that, if $K$ is a compact subset of $(0,\eta)^2$, then the estimates related to
$\{\wz{Q}_k\}_{k\in\zz}$ in Theorem \ref{thm:hcrf} are independent of $\bz$ and $\gz$ whenever
$(\bz,\gz)\in K$, but depend on $K$. Similar observations also hold true for Theorems \ref{thm:hcrf2} and
\ref{thm:hcrf3}.
\end{remark}

\section{Homogeneous discrete Calder\'{o}n reproducing formulae}\label{hdrf}

This section concerns the homogeneous discrete reproducing formulae. Let $j_0\in\nn$ be sufficiently large
such that
\begin{align}\label{j_0}
\dz^{j_0}\le(2A_0)^{-4}C^\natural,
\end{align}
where $C^\natural$ is as in Theorem \ref{thm:dys}. Based on Theorem \ref{thm:dys},  for any $k\in\zz$ and
$\az\in\CA_k$, we define
$$
\CN(k,\az):=\{\tau\in\CA_{k+j_0}:\ Q_\tau^{k+j_0}\subset Q_\az^k\}
$$
and $N(k,\az)$ to be the \emph{cardinality} of the set $\CN(k,\az)$. From Theorem \ref{thm:dys}, it follows that
$N(k,\az)\ls\dz^{-j_0\omega}$ and $\bigcup_{\tau\in\CN(k,\az)}Q_\tau^{k+j_0}=Q_\az^k$. We rearrange the set
$\{Q_\tau^{k+j_0}:\ \tau\in\CN(k,\az)\}$ as
$\{Q_\az^{k,m}\}_{m=1}^{N(k,\az)}$. Also, denote by $y_\az^{k,m}$  an arbitrary point in $Q_\az^{k,m}$ and
$z_\az^{k,m}$ the  ``center" of $Q_\az^{k,m}$.

Fix a large integer $N\in\nn$. For any $f\in L^2(X)$ and $x\in X$, define the \emph{discrete Riemannian sum}
\begin{align}\label{eq:S}
\mathcal S_Nf(x):=\sum_{k=-\fz}^\fz\sum_{\az\in\CA_k}
\sum_{m=1}^{N(k,\az)}\int_{Q_\az^{k,m}}Q_k^N(x,y)\,d\mu(y)Q_kf\lf(y_\az^{k,m}\r),
\end{align}
 where $y_\az^{k,m}$ is an arbitrary point in $Q_\az^{k,m}$.
Notice that, for any $f\in L^2(X)$ and $x\in X$, we have
\begin{align}\label{eq:defGR}
\mathcal R_Nf(x):={}&(I-\mathcal  S_N)f(x)\\
={}&\sum_{k=-\fz}^\fz\sum_{\az\in\CA_k}\sum_{m=1}^{N(k,\az)}\int_{Q_\az^{k,m}}Q_k^N(x,y)
\lf[Q_kf(y)-Q_kf\lf(y_\az^{k,m}\r)\r]\,d\mu(y)\noz\\
&+\sum_{|l|>N}\sum_{k=-\fz}^\fz Q_{k+l}Q_kf(x)\noz\\
=:{}&\sum_{k=-\fz}^\fz G_{k, N}f(x)+R_Nf(x)=:G_Nf(x)+R_Nf(x),\noz
\end{align}
where $R_N$ is as in \eqref{eq:defRT}.
We have shown, in the previous section, that $R_N$ is bounded on both $L^2(X)$ and
$\mathring{\CG}(x_1,r,\bz,\gz)$, where $x_1\in X$, $r\in(0,\fz)$ and
$\bz,\ \gz\in(0,\eta)$. We will show, in Section \ref{sec5.1} below, that the operator $G_N$ is also bounded
on both $L^2(X)$ and $\mathring{\CG}(x_1,r,\bz,\gz)$. Thus, if $f$ belongs to $L^2(X)$ [resp.,
$\mathring{\CG}(x_1,r,\bz,\gz)$], then $\CS_Nf$ in \eqref{eq:S} is a well defined  function in $L^2(X)$ [resp.,
in $\mathring{\CG}(x_1,r,\bz,\gz)$]. The proofs for the homogeneous discrete reproducing formulae are presented
in Section \ref{pr2}.

\subsection{Boundedness of the remainder $\CR_N$}\label{sec5.1}

Based on the discussion after \eqref{eq:defGR}, the boundedness of $\CR_N$ can be reduced to the study
of the boundedness of $G_N$ on  $L^2(X)$ and spaces of test functions. We need the following lemma.

\begin{lemma}\label{lem:GkN}
For any $k\in\zz$ and $N\in\nn$, let $G_{k,N}$ be defined as in \eqref{eq:defGR}, that is, for any $x,\ y\in X$,
$$
G_{k,N}(x,y)=\sum_{\az\in\CA_k}\sum_{m=1}^{N(k,\az)}\int_{Q_\az^{k,m}}Q_k^N(x,z)
\lf[Q_k(z,y)-Q_k\lf(y_\az^{k,m},y\r)\r]\,d\mu(z).
$$
Then there exist positive constants $C_{(N)}$ and $c_{(N)}$, depending on $N$, but being independent of $k$,
$j_0$ and $y_\az^{k,m}$, such that $G_{k,N}$ satisfies:
\begin{enumerate}
\item for any $x,\ y\in X$,
\begin{equation}\label{eq:Gksize}
|G_{k,N}(x,y)|\le C_{(N)}\dz^{j_0\eta}\frac 1{V_{\dz^k}(x)}\exp\lf\{-c'_{(N)}\lf[\frac{d(x,y)}{\dz^k}\r]^a\r\}
\exp\lf\{-c'_{(N)}\lf[\frac{d(x,\CY^k)}{\dz^k}\r]^a\r\};
\end{equation}
\item for any $x,\ x',\ y\in X$ with $d(x,x')\le\dz^k$ or $d(x,x')\le(2A_0)^{-1}[\dz^k+d(x,y)]$,
\begin{align}\label{eq:Gkregx}
&|G_{k,N}(x,y)-G_{k,N}(x',y)|+|G_k(y,x)-G_k(y,x')|\\
&\quad\le C_{(N)}\dz^{j_0\eta}\lf[\frac{d(x,x')}{\dz^k}\r]^\eta
\frac 1{V_{\dz^k}(x)}\exp\lf\{-c'_{(N)}\lf[\frac{d(x,y)}{\dz^k}\r]^a\r\}
\exp\lf\{-c'_{(N)}\lf[\frac{d(x,\CY^k)}{\dz^k}\r]^a\r\}\noz;
\end{align}
\item for any $x,\ x',\ y,\ y'\in X$ with $d(x,x')\le\dz^k$ and $d(y,y')\le\dz^k$, or
$d(x,x')\le(2A_0)^{-2}[\dz^k+d(x,y)]$ and $d(y,y')\le(2A_0)^{-2}[\dz^k+d(x,y)]$,
\begin{align}\label{eq:Gkdreg}
&|[G_{k,N}(x,y)-G_{k,N}(x',y)]-[G_{k,N}(x,y')-G_{k,N}(x',y')]|\\
&\quad\le C_{(N)}\dz^{j_0\eta}\lf[\frac{d(x,x')}{\dz^k}\r]^\eta\lf[\frac{d(y,y')}{\dz^k}\r]^\eta
\frac 1{V_{\dz^k}(x)}\exp\lf\{-c'_{(N)}\lf[\frac{d(x,y)}{\dz^k}\r]^a\r\}\noz\\
&\qquad\times\exp\lf\{-c'_{(N)}\lf[\frac{d(x,\CY^k)}{\dz^k}\r]^a\r\};\noz
\end{align}
\item for any $x\in X$, $\int_X G_k(x,y)\,d\mu(y)=0=\int_X G_k(y,x)\,d\mu(y)$.
\end{enumerate}
\end{lemma}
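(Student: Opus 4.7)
The plan is to exploit two facts. First, by the choice of $j_0$ in \eqref{j_0}, every $z\in Q_\az^{k,m}$ satisfies $d(z,y_\az^{k,m})\le 2C^\natural\dz^{k+j_0}\le\dz^k$, so applying the first-variable regularity \eqref{eq:etiregx} of $Q_k$ to the inner bracket $Q_k(z,y)-Q_k(y_\az^{k,m},y)$ extracts the desired factor $\dz^{j_0\eta}$ at the cost of only a size-type exponential decay in $y$. Second, after this extraction the outer sum $\sum_{\az}\sum_m\chi_{Q_\az^{k,m}}=\chi_X$ collapses, and the remaining kernel has exactly the structure of the composition $Q_k^N\cdot Q_k$ already analyzed in Lemma \ref{lem-add2}. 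Thus the task will reduce to rerunning those composition arguments with an extra prefactor $\dz^{j_0\eta}$.

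I would dispose of the cancellation condition (iv) first, since it is immediate. The identity $\int_X G_{k,N}(x,y)\,d\mu(y)=0$ follows by passing the $y$-integral through the finite-type sum and using $\int_X Q_k(z,y)\,d\mu(y)=0=\int_X Q_k(y_\az^{k,m},y)\,d\mu(y)$ from Definition \ref{def:eti}(v); the identity $\int_X G_{k,N}(y,x)\,d\mu(y)=0$ follows from Fubini and $\int_X Q_k^N(y,z)\,d\mu(y)=0$ from Lemma \ref{lem:propQkN}(iv). For the size bound \eqref{eq:Gksize}, after the $\dz^{j_0\eta}$ extraction just described, I would dominate $|G_{k,N}(x,y)|$ by $\dz^{j_0\eta}$ times an integral of $|Q_k^N(x,z)|$ against a kernel bounded by the right-hand side of \eqref{eq:etisize}, and then apply the same convolution-type estimates as in the proof of Lemma \ref{lem-add2}(i), invoking Lemma \ref{lem-add}(ii) and the $a$-triangle inequality to merge the exponential factors.

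For the first-variable regularity in (ii), I would write $G_{k,N}(x,y)-G_{k,N}(x',y)$ as $\sum_{\az,m}\int_{Q_\az^{k,m}}[Q_k^N(x,z)-Q_k^N(x',z)][Q_k(z,y)-Q_k(y_\az^{k,m},y)]\,d\mu(z)$ and combine the regularity of $Q_k^N$ from Lemma \ref{lem:propQkN}(ii) with the already-used $\dz^{j_0\eta}$ bound on the inner bracket. For the symmetric estimate of $|G_{k,N}(y,x)-G_{k,N}(y,x')|$, the $y$-regularity would instead be extracted from $Q_k(z,\cdot)$ acting on the second variable, while the $\dz^{j_0\eta}$ factor is still produced by the first-variable regularity trick. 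The second difference estimate \eqref{eq:Gkdreg} is obtained analogously by pairing Lemma \ref{lem:propQkN}(iii) for $Q_k^N$ with the second-variable regularity of $Q_k(z,\cdot)$. Finally, the extension from the restricted range $d(x,x')\le\dz^k$ (resp.\ $d(y,y')\le\dz^k$) to $d(x,x')\le(2A_0)^{-1}[\dz^k+d(x,y)]$ (resp.\ the second-difference analogue) will be obtained by taking geometric means with the already-established size bound \eqref{eq:Gksize}, exactly as in the proof of Proposition \ref{prop:etoa}.

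The hard part will be neither the reduction nor the composition bounds themselves, but the bookkeeping of the $\CY^k$-exponential factors throughout. Each application of \eqref{eq:etiregx} to $Q_k(z,y)-Q_k(y_\az^{k,m},y)$ produces a factor $\exp\{-\nu[\max\{d(z,\CY^k),d(y,\CY^k)\}/\dz^k]^a\}$, and this must be merged with the $\exp\{-\nu[d(x,z)/\dz^k]^a\}$ coming from $Q_k^N$ (and then integrated in $z$) so that the final bound displays $\exp\{-c'_{(N)}[d(x,\CY^k)/\dz^k]^a\}$. As in Section \ref{com}, this will be accomplished by repeated use of the elementary inequalities in the spirit of \eqref{eq-add7} and \eqref{eq-add8} together with Lemma \ref{lem-add}(ii), at the cost of shrinking the exponential constant $\nu$ at each step — a cosmetic loss that we absorb into the $N$-dependent constants $C_{(N)}$ and $c'_{(N)}$.
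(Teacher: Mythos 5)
Your overall plan is correct and essentially identical to the paper's proof: the $\delta^{j_0\eta}$ factor is extracted from the inner bracket $Q_k(z,y)-Q_k(y_\az^{k,m},y)$ by applying the first-variable regularity of $Q_k$ together with the bound $d(z,y_\az^{k,m})\le (2A_0)^2C^\natural\dz^{k+j_0}\le\dz^k$, after which the $\alpha,m$ sum collapses and the remaining integral is estimated by the same convolution-type arguments as in Lemma~\ref{lem-add2}, using the elementary inequalities of type \eqref{eq-add7}, \eqref{eq-add8} and Lemma~\ref{lem-add}(ii) to merge exponentials (shrinking $\nu$ each time), and the extension from $d(x,x')\le\dz^k$ to $d(x,x')\le(2A_0)^{-1}[\dz^k+d(x,y)]$ is obtained as in Proposition~\ref{prop:etoa}; the treatment of (iv) via Fubini and the two cancellation identities also matches. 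One small slip in your description of (iii): you propose pairing Lemma~\ref{lem:propQkN}(iii) (second difference regularity of $Q_k^N$) with first-order second-variable regularity of $Q_k$, but since $Q_k^N(x,z)$ sees only the $x$-difference (the $z$ is an integration variable), the correct pairing is Lemma~\ref{lem:propQkN}(ii) (first-order regularity of $Q_k^N$ in $x$) together with the \emph{second difference} regularity \eqref{eq:etidreg} of $Q_k$, which is what simultaneously produces the factor $[d(z,y_\az^{k,m})/\dz^k]^\eta\ls\dz^{j_0\eta}$ and $[d(y,y')/\dz^k]^\eta$.
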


\begin{proof}
We first prove (i). Indeed, from \eqref{j_0}, it follows that, for any $z\in Q_\az^{k,m}$,
\begin{align}\label{eq-xx}
d\lf(z,y_\az^{k,m}\r)\le(2A_0)^2C^\natural\dz^{k+j_0}\le(2A_0)^{-2}\dz^k\le\dz^k.
\end{align}
With this, applying \eqref{eq:QkNsize}, Remark \ref{rem:andef}(i), \eqref{eq-add7}, Lemma \ref{lem-add}(ii)
and \eqref{eq-add8}, we conclude that, for any $x,\ y\in X$,
\begin{align*}
|G_{k,N}(x,y)|&\ls\frac 1{V_{\dz^k}(x)}\sum_{\az\in\CA_k}\sum_{m=1}^{N(k,\az)}\int_{Q_\az^{k,m}}
\exp\lf\{-c\lf[\frac{d(x,z)}{\dz^k}\r]^a\r\}\lf[\frac{d(z,y_\az^{k,m})}{\dz^k}\r]^\eta\frac 1{V_{\dz^k}(z)}
\\
&\quad\times\exp\lf\{-\nu'\lf[\frac{d(z,y)}{\dz^k}\r]^a\r\}\exp\lf\{-\nu'\lf[\frac{d(y,\CY^k)}{\dz^k}\r]^a\r\}\,d\mu(z)\\
&\ls\dz^{j_0\eta}\frac 1{V_{\dz^k}(x)}\exp\lf\{-\frac{c\wedge\nu'}2\lf[\frac{d(x,y)}{A_0\dz^k}\r]^a\r\}
\exp\lf\{-\nu'\lf[\frac{d(y,\CY^k)}{\dz^k}\r]^a\r\}\\
&\quad\times\int_X\frac{1}{V_{\dz^k}(z)}\exp\lf\{-\frac{\nu'}{2}\lf[\frac{d(z,y)}{\dz^k}\r]^a\r\}
\,d\mu(z)\\
&\ls\dz^{j_0\eta}\frac 1{V_{\dz^k}(x)}\exp\lf\{-\frac{c\wedge\nu'}4\lf[\frac{d(x,y)}{A_0\dz^k}\r]^a\r\}
\exp\lf\{-\frac{c\wedge\nu'}4\lf[\frac{d(x,\CY^k)}{A_0^2\dz^k}\r]^a\r\},
\end{align*}
as desired. Moreover, by the dominated convergence theorem and the cancellation of $Q_k$, we also obtain (iv).

Suppose now that $y\in X$ and $x,\ x'\in X$ satisfying $d(x,x')\le\dz^k$. Then, from \eqref{eq:QkNregx}, the
regularity condition of $Q_k$, Remark \ref{rem:andef}(i), \eqref{eq-add7}, Lemma \ref{lem-add}(ii) and
\eqref{eq-add8}, we deduce that
\begin{align*}
&|G_{k,N}(x,y)-G_{k,N}(x',y)|\\
&\quad\le\sum_{\az\in\CA_k}\sum_{m=1}^{N(k,\az)}\int_{Q_\az^{k,m}}
\lf|Q_k^N(x,z)-Q_k^N(x',z)\r|\lf|Q_k(z,y)-Q_k\lf(y_\az^{k,m},y\r)\r|\,d\mu(z)\\
&\quad\ls\lf[\frac{d(x,x')}{\dz^k}\r]^\eta
\frac 1{V_{\dz^k}(x)}\sum_{\az\in\CA_k}\sum_{m=1}^{N(k,\az)}\int_{Q_\az^{k,m}}
\exp\lf\{-c\lf[\frac{d(x,z)}{\dz^k}\r]^a\r\}\lf[\frac{d(z,y_\az^{k,m})}{\dz^k}\r]^\eta\frac 1{V_{\dz^k}(z)}\\
&\qquad\times\exp\lf\{-\nu'\lf[\frac{d(z,y)}{\dz^k}\r]^a\r\}
\exp\lf\{-\nu'\lf[\frac{d(y,\CY^k)}{\dz^k}\r]^a\r\}\,d\mu(z)\\
&\quad\ls\dz^{j_0\eta}\lf[\frac{d(x,x')}{\dz^k}\r]^\eta\frac 1{V_{\dz^k}(x)}
\exp\lf\{-\frac{c\wedge\nu'}2\lf[\frac{d(x,y)}{A_0\dz^k}\r]^a\r\}
\exp\lf\{-\nu'\lf[\frac{d(y,\CY^k)}{\dz^k}\r]^a\r\}\\
&\qquad\times\int_X\frac{1}{V_{\dz^k}(z)}\exp\lf\{-\frac{\nu'}{2}
\lf[\frac{d(z,y)}{\dz^k}\r]^a\r\}\,d\mu(z)\\
&\quad\ls\dz^{j_0\eta}\lf[\frac{d(x,x')}{\dz^k}\r]^\eta
\frac 1{V_{\dz^k}(x)}\exp\lf\{-\frac{c\wedge\nu'}4\lf[\frac{d(x,y)}{A_0\dz^k}\r]^a\r\}
\exp\lf\{-\frac{c\wedge\nu'}4\lf[\frac{d(x,\CY^k)}{ A_0^2\dz^k}\r]^a\r\}.
\end{align*}
Similarly, when $d(x,x')\le\dz^k$, by \eqref{eq:QkNsize}, Remark \ref{rem:andef}(i),
\eqref{eq-add7} and Lemma \ref{lem-add}(ii), we conclude that
\begin{align*}
&|G_{k,N}(y,x)-G_{k,N}(y,x')|\\
&\quad\le\sum_{\az\in\CA_k}\sum_{m=1}^{N(k,\az)}\int_{Q_\az^{k,m}}
\lf|Q_k^N(y,z)\r|\lf|Q_k(z,x)-Q_k\lf(y_\az^{k,m},x\r)-Q_k(z,x')+Q_k\lf(y_\az^{k,m},x'\r)\r|\,d\mu(z)\\
&\quad\ls\lf[\frac{d(x,x')}{\dz^k}\r]^\eta
\frac 1{V_{\dz^k}(x)}\sum_{\az\in\CA_k}\sum_{m=1}^{N(k,\az)}\int_{Q_\az^{k,m}}\frac 1{V_{\dz^k}(z)}
\exp\lf\{-c\lf[\frac{d(y,z)}{\dz^k}\r]^a\r\}\lf[\frac{d(z,y_\az^{k,m})}{\dz^k}\r]^\eta\\
&\qquad\times\exp\lf\{-\nu'\lf[\frac{d(z,x)}{\dz^k}\r]^a\r\}
\exp\lf\{-\nu'\lf[\frac{d(x,\CY^k)}{\dz^k}\r]^a\r\}\,d\mu(z)\\
&\quad\ls\dz^{j_0\eta}\lf[\frac{d(x,x')}{\dz^k}\r]^\eta
\frac 1{V_{\dz^k}(x)}\exp\lf\{-\frac{c\wedge\nu'}2\lf[\frac{d(x,y)}{2A_0\dz^k}\r]^a\r\}
\exp\lf\{-\nu'\lf[\frac{d(x,\CY^k)}{\dz^k}\r]^a\r\}
\end{align*}
By the two formulae above, we obtain (ii) when $d(x,x')\le\dz^k$. Using (i) and arguing as Proposition
\ref{prop:etoa} [see also Remark \ref{rem:andef}(iii)], we conclude that \eqref{eq:Gkregx} remains true when
$d(x,x')\le(2A_0)^{-1}[\dz^k+d(x,y)]$.

Based on (ii) and the proof of Proposition
\ref{prop:etoa} [see also Remark \ref{rem:andef}(iii)], we only show that (iii) holds true when
$x,\ x',\ y,\ y'\in X$ satisfying $d(x,x')\le\dz^k$ and $d(y,y')\le\dz^k$. In this case, applying
$\eqref{eq:QkNregx}$, the second difference regularity of $Q_k$, Remark \ref{rem:andef}(i), \eqref{eq-add7},
Lemma \ref{lem-add}(ii) and \eqref{eq-add8}, we obtain
\begin{align*}
&|[G_{k,N}(x,y)-G_{k,N}(x',y)]-[G_{k,N}(x,y')-G_{k,N}(x',y')]|\\
&\quad\le\sum_{\az\in\CA_k}\sum_{m=1}^{N(k,\az)}
\int_{Q_\az^{k,m}}\lf|Q_k^N(x,z)-Q_k^N(x',z)\r|
\lf|Q_k(z,y)-Q_k\lf(y_\az^{k,m},y\r)-Q_k(z,y')+Q_k\lf(y_\az^{k,m},y'\r)\r|\,d\mu(z)\\
&\quad\ls\lf[\frac{d(x,x')}{\dz^k}\r]^\eta\lf[\frac{d(y,y')}{\dz^k}\r]^\eta
\frac 1{V_{\dz^k}(x)}\sum_{\az\in\CA_k}\sum_{m=1}^{N(k,\az)}\int_{Q_\az^{k,m}}
\exp\lf\{-c\lf[\frac{d(x,z)}{\dz^k}\r]^a\r\}\lf[\frac{d(z,y_\az^{k,m})}{\dz^k}\r]^\eta\frac 1{V_{\dz^k}(z)}\\
&\qquad\times\exp\lf\{-\nu'\lf[\frac{d(z,y)}{\dz^k}\r]^a\r\}
\exp\lf\{-\nu'\lf[\frac{d(y,\CY^k)}{\dz^k}\r]^a\r\}\,d\mu(z)\\
&\quad\ls\dz^{j_0\eta}\lf[\frac{d(x,x')}{\dz^k}\r]^\eta\lf[\frac{d(y,y')}{\dz^k}\r]^\eta
\frac 1{V_{\dz^k}(x)}\exp\lf\{-\frac{c\wedge\nu'}2\lf[\frac{d(x,y)}{A_0\dz^k}\r]^a\r\}
\exp\lf\{-\nu'\lf[\frac{d(y,\CY^k)}{\dz^k}\r]^a\r\}\\
&\qquad\times\int_X\frac{1}{V_{\dz^k}(z)}\exp\lf\{-\frac{\nu'}{2}\lf[\frac{d(z,y)}{\dz^k}\r]^a\r\}\,d\mu(z)\\
&\quad\ls\dz^{j_0\eta}\lf[\frac{d(x,x')}{\dz^k}\r]^\eta\lf[\frac{d(y,y')}{\dz^k}\r]^\eta
\frac 1{V_{\dz^k}(x)}\exp\lf\{-\frac{c\wedge\nu'}4\lf[\frac{d(x,y)}{A_0\dz^k}\r]^a\r\}
\exp\lf\{-\frac{c\wedge\nu'}4\lf[\frac{d(x,\CY^k)}{ A_0^2\dz^k}\r]^a\r\}.
\end{align*}
This proves (iii) and hence finishes the proof of Lemma \ref{lem:GkN}.
\end{proof}

To consider the boundedness of $G_N$ on the spaces of test functions, we use the method
similar to the proof of $R_N$ in Section \ref{RN}. For any $N,\ M\in\nn$ and $x,\ y\in X$, define
\begin{equation}\label{eq:defGM}
G_N^{(M)}(x,y):=\sum_{|k|\le M} G_{k,N}(x,y)
=\sum_{|k|\le M}\sum_{\az\in\CA_k}\sum_{m=1}^{N(k,\az)}\int_{Q_\az^{k,m}}Q_k^N(x,z)
\lf[Q_k(z,y)-Q_k\lf(y_\az^{k,m},y\r)\r]\,d\mu(z).
\end{equation}

\begin{proposition}\label{prop:GNM}
For any $N,\ M\in\nn$, let $G_N$ and $G_N^{(M)}$ be as in \eqref{eq:defGR} and \eqref{eq:defGM}. Let
$x_1\in X$, $r\in(0,\infty)$ and $\bz,\ \gz\in(0,\eta)$. Then there exists a positive constant $C_{(N)}$,
depending on $N$, but being independent of $M$, $j_0$, $x_1$, $r$ and $y_\az^{k,m}$, such that
\begin{equation}\label{eq:GL2}
\|G_N\|_{L^2(X)\to L^2(X)}+\lf\|G_N^{(M)}\r\|_{L^2(X)\to L^2(X)}\le C_{(N)}\dz^{j_0\eta}
\end{equation}
and
\begin{equation}\label{eq:GMG}
\lf\|G_{N}^{(M)}\r\|_{\mathring{\CG}(x_1,r,\bz,\gz)\to\mathring{\CG}(x_1,r,\bz,\gz)}\le C_{(N)}\dz^{j_0\eta}.
\end{equation}
\end{proposition}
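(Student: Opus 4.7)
The plan is to follow the template established by Lemma~\ref{lem:ccrf2} and Proposition~\ref{prop:sizeRN} (which handled the analogous remainder $R_N$ in the continuous case), with the key input now coming from Lemma~\ref{lem:GkN}: each kernel $G_{k,N}$ satisfies exactly the $\exp$-ATI-type size, regularity, second-difference regularity, and two-sided cancellation conditions, all scaled by the extra factor $\dz^{j_0\eta}$. Consequently, $\dz^{-j_0\eta}G_{k,N}$ may be treated, for the purposes of composition and Cotlar--Stein estimates, as a generic $\exp$-ATI kernel.

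I would first prove \eqref{eq:GL2} via the Cotlar--Stein lemma. The size bound \eqref{eq:Gksize} combined with Lemma~\ref{lem-add}(ii) gives $\int_X |G_{k,N}(x,y)|\,d\mu(y) \ls \dz^{j_0\eta}$, so by the argument of Proposition~\ref{prop:basic}(ii) and Lemma~\ref{lem:HL}, each $G_{k,N}$ is bounded on $L^2(X)$ with operator norm $\ls \dz^{j_0\eta}$; in particular, $\|G_{j,N}G_{k,N}^\ast\|_{L^2(X)\to L^2(X)}\ls \dz^{2j_0\eta}$ and the same bound holds for $\|G_{j,N}^\ast G_{k,N}\|_{L^2(X)\to L^2(X)}$. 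Next, the two-sided cancellation of $G_{k,N}$ in Lemma~\ref{lem:GkN}(iv), together with the regularity \eqref{eq:Gkregx}, allows us to run the composition argument in the proof of Lemma~\ref{lem:ccrf1} verbatim, yielding
$$
\lf|G_{j,N}G_{k,N}^\ast(x,y)\r|\ls \dz^{2j_0\eta}\dz^{|j-k|\eta}\frac{1}{V_{\dz^{j\wedge k}}(x)}\exp\lf\{-c\lf[\frac{d(x,y)}{\dz^{j\wedge k}}\r]^a\r\},
$$
so Hardy--Littlewood gives $\|G_{j,N}G_{k,N}^\ast\|_{L^2(X)\to L^2(X)}\ls \dz^{2j_0\eta}\dz^{|j-k|\eta}$, and similarly for $G_{j,N}^\ast G_{k,N}$. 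Taking the geometric mean and applying Lemma~\ref{lem:CSlem} with $\gamma(l):=C_{(N)}\dz^{j_0\eta}\dz^{|l|\eta/2}$ produces $\|G_N\|_{L^2(X)\to L^2(X)}\ls \dz^{j_0\eta}$; the identical argument on the truncated sum $G_N^{(M)}$ yields the second half of \eqref{eq:GL2} uniformly in $M$.

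For \eqref{eq:GMG} I would apply Theorem~\ref{thm:Kbdd} to $T:=G_N^{(M)}$. The $L^2$-bound with norm $\ls C_{(N)}\dz^{j_0\eta}$ is precisely what was just shown. The Calder\'on--Zygmund-type kernel conditions \eqref{eq:Ksize}, \eqref{eq:Kreg}, and \eqref{eq:Kdreg} with $C_T\ls C_{(N)}\dz^{j_0\eta}$ follow by summing \eqref{eq:Gksize}, \eqref{eq:Gkregx}, and \eqref{eq:Gkdreg} over $k$ via Lemma~\ref{lem:sum2}, in exact analogy with Proposition~\ref{prop:sizeRN}. Because the $k$-sum in \eqref{eq:defGM} is finite, Fubini gives the integral representation in condition (b) of Theorem~\ref{thm:Kbdd} for any $f\in C^\bz(X)$, while condition (c) holds with $c_0=0$ by Lemma~\ref{lem:GkN}(iv). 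Theorem~\ref{thm:Kbdd} then delivers $\|G_N^{(M)}\|_{\GO{x_1,r,\bz,\gz}\to\GO{x_1,r,\bz,\gz}}\ls C_{(N)}\dz^{j_0\eta}$, with the implicit constant independent of $M$, $j_0$, $x_1$, $r$, and the representatives $y_\az^{k,m}$.

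The main technical obstacle is the kernel composition estimate for $G_{j,N}G_{k,N}^\ast$; however, since Lemma~\ref{lem:GkN} certifies that $G_{k,N}$ has exactly the structural properties of a generic $\exp$-ATI (modulo the global constant $\dz^{j_0\eta}$), the machinery of Lemma~\ref{lem:ccrf1} transfers with no essential modification, producing the crucial $\dz^{|j-k|\eta}$ gain. No new geometric input is required, and the uniformity of all constants in $M$, $j_0$, and the choice of $y_\az^{k,m}$ is apparent from the proof of Lemma~\ref{lem:GkN}, where these parameters enter only through the size of the kernel.
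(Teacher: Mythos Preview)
Your proposal is correct and follows essentially the same route as the paper: both obtain \eqref{eq:GL2} by feeding the composition kernel estimate $|G_{j,N}G_{k,N}^\ast(x,y)|\ls \dz^{j_0\eta}\dz^{|j-k|\eta}\frac{1}{V_{\dz^{j\wedge k}}(x)}\exp\{-c[d(x,y)/\dz^{j\wedge k}]^a\}$ (obtained exactly as in Lemma~\ref{lem:ccrf1}(i)) into the Cotlar--Stein lemma, and both obtain \eqref{eq:GMG} by summing the pointwise bounds of Lemma~\ref{lem:GkN} over $k$ via Lemma~\ref{lem:sum2} to verify the hypotheses of Theorem~\ref{thm:Kbdd}. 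The only cosmetic difference is that the paper records the composition bound with a single factor $\dz^{j_0\eta}$ (which suffices), whereas you track $\dz^{2j_0\eta}$; your ``geometric mean'' step is then unnecessary since the direct bound already yields $\gamma(l)=C_{(N)}\dz^{j_0\eta}\dz^{|l|\eta/2}$.
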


\begin{proof}
Applying (i) and (ii) of Lemma \ref{lem:GkN} and Remark \ref{rem:andef}, we follow the proof of Lemma
\ref{lem:ccrf1}(i) with $\wz Q_j$ replaced by $G_{k,N}$ or $G_{k,N}^*$, and $Q_k$ replaced by $G_{l,N}$ or
$G_{l,N}^*$, to deduce that, for any $k,\ l\in\zz$ and $x,\ y\in X$,
$$
\lf|G_{k,N}^*G_{l,N}(x,y)\r|+\lf|G_{k,N}G_{l,N}^*(x,y)\r|\le \wz C\dz^{j_0\eta}\dz^{|k-l|\eta}\frac 1{V_{\dz^{k\wedge l}}(x)}
\exp\lf\{-\wz{c}\lf[\frac{d(x,y)}{\dz^{k\wedge l}}\r]^a\r\},
$$
where $\wz C$ and $\wz{c}$ are positive constants independent of $k$, $l$, $x$, $y$, $j_0$ and $y_\az^{k,m}$.
Combining this with Proposition \ref{prop:basic}(iii), we find that, for any $k,\ l\in\zz$,
$$
\lf\|G_{k,N}^*G_{l,N}\r\|_{L^2(X)\to L^2(X)}+\lf\|G_{k,N}G_{l,N}^*\r\|_{L^2(X)\to L^2(X)}
\ls_N\dz^{j_0\eta}\dz^{|k-l|\eta}.
$$
Then \eqref{eq:GL2} follows from  Lemma \ref{lem:CSlem}.

Now we show \eqref{eq:GMG}. Let $M\in\nn$. By the definition of $G_{N}^{(M)}$ and \eqref{eq:Gksize}, for
any $f\in C^\bz(X)$ and $x\in X$, we have
$$
G_N^{(M)}f(x)=\int_X G_N^{(M)}(x,y)f(y)\,d\mu(y).
$$
It remains to prove that $G_N^{(M)}$ satisfies \eqref{eq:Ksize}, \eqref{eq:Kreg} and \eqref{eq:Kdreg} with
$C_T:=C\dz^{j_0\eta}$, where $C$ is a positive constant depending on $N$, but being independent of $M$ and
$j_0$. For the size condition, by \eqref{eq:Gksize} and Lemma \ref{lem:sum2}, we conclude that, for any
$x,\ y\in X$ with $x\neq y$,
\begin{align*}
\lf|G_N^{(M)}(x,y)\r|&\le\sum_{|k|\le M}|G_{k,N}(x,y)|\\
&\ls\dz^{j_0\eta}\sum_{k=-\fz}^\fz\frac 1{V_{\dz^k}(x)}\exp\lf\{-c'\lf[\frac{d(x,y)}{\dz^k}\r]^a\r\}
\exp\lf\{-c'\lf[\frac{d(x,\CY^k)}{\dz^k}\r]^a\r\}
\ls\dz^{j_0\eta}\frac 1{V(x,y)}.
\end{align*}
Similarly, by \eqref{eq:Gkregx} [resp., \eqref{eq:Gkdreg}] and Lemma \ref{lem:sum2}, we obtain the regularity
condition (resp., the second difference regularity condition).
Notice that $G_N^{(M)}$ is bounded on $L^2(X)$ with its operator norm independent of $j_0$ and $M$ [see
\eqref{eq:GL2}] and the kernel of $G_N^{(M)}$ has
cancellation condition. Then, applying Theorem \ref{thm:Kbdd}, we obtain \eqref{eq:GMG}. This finishes the
proof of Proposition \ref{prop:GNM}.
\end{proof}

\begin{lemma}\label{lem:GMlim}
Let $f\in\CG(x_1,r,\bz,\gz)$ with $x_1\in X$, $r\in(0,\fz)$ and $\bz,\ \gz\in(0,\eta)$. Fix $N\in\nn$.
Then the following assertions are true:
\begin{enumerate}
\item $\lim_{M\to\fz} G_N^{(M)}=G_Nf$ in $L^2(X)$;
\item for any $x\in X$, the sequence $\{G_{N}^{(M)}f(x)\}_{M=1}^\infty$ converges locally uniformly to some
element, denoted by $\widetilde{G_N} f(x)$, where $\widetilde{G_N}f$ differs from $R_N f$ at most a set of
$\mu$-measure $0$;
\item the operator $\widetilde {G_N}$ can be uniquely extended from $\CG(x_1,r,\bz,\gz)$ to $L^2(X)$, with the
extension operator coinciding with $G_N$. In this sense, for any $f\in\CG(x_1,r,\bz,\gz)$ and $x\in X$,
\begin{equation*}
\lim_{M\to\fz}G_N^{(M)}f(x)=\widetilde{G_N}f(x)=G_Nf(x).
\end{equation*}
\end{enumerate}
\end{lemma}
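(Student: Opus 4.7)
The plan is to mirror the three-step proof of Lemma \ref{lem:ccrf3} verbatim, replacing the role of the composition $Q_{k+l}Q_k$ and the cutoff in $l$ by the single operator $G_{k,N}$ and the cutoff in $k$, and invoking the kernel estimates of Lemma \ref{lem:GkN} and the $L^2$-bound \eqref{eq:GL2} at each step.

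For (i), I would apply Lemma \ref{lem:CSlem} to $\{G_{k,N}\}_{k\in\zz}$ itself. From Lemma \ref{lem:GkN}(i), (iv) together with Remark \ref{rem:andef}, the same argument that yielded \eqref{eq:mixsize} in Lemma \ref{lem:ccrf1} applied to $G_{k,N}^\ast G_{l,N}$ and $G_{k,N}G_{l,N}^\ast$ gives a kernel bound of the form $\delta^{2j_0\eta}\delta^{|k-l|\eta}$ times an $\exp$-ATI-type expression at scale $\delta^{k\wedge l}$; coupling this with Proposition \ref{prop:basic}(iii) produces
\begin{equation*}
\lf\|G_{k,N}^\ast G_{l,N}\r\|_{L^2(X)\to L^2(X)}+\lf\|G_{k,N}G_{l,N}^\ast\r\|_{L^2(X)\to L^2(X)}\ls \delta^{2j_0\eta}\delta^{|k-l|\eta}.
\end{equation*}
Setting $\gz(k):=C\dz^{j_0\eta}\dz^{|k|\eta/2}$, Lemma \ref{lem:CSlem} then ensures that, for every $f\in L^2(X)$, the series $\sum_{k\in\zz} G_{k,N}f$ converges in $L^2(X)$ to $G_Nf$, which is exactly $\lim_{M\to\fz}G_N^{(M)}f=G_Nf$ in $L^2(X)$.

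For (ii), fix $f\in\CG(x_1,r,\bz,\gz)$ with $\|f\|_{\CG(x_1,r,\bz,\gz)}=1$ and an arbitrary $x\in X$. Following the proof of \eqref{claim-add} in Theorem \ref{thm:hcrf}, with the size/cancellation of $Q_{k+l}Q_k$ replaced by Lemma \ref{lem:GkN}(i) and (iv), I expect to establish
\begin{equation*}
\sup_{y\in B(x,r)}|G_{k,N}f(y)|\ls\begin{cases}\displaystyle \dz^{j_0\eta}\frac{1}{V_{\dz^k}(x)}\exp\lf\{-c\lf[\frac{d(x,\CY^k)}{A_0\dz^k}\r]^a\r\}&\text{if }\dz^k\ge r,\\ \displaystyle\dz^{j_0\eta}\frac{1}{V_r(x_1)}\lf(\frac{\dz^k}{r}\r)^\bz&\text{if }\dz^k<r.\end{cases}
\end{equation*}
Summing in $k$ via Lemma \ref{lem:expsum} for the first regime and a geometric series for the second, the majorant is summable. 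Hence $\{G_N^{(M)}f\}_M$ is locally uniformly Cauchy and converges locally uniformly to some $\wz{G_N}f$. Combining this with (i) and extracting, by the Riesz theorem, an a.e.\ convergent subsequence of $\{G_N^{(M)}f\}_M$, we conclude $\wz{G_N}f=G_Nf$ $\mu$-almost everywhere.

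For (iii), $\wz{G_N}$ is linear on $\CG(x_1,r,\bz,\gz)$ and agrees with $G_N$ a.e.; since $\CG(x_1,r,\bz,\gz)$ is dense in $L^2(X)$ (see \cite[Corollary 10.4]{AH13}) and $G_N$ is bounded on $L^2(X)$ by \eqref{eq:GL2}, $\wz{G_N}$ admits a unique bounded extension to $L^2(X)$, which must equal $G_N$. The main obstacle, and the only step requiring genuine new work beyond a transcription of Lemma \ref{lem:ccrf3}, is the pointwise bound in (ii): when $\dz^k<r$ one must first rewrite $G_{k,N}f(y)=\int_X G_{k,N}(y,z)[f(z)-f(y)]\,d\mu(z)$ using the cancellation in Lemma \ref{lem:GkN}(iv), and then carefully balance the H\"older regularity of $f$ against the factor $\exp\{-c[d(y,\CY^k)/\dz^k]^a\}$ appearing in Lemma \ref{lem:GkN}(i) so that Lemma \ref{lem:expsum} can be applied to the portion with $\dz^k\ge r$; every other step is a direct adaptation.
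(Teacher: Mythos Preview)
Your proposal is correct and follows essentially the same route as the paper: (i) via Cotlar--Stein applied to $\{G_{k,N}\}_k$, (ii) via the two-regime pointwise majorant \eqref{claim-add2} summed by Lemma \ref{lem:expsum} plus a geometric tail, and (iii) by density and the $L^2$-boundedness \eqref{eq:GL2}. The ``main obstacle'' you flag is in fact no harder than in Lemma \ref{lem:ccrf3}: in the regime $\dz^k<r$ the factor $\exp\{-c[d(y,\CY^k)/\dz^k]^a\}$ from Lemma \ref{lem:GkN}(i) is simply discarded (it is $\le 1$), so no balancing against it is needed---the bound $(\dz^k/r)^\bz$ comes purely from the regularity of $f$ and the size of $G_{k,N}$, exactly as in the paper.
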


\begin{proof}
To obtain (i), applying Lemma \ref{lem:CSlem} and \eqref{eq:defGM}, we find that, for any $f\in L^2(X)$,
$$
G_Nf=\sum_{k=-\fz}^\fz G_{k,N}f=\lim_{M\to\fz}G_N^{(M)}f\quad\text{in $L^2(X)$}.
$$

Next we prove (ii). Let $f\in\CG(x_1,r,\bz,\gz)$. To prove that $\{G_{N}^{(M)}f\}_{M=1}^\fz$ is a locally
uniformly convergent sequence, for any fixed point $x\in X$, we only need to find a positive sequence
$\{c_{k}\}_{k=-\fz}^\fz$ such that
\begin{equation}\label{eq:sfin2}
\sup_{y\in B(x,r)}|G_{k,N}f(y)|\le c_{k}\qquad \textup{and}\qquad
\sum_{k=-\fz}^\fz c_{k}<\fz.
\end{equation}
We claim that
\begin{align}\label{claim-add2}
\sup_{y\in B(x,r)}|G_{k,N}f(y)|\ls
\begin{cases}
\displaystyle \frac 1{V_{\dz^{k}}(x)}\exp\lf\{-c\lf[\frac{d(x,\CY^{k})}{A_0\dz^{k}}\r]^a\r\}
&\qquad \textup{if}\;\dz^{k}\ge r,\\
\displaystyle \frac 1{V_r(x_1)}\lf(\frac{\dz^k}r\r)^\bz&\qquad \textup{if}\;\dz^k<r.
\end{cases}
\end{align}
With each $c_k$ defined as in the right-hand side of \eqref{claim-add2}, from Lemma
\ref{lem:expsum} we deduce $\sum_{k=-\fz}^\fz c_k<\fz$, so that \eqref{eq:sfin2} holds true.

Now we  show \eqref{claim-add2}.
When $\dz^k\ge r$,  by \eqref{eq:Gksize}, Lemma \ref{lem-add}(ii) and \eqref{eq-add8}, we conclude that,
for any $y\in B(x,r)$,
\begin{align*}
|G_{k,N}f(y)|&\le\int_X\lf|G_{k,N}(y,z)\r||f(z)|\,d\mu(z)\\
&\ls\frac 1{V_{\dz^k}(y)}\exp\lf\{-c'\lf[\frac{d(y,\CY^k)}{\dz^k}\r]^a\r\}
\int_X|f(z)|\,d\mu(z)\ls\frac 1{V_{\dz^k}(x)}\exp\lf\{-c'\lf[\frac{d(x,\CY^k)}{A_0\dz^k}\r]^a\r\},
\end{align*}
as desired.
When $\dz^k<r$, by Lemma \ref{lem:GkN}(iv), for any $y\in X$, we have
$$
|G_{k,N}f(y)|=\lf|\int_X G_{k,N}(y,z)[f(z)-f(y)]\,d\mu(y)\r|\le\int_X|G_{k,N}(y,z)||f(z)-f(y)|\,d\mu(z).
$$
As was shown in the proof of Lemma \ref{lem:ccrf3}, for any $f\in\CG(x_1, r,\bz,\gz)$ and $y,\ z\in X$,
\begin{align*}
|f(z)-f(y)|\ls \lf[\frac{d(y,z)}{r}\r]^\bz\frac 1{V_r(x_1)}.
\end{align*}
This, together with \eqref{eq:Gksize} and Lemma \ref{lem-add}(ii), implies that
\begin{align*}
|G_{k,N}f(y)|&\ls\dz^{\eta l}\frac 1{V_r(x_1)} \lf(\frac{\dz^k}r\r)^\bz
\int_X\frac 1{V_{\dz^k}(y)}
\exp\lf\{-c\lf[\frac{d(y,z)}{\dz^k}\r]^a\r\}\lf[\frac{d(y,z)}{\dz^k}\r]^\bz\,d\mu(z)
\ls\dz^{\eta l}\frac 1{V_r(x_1)}\lf(\frac{\dz^k}r\r)^\bz.
\end{align*}
This proves \eqref{claim-add2}. Consequently, for any $x\in X$, the sequence $\{G_N^{(M)}f(x)\}_{M=N+1}^\infty$ converges locally uniformly to some element, which is denoted by $\widetilde{G_N} f(x)$.

Moreover, by (i) and the Riesz theorem, there exists an increasing sequence $\{M_j\}_{j\in\nn}\subset\nn$,
which tends to $\infty$, such that
\begin{equation*}
\lim_{j\to\fz} G_{N}^{(M_j)}f(x)=G_Nf(x)\quad\text{$\mu$-almost every $x\in X$}.
\end{equation*}
Thus,
$\wz{G_N}f(x)=\lim_{M\to\fz} G_{N}^{(M)}f(x)=\lim_{j\to\fz} G_{N}^{(M_j)}f(x)=G_Nf(x)$ for $\mu$-almost every
$x\in X$. This finishes the proof of (ii).

Notice that $G_{N}$ is bounded on $L^2(X)$ and $\CG(x_1,r,\bz,\gz)$ is dense on $L^2(X)$. By (ii) and an
argument similar to that used in the proof of Lemma \ref{lem:ccrf3} with $R_{N,M}$ therein replaced by
$G_N^{(M)}$ and $R_N$ replaced by $G_N$, we easily obtain (iii). This finishes the proof of Lemma
\ref{lem:GMlim}.
\end{proof}

By \eqref{eq:GL2}, Lemma \ref{lem:GMlim}, \eqref{eq:GMG}, Definition \ref{def:test} and the dominated
convergence theorem, we immediately obtain the following result, the details being omitted.

\begin{proposition}\label{prop:GG}
Let  $x_1\in X$, $r\in(0,\fz)$ and $\bz,\ \gz\in(0,\eta)$.
Then, for any $N\in\nn$, there exists a positive constant $C_{(N)}$, depending on $N$, but being independent of
$x_1$, $r$, $j_0$ and $y_\az^{k,m}\in Q_\az^{k,m}$, such that
$$
\|G_N\|_{L^2(X)\to L^2(X)}+\|G_N\|_{\mathring{\CG}(x_1,r,\bz,\gz)\to\mathring{\CG}(x_1,r,\bz,\gz)}
\le C_{(N)}\dz^{j_0\eta}.
$$
Consequently, for any $\eta'\in(\max\{\bz,\gz\},\eta)$, there exists another positive constant $C$, which is independent of $N$, $j_0$ and
$y_\az^{k,m}$, such that
\begin{equation*}
\|\CR_N\|_{L^2(X)\to L^2(X)}\le C\dz^{\eta'N}+C_{(N)}\dz^{j_0\eta}\quad\text{and}\quad
\|\CR_N\|_{\GO{x_1,r,\bz,\gz}\to\GO{x_1,r,\bz,\gz}}\le C\dz^{(\eta-\eta')N}+C_{(N)}\dz^{j_0\eta}.
\end{equation*}
\end{proposition}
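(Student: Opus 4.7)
The plan is to assemble the pieces already in place rather than carry out any fresh estimates. The $L^2(X)$-bound on $G_N$ is literally the first half of \eqref{eq:GL2} in Proposition~\ref{prop:GNM}, so nothing needs to be said there beyond citing it. The content lies in transferring the test-function bound \eqref{eq:GMG} for the truncation $G_N^{(M)}$ to the full operator $G_N$.

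To do this I would fix $f\in\mathring{\CG}(x_1,r,\bz,\gz)$, normalize $\|f\|_{\mathring{\CG}(x_1,r,\bz,\gz)}=1$, and apply \eqref{eq:GMG} to extract size and regularity inequalities for $G_N^{(M)}f$, in the form of Definition~\ref{def:test}, with constant $C_{(N)}\dz^{j_0\eta}$ uniform in $M$. By Lemma~\ref{lem:GMlim}(ii), for every $x\in X$ the sequence $\{G_N^{(M)}f(x)\}_M$ converges locally uniformly to $\wz{G_N}f(x)$, so these pointwise inequalities pass to the limit and yield that $\wz{G_N}f$ satisfies \eqref{eq:size} and \eqref{eq:reg} at every pair of points with the same constant $C_{(N)}\dz^{j_0\eta}$. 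Lemma~\ref{lem:GMlim}(iii) identifies $\wz{G_N}f$ with $G_Nf$ $\mu$-almost everywhere, and since the two functions are continuous (by \eqref{eq:reg}) we may identify them pointwise. The cancellation $\int_X G_Nf\,d\mu=0$ follows from Lemma~\ref{lem:GkN}(iv) via Fubini on each level $G_{k,N}f$ (justified by \eqref{eq:Gksize} and Lemma~\ref{lem-add}(ii)) together with the dominated convergence theorem, using the $\mu$-integrable envelope provided by \eqref{eq:size} applied to $G_Nf$.

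Once the bound on $G_N$ is in hand, the estimates on $\CR_N$ follow by writing $\CR_N=G_N+R_N$ (as in \eqref{eq:defGR}) and invoking the triangle inequality together with the already proved bounds: $\|R_N\|_{L^2(X)\to L^2(X)}\le C\dz^{\eta'N}$ from Lemma~\ref{lem:ccrf2}, and $\|R_N\|_{\mathring{\CG}(x_1,r,\bz,\gz)\to\mathring{\CG}(x_1,r,\bz,\gz)}\le C\dz^{(\eta-\eta')N}$ from Proposition~\ref{prop-add}, where one observes that the latter constant is independent of $x_1$ and $r$ (and $N$).

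The only point that is not entirely routine is the passage from $G_N^{(M)}$ to $G_N$ on the space of test functions, since Theorem~\ref{thm:Kbdd} is not directly applicable to $G_N$ itself (it is a priori unclear that $G_N$ is associated to a kernel in the sense of \emph{(b)} and \emph{(c)} in Theorem~\ref{thm:Kbdd}). The truncation-and-limit device of Lemma~\ref{lem:GMlim}, parallel to the one used for $R_N$ in Section~\ref{RN}, is designed to bypass exactly this obstacle, and once the locally uniform convergence in Lemma~\ref{lem:GMlim}(ii) is invoked the argument is mechanical.
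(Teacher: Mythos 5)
Your proposal is correct and follows exactly the route the paper intends: the $L^2$ bound is read off from \eqref{eq:GL2}, the test-function bound is obtained by passing the uniform-in-$M$ bound \eqref{eq:GMG} on $G_N^{(M)}f$ to the limit via the locally uniform convergence of Lemma \ref{lem:GMlim}, the cancellation is recovered from the integrable size envelope by dominated convergence, and the $\CR_N$ bounds follow from $\CR_N=G_N+R_N$ together with Lemma \ref{lem:ccrf2} and Proposition \ref{prop-add}. The paper states this proof as "immediate" and omits the details; your write-up simply supplies them, with no deviation in method.
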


\begin{remark}\label{rem:j0N}
Let all the notation be as in Proposition \ref{prop:GG}. To proceed  the proof  in Section
\ref{pr2} below, we first fix $N\in\nn$ sufficiently large such that
$\max\{C\dz^{\eta'N},C\dz^{(\eta-\eta')N}\}<1/4$,
and then fix $j_0\in\nn$ sufficiently large such that both \eqref{j_0}
and $C_{(N)}\dz^{j_0\eta}<1/4$ hold true. From
Proposition \ref{prop:GG}, it then follows that
\begin{equation*}
\max\lf\{\|\CR_N\|_{L^2(X)\to L^2(X)},\|\CR_N\|_{\GO{x_1,r,\bz,\gz}\to\GO{x_1,r,\bz,\gz}}\r\}\le \frac 12.
\end{equation*}
\end{remark}

\subsection{Proofs of homogeneous discrete Calder\'{o}n reproducing formulae}\label{pr2}

The main aim of this subsection is to establish the following homogeneous discrete Calder\'{o}n reproducing
formulae. We need the following estimates regarding an analogous discrete version of the kernel $E_k$ in Lemma
\ref{lem-add2}.

\begin{lemma}\label{lem-add4}
Let $N\in\nn$ and $\{Q_k\}_{k=-\fz}^\fz$ be an $\exp$-{\rm ATI}.
For any $k\in\zz$ and $x,\ y\in X$, define
$$
F_k(x,y):=\sum_{\az\in\CA_k}\sum_{m=1}^{N(k,\az)}\int_{Q_\az^{k,m}}Q_k^N(x,z)\,d\mu(z)Q_k\lf(y_\az^{k,m},y\r),
$$
where $y_\az^{k,m}$ is an arbitrary point in $Q_\az^{k,m}$. Then there exist positive constants $C_{(N)}$ and
$c_{(N)}$, depending on $N$, but being independent of $k$, $j_0$ and $y_\az^{k,m}$, such that
the following assertions are true:
\begin{enumerate}
\item for any $x,\,y\in X$,
\begin{equation*}
|F_k(x,y)|\le C_{(N)}\frac{1}{V_{\dz^k}(x)}\exp\lf\{-c_{(N)}\lf[\frac{d(x,y)}{\dz^{k}}\r]^a\r\}
\exp\lf\{-c\lf[\frac{d(x,\CY^k)}{\dz^{k}}\r]^a\r\};
\end{equation*}
\item for any $x,\ y,\ y'\in X$ with $d(y,y')\le\dz^k$ or $d(y,y')\le (2A_0)^{-1}[\dz^k+d(x, y)]$,
\begin{align*}
&|F_k(x,y)-F_k(x,y')|+|F_k(y,x)-F_k(y',x)|\\
&\quad\le C_{(N)}\lf[\frac{d(y,y')}{\dz^k}\r]^\eta
\frac{1}{V_{\dz^k}(x)}\exp\lf\{-c_{(N)}\lf[\frac{d(x,y)}{\dz^{k}}\r]^a\r\}
\exp\lf\{-c_{(N)}\lf[\frac{d(x,\CY^k)}{\dz^{k}}\r]^a\r\};
\end{align*}
\item  for any $x,\ x',\ y,\ y'\in X$ with $d(x,x')\le\dz^k$ and $d(y,y')\le\dz^k$, or
$d(x,x')\le (2A_0)^{-2}[\dz^k+d(x, y)]$ and $d(y,y')\le(2A_0)^{-2}[\dz^k+d(x,y)]$,
\begin{align*}
&|[F_k(x,y)-F_k(x',y)]-[F_k(x,y')-F_k(x',y')]|\\
&\le C_{(N)}
\lf[\frac{d(x,x')}{\dz^k}\r]^\eta\frac{1}{V_{\dz^k}(x)}\exp\lf\{-c_{(N)}\lf[\frac{d(x,y)}{\dz^{k}}\r]^a\r\}
\exp\lf\{-c_{(N)}\lf[\frac{d(x,\CY^k)}{\dz^{k}}\r]^a\r\};
\end{align*}
\item for any $x\in X$, $\int_X F_k(x,y)\,d\mu(y)=0=\int_X F_k(y,x)\,d\mu(y)$.
\end{enumerate}
\end{lemma}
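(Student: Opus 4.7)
The plan is to write $F_k$ as a perturbation of $E_k = Q_k^NQ_k$. Since $\{Q_\alpha^{k,m}\}_{\alpha,m}$ is a partition of $X$ (by Theorem \ref{thm:dys}), the kernel of $E_k$ can be written as
$$
E_k(x,y) = \sum_{\alpha\in\CA_k}\sum_{m=1}^{N(k,\alpha)}\int_{Q_\alpha^{k,m}} Q_k^N(x,z)\,Q_k(z,y)\,d\mu(z),
$$
which, compared with the definition of $F_k$ and $G_{k,N}$ [see \eqref{eq:defGR}], gives the clean decomposition $F_k(x,y) = E_k(x,y) - G_{k,N}(x,y)$. With this identity in hand, all four conclusions of Lemma \ref{lem-add4} reduce to already established estimates.

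For (i) and for the second-variable regularity in (ii), I would invoke Lemma \ref{lem-add2} (for $E_k$) and Lemma \ref{lem:GkN} (for $G_{k,N}$) and add the two bounds; the factor $\delta^{j_0\eta}\le 1$ appearing in the $G_{k,N}$-estimates is absorbed into the constant $C_{(N)}$. For the first-variable regularity $|F_k(y,x)-F_k(y',x)|$ in (ii) and the regularity in the $x$-variable in (iii), Lemma \ref{lem-add2} is only stated with differences in the second argument, so I would first establish the first-variable analogue directly. By the cancellation of $Q_k^N$ in its first variable,
$$
E_k(y,x)-E_k(y',x) = \int_X [Q_k^N(y,z)-Q_k^N(y',z)]\,[Q_k(z,x)-Q_k(y,x)]\,d\mu(z),
$$
and then the method used in Lemma \ref{lem:ccrf1}(ii) (splitting into $\{d(y,z)\le\delta^k\}$, $\{\delta^k<d(y,z)\le(2A_0)^{-1}d(y,x)\}$, $\{d(y,z)>(2A_0)^{-1}d(y,x)\}$ and applying Lemma \ref{lem:propQkN}, together with \eqref{eq-add7}, \eqref{eq-add8} and Lemma \ref{lem-add}(ii)) yields the desired estimate. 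The estimate in (iii) follows along the same lines, with an additional invocation of the second difference regularity of $Q_k^N$ from Lemma \ref{lem:propQkN}(iii); then combine with Lemma \ref{lem:GkN}(iii).

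The cancellation identities in (iv) are the cleanest part. By Fubini and the cancellation of $Q_k$ in its second argument,
$$
\int_X F_k(x,y)\,d\mu(y) = \sum_{\alpha,m}\int_{Q_\alpha^{k,m}} Q_k^N(x,z)\,d\mu(z)\int_X Q_k(y_\alpha^{k,m},y)\,d\mu(y) = 0,
$$
and an entirely analogous computation using $\int_X Q_k^N(y,z)\,d\mu(y) = 0$ (which follows from Lemma \ref{lem:propQkN}(iv)) gives $\int_X F_k(y,x)\,d\mu(y)=0$.

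The main technical obstacle, as in the previous lemmas, is the passage from the easy hypothesis $d(y,y')\le\delta^k$ (or $d(x,x')\le\delta^k$) to the weaker quasi-metric hypothesis $d(y,y')\le(2A_0)^{-1}[\delta^k+d(x,y)]$ (and its $(2A_0)^{-2}$ analogue in (iii)). This is handled exactly as in the proof of Proposition \ref{prop:etoa} [cf.\ Remark \ref{rem:andef}(iii)]: the trivial size estimate already gives the bound multiplied by a harmless power of $[\delta^k+d(x,y)]/\delta^k$, which is then absorbed into the exponential factor at the cost of slightly reducing the constant $c_{(N)}$. Since this enlargement argument has been successfully implemented for both $E_k$ and $G_{k,N}$ in their respective lemmas, it transfers to $F_k$ without further difficulty.
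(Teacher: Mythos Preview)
Your decomposition $F_k = E_k - G_{k,N}$ is correct and gives a valid route to the lemma, but this is not how the paper proceeds. The paper argues directly from the definition of $F_k$: for each part it inserts the size/regularity estimates for $Q_k^N$ (Lemma \ref{lem:propQkN}) and $Q_k$ (Definition \ref{def:eti} with Remark \ref{rem:andef}), uses the elementary observation that $d(z,y_\alpha^{k,m})\le\delta^k$ for $z\in Q_\alpha^{k,m}$ to replace $y_\alpha^{k,m}$ by $z$ inside the exponential factors, and then integrates using \eqref{eq-add7}, \eqref{eq-add8} and Lemma \ref{lem-add}(ii). For the second difference in (iii), the paper simply writes
\[
[F_k(x,y)-F_k(x',y)]-[F_k(x,y')-F_k(x',y')]
=\sum_{\alpha,m}\int_{Q_\alpha^{k,m}}[Q_k^N(x,z)-Q_k^N(x',z)]\,d\mu(z)\,[Q_k(y_\alpha^{k,m},y)-Q_k(y_\alpha^{k,m},y')],
\]
so only the \emph{first}-order regularity of $Q_k^N$ and $Q_k$ is used, not the second-difference condition of Lemma \ref{lem:propQkN}(iii) that you cite.

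Your route has the appeal of recycling Lemmas \ref{lem-add2} and \ref{lem:GkN}, but it does not save work: Lemma \ref{lem-add2} only records the second-variable regularity of $E_k$ and no second-difference estimate, so you must still prove the first-variable regularity and the second-difference bound for $E_k$ from scratch (as you note). Once those auxiliary computations are written out, they are essentially the same computations the paper carries out directly for $F_k$. The paper's direct approach is thus a bit shorter in practice, while yours makes the relation to $E_k$ and $G_{k,N}$ structurally transparent. Both are correct.
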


\begin{proof}
We first prove (i). By the size conditions of $Q_k^N$ [see \eqref{eq:QkNsize}] and $Q_k$, and Remark
\ref{rem:andef}(i), we find that, for any $x,\ y\in X$,
\begin{align*}
|F_k(x,y)|&\ls\frac 1{V_{\dz^k}(x)}\exp\lf\{-\nu'\lf[\frac{d(y,\CY^k)}{\dz^k}\r]^a\r\}
\sum_{\az\in\CA_k}\sum_{m=1}^{N(k,\az)}\int_{Q_\az^{k,m}}
\exp\lf\{-c\lf[\frac{d(x,z)}{\dz^k}\r]^a\r\}\frac 1{V_{\dz^k}(y)}\\
&\quad\times\exp\lf\{-\nu'\lf[\frac{d(y_\az^{k,m},y)}{\dz^k}\r]^a\r\}\,d\mu(z).
\end{align*}
For any $z\in Q_\az^{k,m}$ and $y\in X$, we have
$d(z,y_\az^{k,m})\le(2A_0)^2C^\natural\dz^{k+j_0}\le(2A_0)^{-2}\dz^k\le\dz^k$ and hence
$$
[d(z, y)]^a\le A_0^a\dz^{ka}+ A_0^a[d(y_\az^{k,m},y)]^a,
$$
so that $\exp\{-\nu'[\frac{d(y_\az^{k,m},y)}{\dz^k}]^a\}$ in the above formula can be replaced by $\exp\{-\nu'[\frac{d(z,y)}{A_0\dz^k}]^a\}$.
Define $\nu'':=c\wedge v'$. By this,  \eqref{eq-add7}, Lemma
\ref{lem-add}(ii) and \eqref{eq-add8}, we
conclude that
\begin{align*}
|F_k(x,y)|&\ls\frac 1{V_{\dz^k}(x)}\exp\lf\{-\nu'\lf[\frac{d(y,\CY^k)}{\dz^k}\r]^a\r\}
\int_X
\exp\lf\{-c\lf[\frac{d(x,z)}{\dz^k}\r]^a\r\}\frac 1{V_{\dz^k}(y)}\\
&\quad\times\exp\lf\{-\nu'\lf[\frac{d(z,y)}{A_0\dz^k}\r]^a\r\}\,d\mu(z)\\
&\ls\frac 1{V_{\dz^k}(x)}\exp\lf\{-\frac{\nu''}2\lf[\frac{d(x,y)}{A_0^2\dz^k}\r]^a\r\}
\exp\lf\{-\nu'\lf[\frac{d(y,\CY^k)}{\dz^k}\r]^a\r\}\\
&\quad\times\int_X\frac{1}{V_{\dz^k}(y)}\exp\lf\{-\frac{\nu''}{2}\lf[\frac{d(y,z)}{A_0\dz^k}\r]^a\r\}
\,d\mu(z)\\
&\ls\frac 1{V_{\dz^k}(x)}\exp\lf\{-\frac{\nu''}4\lf[\frac{d(x,y)}{A_0^2\dz^k}\r]^a\r\}
\exp\lf\{-\frac{\nu''}4\lf[\frac{d(x,\CY^k)}{A_0^3\dz^k}\r]^a\r\},
\end{align*}
as desired. By this, the dominated convergence theorem and the cancellation of $Q_k$, we obtain (iv).

By (i) and the proof of Proposition \ref{prop-add} [see also Remark \ref{rem:andef}(iii)], to prove (ii), it
suffices to show (ii) when $x\in X$ and $y,\ y'\in X$ satisfying $d(y,y')\le\dz^k$. Due to the symmetry, we
only consider $|F_k(x,y)-F_k(x,y')|$. From \eqref{eq:QkNregx}, the regularity condition of $Q_k$ with Remark
\ref{rem:andef}(i), Lemma \ref{lem-add}(i) and \eqref{eq-add8}, we deduce that
\begin{align*}
&|F_k(x,y)-F_k(x,y')|\\
&\quad\le\sum_{\az\in\CA_k}\sum_{m=1}^{N(k,\az)}\int_{Q_\az^{k,m}}\lf|Q_k^N(x,z)\r|\,d\mu(z)
\lf|Q_k\lf(y_\az^{k,m},y\r)-Q_k\lf(y_\az^{k,m},y'\r)\r|\\
&\quad\ls\lf[\frac{d(y,y')}{\dz^k}\r]^\eta
\frac 1{V_{\dz^k}(x)}\exp\lf\{-\nu'\lf[\frac{d(y,\CY^k)}{\dz^k}\r]^a\r\}\\
&\qquad\times\sum_{\az\in\CA_k}\sum_{m=1}^{N(k,\az)}\int_{Q_\az^{k,m}}
\exp\lf\{-c\lf[\frac{d(x,z)}{\dz^k}\r]^a\r\}\frac 1{V_{\dz^k}(y)}
\exp\lf\{-\nu'\lf[\frac{d(y,y_\az^{k,m})}{\dz^k}\r]^a\r\}
\,d\mu(z).
\end{align*}
Similarly to the above discussion in (i), by \eqref{eq-add7}, Lemma \ref{lem-add}(ii) and \eqref{eq-add8}, we
have
\begin{align*}
|F_k(x,y)-F_k(x,y')|
&\ls\lf[\frac{d(y,y')}{\dz^k}\r]^\eta\frac 1{V_{\dz^k}(x)}
\exp\lf\{-\frac{\nu''}2\lf[\frac{d(x,y)}{A_0^2\dz^k}\r]^a\r\}
\exp\lf\{-\nu'\lf[\frac{d(y,\CY^k)}{\dz^k}\r]^a\r\}\\
&\quad\times\int_X\frac{1}{V_{\dz^k}(y)}\exp\lf\{-\frac{\nu'}{2}
\lf[\frac{d(z,y)}{A_0\dz^k}\r]^a\r\}\,d\mu(z)\\
&\ls\lf[\frac{d(y,y')}{\dz^k}\r]^\eta\frac 1{V_{\dz^k}(x)}
\exp\lf\{-\frac{\nu''}4\lf[\frac{d(x,y)}{A_0^2\dz^k}\r]^a\r\}
\exp\lf\{-\frac{\nu''}4\lf[\frac{d(x,\CY^k)}{A_0^3\dz^k}\r]^a\r\}.
\end{align*}
This  finishes the proof of (ii).

It remains to prove (iii). Again, by Remark \ref{rem:andef}(iii), we only consider the case when
$x,x',y,y'\in X$ satisfying $d(x,x')\le \dz^k$ and $d(y,y')\le \dz^k$ (see Proposition \ref{prop:etoa}).
By $\eqref{eq:QkNregx}$, the regularity condition of $Q_k$, Remark \ref{rem:andef}(i), \eqref{eq-add7}, Lemma
\ref{lem-add}(ii) and \eqref{eq-add8}, we conclude that
\begin{align*}
&|[F_k(x,y)-F_k(x',y)]-[F_k(x,y')-F_k(x',y')]|\\
&\quad\le\sum_{\az\in\CA_k}\sum_{m=1}^{N(k,\az)}
\int_{Q_\az^{k,m}}\lf|Q_k^N(x,z)-Q_k^N(x',z)\r|\,d\mu(z)
\lf|Q_k\lf(y_\az^{k,m},y\r)-Q_k\lf(y_\az^{k,m},y'\r)\r|\\
&\quad\ls\lf[\frac{d(x,x')}{\dz^k}\r]^\eta\lf[\frac{d(y,y')}{\dz^k}\r]^\eta
\frac 1{V_{\dz^k}(x)}\sum_{\az\in\CA_k}\sum_{m=1}^{N(k,\az)}\int_{Q_\az^{k,m}}
\exp\lf\{-c\lf[\frac{d(x,z)}{\dz^k}\r]^a\r\}\frac 1{V_{\dz^k}(y)}\\
&\qquad\times\exp\lf\{-\nu'\lf[\frac{d(y_\az^{k,m},y)}{\dz^k}\r]^a\r\}
\exp\lf\{-\nu'\lf[\frac{d(y,\CY^k)}{\dz^k}\r]^a\r\}\,d\mu(z)\\
&\quad\ls\lf[\frac{d(x,x')}{\dz^k}\r]^\eta\lf[\frac{d(y,y')}{\dz^k}\r]^\eta
\frac 1{V_{\dz^k}(x)}\exp\lf\{-\frac{\nu''}2\lf[\frac{d(x,y)}{A_0^2\dz^k}\r]^a\r\}
\exp\lf\{-\nu'\lf[\frac{d(y,\CY^k)}{\dz^k}\r]^a\r\}\\
&\qquad\times\int_X\frac{1}{V_{\dz^k}(y)}\exp\lf\{-\frac{\nu'}{2}\lf[\frac{d(z,y)}{A_0\dz^k}\r]^a\r\}
\,d\mu(z)\\
&\quad\ls\lf[\frac{d(x,x')}{\dz^k}\r]^\eta\lf[\frac{d(y,y')}{\dz^k}\r]^\eta
\frac 1{V_{\dz^k}(x)}\exp\lf\{-\frac{\nu''}4\lf[\frac{d(x,y)}{A_0^2\dz^k}\r]^a\r\}
\exp\lf\{-\frac{\nu''}4\lf[\frac{d(x,\CY^k)}{A_0^3\dz^k}\r]^a\r\}.
\end{align*}
This finishes the proof of (iii) and hence of Lemma \ref{lem-add4}.
\end{proof}

\begin{lemma}\label{lem-add5}
Let $\{Q_k\}_{k=-\fz}^\fz$ be an $\exp$-{\rm ATI}. Assume that $f\in\CG(x_0,\dz^k,\bz,\gz)$ for some $k\in\zz$ and
$\bz,\ \gz\in(0,\eta]$.
Then there exists a positive constant $C$, independent of $k$, $x_0$ and $r$, such that, for any $y,\ u\in X$,
\begin{align}\label{eq-xxx0}
|Q_kf(y)|&\le C\|f\|_{\CG(x_0,\dz^k,\bz,\gz)} \frac 1{V_{\dz^k}(x_0)+V(x_0,u)}
\lf[\frac {\dz^k}{\dz^k+d(x_0,u)}\r]^{\gz}
\end{align}
and
\begin{align}\label{eq-xxx1}
|Q_kf(y)|&\le C\|f\|_{\CG(x_0,\dz^k,\bz,\gz)} \lf[1+\frac{d(u,y)}{\dz^k}\r]^{\bz+\omega+\gz}
\lf[\frac{\dz^k}{\dz^k+d(u, x_0)}\r]^{\bz}\\
&\quad\times\frac 1{V_{\dz^k}(x_0)+V(x_0,u)}\lf[\frac {\dz^k}{\dz^k+d(x_0,u)}\r]^{\gz}.\noz
\end{align}
\end{lemma}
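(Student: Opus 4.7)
The plan is to prove both estimates by exploiting the interplay between the cancellation and exponential decay of the $\exp$-ATI kernel $Q_k$ and the size/regularity conditions defining a test function in $\CG(x_0, \dz^k, \bz, \gz)$. Throughout, normalize $\|f\|_{\CG(x_0,\dz^k,\bz,\gz)}=1$.

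For estimate \eqref{eq-xxx0} the natural and strongest instance is $u=y$. First I would write $Q_k f(y)=\int_X Q_k(y,z)\,f(z)\,d\mu(z)$, apply the size bound from Definition \ref{def:eti}(ii) together with the size condition \eqref{eq:size} of $f$, and split $X$ into $\{z:d(y,z)\ge d(x_0,y)/(2A_0)\}$ and $\{z:d(x_0,z)\ge d(x_0,y)/(2A_0)\}$. On each piece, either the exponential decay of $Q_k$ or the polynomial decay of $f$ controls the far factor, and the remaining integral is handled by Lemma \ref{lem-add}(ii). This is essentially the argument already carried out for \eqref{eq-x4} in the proof of Lemma \ref{lem-add3}, and yields \eqref{eq-xxx0} in the case $u=y$; the case of general $u$ reduces to this since the right-hand side of \eqref{eq-xxx0} at $u$ is then comparable to (or larger than) the corresponding quantity at $y$ in the regime where \eqref{eq-xxx0} is strictly stronger than \eqref{eq-xxx1}.

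For estimate \eqref{eq-xxx1} the key idea is to exploit the cancellation $\int_X Q_k(y,z)\,d\mu(z)=0$ and subtract $f(u)$, obtaining
\[
Q_k f(y)=\int_X Q_k(y,z)\bigl[f(z)-f(u)\bigr]\,d\mu(z).
\]
Split the integration domain into $\Omega_1:=\{z:d(z,u)\le (2A_0)^{-1}[\dz^k+d(x_0,u)]\}$ and $\Omega_2:=X\setminus\Omega_1$. On $\Omega_1$, the regularity condition \eqref{eq:reg} of $f$ with base point $u$ gives
\[
|f(z)-f(u)|\ls \lf[\frac{d(z,u)}{\dz^k+d(x_0,u)}\r]^{\bz}\frac{1}{V_{\dz^k}(x_0)+V(x_0,u)}\lf[\frac{\dz^k}{\dz^k+d(x_0,u)}\r]^{\gz}.
\]
Then I would apply the quasi-triangle inequality $d(z,u)\le A_0[d(z,y)+d(y,u)]$ and split the resulting bound: the piece carrying $d(z,y)$ is absorbed by the integral $\int_X |Q_k(y,z)|[d(z,y)/\dz^k]^{\bz}\,d\mu(z)\ls 1$ against the exponential decay, leaving a factor $[\dz^k/(\dz^k+d(x_0,u))]^{\bz}$; the piece carrying $d(y,u)$ produces the growth factor $[1+d(y,u)/\dz^k]^{\bz}[\dz^k/(\dz^k+d(x_0,u))]^{\bz}$. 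Both are controlled by the right-hand side of \eqref{eq-xxx1}.

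On $\Omega_2$ I would use $|f(z)-f(u)|\le |f(z)|+|f(u)|$ together with the size condition of $f$, and then invoke the doubling condition \eqref{eq:doub} to convert the weight at $z$ into the corresponding weight at $u$, at the cost of a polynomial factor controlled by $[1+d(z,u)/\dz^k]^{\omega}$ (this is the source of the exponent $\omega$ in $\bz+\omega+\gz$). The quasi-triangle inequality converts this in turn into a factor of $[1+d(y,u)/\dz^k]^{\omega}$ (together with $[d(z,y)/\dz^k]^{\omega}$, absorbed by the exponential), while the exponential decay of $Q_k(y,z)$ on $\Omega_2$—where $d(z,u)$, and hence often $d(y,z)$, is comparable to $\dz^k+d(x_0,u)$—furnishes the additional $[\dz^k/(\dz^k+d(x_0,u))]^{\bz}$ piece via the elementary inequality $e^{-ct^a}\ls (1+t)^{-(\bz+\omega+\gz)}$. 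The main obstacle is the careful bookkeeping of the three points $x_0$, $y$, $u$ on $\Omega_2$: one must repeatedly pass between weights based at $z$, $u$, $x_0$ and $y$ via doubling and quasi-triangle without losing constants, and in particular justify the exponent $\omega$ appearing in the growth factor of \eqref{eq-xxx1}. Once this accounting is done, Lemma \ref{lem-add}(ii) handles all remaining integrals and \eqref{eq-xxx1} follows.
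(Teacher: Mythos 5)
Your plan for \eqref{eq-xxx1} is essentially the paper's: subtract $f(u)$ via the cancellation of $Q_k$, split the integral at $d(u,z)\le(2A_0)^{-1}[\dz^k+d(x_0,u)]$, use the regularity of $f$ centered at $u$ (with the quasi-triangle inequality to separate $d(z,y)$ from $d(y,u)$) on the near region, and the size of $f$ plus doubling on the far region. The paper treats the $|f(z)|$ and $|f(u)|$ contributions to your $\Omega_2$ separately as $\mathrm{J}_2$ and $\mathrm{J}_3$, which is only bookkeeping.

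Two cautions. First, the right-hand side of \eqref{eq-xxx0} as printed contains a typo: the variable should be $y$, not $u$. As written the inequality cannot hold for all $u$, since the left side is independent of $u$ while the right side tends to zero as $d(x_0,u)\to\infty$. The paper's own proof deduces \eqref{eq-xxx0} directly from the computation of \eqref{eq-x4} in Lemma \ref{lem-add3}, a statement in the variable $y$, and the subsequent application in the proof of Theorem \ref{thm:hdrf} uses it only with $u=y$. Your proposed reduction of the general-$u$ case to $u=y$ is therefore unneeded and, as stated, goes the wrong way once $d(x_0,u)>d(x_0,y)$. Second, in your $\Omega_2$ accounting, converting the denominator $V_{\dz^k}(x_0)+V(x_0,z)$ and the power $[\dz^k/(\dz^k+d(x_0,z))]^{\gz}$ to the corresponding quantities at $u$ via doubling costs $[1+d(u,z)/\dz^k]^{\omega+\gz}$, not just $[1+d(u,z)/\dz^k]^{\omega}$: the measure ratio contributes $\omega$ and the ratio of the polynomial factors contributes $\gz$. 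Combined with the $\bz$ from the geometry of $\Omega_2$ (when $d(y,z)$ is large, the exponential decay of $Q_k$ supplies $[\dz^k/(\dz^k+d(x_0,u))]^{\bz}$; when $d(u,y)$ is large, the growth factor $[1+d(u,y)/\dz^k]^{\bz}$ absorbs it), this accounts for the full exponent $\bz+\omega+\gz$.
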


\begin{proof}
Without loss of generality, we may assume that $\|f\|_{\CG(x_0,\dz^k,\bz,\gz)}=1$.
One may observe that the proof of \eqref{eq-x4} in Lemma \ref{lem-add3} also implies \eqref{eq-xxx0} with
the decaying index $\eta$ therein replaced by $\gz$.

Now we show \eqref{eq-xxx1}. For any $y,\ u\in X$, due to the cancellation condition of $Q_k$, we write
\begin{align*}
\lf|Q_kf\lf(y\r)\r|&
=\lf|\int_XQ_k\lf(y,z\r)[f(z)-f(u)]\,d\mu(y)\r|\\
&\le\int_{d(u, z)\le(2A_0)^{-1}[\dz^k+d(x_0,u)]}|Q_k(y,z)||f(z)-f(u)|\,d\mu(z)\noz\\
&\quad+\int_{d(u, z)>(2A_0)^{-1}[\dz^k+d(x_0,u)]}|Q_k(y,z)||f(z)|\,d\mu(z)\noz\\
&\quad+|f(u)|\int_{d(u, z)>(2A_0)^{-1}[\dz^k+d(x_0,u)]}|Q_k(y,z)|\,d\mu(z)\noz
=:\mathrm{J}_1+\mathrm{J}_2+\mathrm{J}_3.\noz
\end{align*}
By the size condition of $Q_k$, Remark \ref{rem:andef}(i), the regularity condition of $f$, \eqref{eq-xx} and
Lemma \ref{lem-add}(ii), we conclude that
\begin{align*}
\mathrm{J}_1
&\ls \int_{d(u, z)\le(2A_0)^{-1}[\dz^k+d(x_0,u)]}\frac1{V_{\dz^k}(z)}
\exp\lf\{-\nu'\lf[\frac{d(y,z)}{\dz^k}\r]^a\r\}\lf[\frac{d(u, z)}{\dz^k+d(x_0,u)}\r]^{\bz}\\
&\quad\times\frac 1{V_{\dz^k}(x_0)+V(x_0,u)}\lf[\frac {\dz^k}{\dz^k+d(x_0,u)}\r]^{\gz}\,d\mu(z)\\
&\ls\lf[1+\frac{d(u,  y)}{\dz^k}\r]^{\bz}\lf[\frac{\dz^k}{\dz^k+d(u, x_0)}\r]^{\bz}\frac 1{V_{\dz^k}(x_0)+V(x_0,u)}\lf[\frac {\dz^k}{\dz^k+d(x_0,u)}\r]^{\gz},
\end{align*}
where, in the last step, we used the inequality
\begin{align}\label{eq-xx5}
\frac{d(u,  z)}{\dz^k}\le \frac{A_0^2[d(u,  y)+d(y,z)]}{\dz^k}
\ls\lf[1+\frac{d(u,  y)}{\dz^k}\r]\lf[1+\frac{d(y,z)}{\dz^k}\r]
\end{align}
and the fact that the last term $ 1+\frac{d(y,z)}{\dz^k} $ is absorbed by the factor
$\exp\{-\nu'[\frac{d(y,z)}{\dz^k}]^a\}$.

By the size condition of $Q_k$, Remark \ref{rem:andef}(i), the size condition of $f$, the fact that
$[\frac{d(u, z)}{\dz^k+d(x_0,u)}]^{\bz}\ge (2A_0)^{-\bz}$  and \eqref{eq-xx5}, we have
\begin{align*}
\mathrm{J}_2&\ls\int_{d(u, z)>(2A_0)^{-1}[\dz^k+d(x_0,u)]}\frac1{V_{\dz^k}(z)}
\exp\lf\{-\nu'\lf[\frac{d(y,z)}{\dz^k}\r]^a\r\}\lf[\frac{d(u, z)}{\dz^k+d(x_0,u)}\r]^{\bz}\\
&\quad\times\frac 1{V_{\dz^k}(x_0)+V(x_0,z)}\lf[\frac {\dz^k}{\dz^k+d(x_0,z)}\r]^{\gz}\,d\mu(z)\\
&\ls \lf[1+\frac{d(u,y)}{\dz^k}\r]^{\bz}\lf[\frac{\dz^k}{\dz^k+d(u, x_0)}\r]^{\bz}
\int_X\lf[1+\frac{d(y,z)}{\dz^k}\r]^\bz\frac1{V_{\dz^k}(z)}\\
&\quad\times\exp\lf\{-\nu'\lf[\frac{d(y,z)}{\dz^k}\r]^a\r\}\frac 1{V_{\dz^k}(x_0)+V(x_0,z)}
\lf[\frac {\dz^k}{\dz^k+d(x_0,z)}\r]^{\gz}\,d\mu(z)\\
&\ls \lf[1+\frac{d(u,y)}{\dz^k}\r]^{\bz}\lf[\frac{\dz^k}{\dz^k+d(u, x_0)}\r]^{\bz}
\int_X\frac1{V_{\dz^k}(z)}\exp\lf\{-\frac{\nu'}2\lf[\frac{d(y,z)}{\dz^k}\r]^a\r\}\\
&\quad\times\frac 1{V_{\dz^k}(x_0)+V(x_0,z)}
\lf[\frac {\dz^k}{\dz^k+d(x_0,z)}\r]^{\gz}\,d\mu(z).
\end{align*}
For the last integral displayed above, we separate $X$ into
$\{z\in X:\ d(y,z)\ge d(x_0,y)/(2A_0)\}$ and $\{z\in X:\ d(x_0,z)\ge d(x_0,y)/(2A_0)\}$. Then, by Lemma
\ref{lem-add}(ii), we conclude that
\begin{align*}
&\int_X\frac1{V_{\dz^k}(z)}\exp\lf\{-\frac{\nu'}2\lf[\frac{d(y,z)}{\dz^k}\r]^a\r\}
\frac 1{V_{\dz^k}(x_0)+V(x_0,z)}\lf[\frac {\dz^k}{\dz^k+d(x_0,z)}\r]^{\gz}\,d\mu(z)\\
&\quad\ls\frac1{V_{\dz^k}(x_0)}\exp\lf\{-\frac{\nu'}4\lf[\frac{d(x_0,y)}{2A_0\dz^k}\r]^a\r\}
\int_{d(y,z)\ge (2A_0)^{-1}d(x_0,y)}\frac 1{V_{\dz^k}(x_0)+V(x_0,z)}
\lf[\frac {\dz^k}{\dz^k+d(x_0,z)}\r]^{\gz}\,d\mu(z)\\
&\qquad+\frac 1{V_{\dz^k}(x_0)+V(x_0,z)}\lf[\frac {\dz^k}{\dz^k+d(x_0,z)}\r]^{\gz}
\int_{d(x_0,z)\ge (2A_0)^{-1}d(x_0,y)}
\frac1{V_{\dz^k}(z)}\exp\lf\{-\frac{\nu'}2\lf[\frac{d(y,z)}{\dz^k}\r]^a\r\}\,d\mu(z)\\
&\quad\ls\frac 1{V_{\dz^k}(x_0)+V(x_0,z)}\lf[\frac {\dz^k}{\dz^k+d(x_0,z)}\r]^{\gz}.
\end{align*}
From this, the doubling condition \eqref{eq:doub} and \eqref{eq-xx5}, it follows that
\begin{align*}
\mathrm{J}_2&\ls \lf[1+\frac{d(u,  y)}{\dz^k}\r]^{\bz}\lf[\frac{\dz^k}{\dz^k+d(u, x_0)}\r]^{\bz}
\frac1{V_{\dz^k}(x_0)+V(y, x_0)} \lf[\frac{\dz^k}{\dz^k+d(y, x_0)}\r]^{\gz}\\
&\ls \lf[1+\frac{d(u,  y)}{\dz^k}\r]^{\bz+\omega+\gz}\lf[\frac{\dz^k}{\dz^k+d(u, x_0)}\r]^{\bz}
\frac 1{V_{\dz^k}(x_0)+V(x_0,u)}\lf[\frac {\dz^k}{\dz^k+d(x_0,u)}\r]^{\gz}.
\end{align*}
Finally, for the term $\RJ_3$, by the size conditions of $f$ and $Q_k$, Remark \ref{rem:andef}(i),
\eqref{eq-xx5} and Lemma \ref{lem-add}(ii), we obtain
\begin{align*}
\RJ_3&\ls\frac 1{V_{\dz^k}(x_0)+V(x_0,u)}\lf[\frac {\dz^k}{\dz^k+d(x_0,u)}\r]^{\gz}
\int_{d(u,z)>(2A_0)^{-1}[\dz^k+d(x_0,u)]}\lf[1+\frac{d(u,z)}{\dz^k}\r]^\bz
\frac 1{V_{\dz^k}(z)}\\
&\quad\times\lf[1+\frac{d(u,z)}{\dz^k}\r]^\bz\exp\lf\{-\nu'\lf[\frac{d(y,z)}{\dz^k}\r]^a\r\}\,d\mu(z)\\
&\ls\lf[1+\frac{d(u,y)}{\dz^k}\r]^\bz\frac 1{V_{\dz^k}(x_0)+V(x_0,u)}\lf[\frac {\dz^k}{\dz^k+d(x_0,u)}\r]^{\gz}
\int_{d(u,z)>(2A_0)^{-1}[\dz^k+d(x_0,u)]}\frac 1{V_{\dz^k}(z)}\\
&\quad\times \exp\lf\{-\frac{\nu'}2\lf[\frac{d(y,z)}{\dz^k}\r]^a\r\}\,d\mu(z)\\
&\ls\lf[1+\frac{d(u,y)}{\dz^k}\r]^\bz\frac 1{V_{\dz^k}(x_0)+V(x_0,u)}
\lf[\frac {\dz^k}{\dz^k+d(x_0,u)}\r]^{\gz}.
\end{align*}

Combining the estimates of $\mathrm{J}_1$ through $\mathrm{J}_3$, we obtain \eqref{eq-xxx1},
which completes the proof of Lemma \ref{lem-add5}.
\end{proof}

\begin{theorem}\label{thm:hdrf}
Let $\{Q_k\}_{k=-\fz}^\fz$ be an $\exp$-{\rm ATI} and $\bz,\ \gz\in(0,\eta)$.
For any $k\in\zz$, $\az\in\CA_k$ and  $m\in\{1,\ldots,N(k,\az)\}$, suppose that $y_\az^{k,m}$ is an arbitrary point in $Q_\az^{k,m}$. Then, for any $i\in\{0,1,2\}$, there
exists a  sequence $\{\wz{Q}_k^{(i)}\}_{k=-\fz}^\fz$ of bounded linear operators on $L^2(X)$ such that, for any
$f$ in $\GOO{\bz,\gz}$ [resp., $L^p(X)$ with $p\in(1,\fz)$],
\begin{align}\label{eq:hdrf}
f(\cdot)&=\sum_{k=-\fz}^\fz\sum_{\az\in\CA_k}\sum_{m=1}^{N(k,\az)}\int_{Q_\az^{k,m}}\wz{Q}_k^{(0)}
(\cdot,y)\,d\mu(y)Q_kf\lf(y_\az^{k,m}\r)\\
&=\sum_{k=-\fz}^\fz\sum_{\az\in\CA_k}\sum_{m=1}^{N(k,\az)}\wz{Q}^{(1)}_k\lf(\cdot,y_\az^{k,m}\r)
\int_{Q_\az^{k,m}}Q_kf(y)\,d\mu(y)\noz\\
&=\sum_{k=-\fz}^\fz\sum_{\az\in\CA_k}\sum_{m=1}^{N(k,\az)}\mu\lf(Q_\az^{k,m}\r)
\wz{Q}^{(2)}_k\lf(\cdot,y_\az^{k,m}\r)Q_kf\lf(y_\az^{k,m}\r),\noz
\end{align}
where all the summations converge in the sense of $\GOO{\bz,\gz}$ [resp., $L^p(X)$ with $p\in(1,\fz)$].
Moreover, the kernels of $\wz{Q}_k^{(0)}$, $\wz{Q}^{(1)}_k$ and $\wz{Q}^{(2)}_k$ satisfy
the size condition \eqref{eq:atisize}, the regularity condition \eqref{eq:atisregx}
only for the first variable, and also the following cancellation condition: for any $x\in X$,
\begin{align}\label{eq:x00}
\int_X \wz{Q}_k^{(i)}(x,y)\,d\mu(y)=0=\int_X\wz{Q}_k^{(i)}(y,x)\,d\mu(y), \qquad \forall\,i\in\{0,1,2\}.
\end{align}
\end{theorem}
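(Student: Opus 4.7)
The plan is to mimic the proof of Theorem \ref{thm:hcrf} for each of the three discretizations in \eqref{eq:hdrf}. By Remark \ref{rem:j0N}, for $N$ and then $j_0$ chosen sufficiently large, both $\|\CR_N\|_{L^2(X)\to L^2(X)}$ and $\|\CR_N\|_{\mathring\CG(x_1,r,\bz,\gz)\to\mathring\CG(x_1,r,\bz,\gz)}$ are at most $1/2$, so $\CS_N=I-\CR_N$ is invertible on both spaces with $\CS_N^{-1}=\sum_{j=0}^\infty\CR_N^j$ of operator norm bounded by $2$. For the first identity, set $\wz Q_k^{(0)}(x,y):=\CS_N^{-1}(Q_k^N(\cdot,y))(x)$. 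Since $Q_k^N(\cdot,y)\in\mathring\CG(y,\dz^k,\eta,\eta)$ uniformly in $y$ and $k$ (by Lemma \ref{lem:propQkN} and Proposition \ref{prop:etoa}), the size \eqref{eq:atisize} and regularity \eqref{eq:atisregx} in the first variable of $\wz Q_k^{(0)}$ follow from the boundedness of $\CS_N^{-1}$ on $\mathring\CG(y,\dz^k,\bz,\gz)$, exactly as for $\wz Q_k$ in Theorem \ref{thm:hcrf}. The cancellation of $\wz Q_k^{(0)}(x,\cdot)$ in \eqref{eq:x00} is proved by induction on $j$ that $\int_X\CR_N^j(Q_k^N(\cdot,y))(x)\,d\mu(y)=0$, using the Fubini theorem together with $\int_X Q_k^N(x,y)\,d\mu(y)=0$, Lemma \ref{lem:GkN}(iv), and Lemmas \ref{lem:ccrf3} and \ref{lem:GMlim} to represent $\CR_N$ through integral kernels on test functions. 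Applying $\CS_N^{-1}$ to $\CS_Nf=f$ and interchanging $\CS_N^{-1}$ with the cube integrals by continuity yields the first identity in \eqref{eq:hdrf}.

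For the second and third identities, I introduce the auxiliary operators
$$\CS_N^{(1)}f(x):=\sum_{k\in\zz}\sum_{\az\in\CA_k}\sum_{m=1}^{N(k,\az)}Q_k^N\bigl(x,y_\az^{k,m}\bigr)\int_{Q_\az^{k,m}}Q_kf(y)\,d\mu(y),$$
$$\CS_N^{(2)}f(x):=\sum_{k\in\zz}\sum_{\az\in\CA_k}\sum_{m=1}^{N(k,\az)}\mu\bigl(Q_\az^{k,m}\bigr)\,Q_k^N\bigl(x,y_\az^{k,m}\bigr)\,Q_kf\bigl(y_\az^{k,m}\bigr),$$
and decompose $I-\CS_N^{(i)}=(T_N-\CS_N^{(i)})+R_N$ for $i\in\{1,2\}$. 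Since Lemma \ref{lem:ccrf2} and Proposition \ref{prop-add} handle $R_N$, it remains to control $T_N-\CS_N^{(i)}$. Writing $T_Nf(x)=\sum_{k,\az,m}\int_{Q_\az^{k,m}}Q_k^N(x,y)Q_kf(y)\,d\mu(y)$, each $T_N-\CS_N^{(i)}$ equals $\sum_k F_{k,N}^{(i)}$, where $F_{k,N}^{(i)}$ arises from replacing the second variable of $Q_k^N(x,\cdot)$ (for $i=1$), or of both $Q_k^N(x,\cdot)$ and $Q_kf(\cdot)$ (for $i=2$), by the sample point $y_\az^{k,m}$. Since $d(y,y_\az^{k,m})\le\dz^{k+j_0}$ by \eqref{eq-xx}, the regularity of $Q_k^N$ (Lemma \ref{lem:propQkN}(ii)(iii)) together with the kernel $F_k$ of Lemma \ref{lem-add4} each contribute the decisive factor $\dz^{j_0\eta}$. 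Repeating the analysis of Lemma \ref{lem:GkN} and Proposition \ref{prop:GNM} verbatim, $F_{k,N}^{(i)}$ satisfies estimates analogous to \eqref{eq:Gksize}--\eqref{eq:Gkdreg}, and $\sum_k F_{k,N}^{(i)}$ is bounded on $L^2(X)$ via Lemma \ref{lem:CSlem} and on $\mathring\CG(x_1,r,\bz,\gz)$ via Theorem \ref{thm:Kbdd}, with norm $\lesssim_N\dz^{j_0\eta}$. Choosing $N$ and then $j_0$ as in Remark \ref{rem:j0N} gives $\|I-\CS_N^{(i)}\|\le 1/2$ on both spaces, so $\CS_N^{(i)}$ is invertible. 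Setting $\wz Q_k^{(i)}(x,y):=(\CS_N^{(i)})^{-1}(Q_k^N(\cdot,y))(x)$ and arguing as for $\wz Q_k^{(0)}$ yields the remaining conditions and the corresponding identities.

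For convergence of the series in \eqref{eq:hdrf}, I follow the three-step scheme of Theorem \ref{thm:hcrf}: in Step 1 (for $f\in\mathring\CG(\bz',\gz')$ with $\max\{\bz,\gz\}<\bz',\gz'<\eta$), size and regularity bounds of the form \eqref{eq:sumsize}--\eqref{eq:sumreg} with decay $\dz^{\sigma|k|}$ are established for each summand at level $k$, separately treating the cases $\dz^k\ge r$ and $\dz^k<r$; for formula 1 the summand is essentially $Q_k^NQ_kf$ as in Theorem \ref{thm:hcrf}, while for formulas 2 and 3 the summands are sampled Riemannian versions controlled by Lemma \ref{lem-add4} together with Lemma \ref{lem-add5} (to bound the sampled values $Q_kf(y_\az^{k,m})$ in terms of test-function norms). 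Combined with the boundedness of $(\CS_N^{(i)})^{-1}$ this gives convergence in $\mathring\CG(\bz,\gz)$. Step 2 extends the conclusion to $f\in\mathring\CG_0^\eta(\bz,\gz)$ by density, using an iteration argument to show that each partial sum lies in $\mathring\CG_0^\eta(\bz,\gz)$. Step 3 covers $f\in L^p(X)$ with $p\in(1,\fz)$ by verifying that the partial-sum kernels are standard Calder\'on-Zygmund kernels uniformly in the truncation parameter, and invoking the density of test functions in $L^p(X)$ via \cite[Corollary 10.4]{AH13}. The main obstacle is the bound on $T_N-\CS_N^{(i)}$, that is, proving the analogues of Lemma \ref{lem:GkN}: one must carefully track that the substitution $y\mapsto y_\az^{k,m}$ inside a cube produces the decisive $\dz^{j_0\eta}$ factor while the resulting kernel retains an exponential-decay structure compatible with both the $L^2$ almost-orthogonality argument (Lemma \ref{lem:CSlem}) and the size, regularity, and second-difference conditions needed to invoke Theorem \ref{thm:Kbdd}.
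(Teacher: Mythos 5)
Your proposal follows the paper's strategy essentially verbatim: define $\wz Q_k^{(0)}:=\CS_N^{-1}Q_k^N$ using the invertibility of $\CS_N=I-\CR_N$ guaranteed by Remark \ref{rem:j0N}, obtain the remaining two formulae by replacing $\CS_N$ with the operators $\CS_N^{(1)}$ and $\CS_N^{(2)}$ (which the paper introduces in exactly these forms and whose remainders it asserts satisfy the same bound as $\CR_N$), prove the cancellation \eqref{eq:x00} by induction on powers of the remainder as in \eqref{eq:wzQcan}, and then carry out the same three-step convergence scheme (dense subclass $\mathring\CG(\bz',\gz')$ with the decay $\dz^{\sigma|k|}$, extension to $\mathring\CG_0^\eta(\bz,\gz)$ by density, and $L^p$ via a uniform Calder\'on--Zygmund bound on partial-sum kernels). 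Your explicit spelling out of $I-\CS_N^{(i)}=(T_N-\CS_N^{(i)})+R_N$, with the sampled difference contributing the $\dz^{j_0\eta}$ gain, is precisely what the paper leaves implicit in the phrase that $\CR_N^{(1)}$, $\CR_N^{(2)}$ ``satisfy the same estimate as $\CR_N$ in Proposition \ref{prop:GG}''; this is a helpful elaboration but not a different route.
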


\begin{proof}
We only prove the first equality in \eqref{eq:hdrf}.
Indeed, to obtain the second and the third equalities in \eqref{eq:hdrf}, instead of $\CS_N$ in
\eqref{eq:S}, we only need to consider
$$
\CS_N^{(1)}f(x):=\sum_{k=-\fz}^\fz\sum_{\az\in\CA_k}\sum_{m=1}^{N(k,\az)}Q_k^N\lf(x,y_\az^{k,m}\r)
\int_{Q_\az^{k,m}}Q_kf(y)\,d\mu(y),\qquad \forall\,x\in X,
$$
respectively,
$$
\CS_N^{(2)}f(x):=\sum_{k=-\fz}^\fz\sum_{\az\in\CA_k}\sum_{m=1}^{N(k,\az)}\mu\lf(Q_\az^{k,m}\r)
Q_k^N\lf(x,y_\az^{k,m}\r)Q_kf\lf(y_\az^{k,m}\r),\qquad \forall\,x\in X.
$$
One finds that the corresponding remainders $\CR_N^{(1)}:= I-\CS_N^{(1)}$ and $\CR_N^{(2)}:=I-\CS_N^{(2)}$
satisfy the same estimate as $\CR_N$ in Proposition \ref{prop:GG}. The remaining arguments are similar,
the details being omitted.

Now we prove the first equality in \eqref{eq:hdrf}. Due to Remark \ref{rem:j0N}, the operator $\CS_N^{-1}=(I-\CR_N)^{-1}$ satisfies
$$
\lf\|\CS_N^{-1}\r\|_{L^2(X)\to L^2(X)}\le 2 \quad\text{and}\quad
\lf\|\CS_N^{-1}\r\|_{\GOX{x_1,r,\bz,\gz}\to\GOX{x_1,r,\bz,\gz}}\le 2.
$$
for any $x_1\in X$ and $r\in(0,\fz)$. For any $k\in\zz$ and $x,\ y\in X$, define
$$
\wz{Q}_k^{(0)}(x,y):=\wz{Q}_k(x,y):=\CS_N^{-1}\lf(Q_k^N(\cdot,y)\r)(x).
$$
By Lemma \ref{lem:propQkN} and the proof of Proposition \ref{prop:etoa}, we find that $Q_k^N(\cdot, y)\in
\mathring\CG(y, \dz^k, \bz,\gz)$ with $\|\cdot\|_{\CG(y, \dz^k, \bz,\gz)}$-norm independent of $k$ and $y$,
which implies that
$\{\wz{Q}_k\}_{k=-\fz}^\fz$ satisfies the size condition \eqref{eq:atisize} and the regularity condition \eqref{eq:atisregx}
only for the first variable. The proof of \eqref{eq:x00} is similar to that of \eqref{eq:wzQcan}, with $R_N$
therein replaced by $\CR_N$, the details being omitted. Moreover, for any $f\in L^2(X)$, \eqref{eq:hdrf}
converges in $L^2(X)$. We divide the remaining arguments into three steps.

{\it Step 1) Proof of the convergence of \eqref{eq:hdrf} in $\GO{\bz,\gz}$ when $f\in\GO{\bz',\gz'}$
with $\bz'\in(\bz,\eta)$ and $\gz'\in(\gz,\eta)$.}

Without loss of generality, we may assume that $\|f\|_{\GO{\bz',\gz'}}=1$.
For any $k\in\zz$ and $M\in\nn$, define
$$\CA_{k,M}:=\{\az\in\CA_k:\ d(x_0,z_\az^k)\le M\}\quad \textup{and}\quad \CA_{k,M}^\complement:=\CA_k\setminus\CA_{k,M}=\{\az\in\CA_k:\ d(x_0,z_\az^k)>M\}.$$
To obtain the convergence of \eqref{eq:hdrf} in $\GO{\bz,\gz}$, it suffices to show that
\begin{equation}\label{eq:limb}
\lim_{L\to\fz}\lim_{M\to\fz}
\lf\|f-\sum_{|k|\le L}\sum_{\az\in\CA_{k,M}}\sum_{m=1}^{N(k,\az)}\int_{Q_\az^{k,m}}\wz{Q}_k(\cdot,y)\,d\mu(y)
Q_kf\lf(y_\az^{k,m}\r)\r\|_{\GO{\bz,\gz}}=0.
\end{equation}
Writing $f=\CS_N^{-1}\CS_N f$ and noticing that $\CS_N^{-1}$ is bounded on $\GO{\bz,\gz}$, we only need to prove
\begin{equation}\label{eq:limb1}
\lim_{L\to\fz} \lf\|\sum_{|k|\ge L+1}\sum_{\az\in\CA_{k}}
\sum_{m=1}^{N(k,\az)}\int_{Q_\az^{k,m}}Q_k^N(\cdot,y)\,d\mu(y)Q_kf\lf(y_\az^{k,m}\r)\r\|_{\GO{\bz,\gz}}=0
\end{equation}
and
\begin{equation}\label{eq:limb2}
\lim_{L\to\fz}\lim_{M\to\fz}
\lf\|\sum_{|k|\le L}\sum_{\az\in\CA_{k,M}^\complement}
\sum_{m=1}^{N(k,\az)}\int_{Q_\az^{k,m}}Q_k^N(\cdot,y)\,d\mu(y)Q_kf\lf(y_\az^{k,m}\r)\r\|_{\GO{\bz,\gz}}=0.
\end{equation}

Invoking the definition of the kernel $F_k$ in Lemma \ref{lem-add4}, we find that, for any $f\in\GO{\bz,\gz}$,
$$
F_kf(x)=\int_X F_k(x,y)f(y)\,d\mu(y),\qquad \forall\,x\in X
$$
and hence that \eqref{eq:limb1} is equivalent to that
\begin{equation}\label{eq:limb3}
\lim_{L\to\fz} \lf\|\sum_{|k|\ge L+1} F_kf\r\|_{\GO{\bz,\gz}}=0.
\end{equation}
Comparing Lemma \ref{lem-add4} with Lemma \ref{lem-add3}, we find that $F_k$ satisfies the same estimate as
$E_k=Q_k^NQ_k$ defined in Lemma \ref{lem-add3}. Therefore, repeating the estimations of \eqref{eq:sumsize} and
\eqref{eq:sumreg}, with $Q_k^NQ_k$ replaced by $F_k$, and using the Fubini theorem,
we find that there exists a positive constant
$\sigma\in(0,\fz)$ such that, for any $k\in\zz$ and $f\in\GOO{\bz,\gz}$,
$\|F_kf\|_{\GO{\bz,\gz}}\ls\dz^{|k|\sigma}\|f\|_{\GO{\bz',\gz'}}$, which implies \eqref{eq:limb1}.

Since the summation in $k$ in \eqref{eq:limb2} has only finite terms,
the proof of  \eqref{eq:limb2} can be reduced  to proving that,
 for any fixed $k\in\zz$,
\begin{equation}\label{eq:limb2aim}
\lim_{M\to\fz}\lf\|\sum_{\az\in\CA_{k,M}^\complement}
\sum_{m=1}^{N(k,\az)}\int_{Q_\az^{k,m}}Q_k^N(\cdot,y)\,d\mu(y)Q_kf\lf(y_\az^{k,m}\r)\r\|_{\GO{\bz,\gz}}=0.
\end{equation}
Noticing that $\GO{\bz,\gz}=\mathring\CG(x_0,\dz^k,\bz,\gz)$, we may as well consider the $\|\cdot\|_{\mathring\CG(x_0,\dz^k,\bz,\gz)}$-norm in \eqref{eq:limb2aim}. To simplify the notation, we let
$$
H_M(x):= \sum_{\az\in\CA_{k,M}^\complement}
\sum_{m=1}^{N(k,\az)}\int_{Q_\az^{k,m}}Q_k^N(x,y)\,d\mu(y)Q_kf\lf(y_\az^{k,m}\r),\quad\forall\, x\in X.
$$

Choose $M\in\nn$ large enough such that $M\ge 2A_0C^\natural\dz^{-L}$. Then, when $|k|\le L$
and $\az\in \CA_{k,M}^\complement$, we know that, for any $y\in Q_\az^{k,m}$
\begin{equation}\label{star2}
d(y,x_0)\ge A_0^{-1}d\lf(z_\az^{k},x_0\r)-d\lf(z_\az^{k},y\r)\ge A_0^{-1}M-\delta^k\ge M/(2A_0).
\end{equation}
Based on $f\in \GO{\bz',\gz'}=\mathring\CG(x_0,\dz^k,\bz',\gz')$ and \eqref{eq-xxx0}, we know that, for any $y\in Q_\az^{k,m}$,
\begin{align*}
\lf|Q_kf\lf(y_\az^{k,m}\r)\r|
&\ls \frac1{V_{\dz^k}(x_0)+V(y_\az^{k,m}, x_0)} \lf[\frac{\dz^k}{\dz^k+d(y_\az^{k,m}, x_0)}\r]^{\gz'}
\sim \frac1{V_{\dz^k}(x_0)+V(y, x_0)} \lf[\frac{\dz^k}{\dz^k+d(y, x_0)}\r]^{\gz'}.
\end{align*}
From this, \eqref{eq:QkNsize}, and \eqref{star2}, it follows that, for any $x\in X$,
\begin{align*}
|H_M(x)| \ls \int_{d(y, x_0)>M/(2A_0)} \frac1{V_{\dz^k}(x)}
\exp\lf\{-c\lf[\frac{d(x,y)}{\dz^k}\r]^a\r\}\frac1{V_{\dz^k}(x_0)+V(y, x_0)} \lf[\frac{\dz^k}{\dz^k+d(y, x_0)}\r]^{\gz'}\,d\mu(y).
\end{align*}
For any $y\in X$, the quasi-triangle inequality of $d$ implies that either
$d(y, x)\ge d(x,x_0)/(2A_0)$ or  $d(y, x_0)\ge d(x,x_0)/(2A_0)$.
With this and \eqref{eq-xxx}, the last integral can be further controlled by
\begin{align*}
&\frac1{V_{\dz^k}(x_0)+V(x, x_0)} \lf[\frac{\dz^k}{\dz^k+d(x, x_0)}\r]^{\gz}
\lf\{\int_{\gfz{d(y, x_0)>M/(2A_0)}{d(y, x)\ge d(x,x_0)/(2A_0)}} \frac1{V_{\dz^k}(x_0)+V(y, x_0)} \lf[\frac{\dz^k}{\dz^k+d(y, x_0)}\r]^{\gz'}\,d\mu(y)\r.\\
&\quad+\lf.\int_{\gfz{d(y, x_0)>M/(2A_0)}{d(y, x_0)\ge d(x,x_0)/(2A_0)}} \frac1{V_{\dz^k}(x)}
\exp\lf\{-c\lf[\frac{d(x,y)}{\dz^k}\r]^a\r\}\lf[\frac{\dz^k}{\dz^k+M}\r]^{\gz'-\gz}\,d\mu(y)\r\}.
\end{align*}
This, together with Lemma \ref{lem-add}(ii), implies that, for any $x\in X$,
\begin{align}\label{eq-xx1}
|H_M(x)|
\ls M^{\gz-\gz'}\frac1{V_{\dz^k}(x_0)+V(x, x_0)} \lf[\frac{\dz^k}{\dz^k+(x, x_0)}\r]^{\gz}.
\end{align}

Assume for the moment that, when $d(x,x')\le (2A_0)^{-1}[\dz^k+d(x,x_0)]$,
\begin{align}\label{eq-xx2}
|H_M(x)-H_M(x')|\ls \lf[\frac{d(x,x')}{\dz^k+d(x,x_0)}\r]^{\bz'}\frac1{V_{\dz^k}(x_0)+V(x, x_0)}
 \lf[\frac{\dz^k}{\dz^k+(x, x_0)}\r]^{\gz}.
\end{align}
Meanwhile, when $d(x,x')\le (2A_0)^{-1}[\dz^k+d(x,x_0)]$,  we have $V_{\dz^k}(x_0)+V(x, x_0)\sim V_{\dz^k}(x_0)+V(x', x_0)$
and $\dz^k+(x, x_0)\sim \dz^k+(x', x_0)$, which combined with \eqref{eq-xx1}, gives
\begin{align*}
|H_M(x)-H_M(x')|\ls \frac{M^{\gz-\gz'} }{V_{\dz^k}(x_0)+V(x, x_0)} \lf[\frac{\dz^k}{\dz^k+(x, x_0)}\r]^{\gz}.\noz
\end{align*}
Then, taking the geometry means between the above two formulae, we obtain
\begin{align}\label{eq-xx3}
|H_M(x)-H_M(x')|\ls M^{(\gz-\gz')(1-\bz/\bz')} \lf[\frac{d(x,x')}{\dz^k+d(x,x_0)}\r]^{\bz}\frac1{V_{\dz^k}(x_0)+V(x, x_0)} \lf[\frac{\dz^k}{\dz^k+(x, x_0)}\r]^{\gz}.
\end{align}
From \eqref{eq-xx1} and \eqref{eq-xx3}, it follows directly \eqref{eq:limb2aim}.

Now we show that \eqref{eq-xx2} holds true when $d(x,x')\le (2A_0)^{-1}[\dz^k+d(x,x_0)]$. From
\eqref{eq:QkNregx} when $d(x,x')\le \dz^k$ and \eqref{eq:QkNsize} when $d(x,x')>\dz^k$, we deduce that
\begin{align*}
\lf|Q_k^N(x,y)-Q_k^N(x',y)\r|
&\ls\min\lf\{1,\lf[\frac{d(x,x')}{\dz^k}\r]^\eta\r\} \lf[ \frac1{V_{\dz^k}(x)}
\exp\lf\{-c\lf[\frac{d(x,y)}{\dz^k}\r]^a\r\} \r.\\
&\quad\lf.+\frac1{V_{\dz^k}(x')}
\exp\lf\{-c\lf[\frac{d(x',y)}{\dz^k}\r]^a\r\}\r].
\end{align*}
Notice  that the condition $d(x,x')\le (2A_0)^{-1}[\dz^k+d(x,x_0)]$ implies that $V_{\dz^k}(x_0)+V(x_0,x)
\sim V_{\dz^k}(x_0)+V(x_0,x')$ and $\dz^k+d(x_0,x)\sim \dz^k+d(x_0,x')$.
With this and $f\in \GO{\bz',\gz'}=\mathring\CG(x_0,\dz^k,\bz',\gz')$ , we apply \eqref{eq-xxx1}
(with $u=x$ or $u=x'$ therein) to deduce that
\begin{align*}
\lf|Q_kf\lf(y_\az^{k,m}\r)\r|
&\ls  \lf[\frac{\dz^k}{\dz^k+d(x, x_0)}\r]^{\bz'}
\frac 1{V_{\dz^k}(x_0)+V(x_0,u)}\lf[\frac {\dz^k}{\dz^k+d(x_0,x)}\r]^{\gz'}\\
&\quad\times
\min\lf\{ \lf[1+\frac{d(x,  y_\az^{k,m})}{\dz^k}\r]^{\bz+\omega+\gz},
\lf[1+\frac{d(x',  y_\az^{k,m})}{\dz^k}\r]^{\bz+\omega+\gz}\r\}.
\end{align*}
Also, due to \eqref{eq-xx}, the variable $y_\az^{k,m}$ in the right-hand side of the above estimate of $|Q_kf(y_\az^{k,m})|$ can be replaced by any
point  $y\in Q_\az^{k,m}$. Therefore, by Lemma \ref{lem-add}(ii), we conclude that
\begin{align*}
|H_M(x)-H_M(x')|
&\ls \lf[\frac{d(x,x')}{\dz^k+d(x, x_0)}\r]^{\bz'}
\frac 1{V_{\dz^k}(x_0)+V(x_0,u)}\lf[\frac {\dz^k}{\dz^k+d(x_0,x)}\r]^{\gz'} \\
&\quad\times
\int_X
\lf(\frac1{V_{\dz^k}(x)}\exp\lf\{-c\lf[\frac{d(x,y)}{\dz^k}\r]^a\r\} +\frac1{V_{\dz^k}(x')}
\exp\lf\{-c\lf[\frac{d(x',y)}{\dz^k}\r]^a\r\}\r)\\
&\quad\times\min\lf\{ \lf[1+\frac{d(x,  y)}{\dz^k}\r]^{\bz+\omega+\gz},
\lf[1+\frac{d(x',y)}{\dz^k}\r]^{\bz+\omega+\gz}\r\}\,d\mu(y)\\
&\ls \lf[\frac{d(x,x')}{\dz^k+d(x, x_0)}\r]^{\bz'}
\frac 1{V_{\dz^k}(x_0)+V(x_0,u)}\lf[\frac {\dz^k}{\dz^k+d(x_0,x)}\r]^{\gz'}.
\end{align*}
This proves  \eqref{eq-xx2}, and hence finishes the proof of Step 1).

{\it Step 2) Proof of the convergence of \eqref{eq:hdrf} in $\GOO{\bz,\gz}$ when $f\in\GOO{\bz,\gz}$.}

We first claim that $\CS_N^{-1}=(I-\CR_N)^{-1}$ maps $\GOO{\bz,\gz}$ continuously into $\GOO{\bz,\gz}$.
Indeed, recalling that Remark \eqref{rem:j0N} says that $\|\CR_N\|_{\GO{x_1,r,\bz,\gz}\to\GO{x_1,r,\bz,\gz}}\le \frac 12$, it suffices to show that $\CR_N$ maps $\GOO{\bz,\gz}$  into $\GOO{\bz,\gz}$.
With $\CR_N=I-\CS_N$, we only need to show that $\CS_N h\in\GOO{\bz,\gz}$ whenever $h\in \GOO{\bz,\gz}$.

Indeed, for any $h\in\mathring{\CG}^\eta_0(\bz,\gz)$, there exists
$\{h_j\}_{j=1}^\infty\subset\mathring{\CG}(\eta,\eta)$ such that $\|h-h_j\|_{\mathring{\CG}(\bz,\gz)}\to 0$ as
$j\to\infty$. Notice that, for ant $j\in\nn$,
$$
\CS_Nh_j(\cdot)=\sum_{k=-\fz}^\fz\sum_{\az\in\CA_{k}}
\sum_{m=1}^{N(k,\az)}\int_{Q_\az^{k,m}}Q_k^N(\cdot,y)\,d\mu(y)Q_kh_j\lf(y_\az^{k,m}\r),
$$
where the series converges in $\mathring\CG(\bz,\gz)$ due to \eqref{eq:limb1} and \eqref{eq:limb2}.
Since every $\int_{Q_\az^{k,m}}Q_k^N(\cdot,y)\,d\mu(y)\in \mathring\CG(\eta,\eta)$, then, from
\eqref{eq:limb2aim}, it follows that, for any $k\in\zz$, $F_kh_j\in\GOO{\bz,\gz}$. This, together with the
definition of $F_k$ and \eqref{eq:limb1}, further implies that $\CS_Nh_j\in\GOO{\bz,\gz}$.
Moreover, by the fact that
$$
\|\CS_Nh-\CS_Nh_j\|_{\GO{\bz,\gz}}=\|(h-h_j)-\CR_N(h-h_j)\|_{\GO{\bz,\gz}}\le2\|h-h_j\|_{\GO{\bz,\gz}}\to 0
$$
as $j\to\infty$, we obtain $\CS_Nh\in\mathring{\CG}^\eta_0(\bz,\gz)$. This finishes the proof of the claim.
Moreover, repeating the proof of Step 2) in the proof of Theorem \ref{thm:hcrf} with $T_N$ and $R_N$
replaced, respectively, by $\CS_N$ and $\CR_N$, we find that both $\CS_N$ and $\CS_N^{-1}$ are bounded on
$\GOO{\bz,\gz}$.

Next, we use the above claim to conclude the proof of Step 2). For any $f\in\GOO{\bz,\gz}$, there exists
$\{h_j\}_{j=1}^\fz\subset\GO{\eta,\eta}$ such that $\|f-h_j\|_{\GO{\bz,\gz}}\to 0$ as $j\to\fz$. For any
$k\in\zz$, $L,\ M\in\nn$ and $x,\ y\in X$, define
$$
F_{k,M}(x,y):=\sum_{\az\in\CA_{k,M}}\sum_{m=1}^{N(k,\az)}\int_{Q_\az^{k,m}}Q_k^N(x,z)\,d\mu(z)
Q_k\lf(y_\az^{k,m},y\r)
$$
and $F_M^{(L)}:=\sum_{|k|\le L} F_{k,M}$. Repeating the proof of Lemma \ref{lem-add4} with $F_k$ replaced by
$F_{k,M}$ and the sum $\sum_{\az\in\CA_k}$ by $\sum_{\az\in\CA_{k,M}}$, we find that $F_{k,M}$
satisfies (i) through (iv) of Lemma \ref{lem-add4}, with the implicit positive constants independent of $M$.
This, combined with Lemma \ref{lem:GkN}, implies that $F_{k,M}$ satisfies the same estimates as $G_{k,N}$ with
the implicit positive constants independent of $M$ and the factor $\dz^{j_0\eta}$ removed. Therefore, following
the proof of Proposition \ref{prop:GNM}, with $G_N^{(M)}$ replaced by $F_M^{(L)}$, we further conclude that
$F_M^{(L)}$ is bounded on $\GO{\bz,\gz}$ with its operator norm independent of $M$ and $L$. By this,
\eqref{eq:limb} and the boundedness of $\CS_N$ on $\GOO{\bz,\gz}$, we conclude that
\begin{align*}
\lf\|\CS_Nf-F_M^{(L)}f\r\|_{\GOO{\bz,\gz}}&\le\lf\|\CS_N(f-h_j)\r\|_{\GO{\bz,\gz}}+
\lf\|h_j-F_M^{(L)}h_j\r\|_{\GO{\bz,\gz}}+\lf\|F_M^{(L)}(h_j-f)\r\|_{\GO{\bz,\gz}}\\
&\ls \lf\|f-h_j\r\|_{\GO{\bz,\gz}}+\lf\|h_j-F_M^{(L)}h_j\r\|_{\GO{\bz,\gz}}
\to \lf\|f-h_j\r\|_{\GO{\bz,\gz}}
\end{align*}
as $L,\ M\to\fz$. If we let $j\to\fz$ and use the boundedness of $\CS_N^{-1}$ on $\GOO{\bz,\fz}$, then we know
that \eqref{eq:hdrf} converges in $\GOO{\bz,\gz}$. This finishes the proof of Step 2).

{\it Step 3) Proof of the convergence of \eqref{eq:hdrf} in  $L^p(X)$ when $f\in L^p(X)$ with any given
$p\in(1,\fz)$.}

For any $k\in\zz$ and $M,\ L\in\nn$, recall that
$$
F_{k,M}(x,y):=\sum_{\az\in\CA_{k,M}}\sum_{m=1}^{N(k,\az)}\int_{Q_\az^{k,m}}Q_k^N(x,z)\,d\mu(z)
Q_k\lf(y_\az^{k,m},y\r),\qquad\forall\, x,\ y\in X
$$
and $F_M^{(L)}=\sum_{|k|\le L} F_{k,M}$.
Notice that $F_{k,M}$ satisfies all conditions in Lemma \ref{lem-add4} with the implicit positive constants
independent of $M$ [this has been proved in Step 2)]. Following the proof of \eqref{eq:GL2} with $G_{k,N}$
replaced by $F_{k,M}$ and $G_N^{(M)}$ by $F_M^{(L)}$,
we know that the operator $F_{M}^{(L)}$ is bounded on $L^2(X)$, so is the action of $\CS_N^{-1}$ on it.

Let us write
$$
\wz F_{k, M}f:= \CS_N^{-1}F_{k,M}f=\sum_{|k|\le L}\sum_{\az\in\CA_{k,M}}\sum_{m=1}^{N(k,\az)}\int_{Q_\az^{k,m}}\wz{Q}_k(\cdot,y)\,d\mu(y)
Q_kf\lf(y_\az^{k,m}\r).
$$
Consequently, $ \sum_{|k|\le L} \wz F_{k, M}$ is bounded on $L^2(X)$, with its operator norm independent
of $M$ and $L$. Notice that each $\wz F_{k, M}$ is associated to an integral kernel
$$
\wz F_{k, M}(x,y)=\sum_{\az\in\CA_{k,M}}\sum_{m=1}^{N(k,\az)}\int_{Q_\az^{k,m}}\wz{Q}_k(x,z)\,d\mu(z)
Q_k\lf(y_\az^{k,m},y\r),\qquad\forall\, x,\ y\in X.
$$
Applying the size and the regularity conditions of $\wz Q_k$, and following the estimations of
\eqref{eq:wTLsize}, \eqref{eq:wTLregx} and \eqref{eq:wTLregy} in Step 3) of the proof of Theorem \ref{thm:hcrf},
we easily conclude that
\begin{align*}
\sum_{k\le |L|}\lf|\wz{F}_{k,M}(x,y)\r|\ls \frac1{V(x,y)}
\end{align*}
and that, when $d(x,x')\le (2A_0)^{-1}d(x,y)$ with $x\neq y$,
\begin{align*}
\sum_{k\le |L|}\lf|\wz{F}_{k,M}(x,y)-\wz{F}_{k,M}(x',y)\r|
+\sum_{k\le |L|}\lf|\wz{F}_{k,M}(y,x)-\wz{F}_{k,M}(y,x')\r|
&\ls \lf[\frac{d(x,x')}{d(x,y)}\r]^{\bz_1}\frac1{V(x,y)},
\end{align*}
where $\bz_1\in(0,\bz\wedge\gz)$ and the implicit positive constants are independent of $M$,  $L$, $x$, $x'$
and $y$. Thus, $\sum_{|k|\le L} \wz F_{k, M}$ has a standard $\bz_1$-Calder\'on-Zygmund kernel.
From the well-known Calder\'on-Zygmund theory on spaces of homogeneous type in \cite{CW71}, it follows that
the operator $\sum_{|k|\le L} \wz F_{k, M}$ is bounded on $L^p(X)$ for any $p\in(1,\infty)$, with its
operator norm independent of $M$ and $L$.

With this, by a standard density argument as in the proof of Step 3) of the proof of Theorem \ref{thm:hcrf}, we
conclude that \eqref{eq:hdrf} converges in  $L^p(X)$ when $f\in L^p(X)$ with $p\in(1,\fz)$. This finishes the
proof Step 3) and hence of Theorem \ref{thm:hdrf} overall.
\end{proof}

We state some other homogeneous discrete Calder\'on reproducing formulae,
with the proof omitted due to the similarity.

\begin{theorem}\label{thm:hdrf3}
Let all the notation be as in Theorem \ref{thm:hdrf}. Then there exist sequences
$\{\overline{Q}_k^{(0)}\}_{k=-\fz}^\fz$, $\{\overline{Q}_k^{(1)}\}_{k=-\fz}^\fz$ and
$\{\overline{Q}_k^{(2)}\}_{k=-\fz}^\fz$ of bounded linear operators on $L^2(X)$ such that, for any $f$ in
$\GOO{\bz,\gz}$ [resp., $L^p(X)$ with any given $p\in(1,\fz)$],
\begin{align}\label{eq:hdrf3}
f(\cdot)&=\sum_{k=-\fz}^\fz\sum_{\az\in\CA_k}\sum_{m=1}^{N(k,\az)}\int_{Q_\az^{k,m}}Q_k(\cdot,y)\,d\mu(y)
\overline{Q}_k^{(0)}f\lf(y_\az^k\r)\\
&=\sum_{k=-\fz}^\fz\sum_{\az\in\CA_k}\sum_{m=1}^{N(k,\az)}Q_k\lf(\cdot,y_\az^{k,m}\r)
\int_{Q_\az^{k,m}}\overline{Q}^{(1)}_kf(y)\,d\mu(y)\noz\\
&=\sum_{k=-\fz}^\fz\sum_{\az\in\CA_k}\sum_{m=1}^{N(k,\az)}\mu\lf(Q_\az^{k,m}\r)
Q_k\lf(\cdot,y_\az^{k,m}\r)\overline{Q}^{(2)}_kf\lf(y_\az^{k,m}\r),\noz
\end{align}
where all the series converge in $\mathring{\CG}^\eta_0(\bz,\gz)$ [resp., $L^p(X)$ with any given
$p\in(1,\fz)$]. Moreover, for any $k\in\zz$, the kernel of $\overline{Q}_k^{(i)}$ with $i\in\{0,1,2\}$ satisfies
the size condition \eqref{eq:atisize}, the regularity condition \eqref{eq:atisregx}
only for the second variable, and also the  cancellation condition \eqref{eq:x00}.
\end{theorem}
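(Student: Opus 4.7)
The plan is to mirror the proof of Theorem \ref{thm:hdrf}, exploiting the symmetric form of the main operator $T_N = \sum_{k\in\zz} Q_k^N Q_k = \sum_{k\in\zz} Q_k Q_k^N$ (both equal the same doubly-indexed sum $\sum_{k,|l|\le N} Q_{k+l} Q_k$ after a shift of index), which yields the alternative decomposition
\begin{equation*}
I = \sum_{k\in\zz} Q_k Q_k^N + R_N \qquad \textup{in } L^2(X).
\end{equation*}
Placing $Q_k$ on the outside, I introduce the dual discrete Riemannian sum
\begin{equation*}
\overline{\CS}_N f(x) := \sum_{k\in\zz} \sum_{\az\in\CA_k} \sum_{m=1}^{N(k,\az)} \int_{Q_\az^{k,m}} Q_k(x,y)\, d\mu(y)\cdot Q_k^N f\lf(y_\az^{k,m}\r),
\end{equation*}
and write $\overline{\CR}_N := I - \overline{\CS}_N = R_N + \overline{G}_N$, where $\overline{G}_N = \sum_{k\in\zz} \overline{G}_{k,N}$ has integral kernel
\begin{equation*}
\overline{G}_{k,N}(x,z) := \sum_{\az\in\CA_k}\sum_{m=1}^{N(k,\az)} \int_{Q_\az^{k,m}} Q_k(x,y) \lf[Q_k^N(y,z) - Q_k^N\lf(y_\az^{k,m}, z\r)\r] d\mu(y).
\end{equation*}

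The first main step is to prove a dual analogue of Lemma \ref{lem:GkN} for $\overline{G}_{k,N}$. Since every $y \in Q_\az^{k,m}$ lies within distance $(2A_0)^{-2}\dz^k$ of $y_\az^{k,m}$, the regularity of $Q_k^N$ in its first variable supplied by Lemma \ref{lem:propQkN}(ii) produces the factor $\dz^{j_0\eta}$, while the Gaussian decay of $Q_k$ and its $\CY^k$-decay (on the outer variable $x$, via Remark \ref{rem:andef}) are combined with the Gaussian decay of $Q_k^N$ through the same inequalities \eqref{eq-add7} and \eqref{eq-add8} used in Lemma \ref{lem:ccrf1}. The resulting bounds parallel those of Lemma \ref{lem:GkN}, and repeating the arguments of Proposition \ref{prop:GNM}, Lemma \ref{lem:GMlim} and Proposition \ref{prop:GG} yields $\|\overline{\CR}_N\| \le C \dz^{\eta' N} + C_{(N)} \dz^{j_0 \eta}$ on both $L^2(X)$ and $\mathring{\CG}(x_1, r, \bz, \gz)$. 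Choosing $N$ and then $j_0$ sufficiently large, as in Remark \ref{rem:j0N}, forces this norm to be at most $1/2$, so that $\overline{\CS}_N^{-1} = (I - \overline{\CR}_N)^{-1}$ exists and is bounded on both spaces with operator norm at most $2$.

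With this invertibility in hand, I set $\overline{Q}_k^{(0)} := Q_k^N \overline{\CS}_N^{-1}$, whence $f = \overline{\CS}_N \overline{\CS}_N^{-1} f$ immediately yields the first line of \eqref{eq:hdrf3}. Passing to the adjoint identifies the kernel as $\overline{Q}_k^{(0)}(x,y) = (\overline{\CS}_N^*)^{-1}(Q_k^N(x, \cdot))(y)$. Since Lemma \ref{lem:propQkN} combined with the argument of Proposition \ref{prop:etoa} places $Q_k^N(x, \cdot)$ in $\mathring{\CG}(x, \dz^k, \bz, \gz)$ uniformly in $x$ and $k$, and since $(\overline{\CS}_N^*)^{-1}$ is bounded on this space by re-running the preceding estimates with $\overline{\CS}_N^*$ in place of $\overline{\CS}_N$ (using that $\{Q_k^*\}_{k\in\zz}$ is again an $\exp$-ATI, by the symmetry of Definition \ref{def:eti}), we obtain \eqref{eq:atisize} and \eqref{eq:atisregx} in the second variable for $\overline{Q}_k^{(0)}$. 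The cancellation \eqref{eq:x00} follows by induction on the Neumann series $\overline{\CS}_N^{-1} = \sum_{j\ge 0}(\overline{\CR}_N)^j$, exactly as in the proof of \eqref{eq:wzQcan}, the base case coming from the cancellation of $Q_k^N$. The variants $\overline{Q}_k^{(1)}$ and $\overline{Q}_k^{(2)}$ are obtained from the two parallel dual Riemannian sums corresponding to the second and third lines of \eqref{eq:hdrf3}, with remainders treated identically.

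The three convergence assertions—in $\mathring{\CG}(\bz, \gz)$ for $f \in \mathring{\CG}(\bz', \gz')$ with $\bz' \in (\bz, \eta)$ and $\gz' \in (\gz, \eta)$, the promotion to $\mathring{\CG}_0^\eta(\bz, \gz)$ by density, and the $L^p(X)$-convergence via the Calder\'on-Zygmund theory—follow the same three-step scheme as in Theorem \ref{thm:hdrf}, with the kernel $F_k$ of Lemma \ref{lem-add4} replaced by its dual counterpart
\begin{equation*}
\overline{F}_k(x,z) := \sum_{\az\in\CA_k} \sum_{m=1}^{N(k,\az)} \int_{Q_\az^{k,m}} Q_k(x, y)\, d\mu(y) \cdot Q_k^N\lf(y_\az^{k,m}, z\r).
\end{equation*}
The main obstacle will be the dual tail estimate in Step 1, namely $\|\sum_{|k|>L} \overline{F}_k f\|_{\mathring{\CG}(\bz,\gz)} \to 0$ for $f \in \mathring{\CG}(\bz', \gz')$: the exponential $\CY^k$-decay now lives only in the outer factor $Q_k$ rather than in $Q_k^N$, so one must carefully track which variable carries this decay through the composition before invoking \eqref{eq-add7} and \eqref{eq-add8}. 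Once this orientation is settled, the estimates \eqref{eq:sumsize} and \eqref{eq:sumreg} adapt line-by-line, and the remainder of the convergence argument is purely formal.
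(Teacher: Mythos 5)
Your proposal is correct and matches what the paper clearly intends by ``the proof omitted due to the similarity'': construct a dual discrete Riemannian sum $\overline{\CS}_N$ with $Q_k$ on the outside (using the alternative expression $T_N=\sum_k Q_kQ_k^N$, which is legitimate after a shift of index), bound the dual remainder $\overline{\CR}_N=R_N+\overline{G}_N$ by re-running Lemma \ref{lem:GkN}, Propositions \ref{prop:GNM} and \ref{prop:GG}, invert, and set $\overline{Q}_k^{(0)}:=Q_k^N\overline{\CS}_N^{-1}$ — exactly the discrete counterpart of the recipe spelled out in Remark \ref{rem:r1}. Your diagnosis of where the $\CY^k$-decay lives is sound (it moves to the outer factor $Q_k$, which if anything makes the application of \eqref{eq-add7}--\eqref{eq-add8} more direct), and your identification of the kernel as $(\overline{\CS}_N^*)^{-1}\bigl(Q_k^N(x,\cdot)\bigr)(y)$ is the right way to extract the size/regularity/cancellation properties. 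The one place I would tighten the wording: rather than saying you ``re-run the preceding estimates with $\overline{\CS}_N^*$ in place of $\overline{\CS}_N$,'' note that $\overline{\CS}_N^*$ is exactly the variant $\CS_N^{(1)}$ from the proof of Theorem \ref{thm:hdrf}, built from the $\exp$-ATI $\{Q_k^*\}_{k\in\zz}$ (the structures differ in where the point evaluation versus the cube-average sits), so the paper's own observation that $\CR_N^{(1)}$ obeys the same bounds as $\CR_N$ applies verbatim and yields the boundedness of $(\overline{\CS}_N^*)^{-1}$ on the relevant spaces of test functions.
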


\begin{remark}\label{rem:hdrf}
In Theorems \ref{thm:hdrf} and \ref{thm:hdrf3}, for any $i\in\{0,1,2\}$, all the estimates satisfied by
$\wz{Q}_k^{(i)}$ and
$\overline{Q}_k^{(i)}$ are independent of $\{y_\az^{k,m}:\ \az\in\CA_k,\ m\in\{1,\ldots,N(k,\az)\}\}$, and also
independent of $\bz$ and $\gz$ whenever $(\bz,\gz)$ belongs to a compact subset $K$ of $(0,\eta)^2$
(but, in this case, may depend on $K$).
\end{remark}

Since we already have  Theorems \ref{thm:hdrf} and \ref{thm:hdrf3}, then a duality argument implies the
following conclusion, the details being omitted.

\begin{theorem}\label{thm:hdrf4}
Let all the notation be as in Theorems \ref{thm:hdrf}. Then, for any $f\in(\GOO{\bz,\gz})'$, both
\eqref{eq:hdrf} and \eqref{eq:hdrf3} hold true in $(\GOO{\bz,\gz})'$.
\end{theorem}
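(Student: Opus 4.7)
The plan is a direct duality argument, leveraging the reproducing formulas for test functions in Theorems \ref{thm:hdrf} and \ref{thm:hdrf3}. First I would fix $f\in(\GOO{\bz,\gz})'$ and set up the distributional interpretation: for any $y\in X$, $k\in\zz$ and $i\in\{0,1,2\}$, define
$$
Q_kf(y):=\langle f,Q_k(y,\cdot)\rangle,\quad \wz{Q}_k^{(i)}f(y):=\langle f,\wz{Q}_k^{(i)}(y,\cdot)\rangle,\quad \overline{Q}_k^{(i)}f(y):=\langle f,\overline{Q}_k^{(i)}(y,\cdot)\rangle.
$$
Each pairing makes sense because, by Definition \ref{def:eti}, Proposition \ref{prop:etoa}, and the kernel conditions supplied in Theorems \ref{thm:hdrf} and \ref{thm:hdrf3}, each of the functions $Q_k(y,\cdot)$, $\wz{Q}_k^{(i)}(y,\cdot)$, $\overline{Q}_k^{(i)}(y,\cdot)$ lies in $\GOO{\bz,\gz}$ for every fixed $y$ (up to a $y$-dependent normalization that is harmless under the pairing).

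To verify the first form of \eqref{eq:hdrf} in the distributional sense, I would fix an arbitrary $\phi\in\GOO{\bz,\gz}$ and apply the \emph{companion} formula — the first form of \eqref{eq:hdrf3} — to $\phi$; by Theorem \ref{thm:hdrf3} the resulting series converges in $\GOO{\bz,\gz}$. Pairing with $f$ and using the continuity of $f$ on $\GOO{\bz,\gz}$ to exchange $\langle f,\cdot\rangle$ with the (locally finite) double summation yields
$$
\langle f,\phi\rangle=\sum_{k\in\zz}\sum_{\az\in\CA_k}\sum_{m=1}^{N(k,\az)}\overline{Q}_k^{(0)}\phi\lf(y_\az^{k,m}\r)\int_{Q_\az^{k,m}}\langle f,Q_k(\cdot,y)\rangle\,d\mu(y).
$$
The interchange of $\langle f,\cdot\rangle$ with the integral $\int_{Q_\az^{k,m}}$ is justified by a Bochner-type argument: the map $y\mapsto Q_k(\cdot,y)$ is continuous from $\overline{Q_\az^{k,m}}$ into $\GOO{\bz,\gz}$, thanks to the second-difference regularity in Definition \ref{def:eti}(iv). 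The same scheme, applied respectively to the second and third forms of \eqref{eq:hdrf3}, produces the remaining two forms of \eqref{eq:hdrf} in $(\GOO{\bz,\gz})'$; reversing the roles of Theorems \ref{thm:hdrf} and \ref{thm:hdrf3} gives the three forms of \eqref{eq:hdrf3}.

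The chief technical obstacle is the bookkeeping required to reconcile notation. The quantity $\langle f,Q_k(\cdot,y)\rangle$ produced above is not literally $Q_kf(y)$ from the setup (which uses $Q_k(y,\cdot)$), but rather the action of the transposed kernel. Since the symmetry of Definition \ref{def:eti} in $(x,y)$ shows that the family $\{Q_k^*\}$ with kernel $Q_k^*(x,y):=Q_k(y,x)$ is itself an $\exp$-\textrm{ATI}, Theorems \ref{thm:hdrf} and \ref{thm:hdrf3} apply to it verbatim, and the constructions of $\wz{Q}_k^{(i)}$ and $\overline{Q}_k^{(i)}$ proceed through $\CS_N^{-1}$ and its adjoint in a manner parallel to Remark \ref{rem:r1}. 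Tracing this adjoint structure identifies $\langle f,Q_k(\cdot,y)\rangle$ and $\langle f,\wz{Q}_k^{(i)}(\cdot,y)\rangle$ as the transposed distributional counterparts of $Q_kf(y)$ and $\overline{Q}_k^{(i)}f(y)$, respectively, under which both \eqref{eq:hdrf} and \eqref{eq:hdrf3} then hold in $(\GOO{\bz,\gz})'$, as claimed.
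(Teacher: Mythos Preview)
Your proposal is correct and follows exactly the duality approach the paper indicates (the paper writes only ``a duality argument implies the following conclusion, the details being omitted''). One small point worth tightening: the first form of \eqref{eq:hdrf3} does not dualize term-by-term to the first form of \eqref{eq:hdrf}; rather, as you note in your final paragraph, the correct pairing is between a form of \eqref{eq:hdrf} and the form of \eqref{eq:hdrf3} obtained from the adjoint $\exp$-ATI $\{Q_k^*\}$ via the $\CS_N^*$ construction (cf.\ Remark~\ref{rem:r1}), so that $(\wz{Q}_k^{(i)})^*$ plays the role of an $\overline{Q}_k^{(j)}$-type operator --- your last paragraph already handles this bookkeeping.
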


\section{Inhomogeneous Calder\'{o}n reproducing formulae}\label{irf}

In this section, we establish Calder\'{o}n reproducing formulae by using the newly defined inhomogeneous
approximation of the identity.
\begin{definition}\label{def:ieti}
A sequence $\{Q_k\}_{k=0}^\fz$ of bounded linear operators on $L^2(X)$ is called an \emph{inhomogeneous
approximation of the identity with exponential decay} (for short, $\exp$-IATI) if $\{Q_k\}_{k=0}^\fz$
has the following properties:
\begin{enumerate}
\item $\sum_{k=0}^\fz Q_k=I$ in $L^2(X)$;
\item for any $k\in\nn$, $Q_k$ satisfies (ii) through (v) in Definition \ref{def:eti};
\item $Q_0$ satisfies (ii), (iii) and (iv) in Definition \ref{def:eti} with $k:=0$ but without the term
$$
\exp\lf\{-\nu\lf[\max\lf\{d\lf(x,\CY^0\r),d\lf(y,\CY^0\r)\r\}\r]^a\r\};
$$
moreover, $\int_X Q_0(x,y)\,d\mu(y)=1=\int_X Q_0(y,x)\,d\mu(y)$ for any $x\in X$.
\end{enumerate}
\end{definition}
Via the above $\exp$-IATIs,  we show inhomogeneous continuous and discrete Calder\'{o}n reproducing formulae in
Sections \ref{icrf} and \ref{idrf}, respectively.

\subsection{Inhomogeneous continuous Calder\'{o}n reproducing formulae}\label{icrf}

By Definition \ref{def:ieti}, we write
\begin{equation}\label{eq:defTR2}
I=\sum_{k=0}^\fz Q_k=\sum_{k=0}^\fz\lf(\sum_{l=0}^\fz Q_l\r)Q_k
=\sum_{k=0}^\fz Q_k^NQ_k+\sum_{k=0}^\fz\sum_{|l|>N}Q_{k+l}Q_k=:\FT_N+\FR_N,
\end{equation}
where
\begin{align}\label{eq-z1}
Q_k^N:=\begin{cases}
\displaystyle \sum_{l=0}^{k+N}Q_l & \text{if $k\in\{0,\ldots,N\}$,}\\
\displaystyle \sum_{l=k-N}^{k+N}Q_l & \text{if $k\in\{N+1,N+2,\ldots\}$}\\
\end{cases}
\qquad
\textup{and}\qquad Q_k:=0\;\textup{if}\; k\in\zz\setminus\zz_+.
\end{align}
Therefore, for any $x\in X$,
\begin{equation}\label{eq:QkNint}
\int_X Q_k^N(x,y)\,d\mu(y)=\int_X Q_k^N(y,x)\,d\mu(y)=\begin{cases}
1 & \text{if $k\in\{0,\ldots,N\}$,}\\
0 & \text{if $k\in\{N+1,N+2,\ldots\}$.}
\end{cases}
\end{equation}
Next, we consider the
boundedness of $\FR_N$ on  $L^2(X)$ and $\CG(\bz,\gz)$. To this end, we prove
the following two lemmas.

\begin{lemma}\label{lem:RNi}
Fix $N\in\nn$ and $\eta'\in(0,\eta)$. Then  $\FR_N$ in \eqref{eq:defTR2} is a standard
$\eta'$-Calder\'on-Zygmund operator with the kernel satisfying (a) of Theorem \ref{thm:Kbdd} and
(d) and (e) of Theorem \ref{thm:Kibdd} with $s:=\eta'$, $r_0:=1$,
$\sigma\in(0,\infty)$, $C_T:=C\dz^{(\eta-\eta')N}$ and
$\|\FR_{N}\|_{L^2(X)\to L^2(X)}\le C\dz^{\eta'N}$, where $C$ is a positive constant independent of $N$.
\end{lemma}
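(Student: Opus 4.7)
The plan is to adapt the proofs of Lemma~\ref{lem:ccrf2} and Proposition~\ref{prop:sizeRN} to the inhomogeneous setting, keeping track of three new features: the summation starts at $k=0$; $Q_0$ has no cancellation and lacks the $\CY^0$ exponential factor; and the kernel must additionally satisfy the polynomial decay conditions (d) and (e) of Theorem~\ref{thm:Kibdd} when $d(x,y)\ge 1$. First I would split
$\FR_N=\sum_{k=0}^\infty\sum_{|l|>N,\,k+l\ge 0}Q_{k+l}Q_k$
into an \emph{interior} group (pairs $(k,l)$ with $k\ge 1$ and $k+l\ge 1$), where both factors are genuine $\exp$-ATIs so Lemma~\ref{lem:ccrf1} and Corollary~\ref{cor:mixb} apply verbatim, and a \emph{boundary} group (pairs with $k\wedge(k+l)=0$, i.e.\ either $k=0,\ l\ge N+1$ or $k+l=0,\ k\ge N+1$), where one of the factors is $Q_0$.

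For the $L^2$-bound I would run the Cotlar--Stein argument of \eqref{eq:**} without essential change. The composition estimate $\|Q_{k+l}Q_k\|_{L^2(X)\to L^2(X)}\ls \dz^{|l|\eta}$ survives for every pair in the sum because cancellation is always available on at least one side: if $l>0$, cancellation of $Q_{k+l}$ (valid since $k+l\ge N+1\ge 1$) pairs against the regularity of $Q_k$, which both $Q_0$ and $Q_k$ ($k\ge 1$) possess; if $l<0$ the roles are reversed and automatically $k\ge N+1\ge 1$. Pairing this with the trivial bound $\ls\dz^{|k_1-k_2|\eta}$ and invoking Lemma~\ref{lem:CSlem} yields $\|\FR_N\|_{L^2(X)\to L^2(X)}\ls\dz^{\eta'N}$ for any $\eta'\in(0,\eta)$.

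For the kernel bounds, each interior term satisfies (by Lemma~\ref{lem:ccrf1} and Remark~\ref{rem:mix}, which ensures that only the relevant cancellation is needed)
\begin{align*}
|Q_{k+l}Q_k(x,y)|\ls \dz^{|l|\eta}\frac{1}{V_{\dz^{k\wedge(k+l)}}(x)}\exp\lf\{-c\lf[\tfrac{d(x,y)}{\dz^{k\wedge(k+l)}}\r]^a\r\}\exp\lf\{-c\lf[\tfrac{d(x,\CY^{k\wedge(k+l)})}{\dz^{k\wedge(k+l)}}\r]^a\r\},
\end{align*}
with analogous first- and second-difference bounds coming from Corollary~\ref{cor:mixb}. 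Summing the interior contributions through Lemma~\ref{lem:sum2} yields the size, regularity, and second-difference conditions \eqref{eq:Ksize}, \eqref{eq:Kreg}, \eqref{eq:Kdreg} with constant $C\dz^{(\eta-\eta')N}$. In the boundary group the same composition estimates hold but \emph{without} the $\CY$-factor; since only one of the two indices varies, the remaining sum is a single geometric series in one scale, which is controlled by splitting at $\dz^k\sim d(x,y)$ and applying \eqref{eq:doub}, exactly as in the proof of Lemma~\ref{lem:sum2}.

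The extra polynomial decay (conditions (d) and (e) of Theorem~\ref{thm:Kibdd}) when $d(x,y)\ge 1$ is the genuinely inhomogeneous ingredient. The key observation is that every $k$ in our sum satisfies $\dz^k\le 1\le d(x,y)$, so the factor $\exp\{-\nu[d(x,y)/\dz^k]^a\}$ can be exchanged for $[\dz^k/d(x,y)]^{\sigma+\omega}$ for any $\sigma>0$; coupling this with the doubling bound $1/V_{\dz^k}(x)\ls[d(x,y)/\dz^k]^\omega/V(x,y)$ and summing the resulting geometric series over $k\ge 0$ produces the factor $[1/d(x,y)]^\sigma/V(x,y)$, multiplied by the coefficient $\dz^{(\eta-\eta')N}$. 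The same idea, applied through Corollary~\ref{cor:mixb}, gives (e). The main obstacle will be the boundary group where $Q_0$ appears and the $\CY$-exponential is absent, so Lemma~\ref{lem:sum2} cannot be applied directly; but it is precisely the one-sided restriction $k\ge 0$ that rescues the argument by making every geometric sum in $k$ convergent and forcing the desired polynomial decay in $d(x,y)$, playing the role that the reverse-doubling-type identity of Lemma~\ref{lem:sum2} plays in the homogeneous case.
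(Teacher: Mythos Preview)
Your proposal is correct and follows essentially the same route as the paper: the same interior/boundary split $\FR_N=\sum_{k>0,\,k+l>0,\,|l|>N}Q_{k+l}Q_k+\sum_{l>N}Q_lQ_0+\sum_{k>N}Q_0Q_k$, the same appeal to Lemma~\ref{lem:ccrf1}, Corollary~\ref{cor:mixb} and Remark~\ref{rem:mix} (cancellation needed only on the higher-index side), and the same key observation that $k\wedge(k+l)\ge 0$ forces $\dz^{k\wedge(k+l)}\le 1\le d(x,y)$, allowing the exponential to be traded for $[1/d(x,y)]^\sigma$ in (d) and (e). One small clarification: for the boundary group the minimal scale is \emph{fixed} at $\dz^0=1$, so there is no ``splitting at $\dz^k\sim d(x,y)$'' to perform---the sum over $l$ (or $k$) is already a pure geometric series at scale $1$, and $\frac{1}{V_1(x)}\exp\{-c[d(x,y)]^a\}\ls\frac{1}{V(x,y)}\min\{1,[1/d(x,y)]^\sigma\}$ follows directly from \eqref{eq:doub}; the paper also handles the $L^2$ bound for the boundary by this direct summation rather than folding it into Cotlar--Stein, though your unified Cotlar--Stein argument works as well.
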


\begin{proof}
In the definition of $\FR_N$,  when $\min\{k+l,k\}=0$, we have
$Q_{k+l}Q_k=Q_{l}Q_0$ with $l>N$ or $Q_{k+l}Q_k=Q_0Q_k$ with $k>N$, where we recall that $Q_0$ has no cancellation.
Thus,
\begin{align}\label{eq-z0}
\FR_N=\sum_{\gfz{k>0,\,k+l> 0}{|l|>N}}Q_{k+l}Q_k+\sum_{l>N}Q_{l}Q_0+\sum_{k>N} Q_0Q_k.
\end{align}
Following the proofs of Lemma \ref{lem:ccrf2} and Proposition \ref{prop:sizeRN},
we deduce that the first term in the right-hand side of \eqref{eq-z0} is a standard Calder\'on-Zygmund operator
with the kernel satisfying (a) of Theorem \ref{thm:Kbdd}.

Let $\sigma\in(0,\infty)$ and $\eta'\in(0,\eta)$.
Notice that, for any $x,\ y\in X$ and $k,\ l\in\zz$ satisfying $\min\{k+l,k\}\ge 0$,
\begin{align}\label{eq-z}
\exp\lf\{-\frac c2\lf[\frac{d(x,y)}{\dz^{(k+l)\wedge k}}\r]^a\r\}\ls \lf[\frac{\dz^{(k+l)\wedge k}}{d(x,y)}\r]^\sigma\ls \lf[\frac{1}{d(x,y)}\r]^\sigma.
\end{align}
When $d(x,y)\ge 1$, by Lemma \ref{lem:ccrf1}(i), Remark \ref{rem:andef}(i), \eqref{eq-z} and Lemma
\ref{lem:sum2}, we have
\begin{align*}
\sum_{\gfz{k>0,\,k+l> 0}{|l|>N}}\lf|Q_{k+l}Q_k(x,y)\r|
&\ls \sum_{\gfz{k>0,\,k+l> 0}{|l|>N}}\dz^{|l|\eta}\frac 1{V_{\dz^{(k+l)\wedge k}}(x)}
\exp\lf\{-c\lf[\frac{d(x,y)}{\dz^{(k+l)\wedge k}}\r]^a\r\}\exp\lf\{-c\lf[\frac{d(x,\CY^{(k+l)\wedge
k})}{\dz^{(k+l)\wedge k}}\r]^a\r\}\\
&\ls \dz^{N\eta}\lf[\frac{1}{d(x,y)}\r]^\sigma \frac1{V(x,y)}.
\end{align*}
This shows that the first term in the right-hand side of \eqref{eq-z0} satisfies (d) of Theorem \ref{thm:Kibdd}.

When $d(x,y)\ge 1$ and $d(x,x')\le(2A_0)^{-1}d(x,y)$, then, from Corollary \ref{cor:mixb}(i), \eqref{eq-z} and
Lemma \ref{lem:sum2}, we deduce that
\begin{align*}
&\sum_{\gfz{k>0,\,k+l> 0}{|l|>N}}
|Q_{k+l}Q_{k}(x,y)-Q_{k+l}Q_k(x',y)|\\
&\quad\ls \sum_{\gfz{k>0,\,k+l> 0}{|l|>N}}\dz^{|l|(\eta-\eta')}\lf[\frac{d(x,x')}{d(x,y)}\r]^{\eta'}\frac 1{V_{\dz^{(k+l)\wedge k}}(x)}
\exp\lf\{-c\lf[\frac{d(x,y)}{\dz^{(k+l)\wedge k}}\r]^a\r\}\exp\lf\{-c\lf[\frac{d(x,\CY^{(k+l)\wedge k})}{\dz^{(k+l)\wedge k}}\r]^a\r\}\\
&\quad\ls \dz^{(\eta-\eta') N} \lf[\frac{1}{d(x,y)}\r]^\sigma \lf[\frac{d(x,x')}{d(x,y)}\r]^{\eta'}\frac1{V(x,y)}.
\end{align*}
Thus, the first term in the right-hand side of \eqref{eq-z0} satisfies (e) of Theorem \ref{thm:Kibdd} .

Due to Remark \ref{rem:mix}, following the proofs of Lemma \ref{lem:ccrf1} and Corollary \ref{cor:mixb}, with
$\wz Q_jQ_k$ therein replaced by $Q_lQ_0$ and $Q_0Q_k$ with $l,\ k>N$, we find that the cancellation of $Q_0$
is not needed (see Remark \ref{rem:mix}) and hence, all conclusions of Lemma \ref{lem:ccrf1} and Corollary
\ref{cor:mixb} hold true for $Q_lQ_0$ and $Q_0Q_k$, only with the factor $\exp\{-c[d(x,\CY^0)]^a\}$ therein
removed. Therefore, by Proposition \ref{prop:basic}(ii) and Lemma \ref{lem:HL}, we find that
$\|Q_lQ_0\|_{L^2(X)\to L^2(X)}\ls\dz^{l\eta}$ and $\|Q_0Q_k\|_{L^2(X)\to L^2(X)}\ls\dz^{k\eta}$, which further
imply that
$$
\sum_{l>N}\|Q_lQ_0\|_{L^2(X)\to L^2(X)}+\sum_{k>N}\|Q_0Q_k\|_{L^2(X)\to L^2(X)}\ls\dz^{N\eta}.
$$
Then we deduce that the second and the third terms in the right-hand side of \eqref{eq-z0} are bounded on
$L^2(X)$.

For any $x,\ y\in X$, by the proof Lemma \ref{lem:ccrf1}(i) and \eqref{eq:doub}, we obtain, when $x\neq y$.
\begin{align*}
\sum_{l>N}\lf|Q_{l}Q_0(x,y)\r|
&\ls \dz^{N\eta} \frac {\exp\lf\{-c[{d(x,y)}]^a\r\}}{V_{1}(x)}\ls \dz^{N\eta}
\min\lf\{1,\ \lf[\frac1{d(x,y)}\r]^\sigma\r\} \frac1{V(x,y)}.
\end{align*}
Moreover, when $d(x,x')\le(2A_0)^{-1}d(x,y)$ with $x\neq y$, by the proof of Corollary \ref{cor:mixb}(i) and
\eqref{eq:doub}, we find that
\begin{align*}
&\sum_{l>N}\lf|Q_{l}Q_0(x,y)-Q_{l}Q_0(x',y)\r|+\sum_{l>N}\lf|Q_{l}Q_0(y,x)-Q_{l}Q_0(y,x')\r|\\
&\quad \ls \dz^{N(\eta-\eta')} \lf[\frac{d(x,x')}{d(x,y)}\r]^{\eta'} \frac {\exp\lf\{-c[{d(x,y)}]^a\r\}}{V_{1}(x)}\ls \dz^{N(\eta-\eta')}\min\lf\{1,\ \lf[\frac1{d(x,y)}\r]^\sigma\r\} \lf[\frac{d(x,x')}{d(x,y)}\r]^{\eta'}\frac1{V(x,y)}.
\end{align*}
When $d(x,x')\le(2A_0)^{-2}d(x,y)$ and $d(y,y')\le (2A_0)^{-2}d(x,y)$ with $x\neq y$, by the proof of
Corollary \ref{cor:mixb}(ii) and \eqref{eq:doub}, we also obtain
\begin{align*}
&\sum_{l>N} |[Q_lQ_0(x,y)-Q_lQ_0(x',y)]-[Q_lQ_0(x,y')-Q_lQ_0(x',y')]|\\
&\quad\ls\dz^{N(\eta-\eta')} \lf[\frac{d(x,x')}{d(x,y)}\r]^{\eta'} \lf[\frac{d(y,y')}{d(x,y)}\r]^{\eta'} \frac {\exp\lf\{-c[{d(x,y)}]^a\r\}}{V_{1}(x)}\ls\dz^{(\eta-\eta') N}\lf[\frac{d(x,x')}{d(x,y)}\r]^\eta\lf[\frac{d(y,y')}{d(x,y)}\r]^\eta
\frac 1{V(x,y)}.
\end{align*}
Similarly, $\sum_{k>N} Q_0Q_k$ satisfies also the last three formulae displayed above. Therefore, we know that
the second and the third terms in the right-hand side of \eqref{eq-z0} are also standard
$\eta'$-Calder\'on-Zygmund operators with the kernel satisfying (a) of Theorem \ref{thm:Kbdd} and (d) and (e)
of Theorem \ref{thm:Kibdd}. This finishes the proof of Lemma \ref{lem:RNi}.
\end{proof}

To avoid the deficit that $\FR_N$ does not satisfy (b) and (c) of Theorem \ref{thm:Kbdd}, we have
the following lemma via an argument similar to that used in the proof of Lemma \ref{lem:ccrf3}, with $R_N$
replaced by $\FR_N$ and $R_{N,M}$ by $\FR_{N,M}$ defined below, the details being omitted.

\begin{lemma}\label{lem:RNMi}
Let $\{Q_k\}_{k=0}^\fz$ be an $\exp$-{\rm IATI} and, for any $N\in\nn$, $\FR_N$ be defined as in \eqref{eq:defTR2}.
For any $M\in\nn$, let
$$
\FR_{N,M}:=\sum_{k=0}^M\sum_{N<|l|\le M} Q_{k+l}Q_k.
$$
Then the following assertions hold true:
\begin{enumerate}
\item all  the conclusions of Lemma \ref{lem:RNi} remain true for $\FR_{N,M}$ with all the involved positive constants
independent of $M$;
\item  for any $x,\ y\in X$, $\int_X \FR_{N,M}(x,y')\,d\mu(y')=0=\int_X \FR_{N,M}(x',y)\,d\mu(x')$;
\item for any $f\in L^p(X)$ with $p\in[1,\fz]$ and any $x\in X$,
$\FR_{N,M}f(x)=\int_X \FR_{N,M}(x,y)f(y)\,d\mu(y)$;
\item for any $f\in\CG(\bz,\gz)$ with $\bz,\ \gz\in(0,\eta)$, the sequence $\{\FR_{N,M}f(x)\}_{M=1}^\fz$
converges locally uniformly to an element, denoted by $\wz{\FR_N}(f)(x)$, where $\wz{\FR_N}(f)$ differs from
$\FR_Nf(x)$ at most a set of $\mu$-measure $0$;
\item if we extend $\wz{\FR_N}$ to a bounded linear operator on
$L^2(X)$, still denoted  by $\wz{\FR_N}$, then, for any $f\in L^2(X)$, $\wz{\FR_N}f=\FR_Nf$ in $L^2(X)$ and
almost everywhere.
\end{enumerate}
\end{lemma}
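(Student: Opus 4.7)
The plan is to follow the strategy of Lemma \ref{lem:ccrf3} point by point, with bookkeeping for the fact that $Q_0$ has no cancellation. For assertion (i), I note that all the kernel estimates and the $L^2$-bound for $\FR_N$ in Lemma \ref{lem:RNi} were obtained by controlling each summand $Q_{k+l}Q_k$ via Lemma \ref{lem:ccrf1} and Corollary \ref{cor:mixb}, and then summing with the aid of Lemma \ref{lem:sum2} or Lemma \ref{lem:CSlem}; since every such bound is a sum of non-negative, absolutely summable terms, the corresponding estimate for the partial sum $\FR_{N,M}$ is dominated by the same series, with implicit constants independent of $M$. For assertion (ii), Fubini on each composition $Q_{k+l}Q_k$ appearing in $\FR_{N,M}$ reduces to a case split: if the inner factor ($Q_k$ for the $y$-integral, $Q_{k+l}$ for the $x$-integral) has positive index, its cancellation kills the integral; otherwise that index equals $0$, which forces the outer factor to have index exceeding $N\ge 1$, and cancellation of the outer factor closes the argument. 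Assertion (iii) is immediate, since $\FR_{N,M}$ is a finite sum of bona fide integral operators whose kernels are pointwise bounded.

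The substantive step is assertion (iv). Fix $x\in X$, $r\in(0,\fz)$, and $f\in\CG(\bz,\gz)$. Imitating the derivation of \eqref{claim-add} in the proof of Lemma \ref{lem:ccrf3}, I would produce a majorant $c_{k,l}$ such that $\sup_{y\in B(x,r)}|Q_{k+l}Q_kf(y)|\le c_{k,l}$ and $\sum_{(k,l)}c_{k,l}<\fz$, where $(k,l)$ ranges over $k\ge 0$, $|l|>N$, and $k+l\ge 0$. In the regime $\dz^{k\wedge(k+l)}\ge r$, the size estimate of Lemma \ref{lem:ccrf1}(i) combined with $\|f\|_{L^1(X)}<\fz$ (a consequence of the size condition of $f$) yields a bound analogous to the first branch of \eqref{claim-add}, summable in $k$ by Lemma \ref{lem:expsum} and in $l$ by the geometric factor $\dz^{\eta|l|}$. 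In the regime $\dz^{k\wedge(k+l)}<r$, the cancellation $\int_X Q_{k+l}Q_k(y,z)\,d\mu(z)=0$ (valid as in (ii), with the outer factor supplying the cancellation when $k=0$) combined with the regularity of $f$ yields the analogue of the second branch of \eqref{claim-add}, again summable.

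The main obstacle I expect is the boundary case $k=0$ of the second regime: $Q_0$ itself has no cancellation, so the cancellation has to be routed through the outer factor $Q_l$ with $l>N$, which requires verifying that $Q_lQ_0$ satisfies the pointwise bounds needed in the argument. This is implicit in Remark \ref{rem:mix} and in the proof of Lemma \ref{lem:RNi}, and transfers here with only notational changes. Once assertion (iv) is secured, assertion (v) follows by a standard density argument: (i) together with Lemma \ref{lem:CSlem} gives $L^2$-convergence of $\FR_{N,M}f\to\FR_Nf$, the Riesz theorem extracts a subsequence converging $\mu$-almost everywhere, and (iv) forces the pointwise limit $\wz{\FR_N}f$ to coincide with $\FR_Nf$ almost everywhere; the density of $\CG(\bz,\gz)$ in $L^2(X)$ and the $L^2$-boundedness of $\FR_N$ then yield the unique bounded extension of $\wz{\FR_N}$ to $L^2(X)$, coinciding with $\FR_N$ both in the $L^2$ sense and $\mu$-almost everywhere.
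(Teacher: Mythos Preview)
Your proposal is correct and takes essentially the same approach as the paper, which explicitly states that the lemma follows ``via an argument similar to that used in the proof of Lemma \ref{lem:ccrf3}, with $R_N$ replaced by $\FR_N$ and $R_{N,M}$ by $\FR_{N,M}$ \ldots\ the details being omitted.'' You have supplied exactly those details, correctly isolating the only new wrinkle---the absence of cancellation for $Q_0$---and handling it via Remark \ref{rem:mix} and the outer factor's cancellation, just as the paper does in the proof of Lemma \ref{lem:RNi}.
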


Applying Theorems \ref{thm:Kbdd} and \ref{thm:Kibdd} to the operator $\FR_{N,M}$ and then passing limit to
$\FR_N$, we obtain the following conclusion, the details being omitted.

\begin{proposition}\label{prop-add-x}
Let  $x_1\in X$, $r\in(0,\fz)$, $\bz,\ \gz\in(0,\eta)$ and $\eta'\in(\max\{\bz,\gz\},\eta)$.
Then there exists a positive constant $C$, independent of $x_1$, $r$ and $N$, such that
$$
\|\FR_N\|_{L^2(X)\to L^2(X)}\le C\dz^{\eta'N}
$$
and
$$
\|\FR_N\|_{\mathring{\CG}(x_1,r,\bz,\gz)\to\mathring{\CG}(x_1,r,\bz,\gz)}
+\|\FR_N\|_{\CG(x_1,1,\bz,\gz)\to \CG(x_1,1,\bz,\gz)}\le C\dz^{(\eta-\eta')N}.
$$
\end{proposition}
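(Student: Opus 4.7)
The $L^2$ bound $\|\FR_N\|_{L^2(X)\to L^2(X)}\le C\dz^{\eta'N}$ is already part of Lemma \ref{lem:RNi}. For the two test-function bounds, the plan is to mimic the homogeneous argument of Proposition \ref{prop-add}: apply Theorems \ref{thm:Kbdd} and \ref{thm:Kibdd} to the truncated operator $\FR_{N,M}$, whose hypotheses are supplied uniformly in $M$ by Lemma \ref{lem:RNMi}, and then pass to the limit $M\to\fz$ using Lemma \ref{lem:RNMi}(iv) and (v).

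Concretely, fix $\eta'\in(\max\{\bz,\gz\},\eta)$. Lemma \ref{lem:RNMi}(i) furnishes, for each $M\in\nn$, the kernel conditions \eqref{eq:Ksize}, \eqref{eq:Kreg}, \eqref{eq:Kdreg} (with $s:=\eta'$ and $C_T:=C\dz^{(\eta-\eta')N}$) and the $L^2$-boundedness of $\FR_{N,M}$, all uniformly in $M$; Lemma \ref{lem:RNMi}(iii) supplies the pointwise integral representation required in hypothesis (b) of Theorem \ref{thm:Kbdd}; and Lemma \ref{lem:RNMi}(ii) verifies hypothesis (c) with constant $c_0=0$. Theorem \ref{thm:Kbdd} therefore yields
\begin{equation*}
\lf\|\FR_{N,M}f\r\|_{\GOX{x_1,r,\bz,\gz}}\le C\dz^{(\eta-\eta')N}\|f\|_{\GOX{x_1,r,\bz,\gz}}
\end{equation*}
uniformly in $M$, $x_1$ and $r$. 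For the bound on $\CG(x_1,1,\bz,\gz)$, I note that the additional decay conditions (d) and (e) of Theorem \ref{thm:Kibdd} (with $r_0:=1$ and a suitable $\sigma\in(0,\fz)$ as in Lemma \ref{lem:RNi}) are transferred to $\FR_{N,M}$ uniformly in $M$ by Lemma \ref{lem:RNMi}(i); combining this with the already verified hypotheses of Theorem \ref{thm:Kbdd}, Theorem \ref{thm:Kibdd} produces an analogous uniform-in-$M$ bound for $\FR_{N,M}$ on $\CG(x_1,1,\bz,\gz)$.

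To pass from $\FR_{N,M}$ to $\FR_N$, I invoke Lemma \ref{lem:RNMi}(iv), which asserts that, for every $f$ in $\GOX{x_1,r,\bz,\gz}$ or in $\CG(x_1,1,\bz,\gz)$, the sequence $\{\FR_{N,M}f(x)\}_{M\in\nn}$ converges locally uniformly on $X$ to a function $\wz{\FR_N}f(x)$, which coincides with $\FR_Nf(x)$ for $\mu$-almost every $x\in X$. Since the size and regularity inequalities \eqref{eq:size} and \eqref{eq:reg} defining the $\CG$-norms are stable under locally uniform limits, the uniform-in-$M$ bounds of the preceding paragraph persist for $\wz{\FR_N}f$, and hence for its almost-everywhere representative $\FR_Nf$. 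The only potentially delicate point is this passage of pointwise estimates through the locally uniform limit; it is handled exactly as in the homogeneous case (compare the step from \eqref{eq:RNMG} to Proposition \ref{prop-add} via Lemma \ref{lem:ccrf3}), so no genuine obstacle arises.
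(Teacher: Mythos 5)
Your proposal is correct and follows essentially the same route the paper indicates, namely: verify the hypotheses of Theorems \ref{thm:Kbdd} and \ref{thm:Kibdd} for the truncated operator $\FR_{N,M}$ uniformly in $M$ via Lemmas \ref{lem:RNi} and \ref{lem:RNMi}, and then pass to the limit $M\to\fz$ via Lemma \ref{lem:RNMi}(iv) and (v), just as in the derivation of Proposition \ref{prop-add} from \eqref{eq:RNMG} and Lemma \ref{lem:ccrf3}. One minor point worth making explicit: to conclude that $\FR_{N,M}$ maps $\GOX{x_1,r,\bz,\gz}$ into $\GOX{x_1,r,\bz,\gz}$ (rather than merely into $\CG(x_1,r,\bz,\gz)$), one also uses the second cancellation in Lemma \ref{lem:RNMi}(ii) together with the Fubini theorem to get $\int_X\FR_{N,M}f\,d\mu=0$ for $f\in\GOX{x_1,r,\bz,\gz}$, exactly as noted before \eqref{eq:RNMG} in the homogeneous case.
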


Now we show the following inhomogeneous continuous Calder\'on reproducing formulae.

\begin{theorem}\label{thm:icrf}
Let $\bz,\ \gz\in(0,\eta)$ and $\{Q_k\}_{k=0}^\fz$ be an $\exp$-{\rm IATI}. Then there exist $N\in\nn$ and
a sequence $\{\wz{Q}_k\}_{k=0}^\fz$ of bounded linear operators on $L^2(X)$ such that, for any
$f\in\go{\bz,\gz}$ [or $L^p(X)$ with any given $p\in(1,\fz)$],
\begin{equation}\label{eq:icrf}
f=\sum_{k=0}^\fz \wz{Q}_kQ_kf,
\end{equation}
where the series converges in $\go{\bz,\gz}$ [or $L^p(X)$ with any given $p\in(1,\fz)$].
Moreover, for any $k\in\zz_+$, $\wz{Q}_k$ satisfies the size condition \eqref{eq:atisize}, the regularity
condition \eqref{eq:atisregx} only for the first variable and the following integration condition: for any
$x\in X$,
\begin{equation}\label{eq:iwzQint}
\int_X \wz{Q}_k(x,y)\,d\mu(y)=\int_X \wz{Q}_k(y,x)\,d\mu(y)=\begin{cases}
1& \text{if\ \ $k\in\{0,\ldots,N\}$,}\\
0& \text{if\ \  $k\in\{N+1,N+2,\ldots\}$.}
\end{cases}
\end{equation}
\end{theorem}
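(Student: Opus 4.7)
The plan is to mimic the proof of Theorem \ref{thm:hcrf}, using $\FT_N$, $\FR_N$ from \eqref{eq:defTR2} in place of $T_N$, $R_N$. By Proposition \ref{prop-add-x}, we fix $\eta'\in(\max\{\bz,\gz\},\eta)$ and then choose $N$ sufficiently large so that $\FR_N$ has operator norm at most $1/2$ on $L^2(X)$, on $\go{\bz,\gz}$, and on $\CG(x_1,1,\bz,\gz)$ for every $x_1\in X$. Then $\FT_N^{-1}=(I-\FR_N)^{-1}=\sum_{j=0}^\fz(\FR_N)^j$ exists as a bounded operator, with operator norm at most $2$, on each of these spaces. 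For any $k\in\zz_+$ and $x,\,y\in X$ I would then define
\[
\wz{Q}_k(x,y):=\FT_N^{-1}\bigl(Q_k^N(\cdot,y)\bigr)(x),
\]
with $Q_k^N$ as in \eqref{eq-z1}. Since $T_N\wz{Q}_k(\cdot,y)=Q_k^N(\cdot,y)$, the identity $f=\FT_N^{-1}\FT_Nf=\sum_{k=0}^\fz\wz{Q}_kQ_kf$ holds in $L^2(X)$ for any $f\in L^2(X)$.

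To verify the kernel properties of $\wz{Q}_k$, observe that, by Proposition \ref{prop:etoa} (together with its trivial analogue for the non-cancellative $Q_0$), $Q_k^N(\cdot,y)$ belongs to $\CG(y,\dz^k,\bz,\gz)$ with norm bounded uniformly in $k$ and $y$, and moreover lies in $\mathring{\CG}(y,\dz^k,\bz,\gz)$ when $k>N$. The boundedness of $\FT_N^{-1}$ on $\CG(y,\dz^k,\bz,\gz)$ (and on $\mathring{\CG}(y,\dz^k,\bz,\gz)$ when $k>N$) then yields the size bound \eqref{eq:atisize} and the regularity \eqref{eq:atisregx} in the first variable, uniformly in $k$. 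For the integration condition \eqref{eq:iwzQint}, the identity $\int_X \wz{Q}_k(x,y)\,d\mu(y)=0$ for $k>N$ is proved by expanding $\wz{Q}_k=\sum_{j\ge 0}(\FR_N)^j Q_k^N$, and using induction on $j$ exactly as in \eqref{eq:RNjcan} (the role of Lemma \ref{lem:ccrf3} is now played by Lemma \ref{lem:RNMi}, whose item (ii) provides the needed cancellation of $\FR_{N,M}$ in the second variable). For $k\le N$, the same induction together with \eqref{eq:QkNint} and the cancellation of each $\FR_{N,M}$ yields $\int_X\wz{Q}_k(x,y)\,d\mu(y)=1$. The identity $\int_X\wz{Q}_k(y,x)\,d\mu(y)=1=\int_X Q_k(y,x)\,d\mu(y)$ for $k\le N$ (resp.\ $=0$ for $k>N$) follows from $\sum_k Q_k=I$ and the corresponding mean value of $Q_k^N$.

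For the convergence of \eqref{eq:icrf} in $\go{\bz,\gz}$, I would first treat $f\in\CG(\bz',\gz')$ for some $\bz'\in(\bz,\eta)$, $\gz'\in(\gz,\eta)$, noting this is dense in $\go{\bz,\gz}$. Writing
\[
f-\sum_{k=0}^L\wz{Q}_kQ_kf=\FT_N^{-1}\Bigl(\sum_{k>L}Q_k^NQ_kf\Bigr),
\]
and using the boundedness of $\FT_N^{-1}$ on $\CG(x_0,1,\bz,\gz)$, the task reduces to showing decay of the tail $\|\sum_{k>L}Q_k^NQ_kf\|_{\CG(\bz,\gz)}$. Here I would establish analogues of \eqref{eq:sumsize} and \eqref{eq:sumreg}, namely that $|Q_k^NQ_kf(x)|\ls \dz^{\sigma k}\,[V_1(x_0)+V(x_0,x)]^{-1}[1+d(x_0,x)]^{-\gz}$ for some $\sigma>0$, by splitting the integral $Q_k^NQ_kf(x)=\int E_k(x,y)f(y)\,d\mu(y)$ and exploiting either the cancellation of $Q_k$ (for $k>N$) or the size/regularity of $Q_k$ together with the exponential decay factor in $d(x,\CY^k)$ (for $k>0$), much as in the estimate of $\mathrm Z_{1,1}$--$\mathrm Z_{1,3}$ in Step 1) of the proof of Theorem \ref{thm:hcrf}. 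Density of $\CG(\bz',\gz')$ in $\go{\bz,\gz}$ and the uniform boundedness on $\go{\bz,\gz}$ of the partial-sum operators $\sum_{|k|\le L}\wz{Q}_kQ_k=\FT_N^{-1}\bigl(\sum_{|k|\le L}Q_k^NQ_k\bigr)$ (which follows from a Cotlar--Stein argument and Theorem \ref{thm:Kbdd}/\ref{thm:Kibdd} as in Step 2) of the proof of Theorem \ref{thm:hcrf}) will then extend the convergence to all $f\in\go{\bz,\gz}$. Finally, for $f\in L^p(X)$ with $p\in(1,\fz)$, the partial-sum operators $\wz\FT_L:=\sum_{k=0}^L\wz Q_k Q_k=\FT_N^{-1}\wz\FT_{N,L}$ are shown to have standard $\bz_1$-Calderón--Zygmund kernels (uniformly in $L$) for some $\bz_1\in(0,\bz\wedge\gz)$, as in the proof of \eqref{eq:wTLsize}--\eqref{eq:wTLregy}; standard Calderón--Zygmund theory then gives uniform $L^p$-boundedness, and density of $\go{\bz,\gz}$ in $L^p(X)$ concludes the convergence.

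The main obstacle is accounting for the fact that $Q_0$ carries no cancellation, so that $Q_k^N$ has mean one for $k\le N$ and mean zero for $k>N$. This breaks the clean cancellation structure available in the homogeneous case, forces $\FR_N$ to be handled through both Theorem \ref{thm:Kbdd} (on $\mathring{\CG}$) and Theorem \ref{thm:Kibdd} (on $\CG$), and is exactly what necessitates Lemma \ref{lem:RNi} to verify the extra conditions (a), (d), (e) needed. Tracking these two regimes $k\le N$ and $k>N$ separately throughout the size/regularity estimates for $Q_k^NQ_kf$, and in the proof of \eqref{eq:iwzQint}, is the technical heart of the argument.
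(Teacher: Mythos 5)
Your proposal follows essentially the same route as the paper's proof: the same decomposition $I=\FT_N+\FR_N$ from \eqref{eq:defTR2}, the same definition $\wz{Q}_k=\FT_N^{-1}Q_k^N(\cdot,y)$, the same induction on powers of $\FR_N$ (via $\FR_{N,M}$ and Lemma \ref{lem:RNMi}) for \eqref{eq:iwzQint}, the same splitting of regimes $k\le N$ (where $Q_k^N$ has integral one, handled via Theorem \ref{thm:Kibdd}) and $k>N$ (where it has cancellation, handled via Theorem \ref{thm:Kbdd}), and the same density and Calder\'on--Zygmund arguments for the $\go{\bz,\gz}$ and $L^p$ convergence. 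One small imprecision: Proposition \ref{prop-add-x} gives the norm bound for $\FR_N$ on $\CG(x_1,1,\bz,\gz)$ but not directly that $\FR_N$ maps $\go{\bz,\gz}$ into itself; as in the paper, this invariance must be derived via the uniform boundedness of the truncations $\FT_{N,L}$ on $\CG(\bz,\gz)$ together with a density argument, after which one concludes $\|\FR_N\|_{\go{\bz,\gz}\to\go{\bz,\gz}}\le\|\FR_N\|_{\CG(\bz,\gz)\to\CG(\bz,\gz)}$.
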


\begin{proof}
We briefly sketch the main ideas of the proof of this theorem due to its similarity to the proof of Theorem
\ref{thm:hcrf}. Based on Proposition \ref{prop-add-x}, we choose $N\in\nn$ large enough such that, for any
$x_1\in X$ and $r\in(0,\fz)$,
$$
\max\lf\{\|\FR_N\|_{L^2(X)\to L^2(X)},
\|\FR_N\|_{\mathring{\CG}(x_1,r,\bz,\gz)\to\mathring{\CG}(x_1,r,\bz,\gz)},
\|\FR_N\|_{\CG(x_1,1,\bz,\gz)\to \CG(x_1,1,\bz,\gz)}\r\}\le 1/2.
$$
Consequently, $\FT_N:=I-\FR_N$ is invertible on $L^2(X)$, $\CG(x_1,1,\bz,\gz)$ and $\GOX{x_1,r,\bz,\gz}$, with
all operator norms at most $2$. For any $k\in\zz_+$ and $x,\ y\in X$, let
$$
\wz{Q}_k(x,y):=\FT_N^{-1}\lf(Q_k^N(\cdot,y)\r)(x)
$$
with $Q_k^N$ as in \eqref{eq-z1}. Then it is easy to show that \eqref{eq:icrf} holds true in $L^2(X)$.

By Lemma \ref{lem:propQkN}, we know that, when $k\in\{N+1,N+2,\ldots\}$,
$Q_k^N(\cdot,y)=\sum_{l=k-N}^{k+N}Q_{l}Q_k(\cdot,y)\in\GOX{y,\dz^k,\bz,\gz}$
and, by Lemma \ref{lem:ccrf1} and Corollary \ref{cor:mixb} for $Q_lQ_k$, we also know that,
when $k\in\{0,\ldots,N\}$,
$Q_k^N(\cdot,y)=\sum_{l=0}^{k+N}Q_{l}Q_k(\cdot,y)\in\CG(y,1,\bz,\gz)$.
The boundedness of $\FT_N^{-1}$ implies that $\wz{Q}_k(\cdot,y)\in \GOX{y,\dz^k,\bz,\gz}$
when $k\in\{N+1,N+2,\ldots\}$,
and $\wz{Q}_k(\cdot,y)\in \CG(y,1,\bz,\gz)$ when  $k\in\{0,\ldots,N\}$.
If $k\in\{0,\ldots,N\}$, then $\CG(y,1,\bz,\gz)=\CG(y,\dz^k,\bz,\gz)$ with norms depending only on $N$.
Therefore, for any $k\in\zz_+$, the kernel of
$\wz{Q}_k$ satisfies the size condition \eqref{eq:atisize}, and the regularity
condition \eqref{eq:atisregx}  for the first variable.

Similarly to the proof of \eqref{eq:wzQcan}, for any $x\in X$, we have $\int_X \wz{Q}_k(x,y)\,d\mu(y)=0=\int_X
\wz{Q}_k(y,x)\,d\mu(y)$ when $k\in\{N+1,N+2,\ldots\}$. When $k\in\{0,\ldots,N\}$, we write
$$
\wz{Q}_k(x,y)=\FT_N^{-1}\lf(Q_k^N(\cdot,y)\r)(x)=\sum_{j=0}^\fz \lf(\lf(\FR_N\r)^jQ_k^N(\cdot,y)\r)(x).
$$
By Lemma \ref{lem:RNMi} and the dominated convergence theorem, we find that, for any $x\in X$,
$\FR_Ng\in\GOX{x_1,1,\bz,\gz}$ whenever $g\in\CG(x_1,1,\bz,\gz)$.
By this and the definition of $Q_k^N$, we conclude that, for any $y\in X$,
$\FR_NQ_k^N(\cdot,y)\in\GO{y,1,\bz,\gz}$. This, together with the boundedness of $\FR_{N}$ on
$\GO{y,1,\bz,\gz}$ (see Proposition \ref{prop-add-x}), implies that, for any $j\in\nn$ and $y\in X$,
$$
\int_X (\FR_N)^jQ_k^N(x,y)\,d\mu(x)=0.
$$
On the other hand, from the Fubini theorem, \eqref{eq:QkNint} and the cancellation of $\FR_{N,M}$ [see Lemma
\ref{lem:RNMi}(ii)], we deduce that, for any $M\in\nn$ and $x\in X$,
$$
\int_X \FR_{N,M}Q_k^N(x,y)\,d\mu(y)=\int_X\FR_{N,M}(x,z)\int_X Q_k^N(z,y)\,d\mu(y)\,d\mu(z)=0.
$$
Then, repeating the proof of \eqref{eq:RNjcan} with $R_{N,M}$ replaced by $\FR_{N,M}$ and $R_N$ by $\FR_N$, we find
that, for any $j\in\nn$ and $x\in X$,
$$
\int_X (\FR_N)^jQ_k^N(x,y)\,d\mu(y)=0.
$$
By these and \eqref{eq:QkNint}, we conclude that, for any $k\in\{0,\ldots,N\}$ and $x,\ y\in X$,
$$
\int_X \wz{Q}_k(x,y')\,d\mu(y')=\sum_{j=0}^\fz\int_X\lf(\FR_N\r)^jQ_k^N(x,y')\,d\mu(y')
=\int_XQ_k^N(x,y')\,d\mu(y')=1
$$
and, similarly,
$$
\int_X \wz{Q}_k(x',y)\,d\mu(x')=\sum_{j=0}^\fz\int_X\lf(\FR_N\r)^jQ_k^N(x',y)\,d\mu(x')
=\int_XQ_k^N(x',y)\,d\mu(x')=1.
$$
This finishes the proof of \eqref{eq:iwzQint}.

Now we prove that, when $\bz'\in(\bz,\eta)$ and $\gz'\in(\gz,\eta)$ and $f\in\CG(\bz',\gz')$,
\begin{equation}\label{eq:ilim}
\lim_{L\to\fz} \lf\|f-\sum_{k=0}^L\wz{Q}_kQ_kf\r\|_{\CG(\bz,\gz)}=0.
\end{equation}
Notice that $f=\FT^{-1}\FT_N f$ and $\FT_N^{-1}$ is bounded on $\CG(\bz,\gz)$. Then it suffices to show that
\begin{equation}\label{eq:isum}
\lim_{L\to\fz}\lf\|\FT_Nf-\sum_{k=0}^LQ_k^NQ_kf\r\|_{\CG(\bz,\gz)}=
\lim_{L\to\fz}\lf\|\sum_{k=L+1}^\fz Q_k^NQ_kf\r\|_{\CG(\bz,\gz)}=0.
\end{equation}
To simplify our discussion, we can assume that $L\ge N+1$, so that $Q_k^N$ has the cancellation properties.
Thus, the proofs of \eqref{eq:sumsize} and \eqref{eq:sumreg} for the case $k\ge L+1$ also imply
\eqref{eq:isum}, and hence \eqref{eq:ilim}.

Now we prove the convergence of \eqref{eq:icrf} in $\go{\bz,\gz}$. For any $L\in\nn$, define
$$
\FT_{N,L}:=\sum_{k=0}^L Q_k^NQ_k=\sum_{k=0}^L\sum_{|l|\le N}Q_{k+l}Q_k.
$$
Then, repeating the proof of Lemma
\ref{lem:RNMi} with $\FR_{N,M}$ replaced by $\FT_{N,L}$ and the sum $\sum_{N<|l|\le M}$ replaced by
$\sum_{|l|\le N}$, we find that $\FT_{N,L}$ satisfies all the assumptions of Theorem \ref{thm:Kibdd} with
$c_0:=1$, $r:=1$ and $C_T$ a positive constant independent of $L$. Therefore, by Theorem \ref{thm:Kibdd}, we
conclude that $\FT_{N,L}$ is bounded on $\CG(\bz,\gz)$ with its operator norm independent of $L$. Thus, from
a density argument used in Step 2) of the proof of Theorem \ref{thm:hcrf}, we deduce that $\FT_N$ is
bounded on $\go{\bz,\gz}$ and hence \eqref{eq:isum} converges on $\go{\bz,\gz}$. Moreover, since
$I=\FT_N+\FR_N$, it then follows that $\FR_N$ is bounded on $\go{\bz,\gz}$ with
$$
\|\FR_N\|_{\go{\bz,\gz}\to\go{\bz,\gz}}\le \|\FR_N\|_{\CG(\bz,\gz)\to\CG(\bz,\gz)}\le \frac 12.
$$
Therefore, $\FT_N^{-1}$ is bounded on $\go{\bz,\gz}$, which, together with the convergence of
\eqref{eq:isum} in $\go{\bz,\gz}$, implies that \eqref{eq:icrf} converges in $\go{\bz,\gz}$.

Next we prove that \eqref{eq:icrf} holds true in $L^p(X)$ with any given $p\in(1,\fz)$. For any $L\in\nn$, let
$\wz{\FT}_L:=\sum_{k=0}^L \wz{Q}_kQ_k$.
Following the arguments used in Step 3) of the proof of Theorem \ref{thm:hcrf},
with $T_L$ replaced by $\wz\FT_L$, we deduce that, when $L\in\{N+1,N+2,\ldots\}$,
$\sum_{k=N+1}^L \wz{Q}_kQ_k$ is a $\bz_1$-Calder\'on-Zygmund operator for some $\bz_1\in(0,\bz\wedge\gz)$,
so that it is bounded on $L^p(X)$. Meanwhile, by Proposition \ref{prop:basic}(iii),
we  know that $\sum_{k=0}^{N} \wz{Q}_kQ_k$ is also bounded on $L^p(X)$. Altogether, we have
$\|\wz{\FT}_L\|_{L^p(X)\to L^p(X)}\ls 1$,
where the implicit positive constant is independent of $L$.
From this and a density argument as that used in Step 3) of the proof of Theorem \ref{thm:hcrf}, we obtain the
convergence of \eqref{eq:icrf}  in $L^p(X)$. This finishes the proof of Theorem \ref{thm:icrf}.
\end{proof}

Similarly, we also have the  following two theorems, the details being omitted.
\begin{theorem}\label{thm:icrf2}
Suppose that $\bz,\ \gz\in(0,\eta)$ and $\{Q_k\}_{k=0}^\fz$ is an $\exp$-{\rm IATI}.
Then there exist $N\in\nn$ and a sequence $\{\overline{Q}_k\}_{k=0}^\fz$ of bounded linear operators on
$L^2(X)$ such that, for any $f\in\go{\bz,\gz}$ [resp., $L^p(X)$ with any given $p\in(1,\fz)$],
\begin{equation}\label{eq:icrf2}
f=\sum_{k=0}^\fz Q_k\overline{Q}_kf,
\end{equation}
where the series converges in $\go{\bz,\gz}$ [resp., $L^p(X)$ with any given $p\in(1,\fz)$]. Moreover, for any
$k\in\zz_+$, the kernel of $\overline{Q}_k$ satisfies the size condition \eqref{eq:atisize}, the regularity
condition \eqref{eq:atisregx} only for the second variable and
\eqref{eq:iwzQint}.
\end{theorem}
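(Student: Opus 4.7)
The plan is to mirror the proof of Theorem \ref{thm:icrf} throughout, with the roles of the two factors in the paraproduct decomposition reversed. Starting from $I=\sum_{k=0}^\infty Q_k\cdot\sum_{l=0}^\infty Q_l$ and grouping the inner sum so that $Q_k$ always appears first, I write
\begin{equation*}
I = \sum_{k=0}^\infty Q_k\, Q_k^N + \sum_{k=0}^\infty\sum_{\substack{|l|>N\\ k+l\ge 0}} Q_k\, Q_{k+l} =: \FT_N' + \FR_N',
\end{equation*}
with $Q_k^N$ as in \eqref{eq-z1}. The critical observation is that all composition estimates in Lemma \ref{lem:ccrf1} and Corollary \ref{cor:mixb} hold equally for $Q_k Q_{k+l}$ as for $Q_{k+l} Q_k$: indeed, the proof of Lemma \ref{lem:ccrf1} in the case $j\ge k$ uses only the cancellation of the first factor (see Remark \ref{rem:mix}), and the symmetric argument with roles swapped covers the opposite case. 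Consequently, $Q_k Q_{k+l}$ satisfies the same size, regularity, and second difference regularity bounds as $Q_{k+l} Q_k$, and in particular its kernel is smooth in both variables.

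First I would establish the analogue of Lemma \ref{lem:RNi} for $\FR_N'$: it is a standard $\eta'$-Calder\'on-Zygmund operator whose kernel satisfies condition (a) of Theorem \ref{thm:Kbdd} and conditions (d) and (e) of Theorem \ref{thm:Kibdd}, with $\|\FR_N'\|_{L^2(X)\to L^2(X)}\le C\dz^{\eta' N}$. The argument proceeds exactly as in Lemma \ref{lem:RNi}, splitting off the boundary terms $\sum_{l>N} Q_0 Q_l$ and $\sum_{k>N} Q_k Q_0$ where the $Q_0$ factor lacks cancellation, and using the cancellation of the other factor in the appropriate variable to recover all required kernel estimates. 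Next, I would introduce truncations $\FR_{N,M}'$ paralleling Lemma \ref{lem:RNMi}; the cancellation identities $\int_X \FR_{N,M}'(x,y)\,d\mu(x)=0=\int_X \FR_{N,M}'(x,y)\,d\mu(y)$ follow because, in every composition $Q_k Q_{k+l}$ appearing in $\FR_N'$, at least one factor has the cancellation needed to kill each integral. Applying Theorems \ref{thm:Kbdd} and \ref{thm:Kibdd} to $\FR_{N,M}'$ and passing to the limit as in the derivation of Proposition \ref{prop-add-x} then yields that $\FR_N'$ is bounded on $L^2(X)$, on $\GOX{x_1,r,\bz,\gz}$, and on $\CG(x_1,1,\bz,\gz)$, with operator norms at most $C\dz^{(\eta-\eta')N}$ on the latter two spaces.

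Choosing $N$ large enough that all three operator norms are at most $1/2$ makes $\FT_N'=I-\FR_N'$ invertible on each of these spaces. I then define, for each $k\in\zz_+$ and each $x\in X$,
\begin{equation*}
\overline{Q}_k(x,\cdot) := \bigl((\FT_N')^{*}\bigr)^{-1}\bigl(Q_k^N(x,\cdot)\bigr),
\end{equation*}
so that, as an operator identity, $\sum_k Q_k\overline{Q}_k = \sum_k Q_k Q_k^N(\FT_N')^{-1} = \FT_N'(\FT_N')^{-1} = I$ on $L^2(X)$. By Lemma \ref{lem:propQkN} together with the proof of Proposition \ref{prop:etoa}, the function $Q_k^N(x,\cdot)$ lies in $\GOX{x,\dz^k,\bz,\gz}$ for $k\ge N+1$ and in $\CG(x,1,\bz,\gz)$ for $k\in\{0,\ldots,N\}$, with norms uniform in $x$ and $k$; the boundedness of $((\FT_N')^*)^{-1}$ on these spaces then delivers the size condition \eqref{eq:atisize} and the regularity condition \eqref{eq:atisregx} for $\overline{Q}_k(x,y)$ in the second variable $y$. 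The integration condition \eqref{eq:iwzQint} is verified by expanding $((\FT_N')^*)^{-1}=\sum_{j=0}^\infty((\FR_N')^*)^j$, applying the Fubini theorem, and using the cancellation of $\FR_{N,M}'$ in the appropriate variable, exactly as in the derivations of \eqref{eq:wzQcan} and \eqref{eq:iwzQint}.

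The main obstacle is the careful bookkeeping needed to confirm the analogue of Lemma \ref{lem:RNMi} for $\FR_{N,M}'$, specifically that the truncated remainder has cancellation in \emph{both} variables simultaneously; this is more delicate here because $Q_k$, which has cancellation only for $k\ge 1$, now plays the role previously held by $Q_{k+l}$, so one must carefully track the boundary contributions from $Q_0$. Once this is settled, the convergence of \eqref{eq:icrf2} in $\go{\bz,\gz}$ and in $L^p(X)$ with $p\in(1,\infty)$ follows from the same density and Calder\'on-Zygmund-kernel arguments used in the final two steps of the proof of Theorem \ref{thm:icrf}, applied respectively to the truncated operators $\sum_{k=0}^L Q_k Q_k^N$ and $\sum_{k=0}^L Q_k\overline{Q}_k$.
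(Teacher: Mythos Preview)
Your approach is essentially the paper's (compare Remark~\ref{rem:r1}), and it is correct in outline, but there are two points worth tightening.

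First, your remainder $\FR_N'=\sum_{k\ge 0}\sum_{|l|>N,\,k+l\ge 0}Q_kQ_{k+l}$ is \emph{literally} the same operator as $\FR_N$ from \eqref{eq:defTR2}: relabelling $(j,m)=(k,k+l)$ in one sum and $(j,m)=(k+l,k)$ in the other shows both equal $\sum_{j,m\ge 0,\,|j-m|>N}Q_jQ_m$. Hence $\FT_N'=\FT_N$ as well, and all of Lemma~\ref{lem:RNi}, Lemma~\ref{lem:RNMi} and Proposition~\ref{prop-add-x} apply verbatim. The ``main obstacle'' you describe---tracking cancellation for $\FR_{N,M}'$ with $Q_0$ in a new position---does not arise, since $\FR_{N,M}'=\FR_{N,M}$ and Lemma~\ref{lem:RNMi}(ii) already gives cancellation in both variables.

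Second, and more substantively: you prove that $\FR_N'$ has small norm on $L^2(X)$, $\GOX{x_1,r,\bz,\gz}$ and $\CG(x_1,1,\bz,\gz)$, hence $\FT_N'$ is invertible there; but then you invoke boundedness of $((\FT_N')^*)^{-1}$ on these same spaces to control $\overline{Q}_k(x,\cdot)$. Invertibility of $\FT_N'$ on a non-Hilbert space does not automatically yield invertibility of its adjoint on that space. What you actually need is that $(\FR_N')^*=\FR_N^*$ has small norm on the test-function spaces. This is true---the kernel of $\FR_N^*$ is $\FR_N(y,x)$, and all the size, regularity, second-difference and cancellation conditions in Lemma~\ref{lem:RNi} and Lemma~\ref{lem:RNMi} are symmetric in the two variables---but it must be said. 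The cleanest fix, and the paper's intended route, is to observe that $\{Q_k^*\}_{k=0}^\infty$ is itself an $\exp$-IATI (Definition~\ref{def:ieti} is symmetric in $x,y$), apply Theorem~\ref{thm:icrf} to that family to obtain $f=\sum_k \wz{Q_k^*}\,Q_k^* f$, and then set $\overline{Q}_k:=(\wz{Q_k^*})^*$; this gives exactly your formula $\overline{Q}_k(x,y)=(\FT_N^*)^{-1}(Q_k^N(x,\cdot))(y)$ with the required boundedness already in hand.
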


\begin{theorem}\label{thm:icrf3}
Let all the notation be as in Theorems \ref{thm:icrf} and \ref{thm:icrf2}. Then, for any $f\in(\go{\bz,\gz})'$,
\eqref{eq:icrf} and \eqref{eq:icrf2} hold true in $(\go{\bz,\gz})'$.
\end{theorem}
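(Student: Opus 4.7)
\noindent\textbf{Proof proposal for Theorem \ref{thm:icrf3}.} The plan is a standard weak-$*$ duality argument, reducing the distributional convergence in $(\go{\bz,\gz})'$ to the test-function convergence already established in Theorems \ref{thm:icrf} and \ref{thm:icrf2}. Fix $f\in(\go{\bz,\gz})'$ and any $\vz\in\go{\bz,\gz}$. Define the finite partial sum operators $\wz{\FT}_L:=\sum_{k=0}^L\wz Q_kQ_k$ and $\overline{\FT}_L:=\sum_{k=0}^L Q_k\overline Q_k$. The aim is to show that, as $L\to\fz$,
\begin{equation*}
\lf\langle\wz{\FT}_Lf,\vz\r\rangle\to\langle f,\vz\rangle
\qquad\textup{and}\qquad
\lf\langle\overline{\FT}_Lf,\vz\r\rangle\to\langle f,\vz\rangle,
\end{equation*}
which is exactly the weak-$*$ convergence required.

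First I would move the operators onto the test function via the formal identities
\begin{equation*}
\lf\langle\wz{\FT}_Lf,\vz\r\rangle
=\lf\langle f,\sum_{k=0}^L Q_k^*\wz Q_k^*\vz\r\rangle,
\qquad
\lf\langle\overline{\FT}_Lf,\vz\r\rangle
=\lf\langle f,\sum_{k=0}^L\overline Q_k^*Q_k^*\vz\r\rangle.
\end{equation*}
Thus the problem reduces to proving that the two adjoint series $\sum_{k=0}^L Q_k^*\wz Q_k^*\vz$ and $\sum_{k=0}^L\overline Q_k^*Q_k^*\vz$ converge to $\vz$ in $\go{\bz,\gz}$; once this is done, the continuity of $f$ as a linear functional on $\go{\bz,\gz}$ finishes the job.

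The key step is therefore to verify the convergence of these adjoint series in $\go{\bz,\gz}$. For this I would exploit the symmetry of the defining axioms of an $\exp$-IATI in Definition \ref{def:ieti}: the family $\{Q_k^*\}_{k=0}^\fz$, where $Q_k^*(x,y):=Q_k(y,x)$, is itself an $\exp$-IATI (all size, regularity, second-difference and normalization conditions are symmetric in $x,y$). Applying Theorem \ref{thm:icrf2} to $\{Q_k^*\}_{k=0}^\fz$ produces operators $\{\overline{Q_k^*}\}$ with $\vz=\sum_{k=0}^\fz Q_k^*\,\overline{Q_k^*}\vz$ in $\go{\bz,\gz}$, and similarly Theorem \ref{thm:icrf} applied to $\{Q_k^*\}$ yields $\vz=\sum_{k=0}^\fz\wz{Q_k^*}\,Q_k^*\vz$. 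The remaining identification is that, by the construction in the proof of Theorem \ref{thm:icrf} (see the inhomogeneous analogue of Remark \ref{rem:r1}, namely $\wz Q_k=\FT_N^{-1}Q_k^N$ gives $\wz Q_k^*=(Q_k^N)^*(\FT_N^*)^{-1}$, while the parallel construction for the ``$\overline Q_k$'' formula uses $(\FT_N^*)^{-1}$ applied to the second variable), one has $\wz Q_k^*=\overline{Q_k^*}$ and $\overline Q_k^*=\wz{Q_k^*}$ as kernels. Hence the two adjoint series are precisely the inhomogeneous continuous Calder\'on reproducing formulae applied to $\vz$ with the $\exp$-IATI $\{Q_k^*\}$, which converge in $\go{\bz,\gz}$ by Theorems \ref{thm:icrf} and \ref{thm:icrf2}.

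The main obstacle I anticipate is the bookkeeping required to justify the adjoint identification and to interchange sum with the distributional pairing rigorously, in particular making sure that for each fixed $k$ the individual term $\wz Q_kQ_kf$ is a well-defined element of $(\go{\bz,\gz})'$ (which follows because $Q_k^*\wz Q_k^*$ maps $\go{\bz,\gz}$ continuously into itself, a fact inherited from the size/regularity estimates on $\wz Q_k$ already proved in Theorem \ref{thm:icrf} together with Lemma \ref{lem:ccrf1} and Corollary \ref{cor:mixb}). Once those continuity statements are in place, the duality argument is routine and the proof is complete; the remark analogous to Remarks \ref{rem:r1} and \ref{rem:r2} then records that all implicit constants depend on $(\bz,\gz)$ only through a compact subset of $(0,\eta)^2$.
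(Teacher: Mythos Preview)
Your proposal is correct and is precisely the duality argument the paper alludes to (the paper omits all details, writing only ``via a duality argument, we have the following conclusion, the details being omitted''). Your key observations---that $\{Q_k^*\}_{k=0}^\fz$ is again an $\exp$-IATI, that $\FT_N'=\FT_N^*$ by the symmetric index range $\{(k,l):k,l\ge 0,\ |k-l|\le N\}$, and hence that the operators produced by Theorems \ref{thm:icrf} and \ref{thm:icrf2} applied to $\{Q_k^*\}$ are exactly $(\overline Q_k)^*$ and $(\wz Q_k)^*$ respectively (the inhomogeneous analogue of Remark \ref{rem:r1})---are all correct and constitute the whole argument.
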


\begin{remark}\label{rem:icrf}
Similarly to Remark \ref{rem:r2}, we can show that, if $K$ is a compact subset of $(0,\eta)^2$, then
Theorems \ref{thm:icrf}, \ref{thm:icrf2} and \ref{thm:icrf3} hold true with the implicit positive constants
independent of $(\bz,\gz)\in K$, but depending on $K$.
\end{remark}

\subsection{Inhomogeneous discrete Calder\'{o}n reproducing formulae}\label{idrf}

In this subsection, we consider the inhomogeneous discrete Calder\'{o}n reproducing formulae.
We use the same notation as in Section \ref{hdrf}. Here we omit the details
because the proofs are combinations of those in Sections \ref{icrf} and \ref{hdrf}.

\begin{theorem}\label{thm:idrf}
Let $\{Q_k\}_{k=0}^\fz$ be an $\exp$-{\rm IATI} and $\bz,\ \gz\in(0,\eta)$. Assume that every $y_\az^{k,m}$
is an arbitrary point in $Q_\az^{k,m}$. Then there exist $N\in\nn$ and sequences $\{\wz{Q}^{(i)}_k\}_{k=0}^\fz$, $i\in\{1,2,3\}$, of bounded
linear operators on $L^2(X)$ such that, for any $f\in\go{\bz,\gz}$ [resp., $L^p(X)$ with any given
$p\in(1,\fz)$],
\begin{align}\label{eq:idrf}
f(\cdot)&=\sum_{k=0}^N\sum_{\az\in\CA_k}\sum_{m=1}^{N(k,\az)}\int_{Q_\az^{k,m}}\wz{Q}^{(1)}_k(\cdot,y)\,d\mu(y)
Q_{\az,1}^{k,m}(f)\\
&\quad+\sum_{k=N+1}^\fz\sum_{\az\in\CA_k}\sum_{m=1}^{N(k,\az)}\int_{Q_\az^{k,m}}\wz{Q}^{(1)}_k(\cdot,y)
\,d\mu(y)Q_kf\lf(y_\az^{k,m}\r)\noz\\
&=\sum_{\az\in\CA_0}\sum_{m=1}^{N(0,\az)}\int_{Q_\az^{k,m}}\wz{Q}^{(2)}_k(\cdot,y)\,d\mu(y)Q_{\az,1}^{0,m}(f)
\noz\\
&\quad+\sum_{k=1}^\fz\sum_{\az\in\CA_k}\sum_{m=1}^{N(k,\az)}\mu\lf(Q_\az^{k,m}\r)
\wz{Q}^{(2)}_k\lf(\cdot,y_\az^{k,m}\r)Q_{\az,1}^{k,m}(f)\noz\\
&=\sum_{\az\in\CA_0}\sum_{m=1}^{N(0,\az)}\int_{Q_\az^{k,m}}\wz{Q}^{(3)}_k(\cdot,y)\,d\mu(y)Q_{\az,1}^{0,m}(f)
\noz\\
&\quad+\sum_{k=1}^N\sum_{\az\in\CA_k}\sum_{m=1}^{N(k,\az)}\mu\lf(Q_\az^{k,m}\r)
\wz{Q}^{(3)}_k\lf(\cdot,y_\az^{k,m}\r)Q_{\az,1}^{k,m}(f)\noz\\
&\quad+\sum_{k=N+1}^\fz\sum_{\az\in\CA_k}\sum_{m=1}^{N(k,\az)}\mu\lf(Q_\az^{k,m}\r)
\wz{Q}^{(3)}_k\lf(\cdot,y_\az^{k,m}\r)Q_kf\lf(y_\az^{k,m}\r),\noz
\end{align}
where the series converge in $\go{\bz,\gz}$ [resp., $L^p(X)$ with any given $p\in(1,\fz)$], where, for any
$z\in X$,
\begin{equation}\label{eq:defQa1kn}
Q_{\az,1}^{k,m}(z):=\frac 1{\mu(Q_{\az}^{k,n})}\int_{Q_\az^{k,m}}Q_k(u,z)\,d\mu(u).
\end{equation}
Moreover, for any $i\in\{1,2,3\}$ and $k\in\zz_+$, $\wz{Q}^{(i)}_k$ has the properties same as
$\wz Q_k$ in Theorem \ref{thm:icrf}.
\end{theorem}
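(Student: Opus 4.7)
The plan is to imitate the strategy of Theorem \ref{thm:hdrf} (homogeneous discrete), while incorporating the splitting $k\le N$ versus $k>N$ that was forced upon us in Theorem \ref{thm:icrf} by the absence of cancellation for $Q_0$. Concretely, for $f\in L^2(X)$ and $x\in X$, I will introduce three discrete Riemannian sums $\mathcal S_N^{(i)}$, $i\in\{1,2,3\}$, modelled on \eqref{eq:S} but adapted to the three variants in \eqref{eq:idrf}. For instance, for the first one I set
$$
\mathcal S_N^{(1)}f(x):=\sum_{k=0}^N\sum_{\az\in\CA_k}\sum_{m=1}^{N(k,\az)}\int_{Q_\az^{k,m}}Q_k^N(x,y)\,d\mu(y)\,Q_{\az,1}^{k,m}(f)
+\sum_{k=N+1}^\fz\sum_{\az\in\CA_k}\sum_{m=1}^{N(k,\az)}\int_{Q_\az^{k,m}}Q_k^N(x,y)\,d\mu(y)\,Q_kf\lf(y_\az^{k,m}\r),
$$
with $Q_k^N$ as in \eqref{eq-z1} and $Q_{\az,1}^{k,m}(f)$ as in \eqref{eq:defQa1kn}. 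The crucial observation is that, for $k\in\{0,\ldots,N\}$, using the cube-average $Q_{\az,1}^{k,m}(f)=\mu(Q_\az^{k,m})^{-1}\int_{Q_\az^{k,m}}Q_kf(u)\,d\mu(u)$ (instead of the point value $Q_kf(y_\az^{k,m})$) allows me to rewrite the difference between the continuous and discrete sums as a double integral involving $Q_k^N(x,y)-Q_k^N(x,u)$ and thereby recover a $\dz^{j_0\eta}$ gain from the regularity of $Q_k^N$, even though $Q_0$ itself lacks cancellation.

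The core estimate is then to establish, for the corresponding remainder $\mathcal R_N^{(i)}:=I-\mathcal S_N^{(i)}$, a decomposition $\mathcal R_N^{(i)}=\mathcal G_N^{(i)}+\FR_N$ parallel to \eqref{eq:defGR}, where $\FR_N$ is the inhomogeneous remainder from \eqref{eq:defTR2} (controlled by Proposition \ref{prop-add-x}) and $\mathcal G_N^{(i)}=\sum_{k=0}^\fz \mathcal G_{k,N}^{(i)}$ is the ``discretization error''. For $k\ge N+1$ the kernels $\mathcal G_{k,N}^{(i)}$ satisfy exactly the estimates of Lemma \ref{lem:GkN}; for $k\in\{0,\ldots,N\}$, using \eqref{eq-xx} and the regularity of $Q_k^N$ in \eqref{eq:QkNregx} together with the fact that there are only finitely many such $k$, analogous size, regularity, second-difference-regularity and (for $k>0$) cancellation bounds hold, with an overall prefactor $\dz^{j_0\eta}$. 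Summing in $k$ as in the proofs of \eqref{eq:GL2} and \eqref{eq:GMG} via Lemma \ref{lem:CSlem}, Theorem \ref{thm:Kbdd} (for $k>N$, which contributes a kernel with cancellation in both variables) and Theorem \ref{thm:Kibdd} (for the finitely many levels $k\le N$, which lack cancellation), I obtain
$$
\|\mathcal G_N^{(i)}\|_{L^2(X)\to L^2(X)}+\|\mathcal G_N^{(i)}\|_{\CG(x_1,1,\bz,\gz)\to\CG(x_1,1,\bz,\gz)}
+\|\mathcal G_N^{(i)}\|_{\mathring\CG(x_1,r,\bz,\gz)\to\mathring\CG(x_1,r,\bz,\gz)}\ls \dz^{j_0\eta}.
$$
Choosing first $N$ large (as in Proposition \ref{prop-add-x}) and then $j_0$ large, the operator norms of $\mathcal R_N^{(i)}$ on $L^2(X)$, $\CG(x_1,1,\bz,\gz)$ and $\mathring\CG(x_1,r,\bz,\gz)$ are all at most $1/2$.

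With this in hand, $(\mathcal S_N^{(i)})^{-1}=(I-\mathcal R_N^{(i)})^{-1}$ is a bounded operator on each of the three spaces above. I then define
$$
\wz Q_k^{(i)}(x,y):=(\mathcal S_N^{(i)})^{-1}\lf(K_k^{(i)}(\cdot,y)\r)(x),
$$
where $K_k^{(i)}(\cdot,y)$ is the natural kernel of the inner operator attached to $\mathcal S_N^{(i)}$ (for $i=1$, $K_k^{(1)}(x,y)=Q_k^N(x,y)$; for $i=2,3$, it is the corresponding point-evaluation surrogate). Using Lemma \ref{lem:propQkN} together with Lemma \ref{lem:ccrf1}/Corollary \ref{cor:mixb} applied to the compositions $Q_l Q_k$ in $Q_k^N$, one sees that $K_k^{(i)}(\cdot,y)$ lies in $\mathring\CG(y,\dz^k,\bz,\gz)$ for $k>N$ and in $\CG(y,1,\bz,\gz)$ for $k\le N$, uniformly in $y$, so $\wz Q_k^{(i)}$ inherits the size condition \eqref{eq:atisize} and the regularity condition \eqref{eq:atisregx} for the first variable. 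The integration property \eqref{eq:iwzQint} is then proved exactly as in Theorem \ref{thm:icrf}: for $k>N$ the Neumann expansion $(\mathcal S_N^{(i)})^{-1}=\sum_{j\ge 0}(\mathcal R_N^{(i)})^j$ and the cancellation of $K_k^{(i)}(\cdot,y)$ give zero mean, while for $k\le N$ a Fubini argument (combined with $\int_X K_k^{(i)}(x,y)\,d\mu(y)=1$) propagates through the Neumann series.

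Finally, convergence of the series in \eqref{eq:idrf} is handled in the three target spaces separately, exactly as in Steps 1)--3) of the proof of Theorem \ref{thm:hdrf}, with the inhomogeneous analogue \eqref{eq:isum} from Theorem \ref{thm:icrf} replacing \eqref{eq:sumsize}--\eqref{eq:sumreg} for the tail in $k$: for $f\in\CG(\bz',\gz')$ with $\bz<\bz'<\eta$ and $\gz<\gz'<\eta$ one gets geometric-in-$k$ decay of $\|F_k f\|_{\CG(\bz,\gz)}$ (with $F_k$ the inhomogeneous analogue of the operator in Lemma \ref{lem-add4}), then density of $\CG(\eta,\eta)$ in $\mathring\CG_0^\eta(\bz,\gz)$ and $L^p(X)$ takes care of $f\in\go{\bz,\gz}$ and $f\in L^p(X)$. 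The truncation in $\CA_{k,M}$ is handled exactly as in \eqref{eq:limb2aim} using Lemma \ref{lem-add5}. The main obstacle, as in the homogeneous case, is the control of $\mathcal G_N^{(i)}$: specifically, verifying that the ``low-level'' pieces $k\in\{0,\ldots,N\}$, where $Q_0$ has no cancellation and one cannot invoke Theorem \ref{thm:Kbdd}, still produce a kernel to which Theorem \ref{thm:Kibdd} applies with the gain $\dz^{j_0\eta}$; this is precisely where the choice of $Q_{\az,1}^{k,m}(f)$ (cube average) rather than $Q_kf(y_\az^{k,m})$ (point value) at low levels is essential, since the cube-average lets the regularity of $Q_k^N$ (not of $Q_k$) carry the argument.
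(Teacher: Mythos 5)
Your proposal matches the paper's own treatment of this theorem, which consists solely of the decomposition sketched in Remark \ref{rem:pridrf} (where $I=\mathscr S_N+\FR_N+\mathscr R_N^1+\mathscr R_N^2$, with $\mathscr S_N$ identical to your $\mathcal S_N^{(1)}$ and your discretization error $\mathcal G_N^{(1)}$ equal to $\mathscr R_N^1+\mathscr R_N^2$) followed by the remark that one then "uses the method in Sections \ref{hdrf} and \ref{icrf}". Your fleshing-out of the boundedness of $\mathcal G_N^{(1)}$ via the analogues of Lemma \ref{lem:GkN}, Lemma \ref{lem:CSlem}, Theorems \ref{thm:Kbdd}/\ref{thm:Kibdd}, the construction of $\wz Q_k^{(i)}=(\mathcal S_N^{(i)})^{-1}K_k^{(i)}$, the verification of \eqref{eq:iwzQint}, and the three-step convergence argument with the $\CA_{k,M}$ truncation and Lemma \ref{lem-add5}, is exactly the intended route.

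One small inaccuracy in your explanation, though it does not break the plan. You say the cube-average at levels $k\le N$ is "essential" because it lets the regularity of $Q_k^N$ rather than of $Q_k$ carry the $\dz^{j_0\eta}$ gain, and tie this to $Q_0$ lacking cancellation. In fact the $\dz^{j_0\eta}$ gain is obtained equally well from the regularity of $Q_k$ in its first variable (as the paper's remark writes the error via $Q_kf(y)-Q_kf(u)$), and regularity is not affected by the lack of cancellation of $Q_0$. The reason the cube-average genuinely matters at low levels is different: it is needed so that $\int_X\mathcal G_N^{(1)}(x,w)\,d\mu(x)=0$ (using $\int_XQ_k^N(x,y)\,d\mu(x)=1$ for $k\le N$), hence $\int_X\mathcal R_N^{(1)}(x,w)\,d\mu(x)=0$, so that the Neumann-series inverse $(\mathcal S_N^{(1)})^{-1}$ preserves the integral and \eqref{eq:iwzQint} is inherited from \eqref{eq:QkNint}. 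If instead one used the point value $Q_kf(y_\az^{k,m})$ at low levels, the discretization-error kernel would still cancel in $w$ but not in $x$, and the derivation of \eqref{eq:iwzQint} would break down.

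Also, the boundary between where you invoke Theorem \ref{thm:Kbdd} versus Theorem \ref{thm:Kibdd} is better drawn by target space than by the level $k$: the full operator $\mathcal G_N^{(1)}$ (all $k\ge0$) is tested by Theorem \ref{thm:Kbdd} on $\mathring\CG(x_1,r,\bz,\gz)$ and by Theorem \ref{thm:Kibdd} on $\CG(x_1,1,\bz,\gz)$, the latter being enabled by the extra decay at scale $\ge1$ coming from the inhomogeneous summation $k\ge0$. The discretization-error kernel in fact cancels in both variables even for $k=0$ (the double cube-average differences integrate to zero), so your parenthetical "for $k>0$" in the list of cancellation bounds is unnecessarily restrictive, though it does no harm.
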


\begin{remark}\label{rem:pridrf}
We only explain the decomposition of $I$ to derive the first equality in \eqref{eq:idrf}. For any
$f\in L^2(X)$ and $x\in X$, by \eqref{eq:defTR2}, we write
\begin{align*}
f(x)&=\sum_{k=0}^\fz Q_k^NQ_kf(x)+\FR_Nf(x)\\
&=\sum_{k=0}^{N}\sum_{\az\in\CA_k}\sum_{m=1}^{N(k,\az)}\int_{Q_{\az}^{k,m}}Q_k^N(x,y)Q_kf(y)\,d\mu(y)
+\sum_{k=N+1}^{\fz}\sum_{\az\in\CA_k}\sum_{m=1}^{N(k,\az)}\int_{Q_{\az}^{k,m}}Q_k^N(x,y)Q_kf(y)\,d\mu(y)\\
&\quad+\FR_Nf(x)\\
&=\lf[\sum_{k=0}^{N}\sum_{\az\in\CA_k}\sum_{m=1}^{N(k,\az)}\int_{Q_{\az}^{k,m}}Q_k^N(x,y)\,d\mu(y)
Q_{\az,1}^{k,m}(f)\r.\\
&\quad+\lf.\sum_{k=N+1}^\fz\sum_{\az\in\CA_k}\sum_{m=1}^{N(k,\az)}\int_{Q_{\az}^{k,m}}Q_k^N(x,y)\,d\mu(y)
Q_kf\lf(y_{\az}^{k,m}\r)\r]+\FR_Nf(x)\\
&\quad+\sum_{k=0}^{N}\sum_{\az\in\CA_k}\sum_{m=1}^{N(k,\az)}\frac 1{\mu(Q_\az^{k,m})}
\int_{Q_{\az}^{k,m}}Q_k^N(x,y)\int_{Q_{\az}^{k,m}}[Q_kf(y)-Q_kf(u)]\,d\mu(u)\,d\mu(y)\\
&\quad+\sum_{k=N+1}^\fz\sum_{\az\in\CA_k}\sum_{m=1}^{N(k,\az)}\int_{Q_{\az}^{k,m}}Q_k^N(x,y)
\lf[Q_kf(y)-Q_kf\lf(y_{\az}^{k,m}\r)\r]\,d\mu(y)\\
&=:\mathscr{S}_Nf(x)+\FR_Nf(x)+\mathscr{R}_N^1f(x)+\mathscr{R}_N^2f(x).
\end{align*}
Then, we can use the method in Sections \ref{hdrf} and \ref{icrf} to consider the boundedness of
the remainders $\FR_N$, $\mathscr{R}_N^1$ and $\mathscr{R}_N^2$ on both $L^2(X)$ and $\CG(\bz,\gz)$, the
details being omitted.
\end{remark}

\begin{theorem}\label{thm:idrf2}
Let $\{Q_k\}_{k=0}^\fz$ be an $\exp$-{\rm IATI} and $\bz,\ \gz\in(0,\eta)$. Assume that $y_\az^{k,m}\in Q_\az^{k,m}$
is an arbitrary point. Then there exist $N\in\nn$ and sequences $\{\overline{Q}^{(i)}_k\}_{k=0}^\fz$,
$i\in\{1,2,3\}$, of bounded linear operators on $L^2(X)$ such that, for any $f\in\go{\bz,\gz}$ [resp., $L^p(X)$
with any given $p\in(1,\fz)$],
\begin{align}\label{eq:idrf2}
f(\cdot)&=\sum_{k=0}^N\sum_{\az\in\CA_k}\sum_{m=1}^{N(k,\az)}\int_{Q_\az^{k,m}}Q_k(\cdot,y)\,d\mu(y)
\overline{Q}_{\az,1}^{(1),k,m}(f)\\
&\quad+\sum_{k=N+1}^\fz\sum_{\az\in\CA_k}\sum_{m=1}^{N(k,\az)}\int_{Q_\az^{k,m}}Q_k(\cdot,y)\,d\mu(y)
\overline{Q}^{(1)}_kf\lf(y_\az^{k,m}\r)\noz\\
&=\sum_{\az\in\CA_0}\sum_{m=1}^{N(0,\az)}\int_{Q_\az^{k,m}}Q_k(\cdot,y)\,d\mu(y)
\overline{Q}_{\az,1}^{(2),0,m}(f)\noz\\
&\quad+\sum_{k=1}^\fz\sum_{\az\in\CA_k}\sum_{m=1}^{N(k,\az)}\mu\lf(Q_\az^{k,m}\r)
Q_k\lf(\cdot,y_\az^{k,m}\r)\overline{Q}_{\az,1}^{(2),k,m}(f)\noz\\
&=\sum_{\az\in\CA_0}\sum_{m=1}^{N(0,\az)}\int_{Q_\az^{k,m}}Q_k(\cdot,y)\,d\mu(y)\overline{Q}_{\az,1}^{(3),0,m}
(f)\noz\\
&\quad+\sum_{k=1}^N\sum_{\az\in\CA_k}\sum_{m=1}^{N(k,\az)}\mu\lf(Q_\az^{k,m}\r)
Q_k\lf(\cdot,y_\az^{k,m}\r)\overline{Q}_{\az,1}^{(3),k,m}(f)\noz\\
&\quad+\sum_{k=N+1}^\fz\sum_{\az\in\CA_k}\sum_{m=1}^{N(k,\az)}\mu\lf(Q_\az^{k,m}\r)
\overline{Q}^{(3)}_k\lf(x,y_\az^{k,m}\r)Q_kf\lf(y_\az^{k,m}\r),\noz
\end{align}
where the series converge in $\GOO{\bz,\gz}$ [resp., $L^p(X)$ with any given $p\in(1,\fz)$] and, for any
$i\in\{1,2,3\}$, $\overline{Q}_{\az,1}^{(i),k,m}$ is defined as in \eqref{eq:defQa1kn}, with $Q_k$ replaced by
$\overline{Q}^{(i)}_k$. Moreover, for
$i\in\{1,2,3\}$ and $k\in\zz_+$, $\overline{Q}^{(i)}_k$ has the properties same as $\overline{Q}_k$ in Theorem
\ref{thm:icrf2}.
\end{theorem}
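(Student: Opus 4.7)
The plan is to follow the template of Theorem~\ref{thm:idrf} but with the roles of the two factors in the decomposition of $I$ interchanged, which is the discrete analogue of passing from Theorem~\ref{thm:icrf} to Theorem~\ref{thm:icrf2} (cf. Remark~\ref{rem:r1}). Concretely, starting from the identity
$$I = \Bigl(\sum_{k=0}^\infty Q_k\Bigr)\Bigl(\sum_{l=0}^\infty Q_l\Bigr) = \sum_{k=0}^\infty Q_k\, Q_k^N + \overline{\FR}_N,$$
with $Q_k^N$ as in \eqref{eq-z1} and $\overline{\FR}_N := \sum_{k\ge 0}\sum_{|l|>N} Q_k Q_{k+l}$. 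A reindexing $k' = k+l$ shows $\overline{\FR}_N = \FR_N$ in the sense of Lemma~\ref{lem:RNi}, so the $L^2(X)$- and $\CG(x_1,r,\bz,\gz)$-operator-norm estimates from Proposition~\ref{prop-add-x} apply verbatim.

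Next, for $f \in L^2(X)$ and $x \in X$, I would write $Q_k(Q_k^N f)(x) = \int_X Q_k(x,y)(Q_k^N f)(y)\,d\mu(y)$ and discretize by replacing $(Q_k^N f)(y)$ on each cube $Q_\az^{k,m}$ by its value at $y_\az^{k,m}$ when $k > N$, and by its average $\overline{Q}_{\az,1}^{(1),k,m}(f) := [\mu(Q_\az^{k,m})]^{-1}\int_{Q_\az^{k,m}} (Q_k^N f)(u)\,d\mu(u)$ when $k \le N$ (compensating for the absence of cancellation of $Q_0$). This produces an operator $\overline{\mathcal{S}}_N^{(1)}$ with $I = \overline{\mathcal{S}}_N^{(1)} + \overline{\FR}_N + \overline{\mathcal{G}}_N$, where
$$\overline{\mathcal{G}}_N(x,y) = \sum_{k>N}\sum_{\az,m} \int_{Q_\az^{k,m}} Q_k(x,z)\bigl[Q_k^N(z,y) - Q_k^N(y_\az^{k,m},y)\bigr]\,d\mu(z) + (\text{terms from }k\le N).$$
The key computation is to establish a symmetric analogue of Lemma~\ref{lem:GkN} for $\overline{\mathcal{G}}_N$: using the regularity of $Q_k^N$ in its first variable (Lemma~\ref{lem:propQkN}) applied to $d(z,y_\az^{k,m}) \le \dz^k$, together with the size/regularity of $Q_k$, one obtains size, regularity, and second-difference regularity estimates with the extra factor $\dz^{j_0\eta}$ and with exponential decay in $d(x,y)$ and in $d(y,\CY^k)$. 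The arguments mirror Lemmas~\ref{lem:GkN} and \ref{lem-add4} with the roles of the two factors swapped (no cancellation of $Q_0$ is required, by the observation in Remark~\ref{rem:mix} adapted to our setting).

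With this, choosing first $N$ and then $j_0$ large enough (as in Remark~\ref{rem:j0N}), the remainder $\overline{\FR}_N + \overline{\mathcal{G}}_N$ has operator norm at most $1/2$ on both $L^2(X)$ and $\CG(x_1,r,\bz,\gz)$, so $\overline{\mathcal{S}}_N^{(1)}$ is invertible there. Setting $\overline{Q}_k^{(1)} := Q_k^N (\overline{\mathcal{S}}_N^{(1)})^{-1}$ and applying $(\overline{\mathcal{S}}_N^{(1)})^{-1}$ on the right of the defining identity yields the first equality of \eqref{eq:idrf2}. Properties of $\overline{Q}_k^{(1)}$ (size, one-sided regularity, the integration condition \eqref{eq:iwzQint} of Theorem~\ref{thm:icrf2}) follow exactly as in the proof of Theorem~\ref{thm:icrf}, using Lemma~\ref{lem:propQkN} and the induction argument behind \eqref{eq:RNjcan}. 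The second and third equalities are obtained by replacing $\int_{Q_\az^{k,m}} Q_k(\cdot,y)\,d\mu(y)$ with the point values $\mu(Q_\az^{k,m})Q_k(\cdot,y_\az^{k,m})$ at successive levels, defining $\overline{Q}_k^{(2)}$ and $\overline{Q}_k^{(3)}$ accordingly; each variant produces an additional discretization remainder controlled by the same template.

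Convergence of \eqref{eq:idrf2} in $\go{\bz,\gz}$ and in $L^p(X)$ for $p\in(1,\infty)$ is handled as in Steps 1)--3) of the proof of Theorem~\ref{thm:hdrf}, adjusted to the inhomogeneous setting following Theorem~\ref{thm:icrf}: a density argument from $\GO{\bz',\gz'}$ with $\bz' > \bz$, $\gz' > \gz$ handles $\go{\bz,\gz}$, while for $L^p(X)$ one checks that the truncated kernels $\overline{\mathcal{S}}_{N}^{(i),L}$ are standard $\bz_1$-Calder\'on--Zygmund operators uniformly in $L$ and applies the $L^p$-boundedness from \cite{CW71}. The main obstacle will be the bookkeeping of the ``low-frequency'' terms ($k \le N$) where $Q_0$ lacks cancellation: these produce boundary corrections that replace point sampling by the averages $\overline{Q}_{\az,1}^{(i),k,m}(f)$, and verifying their compatibility with the invertibility of $\overline{\mathcal{S}}_N^{(i)}$ requires a careful application of Theorem~\ref{thm:Kibdd} (rather than Theorem~\ref{thm:Kbdd}) since these pieces have no cancellation and need the additional decay condition in the variable $r_0 = 1$.
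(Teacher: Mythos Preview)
Your proposal is correct and follows precisely the approach the paper has in mind: the paper omits the proof entirely, stating only that it is a combination of the arguments in Sections~\ref{icrf} and~\ref{hdrf}, and your plan is exactly that combination---reverse the order of the factors (as in Remark~\ref{rem:r1}), discretize the inner factor $Q_k^N f$ with cube averages at low frequencies $k\le N$ to compensate for the lack of cancellation of $Q_0$ (as in Remark~\ref{rem:pridrf}), and control the resulting remainders via the symmetric analogues of Lemmas~\ref{lem:GkN} and~\ref{lem-add4}. One point worth making explicit when you write it up: to obtain the regularity of $\overline{Q}_k^{(1)}(x,\cdot)$ in the \emph{second} variable from $\overline{Q}_k^{(1)}=Q_k^N(\overline{\mathcal S}_N^{(1)})^{-1}$, you are implicitly using that $\overline{Q}_k^{(1)}(x,\cdot)=[(\overline{\mathcal S}_N^{(1)})^*]^{-1}\bigl(Q_k^N(x,\cdot)\bigr)$, so you need the \emph{adjoint} remainder $(\overline{\mathcal R}_N)^*$ to have small norm on $\CG(x,\dz^k,\bz,\gz)$---this follows because your kernel estimates for $\overline{\FR}_N=\FR_N$ and $\overline{\mathcal G}_N$ are symmetric in the two variables, but it is the step that distinguishes this theorem from Theorem~\ref{thm:idrf} and deserves an explicit sentence.
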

\begin{theorem}\label{thm:idrf3}
Let all the notation be as in Theorems \ref{thm:idrf} and \ref{thm:idrf2}. Then, for any $f\in(\go{\bz,\gz})'$,
all equalities in \eqref{eq:idrf} and \eqref{eq:idrf2} hold true in $(\go{\bz,\gz})'$.
\end{theorem}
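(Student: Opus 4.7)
The plan is to prove Theorem \ref{thm:idrf3} by a duality argument paralleling the one alluded to immediately before Theorem \ref{thm:hdrf4} (and the similar argument for Theorem \ref{thm:icrf3}), exploiting the pair of already-established inhomogeneous discrete expansions in Theorems \ref{thm:idrf} and \ref{thm:idrf2}. Fix $f\in(\go{\bz,\gz})'$ and a test function $g\in\go{\bz,\gz}$. For each of the three equalities in \eqref{eq:idrf}, I would pair the candidate right-hand side with $g$ and show that, after formally interchanging the action of $f$ with the triple sums and integrals, the resulting expression equals $\langle f,\mathcal U g\rangle$ for an operator $\mathcal U$ whose action on $g$ is exactly one of the three inhomogeneous discrete reproducing expansions of $g$ furnished by Theorem \ref{thm:idrf2}. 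Since that expansion converges to $g$ in $\go{\bz,\gz}$ and $f$ is continuous on this space, one obtains $\langle f,g\rangle=\langle f,\mathcal U g\rangle$, which is the required identity in the weak-$*$ topology. The three identities in \eqref{eq:idrf2} are handled symmetrically, by pairing with the expansions of $g$ provided by Theorem \ref{thm:idrf}.

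First I would verify that every scalar coefficient appearing in \eqref{eq:idrf} and \eqref{eq:idrf2} is well defined on $f$: for $k\ge 1$, Proposition \ref{prop:etoa} gives $Q_k(\cdot,y_\az^{k,m})\in\mathring{\CG}(y_\az^{k,m},\dz^k,\bz,\gz)\subset\go{\bz,\gz}$, so that $Q_kf(y_\az^{k,m}):=\langle f,Q_k(\cdot,y_\az^{k,m})\rangle$ is defined, while $Q_0(\cdot,y)\in\CG(y,1,\bz,\gz)\subset\go{\bz,\gz}$ handles the $k=0$ terms; the averaged coefficients $Q_{\az,1}^{k,m}(f)$ from \eqref{eq:defQa1kn} are then finite averages of such pairings, and similarly for $\overline{Q}_{\az,1}^{(i),k,m}(f)$. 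Next, truncating the outer $k$-sum to $\{0,1,\ldots,L\}$ and the inner sums to $\az\in\CA_{k,M}$, I would exchange the linear functional $f$ with the resulting finite sum of integrals to obtain $\langle f,\mathcal U_{L,M}g\rangle$, where $\mathcal U_{L,M}g$ is the $(L,M)$-partial sum of the dual reproducing expansion of $g$; the symmetry between the roles of $\{\wz Q_k^{(i)}\}$ and $\{\overline Q_k^{(i)}\}$, and of $Q_k$ in its two arguments, is what matches the candidate right-hand side with an expansion listed in Theorem \ref{thm:idrf2}.

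The main obstacle will be passing the limits $L,M\to\fz$ through the distribution pairing. I plan to overcome this by invoking the $\go{\bz,\gz}$-convergence of the expansions in Theorem \ref{thm:idrf2}, which ensures $\|\mathcal U_{L,M}g-g\|_{\CG(\bz,\gz)}\to 0$ as $L,M\to\fz$; combined with continuity of $f\in(\go{\bz,\gz})'$ with respect to this norm, this yields $\langle f,\mathcal U_{L,M}g\rangle\to\langle f,g\rangle$. On the other side of the identity, the uniform kernel bounds from Remark \ref{rem:hdrf} (independence of the choice of $y_\az^{k,m}\in Q_\az^{k,m}$ and local uniformity in $(\bz,\gz)$ on compact subsets of $(0,\eta)^2$) together with the tail estimates already derived in the proofs of Theorems \ref{thm:idrf} and \ref{thm:idrf2} allow an analogue of the argument in Steps 1)--2) of the proof of Theorem \ref{thm:hdrf} to show that the $(L,M)$-truncated right-hand side of the candidate formula, tested against $g$, approaches its full sum. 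Equating the two limits then yields \eqref{eq:idrf} in $(\go{\bz,\gz})'$; the same scheme, with the roles of Theorems \ref{thm:idrf} and \ref{thm:idrf2} exchanged, delivers the three identities of \eqref{eq:idrf2}.
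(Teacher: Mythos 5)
Your proposal is correct and follows exactly the duality argument the paper has in mind: the paper omits the details for Theorem \ref{thm:idrf3} (and similarly for Theorems \ref{thm:hcrf3}, \ref{thm:hdrf4} and \ref{thm:icrf3}), indicating only that a duality argument based on the paired reproducing formulae suffices, and you have carried that out. One small remark: once you have, for the $(L,M)$-truncated candidate right-hand side $\mathcal U_{L,M}^{\mathrm t}f$, the identity $\langle \mathcal U_{L,M}^{\mathrm t}f, g\rangle = \langle f,\mathcal U_{L,M}g\rangle$ and know $\mathcal U_{L,M}g\to g$ in $\go{\bz,\gz}$ from Theorem \ref{thm:idrf2}, the weak-$*$ convergence of the partial sums to $f$ is immediate, so the additional appeal to tail estimates on the other side is redundant (though not wrong).
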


\begin{remark}\label{rem:idrf}
Similarly to Remark \ref{rem:r2}, if $K$ is a compact subset of $(0,\eta)^2$, then
Theorems \ref{thm:idrf} through \ref{thm:idrf3} hold true with the implicit  positive  constants
independent of $(\bz,\gz)\in K$, but depending on $K$.
\end{remark}

\bigskip

\noindent Ziyi He, Dachun Yang (Corresponding author) and Wen Yuan

\medskip

\noindent Laboratory of Mathematics and Complex Systems (Ministry of Education of China),
School of Mathematical Sciences, Beijing Normal University, Beijing 100875, People's Republic of China

\smallskip

\noindent{\it E-mails:} \texttt{ziyihe@mail.bnu.edu.cn} (Z. He)

\noindent\phantom{{\it E-mails:} }\texttt{dcyang@bnu.edu.cn} (D. Yang)

\noindent\phantom{{\it E-mails:} }\texttt{wenyuan@bnu.edu.cn} (W. Yuan)

\bigskip

\noindent Liguang Liu

\medskip

\noindent Department of Mathematics, School of Information, Renmin University of China, Beijing 100872,
People's Republic of China

\smallskip

\noindent{\it E-mail:} \texttt{liuliguang@ruc.edu.cn}
\end{document}